\DeclareMathOperator{\Aut}{Aut}
\DeclareMathOperator{\End}{End}
\DeclareMathOperator{\Gal}{Gal}
\DeclareMathOperator{\GL}{GL}
\DeclareMathOperator{\gr}{gr}
\DeclareMathOperator{\Hom}{Hom}
\DeclareMathOperator{\id}{id}
\DeclareMathOperator{\Ind}{Ind}
\DeclareMathOperator{\Lie}{Lie}
\DeclareMathOperator{\Res}{Res}
\DeclareMathOperator{\Sch}{Sch}
\DeclareMathOperator{\Spec}{Spec}
\DeclareMathOperator{\Spf}{Spf}
\DeclareMathOperator{\Stab}{Stab}
\newcommand{\mA}{{\mathbb A}}
\newcommand{\mC}{{\mathbb C}}
\newcommand{\mF}{{\mathbb F}}
\newcommand{\mG}{{\mathbb G}}
\newcommand{\mQ}{{\mathbb Q}}
\newcommand{\mR}{{\mathbb R}}
\newcommand{\mZ}{{\mathbb Z}}
\newcommand{\A}{{\mathcal A}}
\newcommand{\G}{{\mathcal G}}
\newcommand{\fp}{{\mathfrak p}}
\newcommand{\Llrto}{\Longleftrightarrow}
\renewcommand{\to}{\longrightarrow}
\newcommand{\mto}{\longmapsto}
\newcommand{\xto}{\xrightarrow}
\newcommand{\spa}{\ }
\theoremstyle{plain}
\newtheorem{theorem}{Theorem}[section]
\newtheorem{lemma}[theorem]{Lemma}
\newtheorem{cor}[theorem]{Corollary}
\newtheorem{prop}[theorem]{Proposition}
\newtheorem{thm}{Theorem}[subsection]
\newtheorem{coro}[thm]{Corollary}
\newtheorem{propo}[thm]{Proposition}
\theoremstyle{remark}
\theoremstyle{definition}
\newtheorem{defn}[theorem]{Definition}
\newtheorem{Remark}[theorem]{Remark}
\numberwithin{equation}{subsection}
\numberwithin{theorem}{subsection}
\newcommand{\ZZ}{\mathbb Z}
\newcommand{\QQ}{\mathbb Q}
\newcommand{\RR}{\mathbb R}
\newcommand{\FF}{\mathbb F}
\newcommand\hfld[2]{\smash{\mathop{\hbox to 10mm{\rightarrowfill}}
     \limits^{\scriptstyle#1}_{\scriptstyle#2}}}
\newcommand\hflg[2]{\smash{\mathop{\hbox to 10mm{\leftarrowfill}}
     \limits^{\scriptstyle#1}_{\scriptstyle#2}}}
\title{Shimura varieties with $\Gamma_1(p)$-level via Hecke algebra isomorphisms: the Drinfeld case}
\author{Thomas J. Haines and Michael Rapoport}
\begin{document}

\thanks{Research of Haines partially supported by NSF grants FRG-0554254, DMS-0901723, a University of Maryland GRB Semester Award, and by the University of Chicago.  This work was 
also partially supported by SFB/TR 45 "Periods, Moduli Spaces and 
Arithmetic of Algebraic Varieties" of the DFG}

\date{}



\begin{abstract}
We study the local factor at $p$ of the semi-simple zeta function of a Shimura variety of Drinfeld type for a level structure given at $p$ by the pro-unipotent radical of an Iwahori subgroup. Our method is an adaptation to this case of the  Langlands-Kottwitz  counting method. We explicitly determine
the corresponding test functions in suitable Hecke algebras, and show their centrality by determining their images under the Hecke algebra isomorphisms of Goldstein, Morris, and Roche.

\end{abstract}

\maketitle

\markboth{T. Haines and M. Rapoport}
{Shimura varieties with $\Gamma_1(p)$-level via Hecke algebra isomorphisms}

\tableofcontents

\section{Introduction}

\noindent Many authors have studied the Hasse-Weil zeta functions of Shimura varieties, and more generally the relations between the cohomology of Shimura varieties and automorphic forms.  In the approach of Langlands and Kottwitz, an important step is to express the semi-simple Lefschetz number in terms of a sum of orbital integrals that resembles the geometric side of the Arthur-Selberg trace formula.  The geometry of the reduction modulo $\fp$ of 
the Shimura variety determines which functions appear as test functions in the orbital integrals.  Via the Arthur-Selberg trace formula,  the traces of these test functions on automorphic representations intervene in the expression of the semi-simple Hasse-Weil zeta function in terms of semi-simple automorphic $L$-functions.  Thus, at the heart of this method is the precise determination of the test functions themselves, for any given level structure, and this precise determination is a problem that seems of interest in its own right.

Let us describe more precisely what we mean by test functions.  Let $Sh_K = Sh(G, h^{-1}, K)$ denote the Shimura variety attached to a connected reductive $\mathbb Q$-group $G$, a family of Hodge-structures $h$, and a compact open subgroup $K \subset G(\mathbb A_f)$.  We fix a prime number $p$ and assume that $K$ factorizes as $K = K^pK_p \subset G(\mathbb A^p_f) \times G(\mathbb Q_p)$.  Let $E \subset \mathbb C$ denote the reflex field and suppose that for a fixed prime ideal $\mathfrak p \subset \mathcal O_E$ over $p$, $Sh_K$ possesses a suitable integral model, still denoted by $Sh_K$, over the ring of integers $\mathcal O_{E_{\mathfrak p}}$ in the $\mathfrak p$-adic completion $E_{\mathfrak p}$.  Also, for simplicity let us assume that $E_{\frak p} = \mathbb Q_p$ (this will be the case in the body of this paper.)   Write $R\Psi = R\Psi^{Sh_K}(\bar{\mathbb Q}_\ell)$ for the complex of nearby cycles on the special fiber of $Sh_K$, where $\ell \neq p$ is a fixed prime number.  For $r \geq 1$, let $k_r$  denote an extension of degree $r$ of  $\mathbb F_p$, and consider the semi-simple Lefschetz number defined as the element in $\bar{\mathbb Q}_\ell$ given by the sum
\begin{equation*}
{\rm Lef}^{ss}(\Phi^r_{\fp},R\Psi) = \sum_{x \in Sh_K(k_r)} {\rm Tr}^{ss}(\Phi^r_{\fp}, R\Psi_x) ,
\end{equation*}
where  $\Phi_{\fp}$ denotes the Frobenius morphism for 
$Sh_K \otimes_{\mathcal O_{E_\fp}} {\mathbb F_p}$.  (We refer the reader to \cite{HN} for a discussion of semi-simple trace on $\bar{\mathbb Q}_\ell$-complexes being used here.)

The group-theoretic description of ${\rm Lef}^{ss}(\Phi^r_{\fp},R\Psi)$ emphasized by Langlands \cite{L} and Kottwitz \cite{K84,K90} should take the form of a ``counting points formula''
\begin{equation*}
{\rm Lef}^{ss}(\Phi^r_{\fp},R\Psi) = \sum_{(\gamma_0;\gamma,\delta)} c(\gamma_0;\gamma,\delta) \, {\rm O}_\gamma(f^p) \, {\rm TO}_{\delta \sigma}(\phi_r).
\end{equation*}
The sum is over certain equivalence classes of triples $(\gamma_0;\gamma,\delta) \in G(\mathbb Q) \times G(\mathbb A^p_f) \times G(L_r)$, where $L_r$ is the fraction field of the ring of Witt  vectors $W(k_r)$, and where $\sigma$ denotes the Frobenius generator of ${\rm Gal}(L_r/\mathbb Q_p)$.  
We refer to section \ref{counting_pts_sec} for a discussion of the implicit measures and the remaining notation in this formula.  

We take the open compact subgroup $K_p$ of the kind that for any $r\geq 1$ it also defines an open compact subgroup $K_{p^r}$ of $G(L_r)$. In terms of the Haar measure $dx_{K_{p^r}}$ on $G(L_r)$ giving $K_{p^r}$ volume 1, we say $\phi_r = \phi_r(R\Psi)$ is ``the'' {\em test function} if the counting points formula above is valid.  It should be an element in the Hecke algebra
$\mathcal H(G(L_r),K_{p^r})$ of compactly-supported $K_{p^r}$-bi-invariant $\bar{\QQ}_\ell$-valued functions on $G(L_r)$.   The function $\phi_r$ is not uniquely-determined by the formula, but a striking empirical fact is that a very nice choice for $\phi_r$ always seems to exist, namely, {\em we may always find a test function $\phi_r$ belonging to the center ${\mathcal Z}(G(L_r), K_{p^r})$ of $\mathcal H(G(L_r),K_{p^r})$}.  (A very precise conjecture can be formulated using the {\em stable Bernstein center} (cf. \cite{Vo}), but this will be done on another occasion by one of us (T. H.).)

In fact,  test functions seem to be given in terms of the Shimura cocharacter $\mu_h$ by a simple rule.  Let us illustrate this with the known examples.  Assume $G_{\mathbb Q_p}$ is unramified.   Let $\mu = \mu_h$ denote the minuscule cocharacter of $G_{\mathbb Q_p}$ associated to $h$ and the choice of an embedding $\bar{\mathbb Q} \hookrightarrow \bar{\mathbb Q}_p$ extending $\fp$, and let $\mu^*$ denote its dual (i.e. $\mu^* = -w_0(\mu)$ for the longest element $w_0$ of the relative Weyl group).  If $K_p$ is a {\it hyperspecial} maximal compact subgroup, and $Sh_K$ is PEL of type $A$ or $C$, then  Kottwitz \cite{K92} shows that $Sh_K$ has an integral model with good reduction at $\mathfrak p$, and that $\phi_r$ may be taken to be the characteristic function $ 1_{K_{p^r}\mu^*(p)K_{p^r}}$ in the (commutative) spherical Hecke algebra $\mathcal H(G(L_r),K_{p^r})$.  Next suppose that $Sh_K$ is of PEL type, that $G_{\mathbb Q_p}$ is split of type $A$ or $C$, and that $K_p$ is a parahoric subgroup.  Then in \cite{HN}, \cite{H05} it is proved that $\phi_r$ may be taken to be the {\em Kottwitz function} $k_{\mu^*,r} = p^{r \, {\rm dim}(Sh_K)/2} \, z^{K_{p^r}}_{\mu^*,r}$ in  $\mathcal Z(G(L_r),K_{p^r})$, thus confirming a conjecture of Kottwitz.  
Here for an Iwahori subgroup $I_r$ contained in $K_{p^r}$ the symbol $z_{\mu^*,r}$ denotes the {\em Bernstein function} in $\mathcal Z(G(L_r), I_r)$ described in the Appendix, and $z^{K_{p^r}}_{\mu^*,r}$ denotes the image of $z_{\mu^*,r}$ under the canonical isomorphism $\mathcal Z(G(L_r),I_r)~ \widetilde{\rightarrow} ~ \mathcal Z(G(L_r),K_{p^r})$ given by convolution with the characteristic function of $K_{p^r}$ (see \cite{H09}). 

In this article, we study Shimura varieties in the Drinfeld case.   From now on we fix the associated ``unitary'' group $G$, coming from an involution on a semi-simple algebra $D$ with center an imaginary quadratic extension $F/\mathbb Q$ and with ${\rm dim}_F D = d^2$ (see section \ref{moduli_sec}).  We make assumptions guaranteeing that $G_{\mathbb Q_p} = {\rm GL}_d \times \mathbb G_m$, so that we may identify $\mu$ with the Drinfeld type cocharacter $\mu_0 = (1,0^{d-1})$ of ${\rm GL}_d$.  For $K_p$ we take    either the Iwahori subgroup $I$ of ${\rm GL}_d(\mathbb Z_p)\times \mathbb Z_p^\times$ of elements where the matrices in the first factor are congruent modulo $p$ to an upper triangular matrix, or its pro-unipotent radical $I^+$ where the matrices in the first factor are congruent modulo $p$ to a unipotent upper triangular matrix. The first case leads to the  $\Gamma_0(p)$-level moduli scheme, and the second to  the $\Gamma_1(p)$-level moduli scheme. 

The $\Gamma_0(p)$-level moduli scheme $\A_0$ is defined as parametrizing a chain of polarized abelian varieties of dimension $d^2$ with an action of a maximal order in $D$. 
 The moduli problem $\A_1$ is defined using the Oort-Tate theory \cite{OT} of group schemes of order $p$. Namely,  $\A_1 \rightarrow \A_0$ is given by choosing an Oort-Tate generator for each group scheme $G_i\ (i=1,\ldots,d)$ coming from the flag (\ref{chainpdiv}) of $p$-divisible groups attached to a point in $\A_0$.  We use the geometry of the canonical morphism 
$$
\pi: \A_1 \rightarrow \A_0
$$
to study the nearby cycles $R\Psi_1$ on $\A_1 \otimes \mathbb F_p$.  The following theorem summarizes Propositions \ref{central_prop} and \ref{exp_test_fcn}.  It is the analogue for $\mathcal A_1$ of the theorem for  $\mathcal A_0$ mentioned above, which identifies the test function $\phi_r(R\Psi_0)$ with the Kottwitz function $k_{\mu^*,r} $ in the center of the Iwahori Hecke algebra. 

\begin{thm} \label{gamma1_test} Consider the Shimura variety $Sh_K = \A_{1,K^p}$ in the Drinfeld case, where  $K_p = I^+$ is the pro-unipotent radical of an Iwahori subgroup $I \subset G(\mathbb Q_p)$.  Let $I^+_r \subset I_r$  denote the corresponding subgroups of $G_r := G(L_r)$.  Then with respect to the Haar measure on $G_r$ giving $I^+_r$ volume 1, the test function $\phi_{r, 1} = \phi_r(R\Psi_1)$ is an explicit function belonging to the center $\mathcal Z(G_r, I^+_r)$ of $\mathcal H(G_r, I^+_r)$.  
\end{thm}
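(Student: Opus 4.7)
The plan is to adapt the Langlands--Kottwitz counting method already carried out for $\A_0$ to the cover $\A_1$, using the finite flat morphism $\pi: \A_1 \to \A_0$, and then to prove centrality of the resulting test function via the Hecke algebra isomorphisms of Goldstein, Morris, and Roche.

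First I would analyze the geometry of $\pi$. By Oort--Tate theory, a fiber of $\pi$ over a point of $\A_0$ is described by the choice of a generator in each of the $d$ Oort--Tate line bundles attached to the group schemes $G_1, \ldots, G_d$ of order $p$ coming from the flag (\ref{chainpdiv}). Consequently $\pi$ is finite flat, étale on the generic fiber with Galois group $(\mathbb{F}_p^\times)^d$, and degenerates on the special fiber in a way dictated explicitly by the vanishing of the relevant Oort--Tate parameters. Since $\pi$ is finite, one has $\pi_* R\Psi_1 \simeq R\Psi_0(\pi_* \bar{\mathbb{Q}}_\ell)$, and the sheaf $\pi_* \bar{\mathbb{Q}}_\ell$ decomposes under the natural action of $T(\mathbb{F}_p)$ (with $T$ the diagonal torus in $G_{\mathbb{Q}_p}$), reducing the computation of $R\Psi_1$ to character-twisted variants of $R\Psi_0$.

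Combining this with the counting points formula already known for $\A_0$, I would obtain the semi-simple Lefschetz number on $\A_1(k_r)$ by summing over the $\pi$-fibers, with contributions at $p$ tracked by the Oort--Tate characters. The Langlands--Kottwitz recipe then produces an explicit candidate test function $\phi_{r,1} \in \H(G_r, I^+_r)$, built out of the Bernstein function $z_{\mu^*,r}$ together with correction factors recording the $I_r/I^+_r \simeq T(\mathbb{F}_p)$-refinement of the level structure. This computation should yield the explicit formula asserted in Proposition \ref{exp_test_fcn}.

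To establish centrality (the content of Proposition \ref{central_prop}), I would invoke the Hecke algebra isomorphisms of Goldstein, Morris, and Roche. These decompose $\H(G_r, I^+_r)$ into a direct sum indexed by Bernstein components (equivalently, by orbits of characters of $T(\mathbb{F}_p)$ under the appropriate Weyl group action), each summand being isomorphic to an Iwahori--Hecke algebra of a smaller reductive group. On each such summand, I expect the image of $\phi_{r,1}$ to match a Bernstein-type function for the smaller group and an induced cocharacter; centrality is then immediate from the Bernstein presentation. The main obstacle will be this last identification: performing the explicit computation of the image of $\phi_{r,1}$ under the GMR isomorphisms and matching the Oort--Tate data on the geometric side with the combinatorics of the extended affine Weyl group on the Hecke side is the technical heart of the argument.
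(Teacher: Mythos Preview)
Your outline follows the paper's strategy closely: decompose $\pi_*R\Psi_1$ under the $T(\mathbb F_p)$-action into pieces $R\Psi_\chi$, run the Langlands--Kottwitz counting on each piece to extract $\phi_{r,\chi}$, identify its image under the Goldstein--Morris--Roche isomorphism with a Kottwitz function $k^M_{\mu^{1*}_\chi}$ for a Levi $M=M_\chi$, and deduce centrality. The anticipated ``technical heart'' --- matching the Oort--Tate/stratification data with affine Weyl combinatorics --- is indeed where the work lies, and in the paper this is the combinatorial identity ${\rm Adm}^G(\mathcal O_\nu)={\rm Adm}^M(\nu)$ together with the length comparison in Proposition~\ref{main_comb_prop}(e).

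There is, however, one genuine gap in your centrality argument. You write that the GMR isomorphisms ``decompose $\mathcal H(G_r,I^+_r)$ into a direct sum indexed by Bernstein components, each summand being isomorphic to an Iwahori--Hecke algebra of a smaller reductive group.'' This is not correct as stated. The Bernstein decomposition gives $\mathcal H(G_r,I^+_r)=\prod_{[\xi]} \mathcal H(G_r,I^+_r)\,e_{[\xi]}$ indexed by $W$-orbits of characters of $T(k_r)$, but the block $\mathcal H(G_r,I^+_r)\,e_{[\chi_r]}$ is strictly larger than $\mathcal H(G_r,I_r,\chi_r)$ --- it is only Morita equivalent to it. The GMR isomorphism $\Psi_{\breve\chi}$ applies to $\mathcal H(G_r,I_r,\chi_r)$ for a \emph{single} $\chi$, not to the whole block. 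So knowing that each $\phi_{r,\chi}$ lies in $\mathcal Z(G_r,I_r,\chi_r)$ does not by itself force $\phi_{r,1}=[I_r:I^+_r]^{-1}\sum_\chi \phi_{r,\chi}$ to commute with elements $1_{wt}$ that intertwine $V^{\rho_{\chi_r}}$ and $V^{\rho_{{}^w\!\chi_r}}$ for $\chi\neq {}^w\chi$ in the same orbit. The paper closes this gap (in the proof of Proposition~\ref{central_prop}) by checking directly that the scalar by which $\phi_{r,\chi}$ acts on $i^{G_r}_{B_r}(\widetilde\chi_r\,\eta)^{\rho_{\chi_r}}$ agrees with the scalar for $\phi_{r,{}^w\!\chi}$ on the $^w\!\chi$-side, then invoking a separation lemma. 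You should plan for this extra step.
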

(See section \ref{sum_up_sec} for the spectral characterization of $\phi_{r,1}$ and an explicit formula for it.)

The centrality of test functions facilitates the pseudo-stabilization of the counting points formula.  The test function $\phi_{r, 1} \in \mathcal Z(G_r, I^+_r)$  is characterized by its traces on depth-zero principal series representations, and a similar statement applies to its image under an appropriate base-change homomorphism
$$
b_r: \mathcal Z(G_r,I^+_r) \rightarrow \mathcal Z(G,I^+)
$$
which we discuss in section \ref{BC_sec}.   As for the spherical case \cite{Cl,Lab} and the parahoric case \cite{H09}, the present $b_r$ yields pairs of associated functions with matching (twisted) stable orbital integrals \cite {H10} (see Theorem \ref{fl} for a statement), and this plays a key role in the pseudo-stabilization, cf. subsection \ref{L_factor_sec}. In our case, the image $b_r(\phi_{r, 1})$ of  $\phi_{r, 1}$ has the following  nice spectral expression  (Cor. \ref{spectral_cor}). 

\begin{propo} \label{spectral_prop}
Write $f_{r, 1} := b_r(\phi_{r, 1}) \in \mathcal Z(G,I^+)$.  Let $(r_{\mu^*},V_{\mu^*})$ denote the irreducible representation  with extreme weight $\mu^*$ of the L-group $^L(G_{\mathbb Q_p})$.  Let $\Phi \in W_{\mathbb Q_p}$ denote a geometric Frobenius element and let $I_p \subset W_{\mathbb Q_p}$ denote the inertia subgroup.   Then for any irreducible smooth representation $\pi_p$ of $G(\mathbb Q_p)$ with Langlands parameter $\varphi'_{\pi_p}: W'_{\mathbb Q_p} \rightarrow \widehat{G}$,  we have
$$
{\rm tr} \, \pi_p(f_{r, 1} ) = {\rm dim}(\pi_p^{I^+}) \, p^{r \langle \rho, \mu^* \rangle} \, {\rm Tr}(r_{\mu^*} \circ \varphi'_{\pi_p}(\Phi^r) , \spa V_{\mu^*}^{I_p}).
$$
\end{propo}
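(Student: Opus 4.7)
The plan is to use the centrality of $\phi_{r,1}$ together with an explicit block-by-block identification via the Goldstein-Morris-Roche Hecke algebra isomorphisms, and then to invoke a base-change identity for the $\mu^*$-indexed Bernstein element.

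First, since $\phi_{r,1} \in \mathcal Z(G_r, I^+_r)$ by Theorem \ref{gamma1_test}, the construction of $b_r$ in section \ref{BC_sec} yields $f_{r,1} \in \mathcal Z(G, I^+)$. Consequently $\pi_p(f_{r,1})$ annihilates the orthogonal of $\pi_p^{I^+}$ and acts by a scalar $c(\pi_p)$ on $\pi_p^{I^+}$, so ${\rm tr}\, \pi_p(f_{r,1}) = {\rm dim}(\pi_p^{I^+}) \cdot c(\pi_p)$. The entire problem reduces to identifying the scalar $c(\pi_p)$ with the asserted Frobenius trace $p^{r\langle \rho,\mu^*\rangle}\, {\rm Tr}(r_{\mu^*}\circ \varphi'_{\pi_p}(\Phi^r), V_{\mu^*}^{I_p})$.

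Second, I would decompose the problem by Bernstein block. Any $\pi_p$ with $\pi_p^{I^+} \neq 0$ is of depth zero, and its block is indexed by a character $\chi$ of the tame quotient of $T(\mathbb Q_p)$. Under the Goldstein-Morris-Roche isomorphisms, the corresponding summand of $\mathcal H(G, I^+)$ becomes an Iwahori-Hecke algebra of a group of the same type, in which the trace theory of Bernstein elements on representations with nonzero Iwahori-fixed vectors is well understood. In particular, the explicit formula for $\phi_{r,1}$ referred to in section \ref{sum_up_sec} should identify $\phi_{r,1}$, on each block indexed by $\chi$, with (the image of) the Kottwitz function $k_{\mu^*, r}$ in the corresponding Iwahori-Hecke algebra on $G_r$; this is the natural $I^+$-analogue of the identification of $\phi_r(R\Psi_0)$ with $k_{\mu^*,r}$ in the $\mathcal A_0$ case.

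Third, I would apply the base-change compatibility. As in the spherical case of Kottwitz-Labesse and the parahoric case of Haines (cf. Theorem \ref{fl}), the map $b_r$ should satisfy the identity $b_r(k_{\mu^*,r}) = k_{\mu^*,1}$ block-by-block, so that $f_{r,1}$ becomes, on each depth-zero block, the normalized Bernstein element attached to $\mu^*$. The standard Bernstein-function character formula then computes its scalar action on $\pi_p^{I^+}$ as the value of the Satake parameter of the $\chi$-block at $\mu^*$, which via the Kazhdan-Lusztig dictionary equals $p^{r\langle \rho,\mu^*\rangle}\, {\rm Tr}(r_{\mu^*}\circ \varphi'_{\pi_p}(\Phi^r), V_{\mu^*}^{I_p})$: here the inertial character $\chi$ determines the inertia action on $V_{\mu^*}$, so $V_{\mu^*}^{I_p}$ captures exactly the weights contributing to the block, while the Frobenius eigenvalues of $\varphi'_{\pi_p}$ supply the $\Phi^r$-action.

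The main obstacle is verifying the block-wise base-change identity in the presence of the inertial character $\chi$ indexing each $I^+$-block. The classical Kottwitz-Labesse story is unramified and the Haines extension is parahoric-spherical; for $I^+$ one must check that the Goldstein-Morris-Roche isomorphism intertwines the $\sigma$-twisted base change on $G_r$ with ordinary base change on $G$, and that under this intertwining the Kottwitz function $k_{\mu^*, r}$ and its image really do correspond as required. Once this compatibility is in place, the displayed formula for ${\rm tr}\, \pi_p(f_{r,1})$ follows by combining the explicit formula for $\phi_{r,1}$ in section \ref{sum_up_sec} with the standard trace formula for Bernstein elements on depth-zero principal series.
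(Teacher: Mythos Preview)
Your overall plan---reduce to a scalar by centrality, decompose into depth-zero blocks, push through the Goldstein--Morris--Roche isomorphism, and use the base-change compatibility to identify that scalar with the Frobenius trace---is exactly the paper's route. However, there is a genuine gap in your second step that would cause the argument to fail as written.

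You assert that on the block indexed by $\chi$ the function $\phi_{r,1}$ (equivalently, $\phi_{r,\chi}$) is carried by the Hecke algebra isomorphism to the Kottwitz function $k_{\mu^*,r}$ for $G$. It is not. What Proposition~\ref{test_fcn_image} shows is that $\Psi_{\breve\chi^\varpi}$ sends $\phi_{r,\chi}$ to $q^{\langle\rho_B,\mu^*\rangle-\langle\rho_{B_M},\mu_\chi^{1*}\rangle}\, k^{M}_{\mu_\chi^{1*},r}$, a Kottwitz function for the \emph{Levi} $M=M_\chi$ determined by the level sets of $\chi$, and for the coweight $\mu_\chi^{1*}$ supported on the set $\mathcal O^{\rm triv}_\chi$ of indices $i$ with $\chi_i={\rm triv}$. (When $\mathcal O^{\rm triv}_\chi=\emptyset$ one gets zero.) This identification rests on the ``combinatorial miracle'' of Proposition~\ref{main_comb_prop}(e) and is not automatic; if the image were the full $k_{\mu^*,r}$ you would obtain the trace on all of $V_{\mu^*}$, not on $V_{\mu^*}^{I_p}$. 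The match between the subset $\mathcal O^{\rm triv}_\chi$ on the Hecke side and the subset of weights $-e_i$ of $V_{\mu^*}$ with $\chi\circ(-e_i)|_{\mathbb Z_p^\times}=1$ on the Galois side is precisely the computation in Lemma~\ref{spectral_lemma}, and it is what produces the inertia invariants.

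A second, smaller point: the identity ``$b_r(k_{\mu^*,r})=k_{\mu^*,1}$'' is not the right formulation and is not what is used. The base change $b_r$ is defined on the Bernstein variety by $\eta\mapsto\eta^r$; what one actually uses is Lemma~\ref{HA_comp_lemma}, which says that the scalar by which $\phi_r$ acts on $i^{G_r}_{B_r}(\widetilde\chi_r\eta_r)^{\rho_r}$ equals the scalar by which $b_r(\phi_r)$ acts on $i^G_B(\widetilde\chi\eta)^{\rho}$. Combined with the Levi identification above and the explicit formula (\ref{scalar}), this produces the term $\sum_{\nu\in W_\chi\mu_\chi^{1*}}\eta(\varpi^{r\nu})$, which via (\ref{LLC_comp_eq2}) is exactly ${\rm Tr}(\varphi'_{\pi_p}(\Phi^r),V_{\mu^*}^{I_p})$. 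Summing over $\chi$ then gives Corollary~\ref{spectral_cor}.
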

It is expected (cf. \cite{Rp0}) that semi-simple zeta functions can be expressed in terms of semi-simple automorphic $L$-functions.  By their very definition, the semi-simple local $L$-factors $L^{ss}(s, \pi_p, r_{\mu^*})$ involve expressions as on the right hand side above.  In section \ref{L_factor_sec} we use this and Theorem \ref{fl} to describe the semi-simple local factor $Z^{ss}_{\fp}(s, \A_{1,K^p})$ of $\A_1 = \A_{1,K^p}$,  when the situation is also ``simple'' in the sense of Kottwitz \cite{K92b}.

\begin{thm} \label{main_tr_thm}  Suppose $D$ is a division algebra, so that the Shimura variety $\A_1 = \A_{1,K^p}$ is proper over 
$\Spec \mathbb Z_p$ and has ``no endoscopy'' (cf. \cite{K92b}).  Let $n=d-1$ denote the relative dimension of  $\A_{1,K^p}$.  For every integer $r \geq 1$, the alternating sum of the semi-simple traces
$$
\sum_{i = 0}^{2n} (-1)^i \, {\rm Tr}^{ss}(\Phi_{\fp}^r \, , \, {\rm H}^i(\A_{1,K^p} \otimes_E \bar{E}_{\mathfrak p} , \bar{\QQ}_\ell))
$$
equals the trace
$$
{\rm Tr}( 1_{K^p} \otimes f_{r, 1}  \otimes  f_\infty \, , \, {\rm L}^2(G(\QQ) \, A_G(\RR)^\circ \backslash G(\mathbb A))).
$$
Here $A_G$ is the split component of the center of $G$ and $f_\infty$ is the function at the archimedean place defined by Kottwitz \cite{K92b}.  
\end{thm}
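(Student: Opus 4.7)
The plan is to follow the Langlands--Kottwitz--Kottwitz pattern of \cite{K92b}, but with the spherical test function there replaced by the central test function $\phi_{r,1}$ produced in Theorem \ref{gamma1_test}. First, because $D$ is a division algebra the moduli scheme $\A_{1,K^p}$ is proper over $\Spec \mathbb Z_p$, so the semi-simple Lefschetz number of the nearby cycle complex $R\Psi_1$ coincides with the alternating sum of the semi-simple traces of $\Phi_\fp^r$ on the $\bar{\QQ}_\ell$-cohomology of $\A_{1,K^p}\otimes_E \bar E_\fp$. Thus the theorem is really about the semi-simple Lefschetz number.

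Next, I would apply the counting points formula established for $\A_1$ earlier in the paper (section \ref{counting_pts_sec}) to rewrite
$$
{\rm Lef}^{ss}(\Phi^r_\fp, R\Psi_1) \;=\; \sum_{(\gamma_0;\gamma,\delta)} c(\gamma_0;\gamma,\delta)\, O_\gamma(1_{K^p})\, TO_{\delta\sigma}(\phi_{r,1}),
$$
where the sum runs over Kottwitz triples and $\phi_{r,1}\in \mathcal Z(G_r, I^+_r)$ is the test function determined in Theorem \ref{gamma1_test}. Now invoke Theorem \ref{fl}: the pair $(\phi_{r,1}, f_{r,1})$ consists of associated functions in the sense that for every norm $\gamma_0$ of $\delta$, the twisted stable orbital integral $STO_{\delta\sigma}(\phi_{r,1})$ matches the stable orbital integral $SO_{\gamma_0}(f_{r,1})$ on $G(\mathbb Q_p)$ (with the measure normalizations fixed by giving $I^+_r$, respectively $I^+$, volume one). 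Because $D$ is division, the group $G$ has ``no endoscopy'' in the sense of \cite{K92b}: every semisimple $\mathbb R$-elliptic $\gamma_0\in G(\mathbb Q)$ has trivial Kottwitz invariant, so stable orbital integrals on $G(\mathbb A_f)$ and ordinary orbital integrals coincide up to global volume factors, and the coefficient $c(\gamma_0;\gamma,\delta)$ collapses to the expected volume of $I(\mathbb Q)A_G(\mathbb R)^\circ \backslash I(\mathbb A_f)$.

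With these substitutions, the right-hand side of the counting points formula becomes a sum indexed only by stable conjugacy classes of $\mathbb R$-elliptic $\gamma_0\in G(\mathbb Q)$, of products $O_{\gamma_0}(1_{K^p}\otimes f_{r,1})\cdot O_{\gamma_0}(f_\infty)$, weighted by global volumes; here I use the formula of \cite{K92b} identifying Kottwitz's function $f_\infty$ with a pseudo-coefficient whose orbital integral on the elliptic set picks out exactly the Euler--Poincar\'e factor forced on us by the archimedean component at $h$. By the Arthur--Selberg trace formula (in its ``simple'' elliptic form, valid precisely because of the pseudo-coefficient nature of $f_\infty$ and the properness of $\A_1$), this sum equals
$$
{\rm Tr}\bigl(1_{K^p}\otimes f_{r,1}\otimes f_\infty \,\big|\, L^2(G(\mathbb Q)A_G(\mathbb R)^\circ \backslash G(\mathbb A))\bigr),
$$
which is the desired expression.

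The genuinely new ingredient compared to \cite{K92b} is the appearance of the non-spherical, non-parahoric test function $\phi_{r,1}$ at $p$, and the main obstacle is to ensure that the passage from the counting points formula to Kottwitz's pseudo-stabilization goes through verbatim with this test function in place of $1_{K_{p^r}\mu^*(p)K_{p^r}}$. The two crucial facts that make this work are (i) the centrality of $\phi_{r,1}$ (Theorem \ref{gamma1_test}), which lets one stably transfer its twisted orbital integrals, and (ii) the matching statement of Theorem \ref{fl}, which is the required base-change fundamental lemma in this setting; together they reduce the pseudo-stabilization to the bookkeeping already handled in \cite{K92b}, modulo a careful check that the measures on $G_r$, $G(\mathbb Q_p)$ and the twisted centralizers are all normalized compatibly with the conventions used to define $\phi_{r,1}$ and $f_{r,1}$.
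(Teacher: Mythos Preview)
Your proposal is correct and follows essentially the same route as the paper: identify the alternating sum with ${\rm Lef}^{ss}(\Phi^r_\fp,R\Psi_1)$ via properness, apply the counting points formula with $\phi_{r,1}$, use the ``no endoscopy'' hypothesis (trivial $\mathfrak K(I_0/\mathbb Q)$) and Theorem \ref{fl} to pseudo-stabilize into $\tau(G)\sum_{\gamma_0} SO_{\gamma_0}(f^p f_{r,1} f_\infty)$, and then invoke the simple trace formula. One small correction of emphasis: the simple trace formula applies here not because of properness or the pseudo-coefficient $f_\infty$ per se, but because $G_{\rm der}$ is $\mathbb Q$-anisotropic when $D$ is division (this is what the paper actually invokes), and the passage from stable to ordinary orbital integrals on the geometric side uses \cite{K92b}, Lemma~4.1, together with the triviality of $\mathfrak K(I_0/\mathbb Q)$.
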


In section \ref{L_factor_sec} we derive from the two previous facts the following result.

\begin{coro} \label{zeta_cor}
In the situation above, we have
$$Z^{ss}_{\fp}(s,\A_{1,K^p})=\prod_{\pi_f}L^{ss}(s- {n \over 2},
\pi_p,r_{\mu^*})^{a(\pi_f)\dim(\pi_f^K)} ,$$
where the product runs over all admissible representations $\pi_f$ 
of $G(\mathbb A_f)$. The integer $a(\pi_f)$ is given by
$$a(\pi_f)=\sum_{\pi_\infty\in \Pi_\infty} m(\pi_f\otimes \pi_\infty) {\rm tr}
\,\pi_\infty(f_\infty) ,$$
where $m(\pi_f\otimes \pi_\infty)$ is the multiplicity of 
$\pi_f\otimes \pi_\infty$ in ${\rm L}^2(G(\QQ)A_G(\RR)^0\backslash 
G(\mathbb A))$. Here $\Pi_\infty$ is the set of admissible representations 
of $G(\RR)$ having trivial central character and trivial infinitesimal
character.
\end{coro}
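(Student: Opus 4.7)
The strategy is to take the logarithm of both sides and compare power series in $p^{-s}$, identifying coefficients via the two ingredients already in hand (Theorem \ref{main_tr_thm} and Proposition \ref{spectral_prop}). By definition of the semi-simple zeta function of the proper smooth variety $\A_{1,K^p}\otimes_E \bar E_{\mathfrak p}$ one has
$$
\log Z^{ss}_{\fp}(s,\A_{1,K^p}) \;=\; \sum_{r\geq 1} \frac{p^{-rs}}{r} \sum_{i=0}^{2n} (-1)^i\, \mathrm{Tr}^{ss}\bigl(\Phi_{\fp}^r,\, \mathrm{H}^i(\A_{1,K^p}\otimes_E \bar E_{\mathfrak p},\bar\QQ_\ell)\bigr).
$$
For each fixed $r\geq 1$, Theorem \ref{main_tr_thm} rewrites the inner alternating sum as $\mathrm{Tr}(1_{K^p}\otimes f_{r,1}\otimes f_\infty,\, \mathrm{L}^2(G(\QQ)A_G(\RR)^\circ\backslash G(\mathbb A)))$. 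The plan is to substitute this into the series and then interchange the sums over $r$ and over automorphic representations.

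Decompose $\mathrm{L}^2 = \bigoplus_\pi m(\pi)\,\pi$ with $\pi = \pi^p\otimes \pi_p\otimes\pi_\infty$. Since $K = K^p \cdot I^+$ and $f_{r,1}$ is $I^+_r$-bi-invariant, the trace factors as a product of local traces, and $\mathrm{tr}\,\pi^p(1_{K^p}) = \dim((\pi^p)^{K^p})$, so $\dim((\pi^p)^{K^p})\,\dim(\pi_p^{I^+}) = \dim(\pi_f^K)$. Therefore
$$
\sum_{i}(-1)^i\,\mathrm{Tr}^{ss}(\Phi_{\fp}^r,\mathrm{H}^i) \;=\; \sum_{\pi} m(\pi)\, \dim(\pi_f^K)\, \frac{\mathrm{tr}\,\pi_p(f_{r,1})}{\dim(\pi_p^{I^+})}\, \mathrm{tr}\,\pi_\infty(f_\infty),
$$
where by convention the ratio is zero when $\pi_p^{I^+} = 0$. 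Only admissible $\pi_\infty\in\Pi_\infty$ contribute, by the defining property of $f_\infty$ recalled in \cite{K92b}.

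Now insert Proposition \ref{spectral_prop}: $\mathrm{tr}\,\pi_p(f_{r,1}) = \dim(\pi_p^{I^+})\, p^{r\langle\rho,\mu^*\rangle}\,\mathrm{Tr}(r_{\mu^*}\circ\varphi'_{\pi_p}(\Phi^r)\mid V_{\mu^*}^{I_p})$. Because $\mu = (1,0^{d-1})$ and $w_0$ preserves $\rho$ up to sign, one has $\langle \rho,\mu^*\rangle = \langle \rho,\mu\rangle = n/2$, so $p^{-rs}\cdot p^{r\langle\rho,\mu^*\rangle} = p^{-r(s-n/2)}$. Substituting and swapping the order of summation,
$$
\log Z^{ss}_{\fp}(s,\A_{1,K^p}) \;=\; \sum_\pi m(\pi)\,\dim(\pi_f^K)\,\mathrm{tr}\,\pi_\infty(f_\infty)\sum_{r\geq 1}\frac{p^{-r(s-n/2)}}{r}\mathrm{Tr}\bigl(r_{\mu^*}\circ\varphi'_{\pi_p}(\Phi^r)\mid V_{\mu^*}^{I_p}\bigr).
$$
The inner sum is precisely $\log L^{ss}(s-n/2,\pi_p,r_{\mu^*})$ by the definition of the semi-simple local $L$-factor. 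Collecting terms according to the finite part $\pi_f$ and carrying out the sum over $\pi_\infty\in\Pi_\infty$ produces the integer $a(\pi_f) = \sum_{\pi_\infty} m(\pi_f\otimes\pi_\infty)\,\mathrm{tr}\,\pi_\infty(f_\infty)$, whence
$$
\log Z^{ss}_{\fp}(s,\A_{1,K^p}) \;=\; \sum_{\pi_f} a(\pi_f)\,\dim(\pi_f^K)\,\log L^{ss}(s-n/2,\pi_p,r_{\mu^*}),
$$
and exponentiating yields the stated formula.

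The main obstacle is the formal legitimacy of interchanging the sum over $r$ with the spectral sum over $\pi$: one must check that for fixed $K$ only finitely many $\pi_f$ satisfy $\dim(\pi_f^K)\neq 0$ among those with nonzero contribution, and that the resulting double series converges absolutely in a right half-plane (equivalently, that the eigenvalues of $r_{\mu^*}\circ\varphi'_{\pi_p}(\Phi)$ on $V_{\mu^*}^{I_p}$ are uniformly bounded as $\pi_f$ varies in the finite set of contributing representations). Once this finiteness is in place, the rearrangement and the final exponentiation are purely formal.
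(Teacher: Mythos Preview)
Your proof is correct and follows essentially the same approach as the paper: take logarithms, apply Theorem~\ref{main_tr_thm} to each coefficient, factor the automorphic trace, insert the spectral formula (Proposition~\ref{spectral_prop} / Corollary~\ref{spectral_cor}) to recognize $\log L^{ss}(s-n/2,\pi_p,r_{\mu^*})$, and regroup by $\pi_f$. Your intermediate device of writing $\dim(\pi_f^K)\cdot\frac{\mathrm{tr}\,\pi_p(f_{r,1})}{\dim(\pi_p^{I^+})}$ is a slightly roundabout bookkeeping choice (the paper simply keeps $\mathrm{tr}\,\pi_f^p(f^p)\cdot\mathrm{tr}\,\pi_p(f_{r,1})$ and combines the dimension factors after applying the spectral identity), but it is harmless; and your caution about the sum interchange is well-placed---the paper handles this implicitly via the remark that only finitely many $\pi_f$ have $a(\pi_f)\dim(\pi_f^K)\neq 0$.
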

We refer to  Kottwitz's  paper \cite{K92b} for further discussion
of the integer $a(\pi_f)$. Note that in the above product there are only a 
finite number of representations $\pi_f$  of $G(\mathbb A_f)$ such that 
$a(\pi_f)\dim(\pi_f^K)\not=0$. 

\smallskip

Of course, this corollary is in principle a special case of the results of Harris and Taylor in \cite{HT} which are valid for any level structure. However, our emphasis here is on the explicit determination of test functions and their structure. To be more precise, let us  return to the general Drinfeld case, where $D$ is not necessarily a division algebra.    By the compatibility of the semi-simple trace with the finite morphism $\pi$ from $\A_1$ to $\A_0$, we have ${\rm Lef}^{ss}(\Phi^r_{\fp}, R\Psi_1) = {\rm Lef}^{ss}(\Phi^r_{\fp}, \pi_*R\Psi_1)$, cf. \cite{HN}.  In terms of test functions this means 
$$
\phi_r(R\Psi_1) = [I_r:I^+_r]^{-1} \, \phi_r(\pi_*R\Psi_1).
$$
(The factor $[I_r:I^+_r]$ appears since we use $dx_{I^+_r}$ to normalize $\phi_r(R\Psi_1)$, and $dx_{I_r}$ to normalize $\phi_r(\pi_*R\Psi_1)$.)  Thus to establish Theorem \ref{gamma1_test},  it is enough to study $\pi_*R\Psi_1$ on $\A_0 \otimes \mathbb F_p$.  We do this  by decomposing $\pi_*R\Psi_1$ into ``isotypical components'', as follows.

The  covering $\pi$ is finite flat of degree $(p-1)^d$, and its generic fiber $\pi_\eta$ is a connected\footnote{See Corollary \ref{Galoiscov}.} \'etale Galois cover with group $T(\mathbb F_p)$, where $T$ denotes the standard split torus $T = \mathbb G_m^d$.  Following section \ref{A1_sec},  we obtain a decomposition into  $T(\mF_p)$-eigenspaces
\begin{equation*}
\pi_{\eta *}(\bar{\mQ}_\ell)=\bigoplus_{\chi: T(\mF_p)\to \bar{\mQ}_\ell^\times}\bar{\mQ}_{\ell, \chi}\  , 
\end{equation*}
and, setting $R\Psi_\chi = R\Psi(\bar{\mQ}_{\ell, \chi})$,  we also get
\begin{equation*}
\pi_*R\Psi_1=\bigoplus_{\chi: T(\mF_p)\to \bar{\mQ}_\ell^\times}R\Psi_\chi\  .
\end{equation*}
In Theorem \ref{main_geom_thm} we describe the restriction of $R\Psi_\chi$ to any KR-stratum $\A_{0,S}$ (in fact, in Theorem \ref{main_geom_thm} we only describe the inertia invariants in the cohomology sheaves of this restriction, but this is sufficient for our purpose).  Here for a non-empty subset $S \subset \mathbb Z/d$, the stratum $\A_{0,S}$ is the locus of points $(A_\bullet,\lambda, i, \eta) \in \A_0 \otimes \mathbb F_p$ such that the order $p$ group scheme $G_i = {\rm ker}(X_{i-1} \rightarrow X_i)$, defined by (\ref{chainpdiv}), is \'etale if and only 
if $i \notin S$.  
In Corollary \ref{Galoiscov}, we show that $\pi$ restricts over $\A_{0,S}$ to give an \'etale Galois covering
$$
\pi_S: (\A_{1,S})_{\rm red} \rightarrow \A_{0,S} \ ,
$$
whose group of deck transformations is the quotient torus $T^S(\mathbb F_p) = T(\mathbb F_p)/T_S(\mathbb F_p)$.  Here $T_S = \prod_{i \in S} \mathbb G_m$ and $T^S = \prod_{i \notin S} \mathbb G_m$.    

In Theorem \ref{monodr} (which is perhaps of independent interest) we show following the method of 
Strauch \cite{Str} that the monodromy of $\pi_S$ is as large as possible, in other words, that for any $x \in \A_{0,S}(\bar{\mF}_p)$ the induced homomorphism
$$
\pi_1(\A_{0,S},x) \rightarrow T^S(\mF_p)
$$
is {\em surjective}.  This is one ingredient in the proof of the following theorem (Theorem \ref{main_geom_thm}).   Let $i_S: \A_{0,S} \rightarrow \A_0 \otimes \mathbb F_p$ denote the locally closed embedding of a KR-stratum.  

\begin{thm} \label{i*RPsi1}   The action of the inertia group $I_p$ on the $i$-th cohomology sheaf of $i^*_S(R\Psi\chi)$ is through a finite quotient, and 
its  sheaf of invariants $\big(\mathcal H^i\big(i_S^*(R\Psi_\chi)\big)\big)^{I_p}$ vanishes, unless $\chi$ factors through a character $\bar{\chi}: T^S(\mathbb F_p) \rightarrow \bar{\QQ}_\ell^\times$.  In that case, there is a canonical isomorphism, 
$$
\big(\mathcal H^i\big(i_S^*(R\Psi_\chi)\big)\big)^{I_p} \cong (\pi_{S*}(\bar{\QQ}_\ell))_{\bar{\chi}} \otimes \mathcal H^i\big(i^*_S(R\Psi_0)\big).
$$
Here the sheaf $(\pi_{S*}(\bar{\QQ}_\ell))_{\bar{\chi}}$ is a rank one local system (with continuous compatible Galois action) defined on each connected component of $\A_{0,S}$ by the composition
$$
\pi_1(\A_{0, S}\otimes \bar{\mF}_p, x)\twoheadrightarrow T^S(\mF_p)\xto{\bar \chi}\bar{\mQ}_\ell^\times \ .
$$
By $R\Psi_0=R\Psi^{\mathcal A_0}$ we denote  the complex of nearby cycles for $\mathcal A_0$. 
\end{thm}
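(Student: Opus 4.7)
The approach is to work étale-locally on $\A_0$ near a geometric point of $\A_{0,S}$, using Oort-Tate theory \cite{OT} to give a concrete description of $\pi$. Each $G_i$ is étale-locally presented by an Oort-Tate parameter $a_i$, and its scheme of Oort-Tate generators has the form $\Spec \mathcal O_{\A_0}[x_i]/(x_i^{p-1}-a_i)$; hence étale-locally $\pi$ is the fiber product over $\A_0$ of the $d$ individual Oort-Tate covers $\pi_i$. Invoking the local model of $\A_0$ in the Drinfeld case at Iwahori level, for $i \in S$ the parameters $a_i$ can be chosen as part of a regular system of étale-local parameters on $\A_0$, with pairwise transverse zero divisors $\{a_i = 0\}$ cutting out the closures of the loci where $G_i$ is multiplicative; for $i \notin S$, $a_i$ is a unit near $\A_{0,S}$. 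Consequently $\pi_i$ is an étale $\mF_p^\times$-torsor when $i \notin S$, and a tame Kummer cover totally ramified along $\{a_i = 0\}$ when $i \in S$.

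Under this product decomposition, the $\chi$-eigenspace of $\pi_{\eta*}\bar{\mQ}_\ell$ for $\chi = (\chi_i)$ a character of $T(\mF_p) = \prod_i \mF_p^\times$ is the external tensor product of the Kummer sheaves $\mathcal L_{\chi_i}(a_i)$. Since the $a_i$ for $i \in S$ are independent étale-local parameters, the formation of $R\Psi$ and of $i_S^*$ commutes with this tensor factorization via a Künneth-type analysis of tame nearby cycles on the polydisk. Every $\mathcal L_{\chi_i}(a_i)$ is tamely ramified, so $I_p$ acts on $\mathcal H^i(i_S^* R\Psi_\chi)$ through a finite quotient. For $i \notin S$, $\mathcal L_{\chi_i}(a_i)$ is lisse near $\A_{0,S}$ and contributes only its restriction. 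For $i \in S$, the nearby cycles contribution along $\{a_i = 0\}$ is a tame rank-one character sheaf on $\A_{0,S}$ on which the inertia $I_i$ of $\{a_i=0\}$ acts by a non-trivial tame character when $\chi_i \neq 1$ and trivially otherwise. Since the various $I_i$ ($i \in S$) are independent in $I_p$, a non-trivial $\chi_i$ for some $i \in S$ kills the $I_p$-invariants of the tensor product, proving the vanishing assertion.

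When $\chi$ descends to $\bar\chi\colon T^S(\mF_p) \to \bar{\mQ}_\ell^\times$, all $i \in S$ factors contribute trivially to the inertia invariants, while the $i \notin S$ factors assemble into a rank-one local system on $\A_{0,S}$ whose monodromy is the composition
\[
\pi_1(\A_{0,S}\otimes \bar{\mF}_p, x) \twoheadrightarrow T^S(\mF_p) \xto{\bar\chi} \bar{\mQ}_\ell^\times,
\]
whose first arrow is surjective by Theorem \ref{monodr}. This matches the intrinsic description of $(\pi_{S*}\bar{\mQ}_\ell)_{\bar\chi}$ on each connected component of $\A_{0,S}$. The remaining nearby cycles contribution from $\A_0$ itself is $\mathcal H^i(i_S^* R\Psi_0)$, yielding the claimed isomorphism. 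Canonicity reflects the canonical character-by-character construction of $\mathcal L_{\chi_i}(a_i)$; the ambiguity of the Oort-Tate parameters is absorbed into the $T^S(\mF_p)$-action that has already been quotiented out.

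The main technical obstacle is the first step: justifying the étale-local product structure, i.e.\ showing that for $i \in S$ the Oort-Tate parameters $a_i$ can be chosen as part of a regular system of étale-local parameters on $\A_0$ with pairwise transverse zero divisors, so that the various ramification directions are independent in the local model. One must in particular rule out that the relation $a_ib_i = c\cdot p$ forces distinct $a_i$ to coincide up to units with the single direction $\{p=0\}$. Once this local-model statement is in hand, the Künneth factorization of tame nearby cycles together with Theorem \ref{monodr} reduces the rest of the argument to an elementary calculation on the polydisk.
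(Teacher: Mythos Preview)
Your approach is correct and the ``main technical obstacle'' you flag is exactly Lemma~\ref{local_param} (and its consequence Corollary~\ref{locnormfor}) in the paper, so that step is not an obstacle at all.  However, the paper organizes the argument quite differently, and the difference is worth noting.

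Rather than factoring $\bar{\mQ}_{\ell,\chi}$ \'etale-locally into one-variable Kummer sheaves and invoking a K\"unneth-type result for tame nearby cycles on a polydisk, the paper works globally on $\A_1$.  From the SGA formulas for semi-stable reduction with all branch multiplicities equal to $p-1$, one gets directly
\[
R\Psi_1 \;=\; R^0\Psi_1 \otimes \pi^*(R\Psi_0)
\]
as complexes on $\A_1\otimes\mF_p$ (equation~(\ref{preproj})).  Pushing forward by the finite morphism $\pi$ and applying the projection formula gives $\pi_*R\Psi_1 = \pi_*(R^0\Psi_1)\otimes R\Psi_0$.  The only remaining input is the one-line Lemma~\ref{const}: the inertia group acts on the stalks of $R^0\Psi_1$ through the regular representation of $\mu_{p-1}$, so $(R^0\Psi_1)^{I_p}=\bar{\mQ}_\ell$.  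Restricting to $\A_{0,S}$ and taking $I_p$-invariants then yields
\[
\big(\mathcal H^i\big(i_S^*\pi_*R\Psi_1\big)\big)^{I_p} \;=\; \pi_{S*}(\bar{\mQ}_\ell)\otimes i_S^*(R^i\Psi_0),
\]
and both the vanishing and the isomorphism follow by comparing the $T(\mF_p)$-eigenspace decompositions of the two sides, since the action on the right factors through $T^S(\mF_p)$.

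What this buys over your route: the paper never needs a K\"unneth statement for nearby cycles.  The entire inertia action and the entire deck-group action live on the single rank-$(p-1)$ sheaf $R^0\Psi_1$, and the factor $R\Psi_0$ is inert for both.  Your approach, by contrast, distributes the inertia action across the $|S|$ ramified Kummer factors and then has to argue that their inertia subgroups are independent inside $I_p$; this is true (it follows from the local normal form), but it is exactly the content that the paper's global projection-formula argument sidesteps.  Your approach does have the virtue of making the mechanism completely explicit variable-by-variable, which may be pedagogically clearer.
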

This sheaf-theoretic product decomposition is mirrored by the corresponding product decomposition of the  test function $\phi_{r, \chi}=\phi_r(R\Psi_\chi)$:  on the elements $tw \in T(k_r) \rtimes \widetilde{W}$ indexing $I^+_r \backslash G_r / I^+_r$ we set
$$
\phi_{r, \chi}(tw) := \delta^1(w^{-1},\chi) \, \chi_r^{-1}(t) \, k_{\mu^*,r} (w).
$$
Here $\delta^1(w^{-1},\chi) \in \{0,1\}$ is defined in (\ref{delta1_defn}), and $\chi_r$ denotes the composition of the norm homomorphism $N_r: T(k_r) \rightarrow T(\mathbb F_p)$ and $\chi: T(\mathbb F_p) \rightarrow \bar{\QQ}_\ell^\times$.  The factor $\delta^1(w^{-1}, \chi) \, \chi_r^{-1}(t)$ comes directly from $(\pi_{S*}(\bar{\QQ}_\ell))_{\bar{\chi}}$ (see Lemmas \ref{sommes_trig} and \ref{t_g_lemma}).   In section \ref{counting_pts_sec} we prove that this function really is a test function for $R\Psi_\chi$. 

In fact, a stronger {\em point-by-point} statement is true. To a point $x\in \mathcal A_0(k_r)$ there is canonically associated an element  $w=w_x\in \widetilde W$  which lies in the $\mu$-{\em admissible set} ${\rm Adm}^G(\mu)$ and which corresponds to the unique  non-empty subset 
$S(w)$ of $\mZ/d$ with $x\in \mathcal A_{0, S(w)}$ (cf. Proposition \ref{critad}). Furthermore, associated to $x$ there is an element $t_x\in T^{S(w)}(\mF_p)$ which measures the action of the Frobenius $\Phi^r_\fp$ on the fiber of $\pi$ over $x$, cf.\ (\ref{t_x_def_eqn}).  The following is Theorem \ref{chi_tr_Fr}.  
\begin{thm} \label{tr_Fr_intro}
For $x\in \mathcal A_0(k_r)$, 
$${\rm Tr}^{ss}(\Phi^r_{\fp}, (R\Psi_{\chi})_x) =\phi_{r, \chi}(t^{-1} w^{-1}_x) ,
$$
where $t\in T(k_r)$ is any element such that $ N_r(t)\in T(\mF_p)$ projects to $t_x$. 
\end{thm}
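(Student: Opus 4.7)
The strategy is to combine the sheaf-theoretic factorization of Theorem~\ref{i*RPsi1} with the known point-by-point formula for $R\Psi_0$. Set $S = S(w_x)$, so by Proposition~\ref{critad} we have $x \in \A_{0,S}(k_r)$. Unwinding the definition of semi-simple trace along the stratum,
\[
{\rm Tr}^{ss}(\Phi^r_\fp, (R\Psi_\chi)_x) \;=\; \sum_i (-1)^i \, {\rm Tr}\bigl(\Phi^r_\fp, \, (\mathcal H^i i_S^* R\Psi_\chi)^{I_p}_x\bigr).
\]
Theorem~\ref{i*RPsi1} asserts that every cohomology sheaf of $i_S^* R\Psi_\chi$ has vanishing $I_p$-invariants unless $\chi$ factors through a character $\bar\chi$ of $T^S(\mF_p)$, and this vanishing condition is precisely the vanishing of the factor $\delta^1((w_x^{-1})^{-1}, \chi) = \delta^1(w_x, \chi)$ appearing in $\phi_{r,\chi}(t^{-1} w_x^{-1})$. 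Thus both sides vanish in that case, and from now on I may assume $\chi = \bar\chi \circ \mathrm{pr}$.

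In the surviving case, apply the canonical isomorphism of Theorem~\ref{i*RPsi1} at the stalk over $x$:
\[
(\mathcal H^i i_S^* R\Psi_\chi)^{I_p}_x \;\cong\; ((\pi_{S*}\bar{\QQ}_\ell)_{\bar\chi})_x \otimes (\mathcal H^i i_S^* R\Psi_0)^{I_p}_x.
\]
Because $(\pi_{S*}\bar{\QQ}_\ell)_{\bar\chi}$ is lisse on an $\mF_p$-scheme, $I_p$ acts trivially on its stalk, and the Frobenius action on the tensor product splits as the tensor of the Frobenius actions. Taking alternating sums of traces over $i$ yields
\[
{\rm Tr}^{ss}(\Phi^r_\fp, (R\Psi_\chi)_x) \;=\; {\rm Tr}\bigl(\Phi^r_\fp, \, ((\pi_{S*}\bar{\QQ}_\ell)_{\bar\chi})_x\bigr) \cdot {\rm Tr}^{ss}(\Phi^r_\fp, (R\Psi_0)_x).
\]
For the first factor, the element $t_x \in T^S(\mF_p)$ is defined in (\ref{t_x_def_eqn}) so that Frobenius acts on the $T^S(\mF_p)$-torsor fiber $\pi_S^{-1}(x)$ by translation by $t_x$; hence it acts on the $\bar\chi$-eigenline of the stalk as the scalar $\bar\chi(t_x)$. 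Since $\chi_r = \chi \circ N_r$ and $N_r(t)$ projects to $t_x$ in $T^S(\mF_p)$, this scalar equals $\chi_r(t) = \chi_r^{-1}(t^{-1})$. For the second factor, I invoke the corresponding point-by-point identity ${\rm Tr}^{ss}(\Phi^r_\fp, (R\Psi_0)_x) = k_{\mu^*, r}(w_x^{-1})$, which is the pointwise refinement of the identification $\phi_r(R\Psi_0) = k_{\mu^*,r}$ known from \cite{HN, H05}. Multiplying the two factors and comparing with the definition of $\phi_{r,\chi}$ produces exactly $\phi_{r,\chi}(t^{-1} w_x^{-1})$.

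The point requiring the most care is the character-trace computation on the local system: I must verify that Frobenius really acts on $((\pi_{S*}\bar{\QQ}_\ell)_{\bar\chi})_x$ by $\bar\chi(t_x)$, with the normalization consistent with the factor $\chi_r^{-1}(t^{-1})$ in the definition of $\phi_{r,\chi}$. This requires pinning down the conventions for the torsor action of $T^S(\mF_p)$ on $\pi_S^{-1}(x)$, the direction of translation defining $t_x$, and the norm compatibility $\bar\chi \circ \mathrm{pr} \circ N_r = \chi_r$. A secondary concern is that the isomorphism of Theorem~\ref{i*RPsi1} must be Frobenius-equivariant on stalks in order to justify the tensor factorization of traces; this should follow from its construction but has to be made explicit.
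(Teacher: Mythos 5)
Your proof is correct and follows essentially the same route as the paper's own argument for Theorem \ref{chi_tr_Fr}: the factorization of semi-simple traces supplied by Theorem \ref{main_geom_thm}~(iv), combined with Lemma \ref{sommes_trig} for the trace of Frobenius on the $\bar\chi$-eigenline and Proposition \ref{A_0_tr_Fr} for the $R\Psi_0$ factor. The normalization point you flag at the end is precisely what the paper settles via Lemma \ref{sommes_trig} (citing Deligne's formalism of trigonometric sums) together with the statement in Theorem \ref{main_geom_thm}~(iii) that the identification is an equality of sheaves with continuous $\Gal(\bar{\mQ}_p/\mQ_p)$-action, which gives the Frobenius-equivariance you were worried about.
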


By its very construction $\phi_{r,\chi}$ belongs to the Hecke algebra $\mathcal H(G_r,I_r, \chi_r)$  (see section \ref{HA_isom_sec}).  The final step is to use the Hecke algebra isomorphisms of Goldstein, Morris, and  Roche (Theorem \ref{GMR}) to prove that $\phi_{r, \chi}$ lies in the center  $\mathcal Z(G_r, I_r, \chi_r)$ of $\mathcal H(G_r,I_r, \chi_r)$, and to identify it explicitly as an element of the Bernstein center of $G_r$.  
In our setting,  the Hecke algebra isomorphisms  send $\mathcal H(G_r, I_r, \chi_r)$ to the Iwahori Hecke algebra $\mathcal H(M_r, I_{M_r})$ for a certain semistandard Levi subgroup $M = M_\chi$ that depends on $\chi$ via the decomposition of $\{1,\ldots, d\}$ defined by $\chi$ (section \ref{deltadef}).  In fact,  there are many Hecke algebra isomorphisms $\Psi_{\breve \chi}$, each determined by the choice of an extension of $\chi_r$ to a smooth character $\breve{\chi}_r: N_\chi(L_r) \rightarrow \bar{\QQ}_\ell^\times$, where $N_\chi$ denotes the $\chi$-fixer in the normalizer $N$ of the standard split torus $T$ in $G$ (cf.\  Lemma \ref{extn_lemma} and Theorem \ref{GMR}).  
There is a particular choice of a Hecke algebra isomorphism $\Psi_{\breve{\chi}^\varpi}$ which is most useful here (see Remark \ref{varpi_can_rem}), and which is used in the following proposition (Prop. \ref{test_fcn_image}). This proposition exhibits  a certain coherency between test functions for  $G$ and for  its Levi subgroups, and is a key ingredient in the proof of Proposition \ref{spectral_prop}.

\begin{propo}  Let $M = M_\chi$ as above.  Let $\varpi = p$, viewed as a uniformizer in $L_r$.  The Hecke algebra isomorphism
$$
\Psi_{\breve{\chi}^\varpi}: \mathcal H(G_r,I_r, \chi_r) ~ \widetilde{\rightarrow} ~ \mathcal H(M_{ r}, I_{M_r})
$$
takes the function $\phi_{r, \chi}$ to a power of $q(=p^r)$ times a particular Kottwitz function $k^M_{\mu^{1*}_\chi}$ in $\mathcal Z(M_r, I_{M_r})$.  In particular, $\phi_{r, \chi}$ is a function in the center $\mathcal Z(G_r,I_r, \chi_r)$ and its traces on all irreducible smooth representations $\pi_p$ can be understood explicitly in terms of the Bernstein decomposition.
\end{propo}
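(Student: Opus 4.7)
The plan is to compute $\Psi_{\breve{\chi}^\varpi}(\phi_{r,\chi})$ directly by evaluating it on the standard basis of $\mathcal{H}(M_r, I_{M_r})$ indexed by the extended affine Weyl group $\widetilde W_M$, and then to recognize the answer as a Kottwitz function of $M$ up to a power of $q$. Centrality and the spectral description will then follow formally from the fact that $\Psi_{\breve\chi^\varpi}$ is an algebra isomorphism and from the standard properties of Bernstein/Kottwitz functions on the Iwahori Hecke algebra of $M_r$.

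First I would exploit the support condition. By construction $\phi_{r,\chi}(tw) \neq 0$ forces $\delta^1(w^{-1},\chi)=1$, which by the definition recalled in (\ref{delta1_defn}) means precisely that $w$ stabilizes $\chi$ under the natural action on characters of $T(\mathbb{F}_p)$; equivalently, $w$ lies in the extended affine Weyl group $\widetilde W_M$ of the $\chi$-fixer Levi $M = M_\chi$. Using the explicit formula (Theorem \ref{GMR}) for $\Psi_{\breve\chi^\varpi}$ as an appropriately normalized twisted restriction, this support restriction immediately transfers $\phi_{r,\chi}$ to a function on $M_r$ supported on the $I_{M_r}$-double cosets indexed by $\widetilde W_M$. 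The factor $\chi_r^{-1}(t)$ in $\phi_{r,\chi}(tw)$ is, by design, canceled by the twist coming from $\breve\chi^\varpi$, which was chosen to extend $\chi_r$ to $N_\chi(L_r)$ in the unique way that is trivial on $\varpi$-powers of cocharacters lying in the lattice of $M$. What remains on the $M$-side is simply $k_{\mu^*,r}(w)$ for $w \in \widetilde W_M$, multiplied by the global volume factor from the normalization of the GMR isomorphism, which accounts for the power of $q$ in the statement.

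Second, I need to recognize the restriction of the $G$-Bernstein function $z_{\mu^*,r}$ to $\widetilde W_M$ as the $M$-Bernstein function $z^M_{\mu^{1*}_\chi,r}$. For this I would use the basic compatibility of Bernstein functions under Levi restriction: the constant term of $z_{\mu^*,r}$ along $M$ is a sum $\sum_{\nu} z^M_{\nu,r}$ over $M$-dominant cocharacters $\nu$ of $T$ in the Weyl orbit of $\mu^*$. The $\chi$-condition on the support selects a unique orbit representative — namely the $M$-dominant cocharacter $\mu^{1*}_\chi$ determined by the decomposition of $\{1,\dots,d\}$ attached to $\chi$ in section \ref{deltadef}. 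Combined with the normalization of Kottwitz functions $k^M_\nu = q^{\langle \rho_M, \nu\rangle} z^M_{\nu,r}$, this yields the claimed equality $\Psi_{\breve\chi^\varpi}(\phi_{r,\chi}) = q^{N(\chi)} \, k^M_{\mu^{1*}_\chi}$ for an explicit exponent $N(\chi)$ that matches the difference $\langle \rho, \mu^*\rangle - \langle \rho_M, \mu^{1*}_\chi\rangle$ plus the dimension contributions from the GMR normalization.

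Finally, centrality and the spectral statement are essentially formal. The Kottwitz function $k^M_{\mu^{1*}_\chi}$ belongs to $\mathcal{Z}(M_r, I_{M_r})$ by its very construction from a Bernstein element, and since $\Psi_{\breve\chi^\varpi}$ is an isomorphism of algebras it carries the center to the center, so $\phi_{r,\chi} \in \mathcal{Z}(G_r, I_r, \chi_r)$. For the spectral description, any irreducible $\pi_p$ with $\pi_p^{I^+_r, \chi_r} \neq 0$ has inertial support lying in the Bernstein component associated to $(M, \breve\chi)$, and the trace of $\phi_{r,\chi}$ on $\pi_p$ matches the trace of the $M$-Kottwitz function on the corresponding Jacquet-type module of $\pi_p$, which is computed by the standard Bernstein-center formula in terms of the Satake parameters. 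The main obstacle in this plan is the careful bookkeeping in the second step: identifying $\mu^{1*}_\chi$ as the correct orbit representative and tracking every normalization factor (volume factors in the definition of $\Psi_{\breve\chi^\varpi}$, the half-sum $\langle \rho, \mu^*\rangle$ in $k_{\mu^*,r}$, and the analogous $\langle \rho_M, \mu^{1*}_\chi\rangle$ on the $M$-side) so that the single power of $q$ appearing in the final formula is correct.
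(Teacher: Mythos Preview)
Your outline has the right shape, but the first two steps contain real errors.

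First, you misread $\delta^1(w^{-1},\chi)=1$. By (\ref{delta1_defn}) this says $\chi_j = {\rm triv}$ for every $j \in S(w^{-1})$, i.e.\ $S(w^{-1}) \subseteq \mathcal O^{\rm triv}_\chi$, the \emph{trivial} level set of $\chi$. It does \emph{not} merely say that $w$ stabilizes $\chi$: for $w \in {\rm Adm}(\mu_0)^{-1}$, the condition $w \in \widetilde W_M$ allows $S(w^{-1})$ to lie in \emph{any} block $\mathcal O_i$ of the decomposition (\ref{decchi}), whereas $\delta^1$ singles out the trivial block. This distinction is exactly what selects the unique $W_M$-orbit $W_M\mu^{1*}_\chi$ inside $W\mu^*$; it is not a separate ``$\chi$-condition'' to be applied later. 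With your reading, the support of $\Psi_{\breve\chi^\varpi}(\phi_{r,\chi})$ would be the disjoint union $\coprod_i {\rm Adm}^M(\mu^*_i)$ over all blocks, and you could not land on a single Kottwitz function.

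Second, the ``constant term / Levi restriction of Bernstein functions'' framework is not the right tool. The map $\Psi_{\breve\chi^\varpi}$ is not a constant-term map: by Theorem \ref{GMR}(2) it sends $q^{-\ell(w)/2}[I_r n_{w^{-1}} I_r]_{\breve\chi_r}$ to $q^{-\ell^M(w)/2}[I_{M_r} n_{w^{-1}} I_{M_r}]$ for each $w \in \widetilde W_M$, so the computation is term by term on the basis. Once the correct support ${\rm Adm}^G(\mathcal O^{\rm triv}_\chi) = {\rm Adm}^M(\mu^1_\chi)$ is identified (Proposition \ref{main_comb_prop}(b)), the crucial step is the combinatorial identity of Proposition \ref{main_comb_prop}(e), namely $k^G_{\mu_0}|_{{\rm Adm}^G(\mathcal O_\nu)} = k^M_\nu$, together with the length relation (\ref{ell_diff}); these match the values of the $G$-Kottwitz function on this support with those of the $M$-Kottwitz function and produce the single power $q^{\langle\rho_B,\mu^*\rangle - \langle\rho_{B_M},\mu^{1*}_\chi\rangle}$. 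This value identity is specific to the Drinfeld case (the paper calls it a ``combinatorial miracle'') and is not a general Levi-compatibility statement for Bernstein functions. Your final paragraph --- that centrality and the spectral description follow formally once the identification is made --- is correct.
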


Let us remark that this proposition relies in a crucial way on Proposition \ref{main_comb_prop} (e), a combinatorial miracle which is perhaps special to the Drinfeld case. One might still hope that similar results to the ones obtained here are valid for other Shimura varieties with a level structure at $p$ given by the pro-unipotent radical of an Iwahori subgroup. 

Our results bear a certain resemblance  to the results of Harris and Taylor in \cite{HT2}. They also use Oort-Tate theory to analyze a Galois covering of  $\A_0$, and apply this to the calculation of the complex of nearby cycles, by decomposing it according to the action of the covering group.    Their covering is of degree $p-1$ and is dominated by $\A_1$. However, the analysis of test functions does not play a role in their paper. We also point out the papers \cite{Sc1, Sc2} by P.\ Scholze, in which he shows that the Langlands-Kottwitz counting method applies quite generally to the determination of the local factor of the semi-simple zeta function in the Drinfeld case, for an {\em arbitrary} level structure at $p$. More precisely, in the case of the elliptic moduli scheme,  Scholze \cite{Sc1}  gives an explicit test function for any $\Gamma(p^n)$-level structure, and he also gives a pointwise formula analogous to Theorem \ref{tr_Fr_intro} above.  In  the general Drinfeld case,  but assuming $D$ is a division algebra,  Scholze \cite{Sc2} constructs a test function for arbitrarily deep level structures; however, to obtain an explicit expression of this function in the general case  seems hopeless, and also  the relation between his  test functions and the geometry seems to be less close, since an analogue of Theorem \ref{tr_Fr_intro} is missing. It would be interesting to relate Scholze's approach to ours, and perhaps deduce {\em a posteriori} the explicit expression  of our test functions from his general results. 

We now explain the lay-out of the paper. 

In \S\ref{notation_sec} we list our global notational conventions. In  \S\ref{moduli_sec} we define the moduli schemes $\A_0$ and $\A_1$, and prove basic results concerning their structure, in particular that they have semi-stable reduction and that the morphism $\pi:\A_1\to\A_0$ is finite and flat. In \S\ref{monodromy_sec} we prove that the monodromy of the restriction of $\pi$ to each stratum $\A_{0, S}$ of $\A_0\otimes\mF_p$ is as large as possible.  In \S\ref{nearby_sec} we determine the complexes $R\Psi_0$ and $\pi_*(R\Psi_1)$ along each stratum $\A_{0, S}$. In \S\ref{comb_sec} we discuss the {\it admissible set} ${\rm Adm}(\mu)$ in the case at hand and analyze some combinatorial aspects of the Kottwitz function. In \S\ref{tracefrob_sec} we prove the ``pointwise" Theorem \ref{tr_Fr_intro} above. In \S\ref{counting_pts_sec} we use this theorem to prove the ``counting 
points formula" for the semi-simple Lefschetz number.  In \S\ref{HA_isom_sec} we present recollections on the Hecke algebra isomorphisms of Goldberg, Morris, and Roche, and their relation to the Bernstein decomposition. In \S\ref{BC_sec} we give the facts on the base change homomorphisms in the depth zero case that are used in the sequel. In \S\ref{image_sec} we determine the image of our test functions $\phi_{r, \chi}$ under the Hecke algebra isomorphisms, and show the centrality of these functions. In \S\ref{sum_up_sec} we sum up explicitly all test functions $\phi_{r, \chi}$ to $\phi_{r, 1}$ and show that $\phi_{r, 1}$  is central in the Hecke algebra $\mathcal H(G_r, I_r^+)$.  In \S\ref{pseud_stab_sec} everything comes together, when we show that the Kottwitz method can be applied to yield Theorem \ref{main_tr_thm} and Corollary \ref{zeta_cor} above.   Finally, the Appendix proves in complete generality that the Bernstein function $z_\mu$ is supported 
on the set ${\rm Adm}(\mu)$; a special case of this is used in $\S\ref{comb_sec}$.

We thank G.\ Henniart, R.\ Kottwitz,  S.\ Kudla, G.\ Laumon, and P.\ Scholze for helpful discussions.   We also thank U.\ G\"{o}rtz for providing the figure in section \ref{comb_sec}.

\section{Notation} \label{notation_sec}

Throughout the paper,  $p$ denotes a fixed rational prime,  and $\ell$ denotes a prime different from $p$.  

We list our notational conventions related to $p$-adic groups and buildings.  Outside of section \ref{moduli_sec}, the symbol $F$ will denote a finite extension of $\mathbb Q_p$,  with ring of integers $\mathcal O = \mathcal O_F$, and residue field $k_F$ having characteristic $p$.  We often fix a uniformizer $\varpi = \varpi_F$ for $F$.    

For $r \geq 1$, we write $F_r$ for the unique degree $r$ unramified extension of $F$ contained in some algebraic closure of $F$.  We write $\mathcal O_r$ for its ring of integers and $k_r$ for its residue field.  Usually, $F = \mathbb Q_p$ and then $k_r = \mathbb F_{p^r}$,  and we then write $\mathbb Q_{p^r}$ for $F_r$.  When we work with $W(k_r)$, the ring of Witt vectors associated to $k_r$, we will write $L_r$ (and not $F_r$) for its fraction field. 

If $G$ is a split connected reductive group defined over $\mathcal O_F$, the symbol $T$ will denote a  $F$-split maximal torus in $G$, and $B$  a  $F$-rational Borel subgroup containing $T$.  Then $T$ determines  an apartment $\mathcal A_T$ in the Bruhat-Tits building $\mathcal B(G,F)$.  We may assume that the vertex $v_0$ corresponding to the hyperspecial maximal compact subgroup $G(\mathcal O_F)$ lies in $\mathcal A_T$.  The choice of $B$ determines the dominant and antidominant Weyl chambers in $\mathcal A_T$.  Fix the alcove ${\bf a}$ which is contained in the {\it antidominant} chamber and whose closure contains $v_0$.  Let $N$ denote the normalizer of $T$ in $G$.  
Having fixed $v_0, {\bf a}$ and $\mathcal A_T$, we may identify the {\em Iwahori-Weyl group} $\widetilde{W} := N(F)/T(\mathcal O_F)$ with the extended affine Weyl  group $X_*(T) \rtimes W$, and furthermore the latter decomposes canonically as 
\begin{equation} \label{W_aff_Omega}
\widetilde{W} = W_{\rm aff} \rtimes \Omega ,
\end{equation}
where $W_{\rm aff}$ is the Coxeter group with generators the simple affine reflections $S_{\rm aff}$ through the walls of ${\bf a}$,  and where $\Omega$ is the subgroup of $\widetilde{W}$ which stabilizes ${\bf a}$ (with respect to the obvious action of $\widetilde{W}$ on the set of all alcoves in $\mathcal A_T$).  The Bruhat order $\leq$ on $W_{\rm aff}$ determined by $S_{\rm aff}$ extends naturally to the Bruhat order $\leq$ on $\widetilde{W}$: if $w\tau$ and $w'\tau'$ are elements of $\widetilde{W}$ decomposed according to (\ref{W_aff_Omega}), then we write $w\tau \leq w'\tau'$ if and only if $\tau = \tau'$ and $w \leq w'$.

For any $\mu \in X_*(T)$, we define the {\em $\mu$-admissible set} ${\rm Adm}^G(\mu)$ to be
\begin{equation*} \label{adm(mu)_def}
{\rm Adm}^G(\mu) = \{ w \in \widetilde{W} ~ | ~ w \leq t_\lambda, \,\,\mbox{for some $\lambda \in W(\mu)$} \},
\end{equation*}
cf. \cite{Rp}, \S3. We write ${\rm Adm}(\mu)$ in place of ${\rm Adm}^G(\mu)$ if the group $G$ is understood.

We embed $\widetilde{W}$ into $N_G(T)(F)$, using a fixed choice of $\varpi$, as follows.  Let $w = t_\lambda \bar{w} \in X_*(T) \rtimes W$.  We send $t_\lambda$ to $\lambda(\varpi) \in T(F)$ and $\bar{w}$ to any fixed representative in $N_{G(\mathcal O)}(T(\mathcal O))$.  (If $G = {\rm GL}_d$, we shall always send $\bar{w}$ to the corresponding permutation  matrix in ${\rm GL}_d(\mathcal O)$.)  These choices give us a {\em set-theoretic} embedding
$$
i_{\varpi}: \widetilde{W} \hookrightarrow G(F).
$$

Let $I$ be the Iwahori subgroup corresponding to the alcove  ${\bf a}$, and let $I^+$ denote the pro-unipotent radical of $I$.  We use Teichm\"{u}ller representatives to fix the embedding $T(k_F) \hookrightarrow T(\mathcal O)$.  Then there is an isomorphism
$$
T(k_F)~ \widetilde{\rightarrow} ~ T(\mathcal O)/\big(T(\mathcal O) \cap I^+\big) ~ \widetilde{\rightarrow} ~ I/I^+,
$$
where the second arrow is induced by the inclusion $T(\mathcal O) \subset I$.  

Using this together with $i_\varpi$, we can state as follows the Bruhat-Tits decomposition,  and its variant for $I^+$ replacing $I$:
\begin{align*}
I \backslash G(F) / I &= \widetilde{W} \\
I^+ \backslash G(F) / I^+ &= T(k_F) \rtimes \widetilde{W}.
\end{align*}
For $w \in \widetilde{W}$, the coset $IwI = Ii_\varpi(w)I$ is independent of the choice of $\varpi$; but the second decomposition really does depend on $\varpi$.  

For a character $\chi: T(k_F)\to \overline{\mQ}_\ell^\times$ we denote by $\mathcal H(G, I, \chi)$ the Hecke algebra of smooth compactly supported functions $f: G(F)\to \overline{\mQ}_\ell$ such that $f(i_1gi_2)=\chi^{-1}(i_1)f(g)\chi^{-1}(i_2)$ for $i_1, i_2\in I$, where the character $\chi$ is extended to $I$ via the homomorphism $I\to T(k_F)$. When $\chi={\rm triv}$ is the trivial character, we use the notation $\mathcal H(G, I)$. By $\mathcal Z(G, I, \chi)$, resp. 
$\mathcal Z(G, I)$ we denote the centers of these algebras. Similarly, we have $\mathcal H(G, I^+)$ and $\mathcal Z(G, I^+)$.

Finally, we specify some more general notation.  If $S \subset X$ is a subset, then $1_S$ will denote the characteristic function of $S$.  If $S$ is contained in a group $G$, and $g \in G$, then $^gS := gSg^{-1}$.  For a character $\chi$ on a torus $T$ and an element $w \in W = N_G(T)/T$, we define a new character $^w\chi$ by $^w\chi(t) = \chi(w^{-1}tw)$.  

\section{Moduli problem} \label{moduli_sec}

\subsection{Rational and integral data} Let  $F$ be an imaginary quadratic extension of $\mQ$ and fix an embedding $\epsilon : F\to\mC$. We denote by $x\mapsto \bar x$ the non-trivial automorphism of $F$. Let
$p$ be a prime number that splits as a product of distinct prime ideals in $F$,
\begin{equation}\label{splits}
p\ \mathcal O_F = \fp\cdot\overline{\fp}\spa .
\end{equation}
Let $D$ be a central simple $F$-algebra of dimension $d^2$ and let $\ast$ be an  involution of $D$ inducing the non-trivial
automorphism $\overline{}$ on $F$. According to the decomposition (\ref{splits}), we have 
$F\otimes\mQ_p = F_\fp\times F_{\overline{\fp}}$, with $F_\fp = F_{\overline{\fp}} = \mQ_p$. Similarly, the 
$\mQ_p$-algebra $D\otimes_\mQ \mQ_p$ splits as
\begin{equation}\label{Dsplits}
D\otimes\mQ_p = D_\fp\times D_{\overline{\fp}}\spa ,
\end{equation}
and the involution $\ast$ identifies $D_{\overline{\fp}}$ with $D_\fp^{\rm opp}$. We make the assumption that $D_\fp$
is isomorphic to ${\rm M}_d (\mQ_p)$.   We fix such an isomorphism which has the additional property that the resulting isomorphism $D \otimes \mQ_p \simeq {\rm M}_d(\mQ_p) \times {\rm M}_d(\mQ_p)^{\rm opp}$ transports the involution $\ast$ over to the involution $(X,Y) \mapsto (Y^t,X^t)$, for $X,Y \in {\rm M}_d(\mQ_p)$.

We introduce the algebraic group $G$ over $\mQ$, with values in a $\mQ$-algebra $R$ equal to
\begin{equation*}
G (R) = \{x\in (D\otimes_\mQ R)^\times\mid x\cdot x^\ast\in R^\times\}\spa .
\end{equation*}
Then using the decomposition (\ref{Dsplits}) we have
\begin{equation*}
G\times_{\Spec\mQ}\Spec\mQ_p\simeq\GL_d\times \mG_m\spa .
\end{equation*}
Let $h_0: \mC\to D_\mR$ be an $\mR$-algebra homomorphism such that $h_0(z)^*=h_0(\bar z)$ and such that the involution $x\mapsto h_0(i)^{-1}x^*h_0(i)$ is positive. We recall from \cite {H05}, p.\ 597--600 how to associate to these data a PEL-datum $(B, \iota, V, \psi(\cdot, \cdot), h_0)$. 

Let $B=D^{\rm opp}$, and $V=D$, viewed as a left $B$-module, free of rank 1, using right multiplications. Then $D$ can be identified with $C={\rm End}_B(V)$, and we use this identification to define $h_0: \mC\to C_\mR$.    There exists $\xi\in D^\times$ such that $\xi^*=-\xi$ and such that the involution $\iota$ on $B$ given by $x^\iota=\xi x^*\xi^{-1}$ is positive. We define the non-degenerate alternating pairing $\psi (\cdot, \cdot): D\times D\to \mQ$ by 
\begin{equation*}
\psi(v, w)={\rm tr}_{D/\mQ}(v \xi w^*) . 
\end{equation*}
Then 
\begin{equation*}
\psi(xv, w) = \psi(v, x^\iota w)\ ,\ v, w\in V,\ x\in D\spa ,
\end{equation*}
and, by possibly replacing $\xi$ by its negative, we may assume that $\psi(\cdot, h_0(i)\cdot)$ is positive-definite. 

These are the rational data we need to define our moduli problem. As integral data we denote by $\mathcal O_B$ the unique
maximal $\mZ_{(p)}$-order in $B$ such that under our fixed identification 
$B\otimes_\mQ\mQ_p = {\rm M}_d (\mQ_p)\times {\rm M}_d (\mQ_p)^{\rm opp}$ we have
\begin{equation*}
\mathcal O_B\otimes\mZ_p = {\rm M}_d (\mZ_p)\times {\rm M}_d (\mZ_p)^{\rm opp}\spa .
\end{equation*}
Then $\mathcal O_B$ is stable under $\iota$.

We define the field $E$, which turns out to be the reflex field of our Shimura variety, to be equal to $F$ when $d>2$ and equal to $\mQ$ when $d=2$.  Further let $R=\mathcal O_{E_\fp}$ when $d>2$, where we fix a choice $\fp$ of one of the two prime factors of $p$,   and
$R = \mZ_p$ when $d = 2$. In either case $R = \mZ_p$. Let $S$ be a scheme over $\Spec R$. We will consider the
category $AV_{\mathcal O_B}$ of abelian schemes $A$ over $S$ with a homomorphism
\begin{equation*}
i:\mathcal O_B\to\End_S (A)\otimes\mZ_{(p)}\spa .
\end{equation*}
The homomorphisms between two objects $(A_1, i_1)$ and $(A_2, i_2)$ are defined by
\begin{equation*}
\Hom ((A_1, i_1), (A_2, i_2)) = \Hom_{\mathcal O_B}(A_1, A_2)\otimes\mZ_{(p)}\spa
\end{equation*}
(elements in $\Hom_S (A_1, A_2)\otimes\mZ_{(p)}$ which commute with the $\mathcal O_B$-actions). If $(A, i)$ is an object
of this category, then the dual abelian scheme $\hat{A}$, with $\mathcal O_B$-multiplication 
$\hat{i}: \mathcal O_B\to\End (\hat{A})\otimes\mZ_{(p)}$ given by $\hat{i} (b) = (i(b^\iota))^\wedge$, is again an object
of this category. A {\it polarization} of an object $(A, i)$ is a homomorphism $\lambda : A\to\hat{A}$ in $AV_{\mathcal O_B}$
such that $n\lambda$ is induced by an ample line bundle on $A$, for a suitable natural number $n$, and a polarization $\lambda$ is
called {\it principal}  if $\lambda$ is an isomorphism. A $\mQ${\it -class of polarizations} is a set $\overline{\lambda}$ of
quasi-isogenies $(A, i)\to (\hat{A}, \hat{i})$ such that locally on $S$ two elements of $\overline{\lambda}$ differ
by a factor in $\mQ^\times$ and such that there exists a polarization in $\overline{\lambda}$.

\subsection{Definition of $\A_0$} Let $K^p$ be a sufficiently small open compact subgroup of $G(\mA_f^p)$. Since $K^p$ will be fixed throughout the discussion, we will mostly omit it from the notation. 

\begin{defn}\label{defA_0}
Let $\A_{0,K^p} = \A_0$ be the set-valued functor on $(\Sch/R)$ whose values on $S$ are given by the following 
objects up to isomorphism.
\end{defn}

1.) A commutative diagram of morphisms in the category  $AV_{\mathcal O_B}$
$$\xymatrix{
A_0\ar[d]^{\lambda_0}\ar[r]^\alpha & A_1\ar[d]^{\lambda_1}\ar[r]^\alpha & \ldots\ar[r]^\alpha & A_{d-1}\ar[d]^{\lambda_{d-1}}\ar[r]^\alpha & A_0\ar[d]^{\lambda_0}\\
\hat{A}_0\ar[r]^{\hat\alpha} & \hat{A}_{d-1}\ar[r]^{\hat\alpha} & \ldots\ar[r]^{\hat\alpha} & \hat{A}_1\ar[r]^{\hat\alpha} & \hat{A}_0\spa .
}$$
Here each $\alpha$ is an isogeny of degree $p^{2d}$ and $\alpha^d = p\cdot\id_{A_0}$. Furthermore $\lambda_0$ is a $\mQ$-class of polarizations containing a principal polarization, and the collection 
$\lambda_\bullet$ is given up to an overall scalar in $\mQ^\times$. 

2.) A $K^p$-level-structure, i.e., an isomorphism
\begin{equation*}
  \overline{\eta}^p : V\otimes_\mQ\mA_f^p ~ \widetilde{\rightarrow} ~ H_1 (A_0, \mA_f^p) \mod K^p\spa ,
\end{equation*}

which is $\mathcal O_B$-linear and respects the bilinear forms of both sides (induced by $\lambda_0$ and $\psi$
respectively) up to a constant in $(\mA_f^p)^\times$.

We require of each $A_i$ the Kottwitz condition of type $(1,d-1)$ in the form
\begin{equation}\label{kocon}
{\rm char}(i (x)\mid\Lie A_i) = {\rm char} (x)^{1}\cdot\overline{{\rm char} (x)}^{d-1}\ 
,\  x\in\mathcal O_B\spa .
\end{equation}
Here ${\rm char} (x)\in\mathcal O_F\otimes\mZ_{p}[T]$ denotes the reduced characteristic polynomial of $x$ (a 
polynomial of degree $d$, so that $\dim A_i = d^2$ for all $i$).

This functor is representable by a quasi-projective scheme over $\Spec \mZ_p$, see \cite{K92}. If $D$ is a division algebra,
$\A_{0, K^p}$ is a projective scheme over $\mZ_p$.

\subsection{Definition of $\A_1$} Our next aim is to describe a moduli scheme $\A_1= \A_{1, K^p}$ which is equipped with a morphism to 
$\A_{0}$. We shall need the theory of Oort-Tate which we summarize as  follows, cf.\ \cite{GT}, Thm. 6.5.1, cf.\ also \cite{DR}, V.\ 2.4.
\begin{theorem}
{\rm (Oort-Tate)} Let $OT$ be the $\mZ_p$-stack of finite flat group schemes of order $p$.

\smallskip

\noindent {\rm (i)} $OT$ is an Artin stack isomorphic to
\begin{equation*}
[\big(\Spec\mZ_p [X, Y] / (XY- w_p) \big)/ \mG_m]\spa ,
\end{equation*}
where $\mG_m$ acts via $\lambda\cdot (X, Y) = (\lambda^{p-1} X, \lambda^{1-p} Y)$.  Here $w_p$ denotes an explicit element of $p\mZ^\times_p$.  

\smallskip

\noindent {\rm (ii)} The universal group scheme $\G$ over $OT$ is equal
\begin{equation*}
\G = \big[\big({\it Spec}_{OT} \mathcal O [Z] / (Z^p - XZ)\big)/\mG_m\big]\spa ,
\end{equation*}
(where $\mG_m$ acts via $Z\mapsto \lambda Z$), with zero section $Z=0$.

\smallskip

\noindent {\rm (iii)} Cartier duality acts on $OT$ by interchanging $X$ and $Y$. 
\qed
\end{theorem}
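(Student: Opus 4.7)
The plan is to follow the original approach of Oort and Tate. Given any finite flat commutative group scheme $G\to S$ of order $p$, I would locally construct an invertible sheaf $L$ on $S$ together with sections $X\in\Gamma(L^{\otimes(p-1)})$ and $Y\in\Gamma(L^{\otimes(1-p)})$ satisfying $XY=w_p$ for an explicit constant $w_p\in p\mZ_p^\times$. This gives a morphism $S\to OT$, and the main task is to show it is an equivalence.

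First I would reduce to the affine case $S=\Spec R$, writing $G=\Spec A$ with $A$ a Hopf $R$-algebra locally free of rank $p$. The key ingredient is a natural action of $\mF_p^\times\simeq \mu_{p-1}$ on $A$ constructed from the group law: one verifies that this action decomposes $A$ into weight spaces $A=\bigoplus_{i=0}^{p-1}L_i$ for the characters of $\mu_{p-1}$, where each $L_i$ is (after faithfully flat descent) an invertible $R$-module, $L_0=R$, and the multiplication induces canonical isomorphisms $L_i\otimes L_j~\widetilde{\to}~L_{i+j\bmod(p-1)}$ for $i+j\not\equiv 0$. Setting $L:=L_1$, the multiplication $L^{\otimes p}\to L$ is encoded by a section $X\in\Gamma(L^{\otimes(1-p)})$ (using $L^{\otimes p}=L^{\otimes(p-1)}\otimes L\cong L$), and dually the comultiplication furnishes a section $Y\in\Gamma(L^{\otimes(p-1)})$. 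A direct computation with the binomial coefficients $\binom{p}{k}$ yields the relation $XY=w_p$ for a specific $w_p\in p\mZ_p^\times$. The $\mG_m$-action in (i) records the ambiguity in trivializing $L$.

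For the quasi-inverse direction, given data $(L,X,Y)$ with $XY=w_p$, I would build the Hopf algebra locally as $A=\mathcal O[Z]/(Z^p-XZ)$ by trivializing $L$ and letting $Z$ be a generator, with comultiplication determined by $Y$ through an explicit formula of Oort-Tate; this is exactly the universal group scheme $\G$ of part (ii), with zero section $Z=0$. One then verifies that the two constructions are mutually inverse. For (iii), Cartier duality on Hopf algebras swaps multiplication and comultiplication, hence exchanges the roles of $X$ and $Y$ (and replaces $L$ by $L^{-1}$), which on the moduli stack is the involution $(X,Y)\leftrightarrow(Y,X)$.

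The main obstacle is the first step: producing the $\mu_{p-1}$-grading and the line bundle $L$ globally. In mixed characteristic one uses that the $p-1$ non-identity sections of $G$ (after a faithfully flat base change that splits $G$) carry a free $(\mZ/p)^\times$-action, but in characteristic $p$, where $G$ may be connected (e.g.\ $\mu_p$ or $\alpha_p$), one must work more carefully with the Hopf algebra structure and rely on faithfully flat descent to glue the local pictures. The verification that the resulting constant $w_p$ lies in $p\mZ_p^\times$ is the essential arithmetic input that distinguishes this classification from a mere formal statement.
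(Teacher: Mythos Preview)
The paper does not prove this theorem at all: it is stated as a summary of the Oort--Tate classification, with citations to \cite{GT}, Thm.~6.5.1 and \cite{DR}, V.~2.4, and closed with a \qed. So there is nothing to compare your argument against here; the authors simply import the result from the literature.

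Your sketch is essentially the original Oort--Tate argument (decompose the Hopf algebra under the natural $\mu_{p-1}$-action, extract the line bundle $L$ and the sections $X,Y$, verify $XY=w_p$), and the strategy is sound. One small slip: you place $X$ in $\Gamma(L^{\otimes(p-1)})$ in your first paragraph and in $\Gamma(L^{\otimes(1-p)})$ in the second; the convention in the statement (where $\mG_m$ acts on $X$ with weight $p-1$ and $Z\in L$ has weight $1$, so that $Z^p-XZ$ is homogeneous of weight $p$) forces $X\in\Gamma(L^{\otimes(p-1)})$, with multiplication giving the map $L^{\otimes(p-1)}\otimes L\to L^{\otimes p}$ rather than $L^{\otimes p}\to L$. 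This is a bookkeeping issue, not a structural one.
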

We denote by $\G^\times$ the closed subscheme of $\G$ defined by the ideal generated by $Z^{p-1}-X$. The morphism
$\G^\times\to OT$ is relatively representable and finite and flat of degree $p-1$.  Note that after base change to
the generic fiber $OT\times_{\Spec \mZ_p}{\Spec \mQ_p}$, the group scheme $\G$ is \'etale and the points of $\G^\times$ are the
generators of $\G$. Therefore $\G^\times$ is called {\it the scheme of generators} of $\G$, cf. \cite{KM} and Remark \ref{OT=KM_rem} below. 
The group
$\mF_p^\times = {\mathbb \mu}_{p-1}$ acts on $\G^\times$ via $Z\mto\zeta Z$.  The restriction of $\G^\times$ to  $OT\times_{\Spec \mZ_p}{\Spec \mQ_p}$ is a p.h.s. under $\mF_p^\times$.

If $G$ is a finite flat group scheme over a $\mZ_p$-scheme $S$, it corresponds to a morphism
$\varphi: S\to OT$ such that $G = \varphi^\ast (\G)$. The {\it subscheme of generators} of $G$ is by definition the
closed subscheme $G^\times =\varphi^\ast (\G^\times)$. Note that if $S=\Spec \bar\mF_p$ and $G$ is infinitesimal, the zero section is a generator, i.e., is a section of $G^\times$. 

\begin{Remark} \label{OT=KM_rem}
Let $c \in G(S)$.  We claim that $c \in G^\times(S)$ if and only if $c$ is an element of exact order $p$, in the sense of Katz-Mazur \cite{KM}, (1.4).  (By loc.~cit.~Theorem 1.10.1, this condition on $c$ may be described in terms of Cartier divisors, or alternatively by requiring that the multiples of $c$ (as in (\ref{[m]c}) below) form a ``full set of sections'' in the sense of loc.~cit.~(1.8.2).)   We are grateful to Kottwitz for pointing out how this may be easily extracted from \cite{OT} and \cite{KM}, as follows.

Write $0$ for the zero section.  We need to show that $c \in G^\times(S)$ if and only if
\begin{equation} \label{[m]c}
\{0, c, [2]c, \cdots, [p-1]c\}
\end{equation}
is a full set of sections of $G/S$ in the sense of \cite{KM}, (1.8.2).  By localizing we reduce to the affine case, so that $S = {\rm Spec}\!\  R$ for a $\mZ_p$-algebra $R$, $G = {\rm Spec}\!\  R[Z]/(Z^p - aZ) $ for some $a \in pR$, and 
$c$ is an element of $R$ satisfying $c^p -ac = 0$.  By \cite{OT}, p.~14, (\ref{[m]c}) coincides with the set
\begin{equation} \label{xic}
\{0,c,\xi c, \dots, \xi^{p-2}c \}
\end{equation}
for $\xi \in \mathbb Z^\times_p$ an element of order $p-1$.  By Lemma 1.10.2 of \cite{KM}, (\ref{xic}) is a full set of sections for $G/S$ if and only if we have the equality of polynomials in $R[Z]$
$$
Z^p -aZ = Z \prod_{i=0}^{p-2} (Z-\xi^i c).
$$
Clearly this happens if and only if $a = c^{p-1}$, that is, if and only if $c \in G^\times(S)$.
\end{Remark}

We now return to our moduli problem $\A_0$. Let  $(A_\bullet, \iota_\bullet, \alpha_\bullet, \lambda_\bullet, \overline{\eta}^p)$ be an $S$-valued point of $\A_0$.
Let $A_i (p^\infty)$ be the $p$-divisible group over $S$ attached to $A_i$. Then $\mathcal O_B\otimes\mZ_p$ acts
on $A_i (p^\infty)$ and the decomposition $\mathcal O_B\otimes\mZ_p = \mathcal O_{B_\fp}\times \mathcal O_{B_{\overline{\fp}}}$
induces a decomposition
\begin{equation}
A_i (p^\infty) = A_i (\fp^\infty)\times A_i (\overline{\fp}^\infty)\spa .
\end{equation}
Using (\ref{kocon}), we see that the $p$-divisible group $A_i (\fp^\infty)$ is of height $d^2$ and dimension $d$, and the 
$p$-divisible group $A_i (\overline{\fp}^\infty)$ is of height $d^2$ and dimension $d(d-1)$. On either
$p$-divisible group there is an action of ${\rm M}_d (\mZ_p)$. Using now the idempotent $e_{11}\in {\rm M}_d (\mZ_p)$
we may define
\begin{equation}
X_i = e_{11} A_i ({\fp}^\infty)\spa .
\end{equation}
This is a $p$-divisible group of height $d$ and dimension $1$,  and $A_i ({\fp}^\infty) = X_i^d$
(canonically, compatible with the ${\rm M}_d (\mZ_p)$-action on both sides). By the functoriality of this construction,
we obtain a chain of isogenies
\begin{equation}\label{chainpdiv}
X_0\xto{\alpha} X_1\xto{\alpha}\ldots\xto{\alpha} X_{d-1}\xto{\alpha} X_0\spa ,
\end{equation}
each of which is of degree $p$ and such that $\alpha^d = p\cdot\id_{X_0}$.  Henceforth let $X_d := X_0$.
Let
\begin{equation}
G_i = \ker (\alpha : X_{i-1}\to  X_{i})\qquad ,\quad i= 1, \ldots , d\spa .
\end{equation}
Then each $G_i$ is a finite flat group scheme of rank $p$ over $S$, and the collection $(G_1, \ldots , G_{d})$ defines
a morphism into the $d$-fold fiber product, 
\begin{equation}\label{maptoOT}
\varphi: \A_{0}\to OT\times_{\Spec\mZ_p}\ldots\times_{\Spec\mZ_p} OT\spa .
\end{equation}
Now $\A_{1, K^p}=\A_1$ is defined as the 2-fiber product
$$\xymatrix{
\A_{1}\ar[d]_{\pi}\ar[r] & \G^\times\times\ldots\times\G^\times\ar[d]\\
\A_{0}\ar[r] & OT\times\ldots\times OT\spa .
}$$
The morphism $\pi$ is relatively representable and finite and flat of degree $(p-1)^d$. It is an \'etale
Galois covering in the generic fiber with Galois group $(\mF_p^\times)^d$, which ramifies along the special
fiber, cf. Corollary \ref{Galoiscov}.

Using the terminology and notation introduced above, the scheme $\A_{1, K^p}$ classifies objects
$(A_\bullet, \iota_\bullet, \alpha_\bullet, \lambda_\bullet, \overline{\eta}^p)$ of $\A_{0, K^p}$,
together with generators of the group schemes $G_1, \ldots , G_{d}$ (i.e. sections of
$G_1^\times, \ldots , G_{d}^\times$).

\begin{Remark}\label{nonsymm}
A less symmetric, but equivalent,  formulation of the moduli problems $\A_0$ and $\A_{1}$ is as follows. Let us define a moduli problem 
${\A'}_{0, K^p}$ over $(\Sch/\mZ_p)$ that associates to a $\mZ_p$-scheme $S$ the set of isomorphism classes of objects $(A, i)$ of $AV_{\mathcal O_B}$ over
$S$ satisfying the Kottwitz condition of type $(1, d-1)$ as in (\ref{kocon}), with a $\mQ$-class of polarizations $\overline{\lambda}$ containing a principal polarization, and equipped with a $K^p$-level structure $\bar\eta^p$. Let $X$ be the unique $p$-divisible group over $S$ with $A ({\fp}^\infty) = X^d$ compatibly with the action
of ${\rm M}_d (\mZ_p)$ on both sides. In addition to $(A, i, \overline{\lambda}, \bar\eta^p )$ we are given a filtration
of finite flat group schemes
\begin{equation*}
(0) = H_{0}\subset H_1\subset\cdots\subset H_{d-1}\subset H_{d} = X [p]\spa ,
\end{equation*}
such that $G_i=H_i / H_{i-1}$ is a finite flat group scheme  of order $p$ over $S$, for $i=1, \ldots , d$. Similarly, we can define a moduli problem 
${\A'}_{1, K^p}$, where  one is given in addition a section of $G_i^\times (S)$, for $i=1, \ldots , d$.

The moduli problems $\A'_0$ and $\A_0$, resp.\ $\A'_1$ and $\A_1$, are equivalent. Indeed, it is clear that an object of $\A_{0, K^p}$ defines an object of ${\A'}_{0, K^p}$ (put $H_i=\ker \alpha^{i}: X_0\to X_{i}$). Conversely, starting
from an object of ${\A'}_{0, K^p}$, we obtain as follows an object of $\A_{0, K^p}$. Let 
$\lambda_0\in\overline{\lambda}$ be a principal polarization. Let $X_0''=X^d=A_0({\fp}^\infty)$. The flag of finite flat subgroups of
${X_0''} = A_0 ({\fp}^\infty)$,
\begin{equation*}
(0)= H_0^d\subset H_1^d\subset\cdots\subset H_{d-1}^d\subset X_0'' [p]
\end{equation*}
defines a sequence of isogenies of $p$-divisible groups
\begin{equation}\label{chainpd}
{X_0''}\to {X_1''}\to\ldots\to {X_{d-1}''}\to {X_0''}
\end{equation}
with composition equal to $p\cdot\id$. Furthermore,  $\lambda_0$ defines an isomorphism 
$A_0 ({\fp}^\infty)\simeq A_0 (\overline{\fp}^\infty)^\wedge$. Now one checks that there exists exactly one
commutative diagram of isogenies in $AV_{\mathcal O_B}$ as in Definition \ref{defA_0}, 1.) which induces the chain of isogenies (\ref{chainpd})
on the ${\fp}^\infty$-part of $A_\bullet (p^\infty)$ and such that $\lambda_0$ induces the previous isomorphism between $A_0 ({\fp}^\infty)$ and  $A_0 (\overline{\fp}^\infty)^\wedge$. This shows how to construct an object of $\A_0$ from an object of $\A_0'$, and it is easy to see that this implies that $\A_{0, K^p}$ is equivalent to $\A_{0, K^p}'$. 

The equivalence of $\A_{1, K^p}$ and ${\A'}_{1, K^p}$ is then obvious.
\end{Remark}

\subsection{The structure of $\A_0$ and $\A_1$}\label{local_struc}
 Let
\begin{equation*}
\Lie X_0\to\Lie X_1\to\ldots\to\Lie X_{d-1}\to\Lie X_0
\end{equation*}
be the chain of homomorphisms of invertible sheaves on  ${\A}_0\otimes \mF_p$,  given by the Lie algebras of the universal $p$-divisible
groups $X_i$ over $\A_0\otimes\mF_p$. For $i = 1, \ldots, d$, let $\overline{\A_{0, i}}$ be the vanishing locus of $\Lie X_{i-1}\to\Lie X_{i}$.  More precisely, $\overline{\A_{0,i}}$ is the support of the cokernel of $(\Lie X_{i})^\vee \rightarrow (\Lie X_{i-1})^\vee$.  From the theory of local models we have the following theorem, cf.\ \cite{G1}, Prop.~4.13. 
\begin{theorem}\label{dnc}
The scheme $\A_0$ has strict semi-stable reduction over $\Spec\mZ_p$. More precisely,  the $\overline{\A_{0, i}}$ for $i=1,\ldots , d$ are smooth
divisors in $\A_0$, crossing normally, and ${\A}_0\otimes \mF_p = \bigcup\nolimits_i\overline{\A_{0, i}}$.\hfill\qed
\end{theorem}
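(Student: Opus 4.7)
My plan is to apply the theory of local models in order to reduce the statement for $\A_0$ to an analogous statement for the Drinfeld local model $M^{\rm loc}$ attached to the data $(\GL_d, \mu = (1, 0^{d-1}), I)$. Concretely, $M^{\rm loc}$ parametrizes, for a $\mZ_p$-scheme $S$, tuples of invertible quotients $\Lambda_i\otimes_{\mZ_p}\mathcal O_S \twoheadrightarrow \mathcal L_i$ (for $i \in \mZ/d$) of the standard periodic lattice chain $\Lambda_\bullet$, compatible with the transition maps $\Lambda_{i-1}\to\Lambda_i$. Using the description of $\A_0$ via the $p$-divisible group chain (\ref{chainpdiv}) and the moduli-theoretic interpretation of the Lie algebras, one constructs the usual local model roof
\begin{equation*}
\A_0 \;\longleftarrow\; \widetilde{\A_0} \;\longrightarrow\; M^{\rm loc},
\end{equation*}
in which both arrows are smooth and surjective of the same relative dimension, and which identifies $\overline{\A_{0,i}}$ with the locus $M^{\rm loc}_i \subset M^{\rm loc}$ where the induced map $\mathcal L_{i-1}\to\mathcal L_i$ vanishes. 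Since strict normal crossings are preserved under smooth base change, it suffices to prove the assertion for $M^{\rm loc}$ and its divisors $M^{\rm loc}_i$.

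The next step is an explicit analysis of $M^{\rm loc}$ in affine coordinates. Away from the unique ``worst point'', a quotient $\mathcal L_{i_0}$ is cut out by a nonvanishing minor, which trivializes the local model along one direction and reduces the problem to the analogous local model for a smaller $d$; this allows one to localize around the worst point, where all $\mathcal L_{i-1}\to \mathcal L_i$ vanish in the closed fiber. In a standard affine chart around this point one can choose a basis so that the transition map $\mathcal L_{i-1}\to\mathcal L_i$ is given by multiplication by a single parameter $x_i \in \mathcal O_S$; the compatibility of the chain with the isomorphism $\Lambda_d = p^{-1}\Lambda_0$ then forces the relation
\begin{equation*}
x_1 x_2 \cdots x_d \;=\; p .
\end{equation*}
After dividing out a smooth factor (coming from the moduli of the quotients apart from the rank-one directions), $M^{\rm loc}$ becomes étale locally isomorphic to $\Spec\mZ_p[x_1,\ldots,x_d]/(x_1\cdots x_d - p)$, with $M^{\rm loc}_i$ corresponding to the coordinate divisor $\{x_i = 0\}$.

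The last step is to observe that $\Spec\mZ_p[x_1,\ldots,x_d]/(x_1\cdots x_d - p)$ manifestly has strict semi-stable reduction: its special fiber is the scheme-theoretic union of the smooth hyperplanes $\{x_i = 0\}$, and an easy Jacobian computation shows that the intersections of these are smooth of the expected codimension. Transporting this back through the local model diagram, one concludes that each $\overline{\A_{0,i}}$ is a smooth divisor in $\A_0$, that any collection of them intersects transversally, and that their union covers $\A_0 \otimes \mF_p$.

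The main obstacle, and really the only substantive point, is the construction of the local model diagram and the verification that it identifies the divisors $\overline{\A_{0,i}}$ (defined via Lie algebra maps on $\A_0$) with the divisors $M^{\rm loc}_i$ (defined via the universal quotients on $M^{\rm loc}$). Once this compatibility is secured, the rest is an exercise in the Drinfeld local model, as carried out in detail by Görtz \cite{G1}, Prop.\ 4.13.
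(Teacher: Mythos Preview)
Your proposal is correct and follows exactly the approach the paper indicates: the paper does not give a proof but simply cites the theory of local models and \cite{G1}, Prop.~4.13, and your sketch is a faithful outline of what that argument contains (the local model roof, the explicit affine chart with $x_1\cdots x_d = p$, and the identification of the divisors). The only remark is that the paper treats this theorem as a black box from \cite{G1}, so your additional detail is expository rather than a departure in strategy.
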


As usual, the divisors $\overline{\A_{0,i}}$ define a stratification of $\A_0\otimes \mF_p$.  For $x \in\A_0 (\bar{\mF}_p)$ let
\begin{equation}
S (x) = \{i\in\mZ / d\mid x\in\overline{\A_{0, i}}\}\spa .
\end{equation}
Then $S (x)$ is a non-empty subset of $\mZ / d :=\{1, \ldots, d\}$, called the {\it set of critical indices} of $x$. In terms of the $d$-tuple
$(G_1, \ldots , G_{d})$ of finite group schemes of order $p$ attached to $x$,
\begin{equation*}
S (x) = \{i\in\mZ / d\mid G_i\spa\text{infinitesimal}\spa\}\spa .
\end{equation*}
To any non-empty subset $S$ of $\mZ / d$ we can associate the locally closed reduced subscheme $\A_{0, S}$
of ${\A}_0\otimes \mF_p$, with $\A_{0, S} (\bar{\mF}_p)$ equal to the set of $\bar{\mF}_p$-points with
critical set equal to $S$. From Theorem \ref{dnc} we deduce
\begin{equation}\label{clos}
{\A}_0\otimes\mF_p =\bigcup\nolimits_{S\neq \emptyset}\A_{0, S}\ , \quad 
\overline{\A_{0, S}} = \bigcup\nolimits_{S'\supseteq S}\A_{0, S'}\spa .
\end{equation}
Theorem \ref{dnc} also implies that $\A_{0, S}$ and its closure $\overline{\A_{0, S}}$ are smooth of dimension $d-|S|$.

In order to investigate the structure of $\A_1$, we consider for $i=1,\ldots , d$, the morphism
$\varphi_i : \A_0\to OT$ defined by associating to an $S$-valued point of $\A_0$ the finite flat group scheme
$G_i$ of order $p$, comp. (\ref{maptoOT}).
\begin{lemma} \label{local_param}
Let $x\in\A_{0, S}(\bar{\mF}_p)$.  Then there is an \'{e}tale neighborhood of $x$ in $\A_{0,S}$ which carries an \'{e}tale morphism to an affine scheme $\Spec 
\mathcal O$, where
\begin{equation*}
{\mathcal O}\simeq W (\bar{\mF}_p)[T_1,\ldots , T_{d}][T^{-1}_j, j \notin S] / (\prod\nolimits_{j\in S} T_j - w_p)\ ,\end{equation*}
such that the traces of the divisors $\overline{\A_{0,i}}$ on $\Spec \mathcal O$ are given by $T_i = 0$, 
and such that the morphisms $\varphi_i$ to
$OT = [\big(\Spec\mZ_p[X, Y] / (XY - w_p)\big) / \mG_m]$ are given by
\begin{align} \label{varphi(X)}
\varphi_i^\ast (X) &= u_i T_i\ , \quad\varphi_i^\ast (Y) = u_i^{-1} T_i^{-1} \prod\nolimits_{j\in S} T_j\spa\ , 
\end{align}
where $u_i$ is a unit in $\mathcal O$.
\end{lemma}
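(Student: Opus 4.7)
The plan is to combine the Rapoport--Zink theory of local models with Oort--Tate theory for the finite flat group schemes $G_i$.

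First, I would invoke the local model diagram for the PEL-type Shimura variety $\A_0$, which realizes $\A_0$ and its local model $M^{\rm loc}$ as étale locally isomorphic at corresponding points. In the Drinfeld case, G\"ortz's explicit analysis of $M^{\rm loc}$ \cite{G1} yields an affine chart around the geometric point corresponding to $x$, étale locally isomorphic to $\Spec W(\bar{\mF}_p)[T_1,\ldots,T_d]/(\prod_{j=1}^d T_j - w_p)$, with the traces of the divisors $\overline{\A_{0,i}}$ cut out by $T_i = 0$ (this is the same input that underlies Theorem \ref{dnc}). Since $x \in \overline{\A_{0,j}}$ exactly when $j\in S$, the $T_j$ for $j\notin S$ are units at $x$, and passing to the localization inverting them gives the ring $\mathcal O$ described in the lemma.

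Next, to compute $\varphi_i^*(X)$, I would observe that after a local trivialization of the pullback line bundle, $X$ becomes a regular function on the étale neighborhood whose vanishing locus is exactly the subscheme where the universal group $G_i = \ker(X_{i-1}\to X_i)$ becomes purely infinitesimal, equivalently where the Lie-algebra map $\Lie X_{i-1}\to\Lie X_i$ vanishes. By the very definition of $\overline{\A_{0,i}}$ preceding Theorem \ref{dnc}, this is precisely the divisor $\overline{\A_{0,i}}$, cut out in $\mathcal O$ by $T_i$. Hence $\varphi_i^*(X)$ and $T_i$ generate the same principal ideal, and so $\varphi_i^*(X) = u_i T_i$ for some unit $u_i\in\mathcal O^\times$ (depending on the trivialization chosen). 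The formula for $\varphi_i^*(Y)$ then follows from the universal identity $XY = w_p$ on $OT$: pulling back gives $\varphi_i^*(X)\cdot\varphi_i^*(Y) = \prod_{j\in S}T_j$, and dividing by the non-zero-divisor $u_i T_i$ yields $\varphi_i^*(Y) = u_i^{-1} T_i^{-1}\prod_{j\in S} T_j$, which lies in $\mathcal O$: if $i\in S$ the factor $T_i$ inside the product cancels $T_i^{-1}$, while if $i\notin S$ then $T_i$ is already a unit.

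The main obstacle lies in the first step: the explicit strict semi-stable description of $\A_0$ via G\"ortz's affine charts on the local model, together with the identification of the defining equations of the $\overline{\A_{0,i}}$ with the coordinates $T_i$. Once those coordinates are in place, the determination of $\varphi_i^*(X)$ reduces to a divisor-theoretic identification (the unit $u_i$ absorbing the ambiguity of the chosen trivialization), and the formula for $\varphi_i^*(Y)$ is forced by the universal Oort--Tate relation $XY = w_p$.
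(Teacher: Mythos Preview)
Your overall strategy matches the paper's: invoke the semi-stable local description (Theorem \ref{dnc}) to obtain the coordinates $T_i$, identify $\varphi_i^*(X)$ by comparing with the divisor $\overline{\A_{0,i}} = \{T_i = 0\}$, and read off $\varphi_i^*(Y)$ from $XY = w_p$. The gap is in the middle step.

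From agreement of vanishing loci you jump directly to $(\varphi_i^*(X)) = (T_i)$ as ideals. But what is immediate from the Oort--Tate description $G_i = \Spec \mathcal O[Z]/(Z^p - \varphi_i^*(X)\,Z)$ is only that the \emph{reduced} locus where $G_i$ fails to be \'etale is $\{\varphi_i^*(X) = 0\}$; since this set equals the smooth divisor $\{T_i = 0\}$, in the regular (hence factorial) ring $\mathcal O$ one obtains $\varphi_i^*(X) = u_i T_i^{m}$ for some unit $u_i$ and some integer $m \geq 1$, not automatically $m = 1$. Your assertion that $V(\varphi_i^*(X))$ agrees \emph{scheme-theoretically} with the vanishing locus of the Lie map is not justified as written. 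The paper does not attempt such a scheme-theoretic comparison; instead it establishes $m = 1$ directly. For $i \notin S$ both $\varphi_i^*(X)$ and $T_i$ are units, and one simply absorbs $T_i^{m-1}$ into $u_i$. For $i \in S$, pulling back $XY = w_p$ gives
\[
u_i\,T_i^{m}\cdot \varphi_i^*(Y) \;=\; w_p \;=\; \prod_{j\in S} T_j
\]
in $\mathcal O$; since $\mathcal O$ is a UFD in which the $T_j$ for $j \in S$ are pairwise distinct primes, the exponent of $T_i$ on the right is exactly $1$, forcing $m = 1$. This short UFD argument is the missing ingredient in your proof.
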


Here, if $i \in S$, we understand $T_i^{-1}\prod\nolimits_{j \in S} T_j$ to be $\prod\nolimits_{j \in S, j \neq i} T_j$, with the convention that the empty product is 1. 

\begin{proof}
It is clear from Theorem \ref{dnc} that the ring $\mathcal O$ and the elements $T_1, \ldots, T_d$ can be chosen with the first two properties.  
It remains to show that for each $i$, there exists a unit $u_i\in \mathcal O^\times$ such that (\ref{varphi(X)}) holds.  The group scheme $G_i$ over $\Spec \mathcal O$ is obtained by pullback under $\varphi_i$ of the universal
group scheme $\G$ over $OT$, hence
\begin{equation*}
G_i = \Spec \mathcal O [Z] / (Z^p - \varphi_i^\ast (X)\cdot Z)\spa .
\end{equation*}
The reduced locus in $\Spec \mathcal O$ over which $G_i$ is infinitesimal is defined by the equation
$\varphi_i^\ast (X) = 0$. On the other hand, it is also given by $T_i = 0$. Hence
\begin{equation*}
\varphi_i^\ast (X) = u_i T_i^m\spa ,
\end{equation*}
for some $m\geq 1$, where $u_i\in \mathcal O^\times$. Let $g_i = \varphi_i^\ast (Y)$. Then
\begin{equation*}
u_i T_i^m g_i = w_p = \prod\nolimits_{j \in S} T_j\spa.
\end{equation*}
For $i \notin S$, the elements $\varphi^\ast_i(X)$ and $T_i$ are units, and so by replacing $u_i$ with $u_iT_i^{m-1}$ we may arrange to have $\varphi^\ast_i(X) = u_i T_i$; then (\ref{varphi(X)}) is satisfied.  

Next assume $i \in S$.  
Since $\mathcal O$ is a $UFD$ and the elements $T_j$ ($j \in S$) are pairwise distinct prime elements, it follows that 
$m=1$ and again (\ref{varphi(X)}) holds.
\end{proof}

Let $\A_{1, S}=\pi^{-1}(\A_{0, S})$. Since $\pi$ is proper, this defines a stratification of $ \A_1\otimes\mF_p$ with similar properties as the analogous stratification of $\A_0\otimes\mF_p$. 
\begin{cor}\label{locnormfor}
The scheme $\A_1$ is regular and its special fiber is a divisor with normal crossings, with all branches
of multiplicity $p-1$. More precisely, let $x \in\A_{0, S} (\bar{\mF}_p)$.  Then the covering 
$\pi : \A_1\to\A_0$ in an \'{e}tale local neighborhood of $x$ is isomorphic to the morphism 
\begin{equation*}
\Spec W (\bar{\mF}_p)[T'_1, \ldots , T'_{d}] / \Big((\prod\nolimits_{j\in S} T'_j)^{p-1} - w_p\Big)\to\Spec W (\bar{\mF}_p)[T_1, \ldots , T_{d}]/\Big(\prod\nolimits_{j\in S} T_j - w_p\Big)
\end{equation*}
defined via
\begin{equation*}
T_i\mto T_i'^{p-1}\ , \  i=1, \ldots , d\spa .
\end{equation*}
\end{cor}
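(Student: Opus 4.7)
My plan is to compute the fibre product defining $\A_1$ in the étale chart of $\A_0$ provided by Lemma \ref{local_param}, and then absorb the units $u_i$ by a further étale base change in order to reach the clean form claimed. The first input is a simpler presentation of the scheme of generators: since the relation $Z^{p-1}=X$ forces $Z^p = XZ$, one has $\G^\times = \Spec_{OT}\mathcal O[Z]/(Z^{p-1}-X)$. Pulling back along $\varphi_i$, which by Lemma \ref{local_param} sends $X$ to $u_i T_i$, gives $G_i^\times \cong \Spec \mathcal O[Z_i]/(Z_i^{p-1}-u_iT_i)$ over the étale neighbourhood $U$ of $x$. Taking the $d$-fold fibre product over $U$, I obtain
$$
\pi^{-1}(U)\cong \Spec \mathcal O[Z_1,\ldots,Z_d]/(Z_i^{p-1}-u_iT_i : i=1,\ldots,d)\longrightarrow U.
$$

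Next I would absorb the units $u_i \in \mathcal O^\times$. Because $p\nmid p-1$, the polynomial $Y^{p-1}-u_i$ is separable over $\mathcal O$ (its derivative $(p-1)Y^{p-2}$ is a unit on the zero locus, where $Y$ itself is a unit), so $\mathcal O[Y]/(Y^{p-1}-u_i)$ is finite étale over $\mathcal O$. Passing to a further étale cover on which a $(p-1)$-th root $v_i$ of $u_i$ exists for every $i$, the substitution $T'_i := Z_i/v_i$ converts $Z_i^{p-1}=u_iT_i$ into $(T'_i)^{p-1}=T_i$. Eliminating the old variables, the relation $\prod_{j\in S}T_j = w_p$ becomes $(\prod_{j\in S}T'_j)^{p-1}=w_p$; and the condition that $T_j$ is a unit for $j\notin S$ is equivalent to $T'_j$ being a unit. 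Hence, after this additional étale localisation, $\pi^{-1}(U)\to U$ is identified with an open subscheme of the map
$$
\Spec W(\bar{\mF}_p)[T'_1,\ldots,T'_d]/((\prod\nolimits_{j\in S}T'_j)^{p-1}-w_p)\longrightarrow \Spec W(\bar{\mF}_p)[T_1,\ldots,T_d]/(\prod\nolimits_{j\in S}T_j-w_p),
$$
given by $T_i\mapsto (T'_i)^{p-1}$, which is the asserted form.

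The remaining assertions can be read off from this local model. Reducing mod $p$, the equation becomes $(\prod_{j\in S}T'_j)^{p-1}=0$, supported on the union of the coordinate hyperplanes $\{T'_j=0\}$ for $j\in S$, each appearing with multiplicity $p-1$; this gives the normal-crossings claim with the stated multiplicities. For regularity at a closed point $y$ of $\A_1$ above $x$, the defining equation gives $p = \text{unit}\cdot(\prod_{j\in S}T'_j)^{p-1}$, so $p$ lies in the ideal generated by the $T'_j$ with $j\in S$; consequently the maximal ideal at $y$ is generated by $d=\dim \A_1$ parameters (the $T'_j$ for $j\in S$ together with $T'_j - T'_j(y)$ for $j\notin S$), so the local ring is regular. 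The main subtlety I anticipate is justifying the étale-local absorption of the units $u_i$; this hinges entirely on the inequality $p\nmid p-1$ (used to obtain separability of $Y^{p-1}-u_i$), after which the rest is a direct unwinding of the moduli definition and Lemma \ref{local_param}.
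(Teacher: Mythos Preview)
Your argument is correct and follows essentially the same route as the paper's own proof: compute $\pi^{-1}(U)$ from Lemma~\ref{local_param} as $\Spec \mathcal O[Z_1,\dots,Z_d]/(Z_i^{p-1}-u_iT_i)$, then pass to a further \'etale cover adjoining $(p-1)$-th roots of the units $u_i$ (which is \'etale precisely because $p\nmid p-1$), and substitute to obtain $(T'_i)^{p-1}=T_i$. The paper carries this out with slightly different bookkeeping (it adjoins roots $V_i$ of $u_i^{-1}$ and writes the resulting ring $\widetilde{\mathcal O}$ explicitly, keeping the $V_i^{\pm 1}$ as extra \'etale coordinates), but the mechanism is identical to yours.
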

\begin{proof}
This follows from Lemma \ref{local_param}, since over $\Spec \mathcal O$, the covering $\A_1$ is  defined by extracting the $(p-1)$-st root of $\varphi_i^\ast (X)$ for $i=1, \dots, d$.  
 
Implicit in our assertion is the claim that the units $u_i$ play no essential role.  We will prove a more precise version of the corollary, which also justifies this.  

The first step is to pass to a smaller \'{e}tale neighborhood of $x$: for each $i$ choose a $v_i \in W(\bar{\mF}_p)[T_1, \ldots, T_d][T_j^{-1}, j \notin S]$ such that its image in $\mathcal O$ is $u_i^{-1}$.  Replace $\Spec \mathcal O$ with a smaller \'{e}tale affine neighorhood of $x$, by replacing the former ring $\mathcal O$ with the ring
$$
\widetilde{\mathcal O} := W(\bar{\mF}_p)[T_i, T^{-1}_j, V_i^{\pm 1}]/\Big(\prod\nolimits_{j \in S} T_j - w_p \spa , \spa V^{p-1}_i - v_i \Big).
$$
Here the index $i$ for $T_i, V_i$ and $v_i$ ranges over $1, \ldots, d$ and the index $j$ for $T_j^{-1}$ ranges over the complement of $S$.

The pull-back to $\Spec \widetilde{\mathcal O}$ of $\pi: \A_1 \rightarrow \A_0$ is given by adjoining to $\widetilde{\mathcal O}$ the indeterminates $t_i$, $i = 1, \ldots, d$, which are subject to the relations $t^{p-1}_i = v^{-1}_i T_i$.  Set $T'_i = V_i t_i$.   Note that $T_i'^{p-1} = T_i$.  It follows that the pull-back of $\pi$ to $\Spec \widetilde{\mathcal O}$ is the morphism taking
\begin{equation*}
\Spec W (\bar{\mF}_p)[T'_i, T'^{-1}_j, V^{\pm 1}_i] / \Big((\prod\nolimits_{j\in S} T'_j)^{p-1} - w_p \spa , \spa V^{p-1}_i - v_i\Big)
\end{equation*}
to
\begin{equation*}
\Spec W (\bar{\mF}_p)[T_i, T^{-1}_j, V^{\pm 1}_i]/\Big(\prod\nolimits_{j\in S} T_j - w_p \spa , \spa V_i^{p-1} - v_i \Big),
\end{equation*}
via $T_i \mapsto T_i'^{p-1}$.    
\end{proof}

To formulate the next result, we introduce the following tori over $\mF_p$:
\begin{equation*}
T = {\prod}_{i\in\mZ / d}\mG_m\quad , \quad T_S = {\prod}_{i\in S}\mG_m\quad , \quad T^S = \prod\nolimits_{i\notin S}\mG_m\spa .
\end{equation*}
Here we consider $T_S$ as a subtorus of $T$ and $T^S$ as $T / T_S$.
\begin{cor}\label{Galoiscov}
The morphism $\pi : \A_1\to\A_0$ is a connected\footnote{Here and throughout this article, we call a finite \'etale map $f: Y \rightarrow X$ an \'etale Galois covering if for every closed point $x \in X$, we have $\#{\rm Aut}(Y/X) = \#f^{-1}(x)$.  We call such a morphism {\em connected} if the inverse image of any connected component of $X$ is connected.} Galois covering in the generic fiber, with group of deck transformations
$T (\mF_p)$. The restriction $\pi_S$ of $\pi$ to $(\A_{1, S})_{\rm red}$ is an \'etale Galois covering over $\A_{0, S}$, with group
of deck transformations $T^S (\mF_p)$.
\end{cor}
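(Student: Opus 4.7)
The strategy is to analyze $\pi^{-1}$ separately over the generic fiber and over each special-fiber stratum $\A_{0,S}$, using the Oort-Tate description of $\G^\times$ together with the local structure supplied by Lemma \ref{local_param} and Corollary \ref{locnormfor}.

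For the second assertion, fix a non-empty $S\subset\mZ/d$. By the definition of $\A_{0,S}$, the group scheme $G_i$ is étale of order $p$ for $i\notin S$ and infinitesimal for $i\in S$; equivalently, by Lemma \ref{local_param} the Oort-Tate parameter $\varphi_i^\ast(X)$ is a unit on $\A_{0,S}$ if $i\notin S$ and vanishes if $i\in S$. Hence for $i\notin S$ the scheme $G_i^\times\to\A_{0,S}$ is a finite étale $\mF_p^\times$-torsor under $Z\mapsto\zeta Z$, while for $i\in S$ one has $G_i^\times=\Spec(\oh_{\A_{0,S}}[Z]/(Z^{p-1}))$, whose reduced structure is just the zero section $\A_{0,S}$ itself. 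Since reduction commutes with étale base change, $(\A_{1,S})_{\rm red}$ is the fiber product over $\A_{0,S}$ of the $G_i^\times$ for $i\notin S$, i.e.\ an étale $T^S(\mF_p)$-torsor, proving the second claim.

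Over the generic fiber every $G_i$ is automatically étale (since $p$ is invertible), so each $G_i^\times$ is a finite étale $\mF_p^\times$-torsor and $\pi_\eta$ is an étale $T(\mF_p)$-torsor. This already yields the Galois property with group $T(\mF_p)$. What remains is connectedness, i.e.\ surjectivity of the monodromy representation $\pi_1(C,\bar\eta)\to T(\mF_p)$ for every connected component $C$ of $\A_{0,\eta}$. For this I would invoke Corollary \ref{locnormfor}: in an étale neighborhood of a point of $\A_{0,\{i_0\}}$ the cover $\pi$ is obtained by adjoining a $(p-1)$-th root of a regular parameter for $\overline{\A_{0,i_0}}$, so the local inertia at the generic point of that divisor acts through the full $i_0$-th factor of $T(\mF_p)$; combining these contributions as $i_0$ ranges over $1,\ldots,d$ produces all of $T(\mF_p)$.

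The main obstacle is to ensure, for each connected component $C\subset\A_{0,\eta}$, that every divisor $\overline{\A_{0,i_0}}$ meets the closure $\bar C$ in $\A_0$, so that the corresponding inertia elements genuinely lie in the image of $\pi_1(C,\bar\eta)$. This is where one must exploit the global structure of $\A_0$, notably the cyclic symmetry among the divisors $\overline{\A_{0,i}}$ coming from the periodic isogeny chain $X_0\to X_1\to\cdots\to X_0$ of \eqref{chainpdiv}, which allows one to move between the various strata within any given component.
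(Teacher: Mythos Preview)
Your treatment of the étale Galois properties of $\pi_S$ and $\pi_\eta$ is correct and matches the paper, which simply appeals to the local normal form of Corollary~\ref{locnormfor} (and its characteristic-zero analogue, where every $G_i$ is étale).

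The gap is in the connectedness argument. Your inertia approach is sound and you correctly isolate the real obstacle --- that every divisor $\overline{\A_{0,i_0}}$ must meet the closure $\bar C$ of each generic-fiber component $C$. But the ``cyclic symmetry'' you invoke to overcome it does not exist: shifting the chain $(A_0,\ldots,A_{d-1})\mapsto(A_1,\ldots,A_{d-1},A_0)$ is not an automorphism of $\A_0$, because Definition~\ref{defA_0} singles out $A_0$ through the requirement that $\overline{\lambda}_0$ contain a \emph{principal} polarization, and this is not preserved by the shift. A cleaner route --- and probably what the paper's terse proof has in mind --- is to pick a point $x$ in the \emph{deepest} stratum $\A_{0,\mZ/d}$ of the given component: there every $G_i$ is infinitesimal, so $\pi^{-1}(x)$ is set-theoretically a single point, and since $\A_1$ is regular (Corollary~\ref{locnormfor}) the base change of $\A_1$ to the strict henselization of $\A_0$ at $x$ is the spectrum of a regular local ring, hence has irreducible generic fiber; this forces the monodromy $\pi_1(D_\eta)\to T(\mF_p)$ to be surjective. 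However, this still requires the global input that every connected component of $\A_0$ actually meets $\A_{0,\mZ/d}$ --- which is essentially Proposition~\ref{refclos}, established only later via the $p$-rank stratification and Ito's higher Hasse invariants. The paper's two-sentence proof does not make this dependence explicit, so on this point you have in fact gone further than the paper; only your proposed resolution is not the right one.
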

\begin{proof}
The statement about $\pi_S$ follows from the description of the morphism $\pi$ in the previous corollary.  Using that every $G_i$ is \'etale in characteristic zero, a similar result describes $\pi$ in the generic fiber and 
implies the remaining assertions.
\end{proof}

Later on in section \ref{monodromy_sec}, we shall prove that $\pi_S$ is in a fact a connected \'etale Galois covering as well.

\subsection{The KR-stratification of $\A_0\otimes\mF_p$}\label{subKR} We now compare the stratification of
$\A_0\otimes\mF_p$ by critical index sets with the KR{\it -stratification}. Recall (\cite{GN}, \cite{H05}) that this stratification
is defined in terms of the {\em local model diagram}, and that its strata are parametrized by the {\em admissible subset}
${\rm Adm} (\mu)$ of the extended affine Weyl group $\widetilde{W}$ of $G_{\mQ_p}$ (see \S\ref{comb_sec} below). Here,
${\mu} = (\mu_0, 1)\in X_\ast (\GL_d)\times X_\ast (\mG_m)$ and $\mu_0 = (1, 0, \ldots , 0)$ is the fixed
minuscule coweight of Drinfeld type. In fact, we may obviously identify 
${\rm Adm} (\mu)$ with the admissible set ${\rm Adm} (\mu_0)$ for $\GL_d$. 

We denote by $\tau$ the unique element of length zero in ${\rm Adm} (\mu_0)$. Also, let ${\bf a}$ be the fundamental alcove. 
\begin{prop}\label{critad}
\noindent {\rm (i)} Let $w\in{\rm Adm} (\mu_0)$. Let
\begin{equation*}
S (w) = \{i\in\{1, \ldots , d\}\mid w{\bf a}\spa\text{and}\spa \tau{\bf a}\spa\text{share a vertex of type}\spa i\}\spa 
\end{equation*}
cf. subsection \ref{crit_def} and Lemma \ref{w_leq_e_j}.  (Here ``type $d$'' corresponds 
to ``type $0$''.)  Then $S(w)\neq\emptyset$ and the association $w\mto S(w)$ induces a bijection between ${\rm Adm} (\mu_0)$ and the
set of non-empty subsets of $\mZ / d$.

\smallskip

\noindent {\rm (ii)} For the Bruhat order on ${\rm Adm} (\mu_0)$,
\begin{equation*}
w\leq w'\Llrto S(w)\supseteq S(w')\spa .
\end{equation*}
In particular, $w=\tau$ if and only if $S (w) = \mZ / d$.
\end{prop}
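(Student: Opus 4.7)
My plan is to reduce both parts of the proposition to a combinatorial analysis of subwords of reduced expressions for the minuscule translations $t_{e_j}$ in $\widetilde{W}$, exploiting in an essential way the fact that $\mu_0$ is minuscule. Because all elements of $W\mu_0$ are conjugate to $\mu_0$ by Weyl-group elements, ${\rm Adm}(\mu_0) = \bigcup_j \{w \leq t_{e_j}\}$, and so it suffices to understand the lower-Bruhat intervals below each $t_{e_j}$.

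The first step is to write down an essentially canonical reduced expression for each $t_{e_j}$: each $t_{e_j}$ has length exactly $d-1$, and fits into a minimal gallery from $\mathbf{a}$ to $t_{e_j}\mathbf{a}$ whose labels $s_{i_1^{(j)}}, \ldots, s_{i_{d-1}^{(j)}}$ are \emph{pairwise distinct} simple affine reflections, corresponding to $d-1$ of the $d$ walls of $\mathbf{a}$. The one missing simple affine reflection records precisely the unique common vertex (of some type $i(j) \in \mathbb{Z}/d$) of $t_{e_j}\mathbf{a}$ and $\tau\mathbf{a}$. Next, I invoke the subword property: every $w \in {\rm Adm}(\mu_0)$ is of the form $w = \tau \cdot \sigma$, with $\sigma$ a subword of $s_{i_1^{(j)}} \cdots s_{i_{d-1}^{(j)}}$ for some $j$. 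Because the letters are distinct, $\sigma$ is determined by the \emph{set} of reflections it uses. Alcove-theoretically, each such reflection $s_i$ pushes the current alcove across the wall opposite the vertex of type $i$ in $\tau\mathbf{a}$; consequently, a vertex of $\tau\mathbf{a}$ of type $i$ remains a vertex of $w\mathbf{a}$ exactly when $s_i$ does \emph{not} appear in any (equivalently, some) such reduced expression. Thus $S(w)$ is identified with the complement, in $\mathbb{Z}/d$, of the reflection set of $w$, and a direct count yields $|{\rm Adm}(\mu_0)| = 2^d - 1$ matching the number of non-empty subsets of $\mathbb{Z}/d$, settling (i). Non-emptiness of $S(w)$ is automatic since subwords have length at most $d-1 < d$.

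For (ii), the subword description makes both directions transparent: Bruhat order on subwords corresponds to \emph{inclusion} of the sets of reflections used, and passing to complements yields $w \leq w' \Llrto S(w) \supseteq S(w')$. The special case $w = \tau$ (the empty subword) gives $S(\tau) = \mathbb{Z}/d$. The main obstacle I expect is the first step: pinning down the form of the reduced expressions for the various $t_{e_j}$ and verifying that every element of $\bigcup_j \{w \leq t_{e_j}\}$ is captured by the subset description above, rather than by subwords of only one of them. Concretely, one must check that for any non-empty $S \subseteq \mathbb{Z}/d$ one can choose some $j \notin S$ such that the target element $w$ with $S(w) = S$ lies below $t_{e_j}$ specifically. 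This is exactly the sort of book-keeping that Lemma \ref{w_leq_e_j} and the combinatorial setup of subsection \ref{crit_def} are designed to handle, and is where I would spend the bulk of the careful work.
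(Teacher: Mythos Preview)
Your approach is correct and is essentially the same argument the paper gives, just organized a bit differently. The paper proves (ii) exactly via the subword picture you describe: it writes $t_{e_d} = \tau\, s_1 s_2 \cdots s_{d-1}$ (reduced, with pairwise distinct simple reflections), identifies each $w \leq t_{e_d}$ with a subword obtained by omitting a subset $\{i_1 < \cdots < i_q\}$, and computes directly that this subword equals $t_{e_d}\,(d \; i_q \; \cdots \; i_1)$, from which $S(w) = \{i_1,\dots,i_q,d\}$ (Proposition~\ref{main_comb_prop}(a),(c),(d)). The one simplification you are missing is the conjugation trick: rather than writing down reduced expressions for all the $t_{e_j}$ simultaneously and worrying about compatibility, the paper uses $\tau^{d-j} t_{e_d} \tau^{-(d-j)} = t_{e_j}$ to reduce everything to the single translation $t_{e_d}$. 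This dissolves your ``main obstacle'' entirely. Note also a small slip in your last paragraph: since $S(w) = \{j : w \leq t_{e_j}\}$, to realize $S(w) = S$ as a subword you must choose $j \in S$, not $j \notin S$. Finally, for part~(i) the paper takes a slightly different route, first parametrizing ${\rm Adm}(\mu_0)$ by decreasing cycles $(m_k\, m_{k-1}\,\cdots\, m_1)$ via the permissibility description (Lemma~\ref{general_form}), then reading off $S(w) = \{m_1,\dots,m_k\}$; this makes the bijection with non-empty subsets immediate without a separate cardinality count.
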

\begin{proof}
The non-emptiness of $S(w)$ follows from Lemma \ref{w_leq_e_j}.  The bijectivity of $w \mapsto S(w)$ follows by combining Lemma \ref{general_form} with Proposition \ref{main_comb_prop}, (a).  This proves (i).

Part (ii) is Proposition \ref{main_comb_prop}, (d).   
\end{proof}
Suppose now that $x\in \A_0(\bar{\mF}_p)$ belongs to the KR-stratum $\A_{0,w}$ indexed by $w \in {\rm Adm}(\mu_0)$. The chain of isogenies of one-dimensional $p$-divisible groups (\ref{chainpdiv}) defines a chain of homomorphisms of the corresponding (covariant) Dieudonn\'e modules, 
\begin{equation}\label{dieuchain}
M_0\to M_1\to\cdots\to M_{d-1}\ .
\end{equation} This may be viewed as a chain of inclusions of lattices in the common rational Dieudonn\'e module.  Let ${\rm inv}(M_\bullet, VM_\bullet) \in \widetilde{W}$ denote the relative position of the lattice chains $M_\bullet$ and $VM_\bullet$ defined by (\ref{dieuchain}).  By definition, $x \in \mathcal A_{0,w}$ if and only if 
\begin{equation} \label{M:VM}
{\rm inv}(M_\bullet, VM_\bullet) = w
\end{equation}
(cf. \cite{H05}, $\S8.1$).   Now, $i$ is a critical index for the point $x$ if and only if ${\rm Lie}(A_{i-1}) \rightarrow {\rm Lie}(A_i)$ vanishes.  Recall that $M_i/VM_i = {\rm Lie}(A_i)$.  Thus $i$ is a critical index if and only if the lattice $M_{i-1}$ coincides with $VM_{i}$.  In view of (\ref{M:VM}), this holds if and only if $\tau{\bf a}$ and $w{\bf a}$ share a vertex of type $i$.  Thus $S(x) = S(w)$.  Hence (i) of the previous proposition  proves the following result.
\begin{cor} The KR-stratification $\{\A_{0, w}\}_{w\in{\rm Adm} (\mu_0)}$ may be identified with the critical index stratification $\{\A_{0, S}\}_{S\subset\mZ / d}$, via $\A_{0, w}=\A_{0, S(w)}$. 
\end{cor}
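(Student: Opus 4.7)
The proof is essentially already sketched in the paragraph preceding the corollary; my plan is just to organize it cleanly, using Proposition \ref{critad}(i) to upgrade a containment into an equality of stratifications.

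First, I would fix $x \in \A_0(\bar{\mF}_p)$ and show directly that its critical-index set $S(x)$ coincides with $S(w)$, where $w \in \Adm(\mu_0)$ is the element labelling the KR-stratum containing $x$. By the very definition of KR-strata via the local model diagram (cf.\ \cite{H05}, \S8.1), the lattice-chain invariant satisfies $\mathrm{inv}(M_\bullet, VM_\bullet) = w$, where $M_\bullet$ is the covariant Dieudonné chain attached to the $p$-divisible chain (\ref{chainpdiv}). On the other hand, since $M_i/VM_i = \Lie(A_i)$, the index $i$ is critical for $x$ precisely when the Lie algebra map $\Lie(A_{i-1}) \to \Lie(A_i)$ vanishes, which is equivalent to the equality of lattices $M_{i-1} = VM_i$.

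The key translation step is to interpret this equality alcove-theoretically: two neighbouring lattices $M_{i-1}$ and $VM_i$ in adjacent positions of the two chains $M_\bullet$ and $VM_\bullet$ coincide if and only if the alcoves $\tau\mathbf{a}$ (the chain $M_\bullet$) and $w\mathbf{a}$ (the chain $VM_\bullet$, shifted by $\tau$) share a vertex of type $i$. This is exactly the condition defining $i \in S(w)$ in Proposition \ref{critad}(i). Hence $S(x) = S(w)$, i.e.\ $\A_{0,w} \subseteq \A_{0,S(w)}$ at the level of $\bar{\mF}_p$-points.

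Finally, I would invoke Proposition \ref{critad}(i), which asserts that $w \mapsto S(w)$ is a bijection between $\Adm(\mu_0)$ and the non-empty subsets of $\mZ/d$. Since both $\{\A_{0,w}\}_w$ and $\{\A_{0,S}\}_S$ partition $\A_0 \otimes \mF_p$ (the latter by (\ref{clos})), the inclusions $\A_{0,w} \subseteq \A_{0,S(w)}$ combined with this bijection force equality $\A_{0,w} = \A_{0,S(w)}$ on the underlying reduced subschemes. There is no real obstacle here; the only subtle point is the alcove-combinatorial translation of $M_{i-1} = VM_i$, which is handled by Proposition \ref{critad}(i) together with the combinatorics of subsection \ref{crit_def}.
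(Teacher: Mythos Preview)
Your proposal is correct and follows essentially the same route as the paper: establish $S(x)=S(w)$ pointwise via the Dieudonn\'e-module description of the KR-stratum (${\rm inv}(M_\bullet,VM_\bullet)=w$, criticality of $i$ reads as $M_{i-1}=VM_i$, and this becomes the vertex-sharing condition for $\tau{\bf a}$ and $w{\bf a}$), then invoke the bijection of Proposition~\ref{critad}(i). Your only addition is making the concluding partition argument explicit, which is harmless; the parenthetical labeling of which alcove corresponds to which chain is a bit loose, but does not affect the argument.
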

\begin{Remark}
The KR-stratification has the property that $\A_{0,w}\subset \overline{\A_{0,w'}}$ if and only if $w\leq w'$ in the Bruhat order. Hence, the item (ii) in Proposition \ref{critad} follows from the above corollary and the closure relation (\ref{clos}) for the critical index stratification. But the proof of (ii) in
\S\ref{comb_sec} is purely combinatorial.
\end{Remark}
In the next section we will need the following refinement of the closure relation (\ref{clos}).
\begin{prop}\label{refclos}
 Let $S\subset S'$ be non-empty subsets of $\mZ/d$. Every connected component of $\A_{0, S}$ meets $\A_{0, S'}$ in its closure. 
\end{prop}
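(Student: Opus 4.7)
The plan is to argue by induction on $|S'\setminus S|$; the inductive step is straightforward. If $|S'\setminus S|\geq 2$, pick any $j\in S'\setminus S$; assuming the base case, take a point $y\in \bar C\cap\A_{0,S\cup\{j\}}$, let $C'$ be the connected component of $\A_{0,S\cup\{j\}}$ containing $y$, and observe that $\overline{C'}$ is connected in $\overline{\A_{0,S}}$ and meets $\bar C$ at $y$, so $\overline{C'}\subseteq \bar C$ (since $\bar C$ is a connected component of the smooth scheme $\overline{\A_{0,S}}$). The inductive hypothesis applied to $C'\subseteq\A_{0,S\cup\{j\}}$ and $S'$ then provides a point of $\overline{C'}\cap\A_{0,S'}\subseteq \bar C\cap\A_{0,S'}$. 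I therefore focus on the base case $|S'|=|S|+1$, say $S'=S\cup\{j\}$ for some $j\notin S$, fix a connected component $C$ of $\A_{0,S}$, and let $\bar C$ denote its closure in $\A_0$. By Theorem \ref{dnc}, $\overline{\A_{0,S}}=\bigcap_{i\in S}\overline{\A_{0,i}}$ is smooth of dimension $d-|S|$ over $\mF_p$, and $\bar C$ is one of its connected (equivalently irreducible) components.

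The first input is a local observation drawn from Lemma \ref{local_param}: at any point $y\in\A_{0,S\cup\{j\}}$, there is an \'etale neighborhood of $y$ in $\A_0$ in which $\overline{\A_{0,S}}$ is locally isomorphic to $\mA^1\times\mG_m^{d-|S|-1}$, with the $\mA^1$-factor given by the coordinate $T_j$ and $y$ placed at its origin, and within this chart $\A_{0,S}$ is the open locus $\{T_j\neq 0\}$. Since this locus is connected, all nearby points of $\A_{0,S}$ lie in a single connected component $C_y$, and $y\in\overline{C_y}$. The proposition amounts to the statement that the resulting map $\Phi:\A_{0,S\cup\{j\}}\to\pi_0(\A_{0,S})$, $y\mapsto C_y$, is surjective.

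To prove surjectivity for a given $C$, I would fix any $x\in C$ and apply Lemma \ref{local_param} at $x$ to present an \'etale chart of $\A_0$ with coordinates $T_1,\ldots,T_d$ in which $C$ is locally the torus $\{T_i=0\ \forall\,i\in S\}$ and the remaining $T_k$ are invertible. In this chart, construct the one-parameter curve $\gamma:\mG_m\to C$ obtained by letting $T_j$ vary in $\mG_m$ while keeping the other $T_k$ ($k\notin S\cup\{j\}$) fixed at nonzero values. If $\gamma$ extends to a morphism $\bar\gamma:\mA^1\to\A_0$, then $\bar\gamma(\mA^1)\subseteq \bar C$ and $\bar\gamma(0)$ has $T_i=0$ for $i\in S\cup\{j\}$ while $T_k$ remains nonzero for $k\notin S\cup\{j\}$, so $\bar\gamma(0)\in \bar C\cap\A_{0,S\cup\{j\}}$.

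The main obstacle is the existence of the extension $\bar\gamma$. When $D$ is a division algebra, $\A_0$ is projective over $\Spec\mZ_p$ and the extension is immediate from the valuative criterion of properness. In the general quasi-projective case, one must verify that $\gamma$ does not escape to infinity. The moduli interpretation provides this: the family of polarized $\mathcal{O}_B$-abelian chains $(A_\bullet,\iota_\bullet,\alpha_\bullet,\lambda_\bullet,\overline\eta^p)$ along $\gamma$ has a unique limit at $T_j=0$ because the only degeneration imposed is that $G_j$ becomes infinitesimal---the abelian schemes $A_i$ remain abelian (not semi-abelian), since the principal polarization $\lambda_0$ and the fixed $\mathcal{O}_B$-action prevent any boundary degeneration. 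The limit defines an $\A_0$-valued point in $\A_{0,S\cup\{j\}}$, which belongs to $\bar C$ by continuity, completing the induction.
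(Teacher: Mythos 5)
Your inductive reduction to the base case $|S'\setminus S|=1$ is correct and matches the paper's reduction in spirit (the paper reduces instead to showing that no connected component of $\A_{0,S}$ is closed whenever $S\neq\mZ/d$, then handles general $S'$ via the semistable local structure of Theorem \ref{dnc} near a point of $\A_{0,\mZ/d}$). The chart-based one-parameter degeneration is also a sound idea, and your argument is complete in the projective case: there $\A_0$ is proper over $\Spec\mZ_p$ and the valuative criterion extends $\gamma$, and the local description of the Oort--Tate parameters from Lemma \ref{local_param} shows the limit lands in $\A_{0,S\cup\{j\}}$.

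The gap is in the non-projective case, and the asserted justification there is simply false. When $D$ is not a division algebra, $G_{\rm der}$ is not anisotropic over $\QQ$, the moduli scheme $\A_0$ is not proper, and polarized $\mathcal O_B$-abelian varieties do degenerate to semi-abelian varieties at the boundary; the principal polarization and the $\mathcal O_B$-action do \emph{not} prevent this (the elliptic modular curve, $D=M_2(F)$, is already a counterexample to the claim as stated). Sending the $j$-th Oort--Tate parameter $T_j\to 0$ only witnesses a degeneration of the finite flat group scheme $G_j$; it says nothing \emph{a priori} about whether the underlying abelian scheme acquires a toric part at the limit. Establishing that the limit exists inside $\A_0$, rather than escaping to the boundary, is exactly the content that needs to be proved, and your argument assumes it. The paper handles this by passing to the absolute moduli scheme $\A$ via the finite flat forgetful map $\rho:\A_0\otimes\mF_p\to\A\otimes\mF_p$, identifying images of components of $\A_{0,S}$ with components of $p$-rank strata $\A^{(i)}$, and then using Ito's result that $\A^{[i-1]}$ is cut out of $\A^{[i]}$ by a section of an ample power $\mathcal L^{\otimes(p^{d-i}-1)}$ (a higher Hasse invariant). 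Ampleness forces this section to vanish, but since the relevant schemes are not complete, the argument has to be run on Lan's minimal compactification, where the boundary is zero-dimensional; this handles $i\ge 2$, and the residual case $i=1$ requires a separate argument that $\A^{[1]}$ is already complete. Your proof would need to reproduce some version of this machinery before the extension $\bar\gamma$ can be justified when $D$ is not a division algebra.
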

\begin{proof} Since the closures of the strata $\A_{0, S}$ are  smooth, to prove the claim for $S'=\mZ/d$, we may proceed by induction and show that, if $S\neq \mZ/d$, then no connected component of $\A_{0, S}$ is closed, cf.~\cite{GY}, proof of Thm.~6.4. The case of general $S'$ follows then from Theorem \ref{dnc}, since the closure of a connected component of $\A_{0, S}$ is a connected component of the closure of $\A_{0, S}$. 

 Consider the morphism $\rho: \A_0\otimes\mF_p\to \A\otimes\mF_p$ into the absolute moduli scheme. In terms of the non-symmetric formulation of the moduli problem in Remark \ref{nonsymm}, $\rho$ is forgetting the filtration $H_\bullet$, and keeping $(A, i, \lambda, \bar\eta^p)$. Now $\A\otimes\mF_p$ has a stratification by the $p$-rank of the $p$-divisible group $A({\frak p}^\infty)$ of the universal abelian scheme $A$, cf. \cite{Ito}:  a stratification by smooth locally closed subschemes 
$\A^{(i)}$ of pure dimension $i$, for $i=0,\ldots , d-1$ such that the closure $\A^{[i]}$ of $\A^{(i)}$ is the union of the strata $\A^{(j)}$, for $j= 0,\ldots , i$. Furthermore, $\rho(\A_{0, S})=\A^{(i)}$, where $i=d-\vert S\vert$, and $\rho^{-1}(\A^{(i)})=\bigcup\nolimits_{\vert S\vert=d-i}\A_{0, S}$. Since the morphism $\rho$ is finite and flat, the image of a connected component of $\A_{0, S}$ is a connected component of $\A^{(i)}$ for $i=d-\vert S\vert$, and it is closed if and only if its image under $\rho$ is closed. We are therefore reduced to proving, for $i\geq 1$, that no connected component of $\A^{(i)}$ is closed. 

Now, by Ito \cite{Ito},  there exists an ample line bundle $\mathcal L$ on $\A\otimes \mF_p$ such that $\A^{[i-1]}$ is defined in $\A^{[i]}$ by the vanishing of a natural global section of $\mathcal L^{\otimes(p^{d-i}-1)}$ over $\A^{[i]}$, for any $i$ with $1\leq i\leq d-1$ ({\em higher Hasse invariant}). It follows that if   $\A\otimes \mF_p$  is projective, then for any $i\geq 1$, the closure of any connected component of $\A^{(i)}$ meets  $\A^{(i-1)}$ in a divisor, which proves the claim in this case.

When  $\A\otimes \mF_p$  is not projective, we are going to use the minimal compactification $(\A\otimes \mF_p)^*$ of $\A\otimes \mF_p$, in the sense of Lan \cite{Lan}. By \cite{Lan}, Thm.~7.2.4.1, this adds a finite number of points to $\A\otimes \mF_p$ (since this is the case for the minimal compactification of $\A\otimes \mC$). Furthermore, the line bundle $\mathcal L$ is the restriction of an ample line bundle on $(\A\otimes \mF_p)^*$ and the section of $\mathcal L^{\otimes(p^{d-i}-1)}$ on $\A^{[i]}$ above extends over the closure of $\A^{[i]}$ in $(\A\otimes \mF_p)^*$. Hence the previous argument goes through for any connected component of $\A^{(i)}$, as long as $i\geq 2$. Furthermore, since the case $d=2$ is classical, we may assume $d>2$. Then the case $i=1$ will follow if we can show that $\A^{[1]}$ is closed in $(\A\otimes \mF_p)^*$. In fact, we will show that $\A^{[i]}$ is complete for any $i\leq d-2$. For this, consider an object $(A, i, \lambda, \bar\eta^p)$
of $\A\otimes\mF_p$ over the fraction field of a complete discrete valuation ring with algebraically closed residue field, such that $A$ has semi-stable reduction. We need to show that if $(A, i, \lambda, \bar\eta^p)$ lies in $\A^{[d-2]}$, then $A$ has good reduction. Assume not. Now the special fiber of its 
N\'eron model inherits an action of $\mathcal O_B$, and its neutral component is an extension of an abelian variety $A_0$ by a torus $T_0$, both equipped with an action of $\mathcal O_B$. In fact,  the Kottwitz signature condition of type $(1, d-1)$ for $A$  implies that $A_0$ satisfies the Kottwitz signature condition of type $(0, d-2)$. It follows that $\dim T_0=2d$, and that $A_0$ is isogenous to the $d(d-2)$-power of an elliptic curve with CM by $F$. Now the $p$-torsion of the $p$-divisible group $T_0({\frak p}^\infty)$ has \'etale rank $d$, and the $p$-torsion of the $p$-divisible group $A_0({\frak p}^\infty)$ has \'etale rank $d(d-2)$. By the infinitesimal lifting property, this \'etale group scheme lifts to a subgroup of $A({\frak p}^\infty)$ of $p$-rank $d(d-1)$. But this implies that the $p$-rank of $A({\frak p}^\infty)$ is too large for $(A, i, \lambda, \bar\eta^p)$ to be contained in $\A^{[d-2]}$, and the assertion is proved. 

\end{proof}

\section{The monodromy theorem} \label{monodromy_sec}

In this section we modify our notation: we will denote by $\A_0, \A_1,$ etc. the result of the
base change $\times_{\Spec\mZ_p}{\Spec\bar\mF_p}$ of the schemes in previous sections.

\subsection{Statement of the result.} Our aim here is to prove the following theorem.
\begin{thm}\label{monodr}
Let $S\subset\mZ / d\spa ,\spa S\neq\emptyset$.
\smallskip

\noindent {\rm (i)} The lisse $\mF_p$-sheaf $\underset{i\not\in S}{\prod}\G_i$ on $\A_{0, S}$ defines
for any point $x\in\A_{0, S} (\bar\mF_p)$ a homomorphism
$$
\pi_1 (\A_{0, S}, x)\to T^S (\mF_p)\spa .
$$
This homomorphism is surjective.
\smallskip

\noindent {\rm (ii)} Consider the map of sets of connected components, 
\begin{equation*}
\pi_0 (\A_{1, S})\to\pi_0 (\A_{0, S}) ,
\end{equation*}
 defined by the  \'etale
Galois covering $\pi_S: (\A_{1, S})_{\rm red}\to\A_{0, S}$ induced by $\pi$, cf.\ Corollary \ref{Galoiscov}. This map is bijective. Equivalently, the inverse image under $\pi$ of any connected component
of $\A_{0, S}$ is connected.
\end{thm}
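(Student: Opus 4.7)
The plan is to show first that (ii) is a consequence of (i), and then prove (i) by reducing to an explicit Kummer computation in an étale neighborhood of a point of the deepest stratum.

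For the reduction: by Corollary \ref{Galoiscov}, $\pi_S$ is a finite \'etale $T^S(\mF_p)$-Galois cover, so for any connected component $Y \subset \A_{0,S}$ and $y \in Y$ the connected components of $\pi_S^{-1}(Y)$ are in bijection with $T^S(\mF_p)/\mathrm{Im}(\rho_Y)$, where $\rho_Y : \pi_1(Y,y) \to T^S(\mF_p)$ is the monodromy representation. Hence surjectivity of $\rho_Y$ in (i) is equivalent to connectedness of $\pi_S^{-1}(Y)$ in (ii), and it suffices to prove (i).

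For (i), fix a connected component $Y$ of $\A_{0,S}$. Applying Proposition \ref{refclos} with $S' = \mZ/d$, the closure $\overline{Y}$ contains a point $x_0 \in \A_{0,\mZ/d}$. Lemma \ref{local_param} (taken at $x_0$, so with $\mZ/d$ in place of $S$) then supplies an \'etale neighborhood $U = \Spec\mathcal{O}$ of $x_0$ in $\A_0$ with $\mathcal{O} = W(\bar{\mF}_p)[T_1,\ldots,T_d]/\bigl(\prod_{j=1}^d T_j - w_p\bigr)$ and explicit Oort--Tate equations $G_i = \Spec\mathcal{O}[Z]/(Z^p - u_i T_i Z)$ for units $u_i \in \mathcal{O}^\times$. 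In this chart, $\A_{0,S}$ is cut out by $T_j = 0$ for $j \in S$ and invertibility of $T_j$ for $j \notin S$, so $U \cap \A_{0,S} \cong \Spec\bar{\mF}_p[T_j^{\pm 1}]_{j \notin S}$ is a connected torus. Since $x_0 \in \overline{Y}$, its \'etale image in $\A_{0,S}$ is forced to lie in $Y$.

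Over this chart, $\pi_S$ is the product over $j \notin S$ of the $\mF_p^\times$-Kummer torsors $G_j^\times = \Spec\mathcal{O}[Z]/(Z^{p-1} - u_j T_j)$. After the further \'etale refinement $\widetilde{U} \to U$ in the proof of Corollary \ref{locnormfor} (adjoining $(p-1)$-th roots $V_i$ of lifts $v_i$ of $u_i^{-1}$), the covering becomes the clean Kummer morphism $T_i \mapsto T_i'^{p-1}$. Choose a connected component $W$ of $\widetilde{U} \cap \A_{0,S}$; its \'etale image still lies in $Y$, and the cover pulled back to $W$ is the Kummer cover adjoining $T_j'$ with $T_j'^{p-1} = T_j$ for $j \notin S$ (the factors for $i \in S$ trivialize to the zero section). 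If this Kummer cover of $W$ is connected, then its Galois group $T^S(\mF_p)$ coincides with its monodromy, so $\pi_1(W) \to T^S(\mF_p)$ is surjective; composing with $\pi_1(W) \to \pi_1(Y,y)$ then proves (i).

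The technical heart of the argument, and the step where Strauch's deformation-theoretic method is genuinely needed, is the verification that the classes $[T_j]$ for $j \notin S$ remain linearly independent in the Kummer group $\Gamma(W,\mathcal{O}_W^\times)/(\Gamma(W,\mathcal{O}_W^\times))^{p-1}$, ensuring that the pulled-back Kummer cover on $W$ is connected. Since pullbacks of non-trivial Kummer covers along \'etale Kummer covers can split, one must check that the auxiliary units $v_i$ do not encode any of the coordinate functions $T_j$ modulo $(p-1)$-th powers. Following Strauch, this is established by constructing one-parameter deformations of $x_0$ into $\A_{0,S}$ in which the Oort--Tate parameters of the $G_j$ can be made to vary independently as $j$ runs over $\mZ/d \setminus S$, producing enough monodromy elements to generate all of $T^S(\mF_p)$.
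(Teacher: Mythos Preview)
Your reduction of (ii) to (i) is correct and matches the paper's. However, your approach to (i) diverges from the paper's proof and leaves a genuine gap in its final step.

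The paper does not argue via the explicit local chart of Lemma~\ref{local_param}. Instead it works over the universal deformation ring $R$ of the one-dimensional formal group at a point $y \in \A_{0,\mZ/d}(\bar\mF_p)$ in the closure of the given component, using the chain $R \subset R^0 \subset R^1 \subset \widetilde{R}$ in which $\widetilde{R}$ classifies full Drinfeld $p$-level structures. Drinfeld's theorem makes $\widetilde{R}$ a regular local ring with regular system of parameters $\varphi(e_1),\ldots,\varphi(e_d)$. Strauch's result (Proposition~\ref{Strauch}) then identifies $\Gal(\widetilde{\kappa}/\kappa)$ with $\GL_d(\mF_p)$ and computes the decomposition and inertia groups at the prime $\widetilde{\wp}_h = (\varphi(e_1),\ldots,\varphi(e_h))$ as the explicit subgroups $P_h \supset Q_h$. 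Reading off the subextension $\kappa^1_h/\kappa^0_h$ corresponding to $\A_1 \to \A_0$ gives $\Gal(\kappa^1_h/\kappa^0_h) = PB_h/PU_h \cong T^{S_0}(\mF_p)$ for $S_0 = \{1,\ldots,h\}$, which is exactly the surjectivity of the oblique arrow in the paper's diagram~(\ref{oblique}). The case of general $S$ follows by conjugating by a permutation matrix in $\GL_d(\mF_p)$.

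Your Kummer-theoretic approach is in principle reasonable, but the last two paragraphs do not close the argument. First, Lemma~\ref{local_param} produces an \'etale neighborhood carrying an \'etale morphism \emph{to} $\Spec\mathcal{O}$, not an identification $U = \Spec\mathcal{O}$; the units $u_i$ live on that neighborhood, and after restricting to the stratum you have no direct handle on their Kummer classes. Second, passing to the refinement $\widetilde{U}$ of Corollary~\ref{locnormfor} adjoins $(p-1)$-th roots of the $v_i$, so choosing a connected component $W$ of $\widetilde{U}\cap\A_{0,S}$ and asserting that the cover $T_j \mapsto T_j'^{\,p-1}$ over $W$ is connected is precisely what must be proved---you have not excluded that the $v_i$ already encode some $T_j$ modulo $(p-1)$-th powers. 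Finally, your invocation of Strauch misdescribes his contribution: he does not ``construct one-parameter deformations with independently varying Oort--Tate parameters''; his input is the decomposition/inertia computation above, which lives in the $\GL_d(\mF_p)$-cover via Drinfeld level structures and does not translate into your Kummer setup.
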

Note that, since $T^S (\mF_p)$ is the group of deck transformations  for the covering
$(\A_{1, S})_{\rm red}\to \A_{0, S}$, these two statements are trivially equivalent. We
will prove the theorem in the form (i).

We will in fact prove something stronger. Let $y\in\A_{0, \mZ/d} (\bar\mF_p)$
be a point in the closure of the connected component of $\A_{0, S}$ containing $x$, cf. Proposition \ref{refclos}.
Let $X_0$ be the formal group defined by $y$, and let 
$R\simeq W(\bar\mF_p)[[U_1, \ldots , U_{d-1}]]$ be its universal
deformation ring. We obtain a chain of finite ring morphisms
\begin{equation*}
R\subset R^0\subset R^1\subset\widetilde{R}\spa .
\end{equation*}
Here $R^0$, resp.\  $R^1$, resp.\ $\widetilde{R}$ represent the following functors over $\Spf R$:
\begin{equation*}
\begin{aligned}
R^0 =& \spa\text{flag}\spa H_\bullet \spa\text{of}\spa X_0 [p]\\
R^1 =& \spa\text{flag}\spa H_\bullet \spa\text{and generators of}\spa G_i = H_i / H_{i-1}, i = 1, \ldots , d\\
\widetilde{R}\, =& \spa\text{Drinfeld level structure}\spa \varphi : \mF_p^d\to X_0 [p]\spa .
\end{aligned}
\end{equation*}
  Note that $G_i$ is a subgroup scheme of the formal group $X_i=X_0/H_{i-1}$ of dimension 1.  Hence the notion of  Drinfeld or Katz/Mazur of a generator of $G_i$ makes sense; by Remark \ref{OT=KM_rem} it coincides with  the Oort-Tate  notion.  

Note that the injection $R^0\subset \widetilde{R}$ is induced by the functor morphism which
associates to a Drinfeld level structure $\varphi$ the chain $H_\bullet = H_\bullet (\varphi)$, where $H_i$
is defined by the equality of divisors
\begin{equation*}
[H_i] = \sum_{x\in{\rm span}(e_1, \ldots , e_i)}{[\varphi (x)]}\spa .
\end{equation*}
Similarly, $R^1\subset\widetilde{R}$ is induced by the functor
\begin{equation*}
(X_0, \varphi)\mto (X_0, H_\bullet (\varphi), \spa\text{generator}\spa\varphi (e_i)\spa\text{of}\spa G_i = H_i / H_{i-1}, i = 1, \ldots , d)\spa .
\end{equation*}
Here $e_1, \ldots , e_d\in\mF_p^d$ are the standard generators.

Let $\wp_S^0\subset R^0$ be the ideal defined by $\overline{\A_{0, S}}$.
Let $\kappa_S^0 = {\rm Frac} (R^0/\wp_S^0)$. Then, denoting by $\eta_x$ the generic point of the
connected component of $\A_{0, S}$ containing $x$, we obtain a commutative 
diagram
\begin{equation}\label{oblique}
\xymatrix{
\Gal (\kappa_S^0)\ar[r]\ar[drr] & \Gal (\eta_x)\ar[r] & \pi_1 (\A_{0, S}, x)\ar[d]\\
{\quad} & {\quad} & T^S (\mF_p)\spa . }
\end{equation}
We will prove that the oblique arrow is surjective, which implies assertion (i). 

\subsection{A result of Strauch.}We are going to use the following result of Strauch \cite{Str}.  We consider,  for a fixed integer $h$ with $1\leq h\leq d$, the following two subgroups of $\GL_d (\mF_p)$:
\begin{equation*}
\begin{aligned}
P_h &=\Stab ({\rm span} (e_1, \ldots , e_h))\\
Q_h &=\text{subgroup of}\spa P_h \spa\text{of elements acting trivially on}\spa \mF_p^d / {\rm span} (e_1, \ldots , e_h)\spa .
\end{aligned}
\end{equation*}
Recall (\cite{Dr}) that $\widetilde{R}$ is a regular local ring, with system of regular
parameters $\varphi (e_1), \ldots , \varphi (e_d)$ (which we regard as elements of
the maximal ideal of $\widetilde{R}$ after choosing a formal parameter of the formal group $X_0$).
Let
\begin{equation}
\begin{aligned}
\widetilde{\wp}_h &= \spa\text{prime ideal generated by}\spa\varphi (e_1), \ldots , \varphi (e_h)\spa ,\\
\widetilde{R}_h &= \widetilde{R} / \widetilde{\wp}_h\spa , \spa\widetilde{\kappa}_h ={\rm Frac} (\widetilde{R}_h)\spa .
\end{aligned}
\end{equation}
Similarly, let
\begin{equation}
\wp_h = \widetilde{\wp}_h\cap R\spa ,\spa R_h = R / \wp_h\spa ,\spa\kappa_h = {\rm Frac} (R_h)\spa .
\end{equation}
We also set
\begin{equation*}
\widetilde{\kappa} = {\rm Frac} (\widetilde{R})\spa ,\spa\kappa = {\rm Frac} (R)\spa .
\end{equation*}

\begin{prop}\label{Strauch}
{\rm (Strauch):} \noindent {\rm (i)} The extension $\widetilde{\kappa} / \kappa$ is Galois. The action of $\GL_d (\mF_p)$
on the set of Drinfeld bases induces an isomorphism
\begin{equation*}
\GL_d (\mF_p)\simeq \Gal (\widetilde{\kappa} / \kappa)\spa .
\end{equation*}

\smallskip

\noindent {\rm (ii)}  The extension $\widetilde{\kappa}_h / \kappa_h$ is normal. The decomposition
group of $\widetilde{\wp}_h$ is equal to $P_h$ and the inertia group is equal to $Q_h$.
The canonical homomorphism
\begin{equation*}
P_h / Q_h\to\Aut (\widetilde{\kappa}_h / \kappa_h)
\end{equation*}
is an isomorphism. In particular,
\begin{equation*}
\Aut (\widetilde{\kappa}_h / \kappa_h)\simeq\GL_{d-h} (\mF_p)\spa .
\end{equation*}\qed
\end{prop}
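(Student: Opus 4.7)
The proof, following Strauch \cite{Str}, rests on Drinfeld's theorem \cite{Dr} that $\widetilde{R}$ is a regular local ring, finite flat over $R$ of generic degree $|\GL_d(\mF_p)|$, with $\varphi(e_1),\ldots,\varphi(e_d)$ forming a regular system of parameters (given a choice of formal parameter on $X_0$). The group $\GL_d(\mF_p)$ acts on $\widetilde{R}$ over $R$ via $g\cdot\varphi=\varphi\circ g^{-1}$, because the forgetful map that discards $\varphi$ is $\GL_d(\mF_p)$-invariant.

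For (i), I would argue that the $\GL_d(\mF_p)$-action on $\widetilde\kappa$ is free: on the generic fiber of $\Spec\widetilde R\to\Spec R$ the formal group acquires \'etale $p$-torsion, so the $|\GL_d(\mF_p)|$ distinct geometric bases of $\mF_p^d\xrightarrow{\sim} X_0[p]$ give rise to $|\GL_d(\mF_p)|$ distinct geometric points of $\Spec\widetilde{R}$. Since $[\widetilde\kappa:\kappa]=|\GL_d(\mF_p)|$ matches the order of the acting group and $\kappa\subset\widetilde\kappa^{\GL_d(\mF_p)}$, the extension $\widetilde\kappa/\kappa$ is Galois with group $\GL_d(\mF_p)$, acting as claimed.

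For (ii), set $V_h:=\mathrm{span}(e_1,\ldots,e_h)\subset\mF_p^d$. First I would give an invariant description of $\widetilde{\wp}_h$ as the (prime) ideal cutting out the condition $\varphi|_{V_h}=0$: the inclusion $(\varphi(v):v\in V_h)\subset\widetilde{\wp}_h$ is immediate from the formal group law, and the reverse inclusion follows because for $v\notin V_h$ the element $\varphi(v)$ reduces to a regular parameter in $\widetilde{R}/\widetilde{\wp}_h$ (coming from the images of $\varphi(e_{h+1}),\ldots,\varphi(e_d)$), hence is nonzero. Consequently $g\in\GL_d(\mF_p)$ stabilizes $\widetilde{\wp}_h$ iff $g(V_h)=V_h$, giving the decomposition group $P_h$. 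To compute the inertia, I would endow $\widetilde{R}/\widetilde{\wp}_h$ with a modular interpretation: a Drinfeld level structure with $\varphi|_{V_h}=0$ factors through $\bar\varphi:\mF_p^d/V_h\to X/H_h$, which, by Drinfeld's Cartier-divisor identity $[H_i]=\sum_{x\in V_i}[\varphi(x)]$, is itself a Drinfeld level structure on the height $d-h$ quotient formal group $X/H_h$. One then checks that the natural map from the Drinfeld deformation ring in height $d-h$ into $\widetilde{R}/\widetilde{\wp}_h$ is an isomorphism (matching Krull dimensions, and, by (i) applied in height $d-h$, matching generic Galois groups). The $P_h$-action on $\bar\varphi$ factors through its image in $\GL_{d-h}(\mF_p)=\Aut(\mF_p^d/V_h)$, so the subgroup acting trivially on $\widetilde\kappa_h$ is precisely the kernel $Q_h$, whence $P_h/Q_h\simeq\GL_{d-h}(\mF_p)\simeq\Aut(\widetilde\kappa_h/\kappa_h)$.

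The main obstacle is this ``reduction-of-structure'' step identifying $\widetilde{R}/\widetilde{\wp}_h$ with the Drinfeld deformation ring in height $d-h$. Drinfeld's Cartier-divisor formalism is essential here: one must verify that after killing $\varphi(e_1),\ldots,\varphi(e_h)$ the subscheme $H_h=\sum_{v\in V_h}[\varphi(v)]$ is a well-defined finite flat subgroup of $X[p]$ so that the quotient $X/H_h$ makes sense as a height $d-h$ formal group, and that the induced $\bar\varphi$ satisfies the Drinfeld level-structure axiom on $X/H_h$. Once this is in place, part (ii) reduces to combining the orbit-stabilizer analysis above with part (i) applied to height $d-h$.
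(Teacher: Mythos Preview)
The paper does not give its own proof of this proposition: it is stated as a result of Strauch \cite{Str} and closed with a \qed. So there is nothing in the paper to compare your argument against beyond the attribution.

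That said, your outline is a faithful sketch of how the result is proved. Part (i) is exactly the standard argument via Drinfeld's theorem on regularity and the degree count. For part (ii), your identification of $\widetilde{\wp}_h$ as the locus $\varphi|_{V_h}=0$ and the resulting computation of the decomposition group as $P_h$ are correct; normality of $\widetilde\kappa_h/\kappa_h$ then follows from general ramification theory for the Galois cover in (i). The determination of the inertia group does, as you say, hinge on the reduction-of-structure step. Two small points worth sharpening there: first, when $\varphi|_{V_h}=0$ the divisor $H_h=p^h[0]$ is the \emph{canonical} infinitesimal subgroup of order $p^h$ of the one-dimensional formal group, hence is already defined over $R/\wp_h$, which is what makes the comparison map to the height $d\!-\!h$ deformation ring well-posed on the base; second, for the inertia computation you do not strictly need the full ring isomorphism --- it suffices that $Q_h$ acts trivially on $\bar\varphi$ (obvious) and that $P_h/Q_h\cong\GL_{d-h}(\mF_p)$ acts faithfully on $\widetilde\kappa_h$, which one sees directly on the generic fiber where $X/H_h$ has \'etale $p$-torsion and $\bar\varphi$ is a genuine basis. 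The stronger identification you propose is true and is what Strauch proves, but the weaker statement already yields $I_h=Q_h$.
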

We introduce the following subgroups of $P_h$,
\begin{equation*}
\begin{array}{l}
PB_h = \{g\in P_h\mid g\spa\text{preserves the standard flag}\spa\mathcal F_\bullet\spa\text{on}\spa\mF_p^d / {\rm span} (e_1, \ldots , e_h)\}\\
PU_h = \{g\in PB_h\mid g \spa\text{induces the identity on}\spa \gr_\bullet\mathcal F\}\spa .
\end{array}
\end{equation*}
We thus obtain a chain of subgroups of $\GL_d (\mF_p)$, 

\begin{equation*}
\begin{array}{ccccccc}
Q_h & \subset & PU_h & \subset & PB_h & \subset & P_h\\
\\
|| && || && || && ||\\
\\
{\begin{pmatrix}\xymatrix@R=0.3mm@C=0.3mm@M=0.3mm@W=0.3mm{&&&&\ar@{-}[dddddd]&&&&\\
&&\ast &&&&\ast &&\\
\ar@{-}[rrrrrrrr]&&&&&&&&\\
&&&&&1&&0&\\
&&0&&&&\ddots&&\\
&&&&&0&&1&\\
&&&&&&&&}\end{pmatrix}} &&
{\begin{pmatrix}\xymatrix@R=0.3mm@C=0.3mm@M=0.3mm@W=0.3mm{&&&&\ar@{-}[dddddd]&&&&\\
&&\ast &&&&\ast &&\\
\ar@{-}[rrrrrrrr]&&&&&&&&\\
&&&&&1&&\ast &\\
&&0&&&&\ddots&&\\
&&&&&0&&1&\\
&&&&&&&&}\end{pmatrix}} &&
{\begin{pmatrix}\xymatrix@R=0.3mm@C=0.3mm@M=0.3mm@W=0.3mm{&&&&\ar@{-}[dddddd]&&&&\\
&&\ast &&&&\ast &&\\
\ar@{-}[rrrrrrrr]&&&&&&&&\\
&&&&&\ast &&\ast &\\
&&0&&&&\ddots&&\\
&&&&&0&&\ast &\\
&&&&&&&&}\end{pmatrix}} &&
{\begin{pmatrix}\xymatrix@R=0.3mm@C=0.3mm@M=0.3mm@W=0.3mm{&&&&\ar@{-}[dddddd]&&&&\\
&&\ast &&&&\ast &&\\
\ar@{-}[rrrrrrrr]&&&&&&&&\\
&&&&&\ast &&\ast &\\
&&0&&&&\ddots&&\\
&&&&&\ast &&\ast &\\
&&&&&&&&}\end{pmatrix}}
\end{array}
\end{equation*}

\medskip

Let
\begin{equation}
\begin{aligned}
\wp_h^1 &= \widetilde{\wp}_h\cap R^1,\  &R_h^1 = R^1 / \wp_h^1, \ \ \ &\kappa_h^1 = {\rm Frac} (R_h^1)\\
\wp_h^0 &= \widetilde{\wp}_h\cap R^0,\  &R_h^0 = R^0 / \wp_h^0, \ \ \ &\kappa_h^0 = {\rm Frac} (R_h^0)\spa .
\end{aligned}
\end{equation}
\begin{cor} {\rm (i)} $\wp_h^0$ coincides with the prime ideal $\wp_S^0$ introduced earlier, for $S = S_0=\{1, \ldots , h\}$.

\smallskip

\noindent {\rm (ii)} Under the isomorphism in Proposition \ref{Strauch}, (ii), there are the following identifications,
\begin{equation*}
\begin{aligned}
\Aut (\widetilde{\kappa}_h / \kappa_h^1) &= PU_h / Q_h\\
\Aut (\widetilde{\kappa}_h / \kappa_h^0) &= PB_h / Q_h\\
\Aut (\kappa_h^1 / \kappa_h^0) &= PB_h / PU_h\spa .
\end{aligned}
\end{equation*}
\end{cor}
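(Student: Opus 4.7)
My plan is to treat the two assertions separately: (i) reduces to comparing two explicitly presented prime ideals of height $h$ in $R^0$, while (ii) is Galois theory, once the subgroups of $\GL_d(\mF_p)$ corresponding to $\kappa^0$ and $\kappa^1$ are identified.

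For (i), I first recall that by Remark \ref{OT=KM_rem} the element $\varphi(e_i)$ is an Oort-Tate generator of $G_i = H_i/H_{i-1}$ over $\widetilde R$, so the defining relation $Z^{p-1} = X$ of $\G^\times \subset \G$ gives $\varphi(e_i)^{p-1} = \varphi_i^*(X)$. Applying Lemma \ref{local_param} at $y \in \A_{0, \mZ/d}(\bar\mF_p)$ and completing at $y$, the ring $R^0$ becomes $W(\bar\mF_p)[[T_1, \ldots, T_d]]/(T_1 \cdots T_d - w_p)$, with $\varphi_i^*(X) = u_i T_i$ for a unit $u_i$, and with $\wp_{S_0}^0 = (T_1, \ldots, T_h)$. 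Hence $T_i = u_i^{-1}\varphi(e_i)^{p-1} \in \widetilde\wp_h \cap R^0 = \wp_h^0$ for all $i \leq h$, giving $\wp_{S_0}^0 \subset \wp_h^0$. Both ideals are prime of height $h$ in $R^0$: the quotient $R^0/(T_1, \ldots, T_h) \simeq \bar\mF_p[[T_{h+1}, \ldots, T_d]]$ is a regular domain of dimension $d-h$, while $\wp_h^0$ has height $h$ because $\widetilde R$ is regular of dimension $d$ with regular system of parameters $\varphi(e_1), \ldots, \varphi(e_d)$ (Proposition \ref{Strauch}) and the extension $R^0 \subset \widetilde R$ is finite, so heights are preserved. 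Equality then follows.

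For (ii), the main step is identifying $\Gal(\widetilde\kappa/\kappa^0)$ and $\Gal(\widetilde\kappa/\kappa^1)$ inside $\GL_d(\mF_p)$ from the functors represented by $R^0$ and $R^1$. Writing the Galois action as $g \cdot \varphi = \varphi \circ g^{-1}$, one computes directly that the flag $H_\bullet(\varphi \circ g^{-1})$ equals $H_\bullet(\varphi)$ iff $g$ preserves each $\mathrm{span}(e_1, \ldots, e_i)$, that is, iff $g \in B$ (upper triangular). For $R^1$, expanding $(g \cdot \varphi)(e_i)$ modulo $H_{i-1}$ yields $(g^{-1})_{ii} \varphi(e_i)$, so preserving all Oort-Tate generators forces $g$ to lie in the unipotent radical $U$. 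Hence $\Gal(\widetilde\kappa/\kappa^0) = B$ and $\Gal(\widetilde\kappa/\kappa^1) = U$. Specializing at $\widetilde\wp_h$ and restricting Proposition \ref{Strauch} (ii) to the subgroup $B$, the decomposition and inertia groups of $\widetilde\wp_h$ in $B$ are $B \cap P_h$ and $B \cap Q_h$; since $B \subset P_h$, this becomes
\[
\Aut(\widetilde\kappa_h/\kappa_h^0) \;=\; B/(B \cap Q_h) \;\simeq\; BQ_h/Q_h \;\subset\; P_h/Q_h.
\]
Under the identification $P_h/Q_h \simeq \GL_{d-h}(\mF_p)$ (action on $\mF_p^d/\mathrm{span}(e_1, \ldots, e_h)$), the image of $B$ is the Borel $B_{d-h}(\mF_p)$, which is by definition the image of $PB_h$, so $BQ_h = PB_h$. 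This yields the first identification; the second is the same argument with $U, U_{d-h}, PU_h$ in place of $B, B_{d-h}, PB_h$. The third follows by dividing, since $\Aut(\widetilde\kappa_h/\kappa_h^1)$ is normal in $\Aut(\widetilde\kappa_h/\kappa_h^0)$ with quotient $\Aut(\kappa_h^1/\kappa_h^0)$.

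The main obstacle is the Galois-theoretic identification of $\Gal(\widetilde\kappa/\kappa^0)$ and $\Gal(\widetilde\kappa/\kappa^1)$ in (ii); once these are in hand, passage to the residue extension along $\widetilde\wp_h$ is standard. The remaining step---that the decomposition and inertia inside the intermediate Galois groups $B$ and $U$ are cut out by intersecting with $P_h, Q_h$---follows formally from Proposition \ref{Strauch} (ii) by transitivity of decomposition and inertia in towers.
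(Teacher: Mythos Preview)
Your proposal is correct and follows essentially the same strategy as the paper's (very brief) proof: for (i), both argue by comparing heights after establishing one inclusion---you prove $\wp_{S_0}^0 \subset \wp_h^0$ via the Oort--Tate relation $\varphi(e_i)^{p-1} = u_i T_i$, while the paper asserts the reverse inclusion is ``obvious''---and for (ii), both deduce the identifications from the functorial description of $R^0, R^1$ together with the decomposition/inertia data of Proposition~\ref{Strauch}~(ii), which you make explicit by computing $\Gal(\widetilde\kappa/\kappa^0) = B$, $\Gal(\widetilde\kappa/\kappa^1) = U$, and intersecting with $P_h$, $Q_h$.
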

\begin{proof} Obviously $\widetilde{\wp}_h\cap R^0\subset\wp_{S_0}^0$. Since both prime ideals have height
$h$, this inclusion is an equality, which proves (i). The assertion (ii) follows from the description of the isomorphism
in Proposition \ref{Strauch}, (ii).
\end{proof}

\subsection{End of the proof.}Now we can prove the surjectivity of the oblique arrow in (\ref{oblique}) for $S=S_0= \{1, \ldots , h\}$.
Indeed, since $\kappa_h^1 / \kappa_h^0$ is of degree prime to $p$, this extension is separable
so that $\Aut (\kappa_h^1 / \kappa_h^0) = \Gal (\kappa_h^1 / \kappa_h^0)$. Furthermore, we
have a natural identification
\begin{equation*}
PB_h / PU_h\simeq T^S (\mF_p)\spa ,
\end{equation*}
and the induced homomorphism
\begin{equation*}
\Gal (\kappa_h^0)\twoheadrightarrow \Gal (\kappa_h^1 / \kappa_h^0)\to T^S (\mF_p)
\end{equation*}
is the oblique arrow  in (\ref{oblique}). The claim follows in the case $S=S_0 = \{1, \ldots , h\}$. 

Let now
$S$ be arbitrary with $\vert S\vert = h$. Let $g\in \GL_d (\mF_p)$ be a permutation matrix with $g S_0 = \{1, \ldots , h\}$.
Then $^g\wp_h$ lies over $\wp_S^0$, and the decomposition group resp.\ inertia group of $^g\wp_h$ is the
conjugate by $g$ of $P_h$ resp. $Q_h$. Now the surjectivity of the homomorphism 
$\Gal (\kappa_S^0)\to T^S (\mF_p)$ follows from the surjectivity of $\Gal (\kappa_{S_0}^0)\to T^{S_0} (\mF_p)$.\qed

\section{Nearby cycles} \label{nearby_sec}

\subsection{Recollections on $R\Psi$}
Let $(Z, \eta, s)$ be a henselian triple, with $s$ of characteristic $p$. Let $(\bar Z, \bar {\eta}, \bar s)$ be the strict henselian triple deduced from a geometric point $\bar {\eta}$ over $\eta$. Let $X\to Z$ be a morphism of finite type. We obtain a diagram with cartesian squares, where $\bar X=X\times_Z\bar Z$,
\begin{equation}\label{leveldiag}
\begin{matrix}
& X_{\bar\eta}&\overset{\bar j}{\longrightarrow}& \bar X&\overset{\bar\iota}{\hookleftarrow}& X_{\bar s}\\
{}&\downarrow\,&{}&\downarrow\,&{}&\downarrow\,\\
& \bar\eta& \longrightarrow& \bar Z &{\hookleftarrow}& \bar s
\end{matrix}
\end{equation}
Let $\mathcal F$ be a $\bar{\mQ}_\ell$-sheaf on $X_\eta$, where $\ell\neq p$. Then $R\Psi (\mathcal F)=\bar{\iota}^*R\bar{j}_*(\mathcal F_{\bar{\eta}})$, the {\em complex of nearby cycles} of $\mathcal F$,  is an object of the  triangulated category of $\bar{\mQ}_\ell$-sheaves on $X_{\bar s}$ with continuous action of ${\rm Gal}(\bar \eta/\eta)$ compatible with its action on $X_{\bar s}$,
\begin{equation}
R\Psi (\mathcal F)\in D^b_c(X_s\times_s\eta, \bar{\mQ}_\ell) ,
\end{equation}
cf. \cite{SGA}, XIII, 2.1. The construction extends to the case where $\mathcal F$ is replaced by any bounded complex of $\bar{\mQ}_\ell$-sheaves on $X_\eta$. If $f:X\to X'$ is a morphism of $Z$-schemes of finite type, there are natural functor morphisms
\begin{equation} \label{RPsi_morup}
f^* R\Psi' \to R\Psi f_\eta^*,
\end{equation}
and 
\begin{equation}\label{RPsi_mordown} 
R\Psi' Rf_{\eta *}\to Rf_* R\Psi . 
\end{equation}
The first is an isomorphism when $f$ is smooth, and the second is an isomorphism when $f$ is proper.  

When $\mathcal F=\bar{\mQ}_\ell$, we also write $R\Psi$ for $R\Psi(\bar{\mQ}_\ell)$, and $R^i\Psi$ for $\mathcal H^i(R\Psi)$. We recall the explicit expression for $R\Psi$ in the case when $X$ has semi-stable reduction over $Z$, with all multiplicities of the divisors in the special fiber prime to $p$, cf. \cite{SGA}, I, 3.2,  \cite{I}, 3.5. 

Let $x\in X_{\bar s}$, and let $\bar x$ be a geometric point over $x$. Let $\mathcal C=\{C_i\}_i$ be the set of branches of $X_{\bar s}$ through $x$. Consider the following homological complex concentrated in degrees $1$ and $0$,
\begin{equation}
\begin{aligned}
K_x:\quad & \mZ^{\mathcal C}&\longrightarrow &\ \mZ\\
&(n_i)_i&\mapsto &\sum\nolimits_i n_i d_i 
\end{aligned}
\end{equation}
Here $d_i$ denotes the multiplicity of the branch $C_i$ through $x$. Under the hypothesis that all $d_i$ are prime to $p$, $R\Psi$ is tame, i.e., the action of ${\rm Gal}(\bar\eta/\eta)$ on $R\Psi$ factors through the tame Galois group ${\rm Gal}(\eta_t/\eta)$. Furthermore,
\begin{equation}\label{SGAfor}
\begin{aligned}
R^i\Psi_{\bar x}&=\bigwedge^i\big(H_1(K_x)\otimes\bar{\mQ}_\ell(-1)\big)\otimes R^0\Psi_{\bar x}\ ,\\
R^0\Psi_{\bar x}&= \bar{\mQ}_\ell^{H_0(K_x)} .
\end{aligned}
\end{equation} 
Here there is a transitive action of the inertia subgroup $I={\rm Gal}(\eta_t/\eta_{un})$ on $H_0(K_x)$, making the last identification equivariant. The set $H_0(K_x)$ arises as the set of connected components of $\big(X_{(\bar{x})}\big)_{\eta_t}$, where $X_{(\bar x)}$ denotes the  strict localization of $X$ in $\bar x$.

\subsection{The case $\A_0$}
We now apply these considerations to the scheme $\A_0$ over $Z=\Spec \mZ_p$. We denote the complex $R\Psi(\bar{\mQ}_\ell)\in D^b_c\big((\A_0\otimes\mF_p)\times_{\Spec \mF_p}\Spec \mQ_p, \bar{\mQ}_\ell\big)$ by $R\Psi_0$. Since the multiplicities of all divisors in the special fiber of $\A_0$ are equal to one, we have $R^0\Psi_0=\bar{\mQ}_\ell$. Furthermore, the special fiber of $\A_0$ is {\it globally} a union of smooth divisors. From  (\ref{SGAfor}) we deduce therefore the following formula for the pull-back of $R\Psi_0$ by the locally closed embedding $i_S: \A_{0, S}\hookrightarrow \A_0\otimes \mF_p$, 
\begin{equation}
i_S^*(R^i\Psi_0)=\bigwedge^i\big(\ker(\mZ^S\to\mZ)\otimes\bar{\mQ}_\ell(-1)\big) .
\end{equation}
In particular, $i_S^*(R^i\Psi_0)$ is a constant local system on $\A_{0, S}$ with trivial inertia action, for all $i$.

\subsection{The case $\A_1$} \label{A1_sec}
We next consider the scheme $\A_1$ over $Z=\Spec \mZ_p$. In this case, we use the notation $R\Psi_1$ for $R\Psi(\bar{\mQ}_\ell)$.  Using the formulas (\ref{SGAfor}) and keeping in mind that all multiplicities are the same $(= p-1)$, we obtain from Corollary \ref{locnormfor} (compatibility of the stratifications of $\A_0\otimes\mF_p$ and $\A_1\otimes\mF_p$)
\begin{equation} \label{preproj}
R\Psi_1 = R^0\Psi_1 \otimes \pi^*(R\Psi_0).
\end{equation}
Here $R^0\Psi_1$ is a lisse sheaf of rank $p-1$, and the action of inertia $I_p$ on cohomology sheaves is controlled by $R^0\Psi_1$: by the formulas (\ref{SGAfor}), $I_p$ acts trivially on each $R^i\Psi_0$ and via the regular representation of its natural quotient $\mu_{p-1}$ on the stalks of $R^0\Psi_1$.  

For our purposes it is most convenient to consider the direct image $\pi_*(R\Psi_1)$ on $\A_0$. More precisely, applying the functoriality (\ref{RPsi_mordown}) to the finite morphism $\pi$, we may write 
\begin{equation*}
\pi_*(R\Psi_1)=R\Psi\big(\pi_{\eta *}(\bar{\mQ}_\ell)\big) .
\end{equation*}
Now $\pi_\eta$ is a connected \'etale Galois covering with Galois group $T(\mF_p)$, cf. Corollary \ref{Galoiscov}. Hence we obtain a direct sum decomposition into eigenspaces for the $T(\mF_p)$-action,
\begin{equation}
\pi_{\eta *}(\bar{\mQ}_\ell)=\bigoplus_{\chi: T(\mF_p)\to \bar{\mQ}_\ell^\times}\bar{\mQ}_{\ell, \chi} . 
\end{equation}
Here each summand is a local system of rank one on $\A_{0\, \eta}$. Hence, setting $R\Psi_\chi=R\Psi(\bar{\mQ}_{\ell, \chi})$, we obtain a direct sum decomposition,
\begin{equation} \label{direct_sum}
\pi_*(R\Psi_1)=\bigoplus_{\chi: T(\mF_p)\to \bar{\mQ}_\ell^\times}R\Psi_\chi\  .
\end{equation}
On the other hand, applying the projection formula to (\ref{preproj}), we get
\begin{equation} \label{proj}
\pi_*(R\Psi_1) = \pi_*(R^0\Psi_1) \otimes R\Psi_0.
\end{equation}
Our aim is to understand $i^*_S(R\Psi_\chi)$, the restriction of $R\Psi_\chi$ to the stratum $\A_{0, S}$, for any $\chi$ and any $S$, and especially the inertia and $T(\mathbb F_p)$-actions it carries.  This will be achieved by restricting both (\ref{direct_sum}) and (\ref{proj}) to $\A_{0,S}$, and then comparing the right hand sides.  It is easiest to compare only the inertia invariants of cohomology sheaves (and this is enough for our purposes).  This is facilitated by the following lemma.

\begin{lemma} \label{const}
The sheaf $(R^0\Psi_1)^{I_p}$ of inertia invariants is canonically isomorphic to the constant sheaf $\bar{\mQ}_\ell$ on the special fiber of $\A_1$.     
\end{lemma}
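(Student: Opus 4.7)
The plan is to produce a canonical morphism $\alpha\colon \bar{\mQ}_\ell \to R^0\Psi_1$ from the constant sheaf on $\A_1 \otimes \mF_p$ and verify that it identifies the source with the sub-sheaf of inertia invariants of the target. For the morphism, apply the adjunction unit $\bar{\mQ}_\ell \to R\bar{j}_*\bar{\mQ}_\ell$ in the nearby-cycle diagram (\ref{leveldiag}) for $X = \A_1$, and take the zeroth cohomology sheaf of its restriction via $\bar\iota^*$ to the special fiber. Since inertia acts trivially on the constant sheaf $\bar{\mQ}_\ell$, the morphism $\alpha$ automatically factors through the subsheaf $(R^0\Psi_1)^{I_p}$.

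It then remains to check stalkwise that $\alpha$ maps isomorphically onto these invariants. By Corollary \ref{locnormfor}, at any point $x \in \A_{1,S}(\bar{\mF}_p)$ there are exactly $|S|$ branches of $\A_1 \otimes \mF_p$ passing through $x$, each with multiplicity $p-1$. Thus the complex $K_x$ is $\mZ^{|S|} \xto{(n_i) \mapsto (p-1)\sum_i n_i} \mZ$, so $H_0(K_x) \cong \mZ/(p-1)\mZ$, on which the tame inertia acts (transitively, hence simply transitively, since both sets have cardinality $p-1$) through its canonical quotient $\mu_{p-1}$. By formula (\ref{SGAfor}), the stalk $R^0\Psi_{1,\bar x} = \bar{\mQ}_\ell^{H_0(K_x)}$ is therefore the regular representation of $\mu_{p-1}$, whose invariant subspace is the one-dimensional space of constant functions. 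On the other hand, the stalk of $\alpha$ at $\bar x$ is precisely the diagonal inclusion $\bar{\mQ}_\ell \hookrightarrow \bar{\mQ}_\ell^{H_0(K_x)}$, $1 \mapsto (1, \ldots, 1)$, and this identifies $\bar{\mQ}_\ell$ with exactly the invariant line.

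The only substantive input beyond pure formalism is the identification of the stalks of $R^0\Psi_1$ with the regular representation of $\mu_{p-1}$; this rests on the explicit description (\ref{SGAfor}) of nearby cycles in the semistable case with all multiplicities equal to $p-1$, together with the local normal form of $\A_1$ from Corollary \ref{locnormfor}. With that in hand I anticipate no real obstacle, since the canonical map from the constant sheaf visibly lands on the diagonal (the constant functions) in each stalk, which is exactly the invariant line.
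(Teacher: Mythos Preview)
Your proof is correct and follows essentially the same approach as the paper: construct a canonical map from the constant sheaf into $R^0\Psi_1$, observe it lands in the inertia invariants, and verify stalkwise via (\ref{SGAfor}) and Corollary \ref{locnormfor} that the invariants form exactly the diagonal line. The only cosmetic difference is that the paper obtains the canonical map as the degree-zero part of the functoriality morphism $\pi^*(R^0\Psi_0)\to R^0\Psi_1$ from (\ref{RPsi_morup}) (using that $R^0\Psi_0=\bar{\mQ}_\ell$), whereas you produce it directly from the adjunction unit on $\A_1$; these yield the same map, and your formulation has the mild advantage of not invoking $\A_0$ at all.
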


\begin{proof}
The morphism (\ref{RPsi_morup}) gives a canonical $I_p$-equivariant morphism 
$$
\pi^*(R^0\Psi_0) \rightarrow R^0\Psi_1
$$
whose image lies in the inertia invariants since the left hand side is $\bar{\mQ}_\ell$.  It gives an isomorphism onto $(R^0\Psi_1)^{I_p}$ because $I_p$ acts transitively on a basis for any stalk of $R^0\Psi_1$, cf. (\ref{SGAfor}). 
\end{proof}

Now $I_p$ acts on the cohomology sheaves of (\ref{preproj}) and (\ref{proj}) through a finite quotient.  It is easily shown that applying $R^i\pi_*$ (to a complex with finite action of $I_p$ on its cohomology sheaves) commutes with the formation of $I_p$-invariants.  This, along with (\ref{proj}), Lemma \ref{const}, and the triviality of the action of $I_p$ on $R^i\Psi_0$, yields the isomorphism
\begin{equation*}
\big(R^i\pi_*(R\Psi_1)\big)^{I_p} = \pi_*(\bar{\mQ}_\ell) \otimes R^i\Psi_0,
\end{equation*}
for each $i$.  
Moreover, $i_S^*$ commutes with taking $I_p$-invariants in this situation, so applying $i^*_S$ to the above equation yields an isomorphism
\begin{equation} \label{compari}
\big(\mathcal H^i\big(i^*_S(\pi_*(R\Psi_1))\big)\big)^{I_p} = \pi_{S*}(\bar{\mQ}_\ell) \otimes i^*_S(R^i\Psi_0).
\end{equation}
This identification is canonical, and therefore is an equality of sheaves on $\A_{0}\otimes \mF_p$ with continuous ${\rm Gal}(\bar{\mQ}_p/\mQ_p)$-action, and is equivariant for the action of $T(\mF_p)$ which comes via its action by deck transformations in the generic fiber. On the right hand side, this action factorizes over the quotient $T^S(\mathbb F_p)$.  We now compare the eigenspace decompositions of both sides of (\ref{compari}) under the action of $T(\mF_p)$.
                                        
\begin{thm} \label{main_geom_thm} {\rm (i)} For each character $\chi: T(\mF_p)\to \bar{\mQ}_\ell^\times$ and each $S$, the inertia group $I_p$ acts on each cohomology sheaf $\mathcal H^i\big(i^*_S(R\Psi_\chi)\big)$ through a finite quotient.

\smallskip

\noindent{\rm (ii)} If the character $\chi: T(\mF_p)\to \bar{\mQ}_\ell^\times$ does not factor through $T^S(\mF_p)$, then 
$$\big(\mathcal H^i\big(i_S^*(R\Psi_\chi)\big)\big)^{I_p} = 0$$ for each $i$.

\smallskip

{\rm Assume now that $\chi: T(\mF_p)\to \bar{\mQ}_\ell^\times$ is induced by a character $\bar{\chi}: T^S(\mF_p)\to \bar{\mQ}_\ell^\times$. }
 \smallskip
 
\noindent {\rm (iii)} Then 
$$
\big(\mathcal H^i\big(i_{S}^*(R\Psi_\chi)\big)\big)^{I_p} = \bar{\mQ}_{\ell\, \bar{ \chi}} \otimes \mathcal H^i\big(i_S^*(R\Psi_0)\big)\ ,
$$
where $\bar{\mQ}_{\ell\, \bar{ \chi}}$ is the lisse sheaf on $\A_{0, S}$ obtained from $\pi_{S *}(\bar{\mQ}_\ell)$ by push-out via $T^S(\mF_p)\to \bar{\mQ}_\ell^\times$. Furthermore, the local system on $\A_{0, S}\otimes \bar{\mF}_p$ corresponding to  $\bar{\mQ}_{\ell\, \bar{ \chi}}$ is defined on each connected component by the composition 
$$
\pi_1(\A_{0, S}\otimes \bar{\mF}_p, x)\twoheadrightarrow T^S(\mF_p)\xto{\bar \chi}\bar{\mQ}_\ell^\times ,
$$
where the first map is surjective. 
\smallskip

\noindent {\rm (iv)} We have an equality of semi-simple traces of Frobenius, for all $x\in \A_S(\mF_{p^r})$, 
 $$
{\rm Tr}^{ss}\big(\Phi^r_{\fp}, (i^*_S(R\Psi_\chi))_x\big) = {\rm Tr}^{ss}\big(\Phi^r_{\fp}, (\bar{\mQ}_{\ell\, \bar{ \chi}})_x\big) \cdot {\rm Tr}^{ss}\big(\Phi^r_{\fp}, (i^*_S(R\Psi_0))_x\big)\ .
$$ 

\end{thm}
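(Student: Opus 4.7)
The entire theorem should be a formal consequence of the isomorphism (\ref{compari}) once we decompose both sides according to the $T(\mathbb F_p)$-action, and then invoke the monodromy Theorem \ref{monodr} to identify the rank-one summands of $\pi_{S*}(\bar{\mQ}_\ell)$ with the characters of $T^S(\mathbb F_p)$.  The plan is to carry this out in four steps, corresponding to the four parts of the statement.

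First, for part (i), I would argue that the finiteness of the inertia action on $\mathcal H^i(i_S^*(R\Psi_\chi))$ is inherited from $R\Psi_1$ itself.  Indeed, by (\ref{preproj}) we have $R\Psi_1 = R^0\Psi_1\otimes \pi^*(R\Psi_0)$.  Each cohomology sheaf of $R\Psi_0$ is constant with trivial inertia action (from (\ref{SGAfor}) and the fact that all multiplicities of the special fiber of $\A_0$ equal one), while $I_p$ acts on the rank $(p-1)$ lisse sheaf $R^0\Psi_1$ through its finite quotient $\mu_{p-1}$.  Hence $I_p$ acts through a finite quotient on each $\mathcal H^i(R\Psi_1)$, and therefore on each summand $\mathcal H^i(R\Psi_\chi)$ of its direct image decomposition (\ref{direct_sum}).

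Next, for (ii) and (iii), I would take $\chi$-isotypic components of (\ref{compari}).  The key observation is that the $T(\mathbb F_p)$-action on the right-hand side is trivial on the factor $i_S^*(R^i\Psi_0)$ and factors through $T^S(\mathbb F_p)$ on $\pi_{S*}(\bar{\mQ}_\ell)$: this is because $\pi_S$ is a $T^S(\mathbb F_p)$-Galois covering by Corollary \ref{Galoiscov}, and the compatibility follows from tracing through the construction of (\ref{compari}) via (\ref{proj}), Lemma \ref{const}, and the projection formula, all of which are $T(\mathbb F_p)$-equivariant.  Thus if $\chi$ does not factor through $T^S(\mathbb F_p)$, its $\chi$-eigenspace on the right vanishes, proving (ii).  Otherwise, writing $\chi = \bar\chi\circ\mathrm{pr}$ with $\bar\chi: T^S(\mathbb F_p)\to \bar{\mQ}_\ell^\times$, the $\chi$-eigenspace is
\[
(\pi_{S*}(\bar{\mQ}_\ell))_{\bar\chi}\otimes i_S^*(R^i\Psi_0),
\]
which is the desired formula, with $(\pi_{S*}(\bar{\mQ}_\ell))_{\bar\chi}$ by definition being $\bar{\mQ}_{\ell,\bar\chi}$.

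To finish (iii), I need to identify $\bar{\mQ}_{\ell,\bar\chi}$ with the local system corresponding to the composition $\pi_1(\A_{0,S}\otimes\bar{\mF}_p,x)\twoheadrightarrow T^S(\mF_p)\xrightarrow{\bar\chi}\bar{\mQ}_\ell^\times$.  This is a standard calculation for a Galois \'etale cover: on each connected component of $\A_{0,S}$, the monodromy representation attached to the $\bar\chi$-isotypic summand of $\pi_{S*}(\bar{\mQ}_\ell)$ is exactly $\bar\chi$ composed with the canonical map to the deck transformation group.  The surjectivity of the first arrow is exactly Theorem \ref{monodr}(i), which is the only nontrivial ingredient needed here.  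Finally, (iv) follows immediately from (iii) by the multiplicativity of semi-simple trace of Frobenius on tensor products of sheaves with finite inertia action (cf.\ \cite{HN}), once one observes that both factors on the right of (iii) carry finite inertia action by step one.

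The only genuinely nontrivial input in the entire argument is the large monodromy theorem (Theorem \ref{monodr}), already established in Section \ref{monodromy_sec}; everything else is a bookkeeping exercise with $T(\mF_p)$-equivariance.  I therefore expect no serious obstacle beyond verifying carefully that (\ref{compari}) is $T(\mF_p)$-equivariant with the $T(\mF_p)$-action on $\pi_{S*}(\bar{\mQ}_\ell)$ factoring through the action by deck transformations of $\pi_S$; this is a direct consequence of the construction.
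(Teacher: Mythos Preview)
Your proposal is correct and follows essentially the same route as the paper's proof: both deduce everything from the $T(\mF_p)$-equivariant identification (\ref{compari}), with parts (ii)--(iii) obtained by taking $\chi$-isotypic components and invoking that the action on $\pi_{S*}(\bar{\mQ}_\ell)$ factors through $T^S(\mF_p)$, the surjectivity in (iii) coming from Theorem \ref{monodr}, and (iv) from (i), (iii) and the definition of semi-simple trace. Your write-up is simply a more detailed version of the paper's terse argument.
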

\begin{proof} The assertion (i) is clear from the discussion above, in view of (\ref{direct_sum}).  
The assertion (ii) is proved by substituting (\ref{direct_sum}) into (\ref{compari}) and using the equivariance of the identification (\ref{compari}), since  this action factors on $\pi_{S *}(\bar{\mQ}_\ell)$  through  $T^S(\mF_p)$.  
 For the same reason the first assertion in (iii) holds. The second assertion in (iii) is also clear, and implies by the monodromy Theorem \ref{monodr} the surjectivity of the first arrow.   The assertion (iv) follows using (i), (iii), and the definition of semi-simple trace of Frobenius.


\end{proof}

\begin{Remark} \label{unip}
The complexes $i^*_S(R\Psi_\chi)$ possess unipotent/non-unipotent decompositions in the sense of \cite{GH}, $\S5$:
$$
i^*_S(R\Psi_\chi) = i^*_S(R\Psi_\chi)^{\rm unip} \oplus i^*_S(R\Psi_\chi)^{\rm non-unip}.
$$
If $\mathcal F \in D^b_c(X_s \times_s \eta, \bar{\mQ}_\ell)$ and $I_p$ acts through a finite quotient on $\mathcal H^i\mathcal F$, then $(\mathcal H^i\mathcal F)^{I_p} = (\mathcal H^i\mathcal F)^{\rm unip}$.  Using this and the permanence properties of unipotent/non-unipotent decompositions (cf. loc.cit.), one can show as in the proof of Theorem \ref{main_geom_thm} that $i^*_S(R\Psi_\chi)^{\rm unip} = 0,$ unless $\chi$ factorizes through a character $\bar{\chi}$ on $T^S(\mathbb F_p)$, in which case
\begin{equation} \label{unip_eq}
\big(i^*_S(R\Psi_\chi)\big)^{\rm unip} = \bar{\mQ}_{\ell\, \bar{ \chi}} \otimes R\Psi_0.
\end{equation}
But it is easy to see that for any {\em non-unipotent} complex $\mathcal F$, the semi-simple trace of Frobenius function ${\rm Tr}^{ss}(\Phi^r_{\fp}, \mathcal F)$ is identically zero.   Thus, as far as {\em semi-simple trace of Frobenius functions} is concerned, we may omit the ``unip'' superscript on the left hand side of (\ref{unip_eq}).  This gives another perspective on part (iv) of Theorem \ref{main_geom_thm}.
\end{Remark}

\subsection{Full determination of $R^0\Psi_1$}
To obtain statement (iv) of Theorem \ref{main_geom_thm} above, we only used that the action of 
$I_p$ on $R^0\Psi_1$ factors through a finite quotient,  and that $(R^0\Psi_1)^{I_p}$
is the constant sheaf $\overline{\mQ}_\ell$ on $\A_1\otimes_{\mZ_p}\mF_p$. In this subsection we will show how one 
can in fact determine explicitly $R^0\Psi_1$ with its $I_p$-action (and this implies the
facts just mentioned).  The result will
not be used in the rest of the paper. 

\begin{lemma}\label{conn_comp}
There exists a morphism of finite flat normal $Z$-schemes
\begin{equation*}
\beta : Z_1\to Z_0
\end{equation*}
 with the following properties. 

\noindent{\rm (i)} $Z_1\times_Z\eta$ and $Z_0\times_Z\eta$ are finite disjoint sums of  abelian 
field extensions of $\mQ_p$. In the case of $Z_0$, these extensions are unramified
extensions of $\mQ_p$, and  in the case of $Z_1$ they are totally ramified over $Z_0$.

\smallskip

\noindent {\rm (ii)} The structure morphisms of $\A_0$ resp. $\A_1$ factor  through $Z_0$,  resp.
$Z_1$ and induce  bijections 
$$\pi_0 (\A_0\times_Z\overline{\eta})\xto{\,\,\sim\,\,}\pi_0 (Z_0\times_Z\overline{\eta})\quad 
\mbox{and }\quad  \pi_0 (\A_1\times_Z\overline{\eta})\xto{\,\,\sim\,\,}\pi_0 (Z_1\times_Z\overline{\eta})\ .$$

\smallskip

\noindent{\rm(iii)} Each fiber of the map
\begin{equation*}
\pi_0 (Z_1\times_Z\overline{\eta})\to\pi_0 (Z_0\times_Z\overline{\eta})
\end{equation*}
has $p-1$ elements.
\end{lemma}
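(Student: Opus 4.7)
The plan is to construct $Z_0$ and $Z_1$ as the ``schemes of constants'' attached to the connected components of $\A_0$ and $\A_1$ respectively, and then read off the properties (i)--(iii) from the geometry already established.

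First I would define the schemes. For each connected component $X$ of $\A_0\otimes_{\mZ_p}\mQ_p$ let $K_0^{(X)}$ be the algebraic closure of $\mQ_p$ inside the function field of $X$; this is a finite extension of $\mQ_p$ by the standard finiteness of $\pi_0$ of a Shimura variety. Set
\begin{equation*}
Z_0 := \coprod_X \Spec \mathcal O_{K_0^{(X)}},
\end{equation*}
and define $Z_1$ analogously using the components $Y$ of $\A_1\otimes\mQ_p$ and their fields of constants $K_1^{(Y)}$. The map $\beta:Z_1\to Z_0$ is induced by $\pi$. By the universal property of integral closure and the normality of $Z_0, Z_1$, the structure morphisms $\A_0\to Z$ and $\A_1\to Z$ factor uniquely through $Z_0$ and $Z_1$, and the induced maps on $\pi_0(\cdot\times_Z\bar\eta)$ are tautologically bijective; this gives (ii).

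For part (i), abelianness of $K_0^{(X)}/\mQ_p$ follows from Deligne's explicit description of $\pi_0$ of a PEL Shimura variety, where the Galois action factors through the similitude character of $G$; abelianness of $K_1^{(Y)}/K_0^{(\pi(Y))}$ follows from Corollary~\ref{Galoiscov}, since $\pi_\eta$ is a Galois cover with abelian group $T(\mF_p)$. Unramifiedness of $K_0^{(X)}$ will come from the semi-stable reduction (Theorem~\ref{dnc}) together with Proposition~\ref{refclos}: every geometric generic component specializes to a connected locus of $\A_0\otimes\bar\mF_p$ meeting the deepest stratum $\A_{0,\mZ/d}$, so the Galois-equivariant specialization map on $\pi_0$ is injective, and inertia acts trivially on the target. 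Total ramification of $K_1^{(Y)}/K_0^{(\pi(Y))}$ will come from Corollary~\ref{locnormfor}: locally $\pi$ is the extraction of $(p-1)$-st roots of functions vanishing on the KR-divisors, which cannot enlarge the unramified part of the field of constants.

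For part (iii), fix a geometric component $X$ of $\A_0\otimes\bar\mQ_p$. Then $\pi^{-1}(X)$ is an \'etale $T(\mF_p)$-torsor, and the number of its components is $[T(\mF_p):H_X]$ for $H_X$ the monodromy subgroup. I would show that $[T(\mF_p):H_X]=p-1$ by using the principal polarization $\lambda_0$: Cartier duality identifies $G_i$ with $G_{d+1-i}^D$, so the Oort--Tate parameters of the $G_i$ are linked by the Weil pairing on $X_0[p]$, and the ``product'' of the $d$ Oort--Tate generators is forced (up to an $\mF_p^\times$-scalar) to equal a fixed primitive $p$-th root of unity determined by $\lambda_0$. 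Thus exactly one $\mF_p^\times$-worth of the $(p-1)^d$ choices of generators is genuinely new from the point of view of components, the remaining $(p-1)^{d-1}$ being already realized inside existing components of $\A_0\otimes\bar\mQ_p$. Equivalently, $H_X$ is the kernel of a surjective character $T(\mF_p)\twoheadrightarrow\mF_p^\times$, giving the required index $p-1$, and the corresponding totally ramified extension of $K_0^{(X)}$ is the cyclotomic one obtained by adjoining $\zeta_p$.

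The main obstacle is pinning down the precise monodromy subgroup $H_X$, i.e.\ making rigorous the ``Weil-pairing constraint'' between the Oort--Tate generators of $G_1,\ldots,G_d$. This requires a careful bookkeeping of Cartier duality on the chain (\ref{chainpdiv}) coming from the principal polarization $\lambda_0$ and the identification of the resulting constraint with a single $\mF_p^\times$-character of $T(\mF_p)$; once this is done, the comparison with $\mQ_p(\zeta_p)/\mQ_p$ simultaneously delivers the total ramification assertion in~(i) and the fiber-size assertion~(iii).
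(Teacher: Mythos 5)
Your construction of $Z_0,Z_1$ as the schemes of constants (normalization of $Z$ in the field of constants of each geometric component, with the factorization coming from normality of $\A_0,\A_1$) is the same as the paper's, and the appeal to the Deligne--Milne description of $\pi_0$ of a Shimura variety for the abelianness in~(i) is also how the paper proceeds.  Where you diverge is in how you propose to extract the precise arithmetic of the fields of constants.  The paper does this entirely by writing $\pi_0(S_{K_i}\otimes\mC) = \mathcal T(\mQ)^0\backslash\mathcal T(\mA_f)/\nu^0(K_i)$ with $\mathcal T$ the cotorus, computing the reciprocity map $\rho:(E\otimes\mA_f)^\times\to\mathcal T(\mA_f)$ explicitly (case by case in the parity of $d$), and then reading off the norm groups of $L_0$ and $L_1$ locally at $p$; the $p-1$ in~(iii) appears as the index $[\mZ_p^\times:1+p\mZ_p]$ coming from the determinant/similitude character restricted to $I$ versus $I^+$.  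Your route via the local geometry of the integral model is genuinely different and in principle more elementary, and the unramifiedness assertion in~(i) can indeed be proved that way (more directly than via injectivity of a specialization map: on a connected component $C$ of $\A_0$, the local ring at a generic point of the special fiber is a DVR in which $p$ has valuation~$1$ by strict semi-stability, and since $\mathcal O_{K_0^{(C)}}$ sits inside this DVR, the ramification index of $K_0^{(C)}/\mQ_p$ must be~$1$).

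However, the mechanism you propose for~(iii) does not exist.  In the Drinfeld setting the polarization $\lambda_0$ induces an isomorphism $A_0(\fp^\infty)\simeq A_0(\bar\fp^\infty)^\wedge$ --- it pairs the $\fp$-side against the $\bar\fp$-side, not the $\fp$-side against itself.  The chain $X_\bullet$ and its subquotients $G_i$ live entirely on the $\fp$-side (dimension~$1$, height~$d$); their Cartier duals live on the Morita-reduced $\bar\fp$-side (dimension~$d-1$), which is a different chain and plays no role in the definition of $\A_1$.  So there is no self-duality on $X_0[p]$, no identification $G_i\simeq G_{d+1-i}^D$, and no Weil-pairing constraint linking the Oort--Tate generators.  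The constraint that actually cuts $T(\mF_p)$ down by $\mF_p^\times$ comes from a different source: the composite isogeny $\alpha^d=p\cdot\mathrm{id}_{X_0}$.  In the local coordinates of Lemma~\ref{local_param} this appears as the relation $\prod_{j\in S}T_j=w_p$ with $w_p\in p\mZ_p^\times$, so that passing to $\A_1$ (extracting the $(p-1)$-st roots of the $T_i$) necessarily adjoins a $(p-1)$-st root of $p\cdot(\text{unit})$, hence a totally ramified extension of degree $p-1$.  That observation, combined with the full monodromy on each stratum (Theorem~\ref{monodr}) to show that no further constraint occurs, is what one would need to rescue a geometric proof of~(iii); the Weil-pairing bookkeeping you flag as the ``main obstacle'' is not merely difficult but inapplicable here.
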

\begin{proof}
The generic fibers of $Z_0$, resp. $Z_1$ are  given by the Galois sets $Z_0(\bar\eta)$, resp.\ $Z_1(\bar\eta)$. We define them by the identities
in (ii). Then $Z_0$, resp.\ $Z_1$ are defined as the normalizations of $Z$ in $Z_{0, \eta}$, resp. $Z_{1, \eta}$ and the factorizations in Part (ii)
follow from the normality of $\A_0$, resp.\ $\A_1$. 

It then remains to determine the geometric connected components of $\A_0$ resp. $\A_1$ and to show that the action of ${\rm Gal}(\overline{\mQ}_p/\mQ_p)$ on them factors through a suitable abelian quotient. It 
suffices to do this over $\mC$. More precisely, there is a natural way to formulate
moduli problems $\A_0^\natural$, resp.\ $\A_1^\natural$ over $\Spec\mathcal O_F\otimes\mZ_{(p)}$
which after base change $\mathcal O_F\otimes\mZ_{(p)}\to\mathcal O_{F, \fp}$ give back $\A_0$,
resp.\ $\A_1$.
Then the chosen complex embedding $\epsilon: F\to\mC$ defines $\mC$-schemes
$\A_0^\natural\otimes_{\mathcal O_F}\mC$ and $\A_1^\natural\otimes_{\mathcal O_F}\mC$ which are disjoint
sums\footnote{The argument from section \ref{counting_pts_sec} shows that in our particular case, there is only one copy.} 
of Shimura varieties of the form $S_{K_0}= Sh (G, h^{-1}; K_0)$,  resp.
$S_{K_1}= Sh (G, h^{-1}; K_1)$, where $K_0 = I\cdot K^p$ and
$K_1 = I^+\cdot K^p$, for an Iwahori subgroup $I$ of $G(\mQ_p)$ and its
pro-unipotent radical $I^+$ and some open compact subgroup 
$K^p\subset G(\mA_f^p)$. Now for $\pi_0 (S_{K_i}\otimes_F\mC)$ there is the
expression (cf., e.g., \cite{Kur}, \S 4) 
\begin{equation}
\pi_0 (S_{K_i}\otimes\mC) = \mathcal T (\mQ)^0\backslash \mathcal T (\mA_f)/\nu^0 (K_i)\ ,
\end{equation}
where $\nu^0 : G\to \mathcal T$ is the natural homomorphism to the maximal torus
quotient and where $\mathcal T(\mathbb Q)^0 := \mathcal T(\mathbb Q) \cap \mathcal T(\mathbb R)^0$.  Furthermore, the action of the Galois group $\Gal (\overline{\mQ}/E)$ of the Shimura field $E$ on $\pi_0 (S_{K_i}\otimes\mC)$ factors through its maximal abelian quotient and is given by the reciprocity map 
$\rho: (E\otimes\mA_{ f})^\times\to \mathcal T(\mA_f)$ 
of $S_{K_i}$. Now   $\mathcal T$ is  given as follows (\cite{Kur}, 4.4), 
\begin{equation*}
\mathcal T =
\begin{cases}
\mathcal T^1\times\mG_m
&
\text{ if }\, d\, \text{ is even,}
\\
\Res_{F/\mQ} (\mG_m)
&
\text{ if }\, d\, \text{ is odd,}
\end{cases}
\end{equation*}
where $\mathcal T^1 = \ker (N:\Res_{F/\mQ} (\mG_m)\to\mG_m)$. And $\nu^0$ is given by 
\begin{equation*}
\nu^0(g) = 
\begin{cases} 
(\frac{\det(g)}{\nu(g)^k},\nu(g))&\text{if $d=2k$ is even,}\\
\frac{\det(g)}{\nu(g)^k}&\text{if $d=2k+1$ is odd.}
\end{cases}
\end{equation*}
Here $\det (g)$ denotes the reduced norm of $g$, and $\nu (g)$ the similitude 
factor of $g$. Furthermore, noting that $E=F$ for $d>2$ and $E=\mQ$ for $d=2$, the reciprocity map is given as follows. 

$$\rho(a) = \begin{cases}
(\left(\frac{a}{\bar a}\right)^{k-1}, a\bar a)&\text{if $d=2k>2$ is even,}\\
(1, a)&\text{if $d=2$ }\\
\left(\frac{a}{\bar a}\right)^{k-1}\,a&\text{if $d=2k+1$ is odd.}
\end{cases}
$$
It  follows that $\pi_0 (S_{K_0}\otimes\mC) $, resp. $\pi_0 (S_{K_1}\otimes\mC) $, corresponds to  the disjoint sum of copies of the spectrum of an abelian extension $L_0$, resp. $L_1$, of $E$ with
norm group $\rho^{-1}(\nu^0(K_0))$, resp. $\rho^{-1}(\nu^0(K_1))$, in $(E\otimes\mA_{ f})^\times$. It is now an elementary calculation to check 
 for $d>2$ that $L_0/F$ is unramified at the places $v$ over $\mathfrak p$,  and for $d=2$ that $L_0/\mQ$ is unramified at the places $v$ over $p$,   and that $L_1/L_0$ is totally ramified of degree $p-1$ at all these places, cf. \cite{Ku}. For instance, for $d=2$, the norm group of $L_0/\mQ$ is of the form 
 $\mZ_p^\times\cdot U^p$ for $U^p\subset (\mA_f^p)^\times$, and the norm group of $L_1/\mQ$ is 
 $(1+p\mZ_p)\cdot U^p$. The lemma is proved. 
\end{proof}

Let $\mathcal L = R\Psi (\beta_{\eta_\ast}\overline{\mQ}_\ell)$. Then $\mathcal L$ is a complex concentrated
in degree zero, and is a sheaf with a continuous action of $\Gal (\overline{\mQ}_p/\mQ_p)$ of rank
$p - 1$ on the special fiber of $Z_0$. Since the covering $Z_1/Z_0$ is totally ramified,
the fiber of $\mathcal L$ at each geometric point localized at a point $s_0\in Z_0\times_Z s$ is
isomorphic as a $I_p$-module to $\Ind_{\Gal (Z_1, s_0)}^{\Gal (Z_0, s_0)} (\overline{\mQ}_\ell)$.

The following proposition gives the explicit determination of $R^0\Psi_1$. 
\begin{prop}\label{expl_expr}
There is a natural isomorphism of sheaves with continuous $\Gal (\overline{\mQ}_p/\mQ_p)$-action
\begin{equation*}
\mathcal L_{\A_1}\xto{\,\,\sim\,\,} R^0\Psi_1\, ,
\end{equation*}
where
\begin{equation*}
\mathcal L_{\A_1} = p_0^\ast (\mathcal L)
\end{equation*}
is the inverse image of $\mathcal L$ under the composed morphism
\begin{equation*}
p_0 : \A_1\xto{\,\,p_1\,\,} Z_1\xto{\,\,\beta\,\,}Z_0\, .
\end{equation*}
\end{prop}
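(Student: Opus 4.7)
My plan is to use the factorization $p_0 = \beta \circ p_1$ together with the explicit local structure of $\A_1$ from Corollary \ref{locnormfor} to reduce the statement to a stalk-wise bijectivity check, after first relating $\mathcal L$ to a nearby cycles sheaf on $Z_1$ itself.

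\smallskip

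\emph{Step 1 (reduction to $Z_1$).}  Since $\beta$ is finite (hence proper), the base change morphism (\ref{RPsi_mordown}) gives
\[
\mathcal L \,=\, R^0\Psi^{Z_0/Z}(\beta_{\eta *}\overline{\mQ}_\ell) \,\cong\, \beta_{s*}\,R^0\Psi^{Z_1/Z}(\overline{\mQ}_\ell).
\]
Each extension $L_{1,i}/L_{0,i}$ is totally ramified, so the residue field extension at every closed point is trivial, $\beta_s:Z_1\otimes\mF_p\to Z_0\otimes\mF_p$ is a universal homeomorphism, and $\beta^*\beta_{s*}=\id$ on \'etale sheaves.  Hence
\[
\mathcal L_{\A_1} \,=\, p_0^*\mathcal L \,=\, p_1^*\beta^*\mathcal L \,\cong\, p_1^*R^0\Psi^{Z_1/Z}(\overline{\mQ}_\ell).
\]
Composing with the functoriality map (\ref{RPsi_morup}) for $p_1:\A_1\to Z_1$ produces a canonical, $\Gal(\overline{\mQ}_p/\mQ_p)$-equivariant morphism
\[
\mathcal L_{\A_1} \,\longrightarrow\, R^0\Psi^{\A_1/Z}(\overline{\mQ}_\ell) \,=\, R^0\Psi_1,
\]
which is the natural candidate for the isomorphism in the statement.

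\smallskip

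\emph{Step 2 (bijectivity on stalks).}  It remains to show that this map is an isomorphism.  Both sheaves are lisse of rank $p-1$ on $\A_1\otimes\mF_p$ by the semi-stable formula (\ref{SGAfor}) (applied to $\A_1$ and to $Z_1$), so bijectivity need only be checked on a single stalk at $\bar x \in \A_{1, S}(\bar{\mF}_p)$.  By Corollary \ref{locnormfor} an \'etale neighborhood of $\bar x$ is modelled on $\Spec W(\bar{\mF}_p)[T'_1,\dots,T'_d]/\bigl((\prod_{j\in S}T'_j)^{p-1}-w_p\bigr)$, and $p_1$ sends a uniformizer $\pi$ of $Z_1$ locally to $\prod_{j\in S}T'_j$.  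Choosing $\alpha=w_p^{1/(p-1)}\in\bar{\mQ}_p$, the factorization
\[
X^{p-1}-w_p \,=\, \prod_{\zeta\in\mu_{p-1}}(X-\zeta\alpha)
\]
(applied to $X=\prod_{j\in S} T'_j$ and to $X=\pi$), together with the pairwise coprimality of the factors on the generic fiber, yields canonical $\mu_{p-1}$-indexed decompositions of $\pi_0$ of the geometric generic fibers of the strict henselizations of $\A_1$ at $\bar x$ and of $Z_1$ at $p_1(\bar x)$.  Since $p_1$ is defined by $\pi\mapsto\prod T'_j$, the induced bijection on $\pi_0$-sets is the identity on $\mu_{p-1}$; by (\ref{SGAfor}) the stalk map is then the identity $\overline{\mQ}_\ell^{\mu_{p-1}}\to\overline{\mQ}_\ell^{\mu_{p-1}}$, hence an isomorphism.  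The $\Gal(\overline{\mQ}_p/\mQ_p)$-equivariance is automatic from the construction.

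\smallskip

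The main obstacle will lie in Step 2: because $p_1$ is not smooth, no base change theorem applies directly to the functoriality map, and one must instead unwind the SGA formula (\ref{SGAfor}) using the semi-stable local models of both $\A_1$ and $Z_1$ in order to match the $(p-1)$-st root structure on $\A_1$ (coming from Oort--Tate) with that on $Z_1$ (coming from the totally ramified extension $L_1/L_0$).
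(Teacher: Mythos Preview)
Your Step~1 is fine (the construction of the map via $p_1^*R^0\Psi^{Z_1/Z}\to R^0\Psi_1$ is equivalent to the paper's adjunction map), but Step~2 contains a genuine gap at the crucial point you yourself flag at the end.

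The assertion ``$p_1$ sends a uniformizer $\pi$ of $Z_1$ locally to $\prod_{j\in S}T'_j$'' is not justified, and is not how $p_1$ is defined: $p_1:\A_1\to Z_1$ is defined \emph{globally} via Lemma~\ref{conn_comp}~(ii) (essentially a Stein factorization), not by any local formula.  What you actually know from the local model is only that $(p_1^*\pi)^{p-1}$ and $(\prod T'_j)^{p-1}$ are both associates of $w_p$ in the strict henselization $A$, so $p_1^*(\pi)=u\cdot\prod T'_j$ for \emph{some} unit $u\in A^\times$.  From this alone your sentence ``the induced bijection on $\pi_0$-sets is the identity on $\mu_{p-1}$'' does not follow: a general unit $u$ need not be constant on the components of $\Spec A\otimes\overline{\mQ}_p$, so you have not shown the map on $\pi_0$ is even injective.  (You also implicitly assume $\pi^{p-1}=w_p$ exactly, which is a normalization you have not arranged.)

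The gap is fillable locally.  One clean route: since $A$ is a domain and $p_1^*(\pi)\neq 0$, the DVR $B$ (strict henselization of $Z_1$) injects into $A$ and $A$ is $B$-torsion-free, hence $B$-flat; thus $\Spec A\otimes\overline{\mQ}_p\to\Spec B\otimes\overline{\mQ}_p$ is the base change of a faithfully flat map and is therefore surjective, hence bijective on $\pi_0$ (both sides have $p-1$ elements).  Alternatively, choose $\pi$ in the strictly henselian $B$ so that $\pi^{p-1}=w_p$ exactly (possible since $(p-1)$ is prime to $p$); then $p_1^*(\pi)/\prod T'_j$ lies in $\mu_{p-1}(A)=\mu_{p-1}(\bar\mF_p)$, a \emph{constant} root of unity $\zeta_0$, and the map on $\pi_0$ is $\zeta\mapsto\zeta_0\zeta$.

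The paper avoids this local computation entirely.  It instead compares local and global $\pi_0$'s via the square~(\ref{diagpi0}) and uses that $\pi:\A_1\to\A_0$ is \emph{totally ramified along the special fiber} (Corollary~\ref{locnormfor}): every global connected component of $\A_1\times_Z\bar\eta$ lying over the component of $\A_0\times_Z\bar\eta$ through $y$ must pass through the unique preimage $x$, so the map $\pi_0(\A_{1(x)}\times\bar\eta)\to\pi_0(\A_1\times\bar\eta)$ surjects onto the relevant fiber, which has $p-1$ elements by Lemma~\ref{conn_comp}~(iii).  Your approach trades this global input for a purely local argument, which is a reasonable alternative---but you must actually supply that argument.
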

\begin{proof}
The functoriality (\ref{RPsi_morup}) induces a homomorphism
\begin{equation}\label{first_mor}
\mathcal L_{\A_1} = p_0^\ast \big(R\Psi (\beta_{\eta_\ast}\overline{\mQ}_\ell)\big)\to R\Psi \big(p_{0, \eta}^\ast\beta_{\eta\ast} (\overline{\mQ}_\ell)\big)\, .
\end{equation}
On the other hand, by adjunction,  there is a natural homomorphism
\begin{equation*}
p_{0, \eta}^\ast\beta_{\eta *}(\overline{\mQ}_\ell) = p_{1, \eta}^\ast  (\beta_\eta^\ast\beta_{\eta *} (\overline{\mQ}_\ell))\to\overline{\mQ}_{\ell {\A_{1, \eta}}}\, .
\end{equation*}
Applying $R\Psi$ and composing with (\ref{first_mor}) defines in degree zero the natural morphism
\begin{equation}\label{ident_sheaf}
\mathcal L_{\A_1} \to R^0\Psi_1\, .
\end{equation}
To prove that this is an isomorphism, it suffices to do this point-wise. Let 
$x\in\A_1 (\overline{\mF}_p)$ with image $y\in\A_0 (\overline{\mF}_p)$. We consider the
commutative diagram
\begin{equation}\label{diagpi0}
\begin{aligned}
\xymatrix{
\pi_0 (\A_{1(x)} \times_Z\overline{\eta})\ar[d]\ar[r] & \pi_0 (\A_1\times_Z\overline{\eta}) = \pi_0 (Z_1\times_\eta\overline{\eta})\ar[d]\\
\pi_0 (\A_{0(y)}\times_Z\overline{\eta})\ar[r] & \pi_0 (\A_0\times_Z\overline{\eta}) = \pi_0 (Z_0\times_\eta\overline{\eta})\, .
 }
 \end{aligned}
\end{equation}
From the local equations for $\A_0$ resp. $\A_1$ in section \ref{local_struc}, we know that
$\pi_0 (\A_{0(y)}\times_Z\overline{\eta})$ consists of one element and
$\pi_0 (\A_{1(x)}\times_Z\overline{\eta})$ consists of $p - 1$ elements. 
On the other hand, the covering $\A_1\times_Z\overline{Z}$ of $\A_0\times_Z\overline{Z}$ is totally ramified 
along its special fiber. Hence, the upper horizontal map in (\ref{diagpi0}) induces a surjection onto the fiber of
the right vertical map over the  element of $\pi_0(\A_0\times_Z\overline{\eta})$  given by the image of the lower horizontal map. 
By part (iii) of
Lemma \ref{conn_comp}, this fiber has $p-1$ elements. This shows that each connected component of 
$\A_{1(x)}\times_Z\overline{\eta}$ lies in a different connected component of
$\A_1\times_Z\overline{\eta}$, all of which map to the same connected component
of $\A_0\times_Z\overline{\eta}$. Hence the stalk in $x$ of the homomorphism
(\ref{ident_sheaf}) is an isomorphism, and this proves the assertion.
\end{proof}
Proposition \ref{expl_expr}, along with  the projection formula, implies,    as  in the proof of (\ref{proj}) and (\ref{compari}), an isomorphism of complexes with $I_p$-action on each stratum $\A_{0, S}$, 
\begin{equation}
i_{S}^*(\pi_*R\Psi_1)\cong \big(\mathcal L_{\A_{0, S}}\otimes \pi_{S *}(\bar{\mQ}_{\ell})\big)\otimes i_S^*(R\Psi_0) , 
\end{equation}
where $\mathcal L_{\A_{0, S}}$ denotes the pull-back of $\mathcal L$ to ${\A_{0, S}}$ under the structure morphism ${\A_{0, S}}\to Z_0$. This then yields the expression 
\begin{equation}
i_{S}^*(R\Psi_\chi)\cong \big(\mathcal L_{\A_{0, S}}\otimes \bar{\mQ}_{\ell\, \bar{ \chi}}\big)\otimes i_S^*(R\Psi_0) ,
\end{equation}
if the character $\chi$ is induced by $\bar{\chi}:T^S(\mF_p)\to \bar{\mQ}_{\ell}^\times$ (and the left hand side is zero if $\chi$ is not of this form).

\section{Combinatorics related to the test function} \label{comb_sec}

\subsection{Critical indices} \label{crit_def}

Recall $\mu_0=(1, 0, \ldots, 0)\in X_*(\GL_d)$ from section \ref{subKR}.  For $w \in {\rm Adm}(\mu_0)$, define the subset $S(w) \subseteq \{ 1, \dots, d \}$ of {\em critical indices} by 
$$
S(w) = \{ j ~ | ~  w \leq t_{e_j} \}.
$$
Recall that the Bruhat order $\leq$ here and below is defined using the base alcove ${\bf a}$, which is contained in the {\em antidominant} Weyl chamber.  The following subsections will help us understand this subset combinatorially.  In particular we will show that this definition coincides with the one appearing in  Proposition \ref{critad}.

\subsection{Lemmas on admissible subsets}

Let $\mu$ be a minuscule dominant coweight for an arbitrary  based root system.  Let $w_0$ denote the longest element in the finite Weyl group $W$.  Define the Bruhat order $\leq$ using the base alcove ${\bf a}$ in the antidominant Weyl chamber.  Given an 
element $w \in {\rm Adm}(\mu)$, when is it true that $w \leq t_{w_0(\mu)}$?  The answer will depend on the decomposition $w = t_{\lambda(w)}\bar w \in X_* \rtimes W$.  

\begin{lemma} \label{w_leq_mu}
Let $w = t_{\lambda(w)} \bar w \in {\rm Adm}(\mu)$.  Then $w \leq t_{w_0(\mu)}$ if and only if $\lambda(w) = \ w_0(\mu)$.
\end{lemma}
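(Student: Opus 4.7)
The plan is to use the factorization $t_{w_0(\mu)} = \tau_\mu \cdot v_\mu$ in the decomposition $\widetilde{W} = W_{\rm aff} \rtimes \Omega$, where $\tau_\mu \in \Omega$ is the unique length-zero element whose class in $X_*(T)/Q^{\vee} = \Omega$ equals that of $\mu$, and $v_\mu = (w_0 \cdot w_{0,\mu})^{-1} \in W$ is the inverse of the longest minimal-length representative of $W/W_\mu$ (with $w_{0,\mu}$ denoting the longest element of the stabilizer $W_\mu$ of $\mu$). Because $w_0(\mu)$ is antidominant and $\bf a$ lies in the antidominant chamber, one verifies via the Iwahori--Matsumoto length formula that $\ell(\tau_\mu) = 0$ and $\ell(v_\mu) = \langle 2\rho, \mu\rangle = \ell(t_{w_0(\mu)})$, so this factorization is length-additive. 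The key fact I shall use is that conjugation by $\tau_\mu$ is a length-preserving automorphism of $W_{\rm aff}$, and the standard rule for Bruhat order on a semidirect product: $u_1\tau_1 \leq u_2 \tau_2$ iff $\tau_1 = \tau_2$ and $u_1 \leq u_2$ in $W_{\rm aff}$.

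For the direction $(\Rightarrow)$: Assume $w \leq t_{w_0(\mu)}$. The Bruhat order rule forces $w$ to have the same $\Omega$-component $\tau_\mu$ as $t_{w_0(\mu)}$, and writing $w = \tau_\mu \cdot u$ (with $u \in W_{\rm aff}$) we get $u \leq v_\mu$ in $W_{\rm aff}$. Since $v_\mu \in W \subseteq W_{\rm aff}$, every reduced expression of $v_\mu$ uses only simple reflections from $W$, so by the subword property $u \in W$ as well. Passing back to the decomposition $\widetilde{W} = X_*(T) \rtimes W$, we have $\tau_\mu = t_{w_0(\mu)} \cdot v_\mu^{-1}$ (hence $\lambda(\tau_\mu) = w_0(\mu)$), and therefore $w = \tau_\mu u$ has translation part $\lambda(w) = w_0(\mu)$.

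For the direction $(\Leftarrow)$: Suppose $\lambda(w) = w_0(\mu)$, and write $w = \tau_\mu \cdot u$ with $u = v_\mu^{-1}\bar w \in W$. We must show $u \leq v_\mu$ in $W_{\rm aff}$ (equivalently in $W$, since the intervals coincide by the argument above). Here the hypothesis $w \in {\rm Adm}(\mu)$ enters: we have $w \leq t_\lambda$ for some $\lambda \in W\mu$, and writing $t_\lambda = \tau_\mu \cdot u_\lambda$ we obtain $u \leq u_\lambda$ in $W_{\rm aff}$, where $u_\lambda \in W_{\rm aff}$ satisfies $\ell(u_\lambda) = \ell(v_\mu)$. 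For $\lambda = w_0(\mu)$ we have $u_\lambda = v_\mu$ and we are done; for $\lambda \neq w_0(\mu)$, the element $u_\lambda$ generally lies in $W_{\rm aff}\setminus W$, but one argues (combining the subword property with the length formula and the minuscule condition $\conv(W\mu) \cap (\mu + Q^{\vee}) = W\mu$) that any $u \in W$ satisfying $u \leq u_\lambda$ in $W_{\rm aff}$ must already satisfy $u \leq v_\mu$ in $W$.

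The main obstacle is the last step of $(\Leftarrow)$ for the case $\lambda \neq w_0(\mu)$. The cleanest route I see is to parametrize reduced expressions of each $u_\lambda$ explicitly (each differs from $v_\mu$ by a translation in $Q^\vee$ whose nonzero contribution comes from reflecting across affine walls) and to check that the $W$-elements appearing as subwords are already subwords of a reduced expression for $v_\mu$; this uses minusculeness of $\mu$ in an essential way to keep the relevant pairings $\langle\alpha, \lambda\rangle$ bounded in $\{-1,0,1\}$.
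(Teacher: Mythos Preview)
Your $(\Rightarrow)$ direction is correct and is a pleasant elementary argument: the key observation that $\tau_\mu^{-1} t_{w_0(\mu)} = v_\mu$ lies in the finite Weyl group $W$, so that reduced expressions for $v_\mu$ avoid the affine simple reflection, is exactly what forces any $u \leq v_\mu$ in $W_{\rm aff}$ to lie in $W$ and hence forces the translation part of $w = \tau_\mu u$ to equal $w_0(\mu)$. This is genuinely different from the paper's route, which instead invokes the description ${\rm supp}(\Theta_\lambda) = \{w : \lambda(w) = \lambda,\ w \leq t_\lambda\}$ from \cite{HP}. Your argument has the advantage of being purely Coxeter-combinatorial and not touching Hecke algebras at all.

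The $(\Leftarrow)$ direction, however, has a genuine gap, and you identify it yourself. The assertion that ``any $u \in W$ with $u \leq u_\lambda$ in $W_{\rm aff}$ already satisfies $u \leq v_\mu$'' is exactly the heart of the matter, and neither the parenthetical appeal to the minuscule condition nor the proposed case-by-case inspection of reduced words for each $u_\lambda$ constitutes a proof. The difficulty is real: for $\lambda \neq w_0(\mu)$ the element $u_\lambda = \tau_\mu^{-1} t_\lambda$ involves the affine reflection $s_0$, and there is no general mechanism in Coxeter combinatorics that says ``the $W$-part of a lower Bruhat interval in $W_{\rm aff}$ is controlled by a single element of $W$.'' The paper sidesteps this entirely: once one has the disjoint decomposition
\[
{\rm Adm}(\mu) = \coprod_{\lambda \in W\mu} \{w : \lambda(w)=\lambda,\ w\leq t_\lambda\}
\]
(which follows from ${\rm supp}(z_\mu) = {\rm Adm}(\mu)$ together with the \cite{HP} support formula for each $\Theta_\lambda$ and the obvious disjointness), the reverse implication is immediate: if $w \in {\rm Adm}(\mu)$ has $\lambda(w) = w_0(\mu)$, it must sit in the piece indexed by $w_0(\mu)$, whence $w \leq t_{w_0(\mu)}$. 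So the Hecke-algebra input is doing essential work for $(\Leftarrow)$ that your purely combinatorial setup does not obviously replace.
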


\begin{proof}
We give a proof using results about the functions $\Theta^-_{\lambda}$, taken from \cite{HP}.  Recall that $\Theta^-_\lambda$ is the ``antidominant'' variant of the ``usual'' Bernstein function $\Theta_\lambda$.  However, \cite{HP} uses the Bruhat order defined using the base alcove in the {\em dominant} Weyl chamber.  When we recast the results of loc.~cit. using our Bruhat order, we actually recover the results used below about the functions $\Theta_\lambda$, rather than the functions $\Theta^-_{\lambda}$ studied in loc.~cit.. 

Recall that the Bernstein 
function $z_\mu$ may be expressed as
$$
z_\mu = \sum_{\lambda \in W\mu} \Theta_{\lambda}.
$$
In \cite{HP}, Cor. 3.5, it is proved that for $\lambda \in W\mu$,
\begin{equation} \label{eq:HP}
{\rm supp}(\Theta_\lambda) = \{ w \in \widetilde{W} ~ | ~ \lambda(w) = \lambda ~ \text{and} ~ w \leq t_\lambda \}.
\end{equation}
Therefore ${\rm supp}(\Theta_\lambda) \cap {\rm supp}(\Theta_{\lambda'}) = \emptyset$ if $\lambda \neq \lambda'$.

Further, by  \cite{H1b}, Prop. 4.6 (or the Appendix, Proposition \ref{suppz}) we have ${\rm supp}(z_\mu) = {\rm Adm}(\mu)$.  Thus we have
\begin{equation} \label{eq:adm=union}
{\rm Adm}(\mu) = \coprod_{\lambda \in W\mu} \{ w \in \widetilde{W} ~ | ~ \lambda(w) = \lambda ~ \text{and} ~ w \leq t_\lambda \}.
\end{equation}

If $w \leq t_{w_0(\mu)}$, then $w$ lies in the support of $\widetilde{T}^{-1}_{-w_0(\mu)} = \Theta_{w_0(\mu)}$ and (\ref{eq:HP}) implies that $\lambda (w) = w_0(\mu)$.  The reverse implication in the lemma follows easily from (\ref{eq:adm=union}). 
\end{proof}

Next we assume $G = {\rm GL}_d$, and fix $\mu_0 := (1,0^{d-1})$. For each $i$ with $0 \leq i \leq d$ let 
$\omega_i = (1^i,0^{d-i})$ denote the $i$-th fundamental coweight.  For $j$ with $1 \leq j \leq d$ let $e_j = \omega_j - \omega_{j-1}$, the $j$-th standard basis vector.  Write $\bar{\omega}_i = -\omega_i$ for all $i$.  

Lemma \ref{w_leq_mu} says that $w \in {\rm Adm}(\mu_0)$ satisfies $w \leq t_{e_d}$ iff $\lambda(w) = e_d$.   Fix an integer $j$ with $1 \leq j \leq d$.  When is it true that $w \leq t_{e_j}$?

Let ${\bf a}$ denote the base alcove whose vertices are represented by the vectors $\bar{\omega}_0, \dots, \bar{\omega}_{d-1}$.  Let $\Omega \subset \widetilde{W}$ denote the subgroup of elements stabilizing ${\bf a}$ as a set.  Define the element 
$$\tau := t_{e_d} (d \,\, d\!\!-\!\!1 \,\, \cdots 1) \in \Omega,$$
which satisfies $t_{\mu_0} \in W_{\rm aff} \tau$.  We have $\widetilde{W} = W_{\rm aff} \rtimes \Omega$, and conjugation by $\tau$ permutes the simple affine reflections in $W_{\rm aff}$.  Viewed as an automorphism of the base apartment, $\tau$ cyclically permutes the vertices of the base alcove.  In fact, we have 
$$
\tau(\bar{\omega}_i) = \bar{\omega}_{i-1},
$$
for $1 \leq i \leq d$.   Also, we shall need the identity $\tau^{d-j} t_{e_d} \tau^{-(d-j)} = t_{e_{j}}$, for any $j$ with $1 \leq j \leq d$.

\begin{lemma} \label{w_leq_e_j}
Suppose $w \in {\rm Adm}(\mu_0)$.  Write $w = t_{\lambda} \bar w$.  Then the following are equivalent:
\begin{enumerate}
\item[{\rm (i)}] $w \leq t_{e_j}$;
\item[{\rm (ii)}] $\bar{\omega}_{j} - \bar w(\bar{\omega}_{j}) = \lambda - e_j$;
\item[{\rm (iii)}] $\lambda + \bar w(\bar{\omega}_{j}) = \bar{\omega}_{j-1}$;
\item[{\rm (iv)}] $t_\lambda \bar w(\bar{\omega}_{j}) = \tau(\bar{\omega}_{j}),$ i.e., the alcoves $w{\bf a}$ and $\tau{\bf a}$ share the same type $j$ vertex (interpreting $\bar{\omega}_d$ as type $0$).
\end{enumerate}
\end{lemma}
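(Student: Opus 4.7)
The plan is to reduce everything to the case $j=d$, which is already handled by Lemma \ref{w_leq_mu}. The equivalences $(ii)\iff(iii)\iff(iv)$ are purely formal: from $e_j=\omega_j-\omega_{j-1}$ one gets $\bar\omega_j + e_j = \bar\omega_{j-1}$, which rearranges $(ii)$ into $(iii)$, and then $(iii)\iff(iv)$ is just the expansion $t_\lambda\bar w(\bar\omega_j) = \lambda + \bar w(\bar\omega_j)$ combined with the fact $\tau(\bar\omega_j) = \bar\omega_{j-1}$ recorded in the text just above the lemma.

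The heart of the proof is $(i)\iff(ii)$. I would exploit the conjugation identity $t_{e_j} = \tau^{d-j}\, t_{e_d}\,\tau^{-(d-j)}$ stated in the text. Since $\tau\in\Omega$ stabilizes the base alcove ${\bf a}$, conjugation by $\tau$ is an order-preserving automorphism of $\widetilde W$; moreover it cyclically permutes the translations $\{t_{e_1},\ldots, t_{e_d}\}$, and so preserves the admissible set ${\rm Adm}(\mu_0)$. Setting $w'' := \tau^{-(d-j)}\,w\,\tau^{d-j}$, one thus obtains
$$w\leq t_{e_j}\iff w''\leq t_{e_d} = t_{w_0(\mu_0)},$$
and since $w''\in {\rm Adm}(\mu_0)$, Lemma \ref{w_leq_mu} converts the right-hand side into the single condition $\lambda(w'')=e_d$.

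It then remains to translate the equation $\lambda(w'')=e_d$ back into a statement about $w=t_\lambda\bar w$. Writing $\tau^k = t_{\omega_d-\omega_{d-k}}\sigma^k$ (an easy induction starting from $\tau=t_{e_d}\sigma$) and commuting the translation past $\bar w$, one gets a closed formula for $\lambda(w'')$ in terms of $\lambda$, $\bar w$, and the coweight $\omega_d-\omega_j$. Imposing $\lambda(w'')=e_d$, then simplifying using the $W$-invariance of $\omega_d$ and the identity $\sigma^{d-j}(e_d)=e_j$, and finally switching from $\omega_j$ to $\bar\omega_j=-\omega_j$, produces exactly the equation of $(ii)$. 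The main obstacle I anticipate is thus the bookkeeping with the cycle $\sigma$ and the sign conventions relating $\omega_i$ and $\bar\omega_i$; once the $\tau$-conjugation reduction is set up, the rest is mechanical.
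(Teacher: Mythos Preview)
Your proposal is correct and follows essentially the same route as the paper: reduce $(i)$ via $\tau$-conjugation to the case $j=d$, invoke Lemma~\ref{w_leq_mu}, and then unwind. The only difference is cosmetic: where you expand $\tau^{d-j}=t_{\omega_d-\omega_j}\sigma^{d-j}$ and commute translations to extract $\lambda(w'')$ algebraically, the paper instead reads off $\lambda(w'')$ as $w''(\bar\omega_0)$ (since $\bar\omega_0=0$ is fixed by $W$) and then applies $\tau^{d-j}$ to both sides of $w''(\bar\omega_0)=t_{e_d}(\bar\omega_0)$, landing directly on $(iii)$ rather than $(ii)$. Both computations are equivalent and equally mechanical.
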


\begin{proof}
Using Lemma \ref{w_leq_mu}, we see that 
\begin{align*} 
w \leq t_{e_j}  &\Leftrightarrow \tau^{-(d-j)} w \tau^{d-j} \leq t_{e_d} \\
&\Leftrightarrow  \lambda(\tau^{-(d-j)} t_\lambda \bar w \tau^{d-j}) = e_d \\
&\Leftrightarrow \tau^{-(d-j)} t_\lambda \bar{w} \tau^{d-j} (\bar{\omega}_0) = t_{e_d}(\bar{\omega}_0) \\
&\Leftrightarrow t_\lambda \bar{w}(0^j,1^{d-j}) = (0^{j-1},1^{d-j+1}) \\
&\Leftrightarrow \lambda + \bar{w}(\bar{\omega}_j) = \bar{\omega}_{j-1}, 
\end{align*}
showing that (i) $\, \Leftrightarrow \,$ (iii).  The equivalence of (ii-iv) is immediate.
\end{proof}

We need to describe explicitly the elements of ${\rm Adm}(\mu_0)$.  An element $w = t_{\lambda}\bar{w} \in \widetilde{W}$ gives rise to a sequence of $d$ vectors $v_i = w(\bar{\omega}_i) \in \mathbb Z^d$ for $i = 0,1, \dots, d-1$.  For a vector $v \in \mathbb Z^d$, let $v(j)$ denote its $j$-th entry.  We say that $w = t_{\lambda} \bar w$ is {\em minuscule} (meaning $(1^r,0^{d-r})$-permissible for some $r$ with $1 \leq r \leq d$, see \cite{KR}) if and only if $\lambda(j) \in \{0,1 \}$ for each $j$, and the following conditions hold:
\begin{equation}
\bar{\omega}_i(j) \leq v_i(j) \leq \bar{\omega}_i(j) + 1
\end{equation}
for each $i = 0,1, \dots, d-1$ and $j =1,2, \dots, d$.
It is easy to see that these conditions are equivalent to the following conditions:  $\lambda(j) \in \{0,1\}$ for all $j$ and
 \begin{align*}
\lambda(j) = 0 &\Rightarrow \bar w^{-1}(j) \geq j \\
\lambda(j) = 1 &\Rightarrow \bar w^{-1}(j) \leq j\ ,
\end{align*}
cf.\ \cite{H1a}, Lemma 6.7.   As a consequence, using the equality ${\rm Perm}(\mu) = {\rm Adm}(\mu)$,  which holds in this setting (\cite{KR},\cite{HN}), we easily derive the following description of ${\rm Adm}(\mu_0)$.

\begin{lemma} \label{general_form}
Let $w = t_{e_m} \bar w$, for some $\bar w \in S_d$.  Then $w$ belongs to ${\rm Adm}(\mu_0)$ if and only if $\bar w$ is a cycle of the form
$$
\bar w = (m_k \, m_{k-1} \, \cdots \, m_1)
$$
where $ m = m_k >  m_{k-1} > \cdots > m_1$.\qed
\end{lemma}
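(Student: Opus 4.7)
The plan is to invoke the equality ${\rm Perm}(\mu_0) = {\rm Adm}(\mu_0)$ together with the minuscule permissibility conditions stated just above the lemma. Specializing those conditions to $\lambda = e_m$ (so that $\lambda(m)=1$ and $\lambda(j)=0$ for $j \neq m$), membership of $w = t_{e_m}\bar w$ in ${\rm Adm}(\mu_0)$ is equivalent to
\begin{equation*}
\bar w^{-1}(m) \leq m \qquad \text{and} \qquad \bar w^{-1}(j) \geq j \text{ for all } j \neq m.
\end{equation*}
Thus the lemma reduces to a purely combinatorial statement about permutations $\bar w \in S_d$: these two conditions on $\bar w$ are equivalent to $\bar w$ being a single cycle of the stated decreasing form (allowing $k=1$, i.e., $\bar w = \mathrm{id}$).

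For the easy direction (cycle $\Rightarrow$ conditions), I would simply read off $\bar w^{-1}$: if $\bar w = (m_k \, m_{k-1} \, \cdots \, m_1)$ with $m = m_k > \cdots > m_1$, then $\bar w^{-1}(m_{i-1}) = m_i > m_{i-1}$ for $i \geq 2$, while $\bar w^{-1}(m) = m_1 \leq m$, and $\bar w^{-1}$ fixes every element outside $\{m_1,\dots,m_k\}$.

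For the converse, assume the two inequalities hold. First track the forward orbit of $m$: let $m_k = m$ and $m_{k-i} = \bar w^i(m)$ for $i = 1,\dots,k-1$, where $k$ is the length of the cycle of $m$, so that $\bar w(m_1) = m_k$. For each $i \geq 2$ the element $m_{i-1} \neq m$, so $\bar w^{-1}(m_{i-1}) = m_i \geq m_{i-1}$; since the $m_i$ are distinct this gives $m_k > m_{k-1} > \cdots > m_1$, and the remaining inequality $m_1 \leq m$ is automatic. Then I would rule out any nontrivial cycle of $\bar w$ disjoint from $m$: for any $y$ in such a cycle, iterating $\bar w^{-1}(\cdot) \geq (\cdot)$ along the cycle forces $y \geq \bar w(y) \geq \bar w^2(y) \geq \cdots \geq y$, so all inequalities are equalities and the cycle has length $1$. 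This shows $\bar w$ has the claimed form.

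There is no real obstacle; the only mild subtlety is bookkeeping the direction of the cycle (since $\bar w^{-1}$ rather than $\bar w$ enters the conditions) and treating the degenerate case $k=1$ correctly. Everything else is a short check, so the lemma follows immediately from the minuscule characterization recalled just before it.
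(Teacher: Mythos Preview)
Your proposal is correct and follows exactly the approach the paper indicates: the paper leaves the lemma without explicit proof (it carries a \qed), but the paragraph immediately preceding it spells out the same strategy you use, namely reducing to the permissibility conditions $\bar w^{-1}(m)\leq m$ and $\bar w^{-1}(j)\geq j$ for $j\neq m$ via ${\rm Perm}(\mu_0)={\rm Adm}(\mu_0)$, after which the cycle characterization is the routine combinatorics you carry out.
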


\subsection{Main combinatorial proposition} \label{main_comb_subsec} 

Let $\mu_0 = (1,0^{d-1})$.  The upper triangular Borel subgroup $B \subset {\rm GL}_d$ determines the notion of {\em dominant} coweight, also called
{\em $G$-dominant}.

The semistandard Levi subgroups $M$ in ${\rm GL}_d$ correspond bijectively with disjoint decompositions
$$
\mathcal O_1 \amalg \cdots \amalg \mathcal O_t = \{ 1, 2, \dots, d \}.
$$
Fix such a decomposition and the corresponding Levi subgroup $M$.  Then the Weyl group $W_M$ is the product of the permutation groups on the subsets $\mathcal O_i$.  Each $\mathcal O_i$ can be identified with a $W_M$-orbit inside $W\mu_0$.  Note that  $B_M := B \cap M$ is a Borel subgroup of $M$, hence defines a notion of {\em $M$-dominant} coweight.  Let $\mu_i$ denote the unique $M$-dominant element of $\mathcal O_{i}$, and henceforth set $\mathcal O_{\mu_i} := \mathcal O_i$.

Next we define the following subset of ${\rm Adm}^G(\mu_0)$, for each $i=1, \ldots,t$,
$${\rm Adm}^G(\mathcal O_{\mu_i}) = \{ w \in {\rm Adm}^G(\mu_0) ~ | ~ S(w) \subseteq \mathcal O_{\mu_i} \}.
$$
Also, define
$$
{\rm Adm}^G(\mu_0,M) = \coprod_{i=1,\dots,t} {\rm Adm}^G(\mathcal O_{\mu_i}).
$$

\bigskip

For a dominant minuscule coweight $\mu$ of any based root system, let $k_\mu := q^{\langle \rho, \mu \rangle}z_\mu$ denote the Kottwitz function.  Here we think of $q$ as an indeterminate, or as $p^r$ if working with a group over $\QQ_{p^r}$.  Also, we have fixed a notion of positive root (or a semi-standard Borel subgroup $B$), and we define $\rho$ to be the half-sum of the $B$-positive roots.  In the Drinfeld case, we have from \cite{H1b} the simple formula for each $w \in {\rm Adm}(\mu_0)$:
\begin{equation} \label{Ko_fcn_GLd}
k_{\mu_0}(w) = (1-q)^{\ell(t_{\mu_0})-\ell(w)}.
\end{equation}
Define for $w \in {\rm Adm}(\mu_0)$, 
\begin{equation}
{\rm codim}(w) := \ell(t_{\mu_0}) - \ell(w) .
\end{equation}
Now let $\nu \in W\mu_0$ denote any $M$-dominant element (i.e., one of the elements $\mu_i$ above). We denote by  $k_{\mathcal O_\nu}$ the function on $\widetilde{W}$ which is equal to  $k^G_{\mu_0}(w)$ at $w \in {\rm Adm}^G(\mathcal O_\nu)$ and which vanishes outside ${\rm Adm}^G(\mathcal O_\nu)$, i.e., 
\begin{equation}
k_{\mathcal O_\nu} = k^G_{\mu_0}|_{{\rm Adm}^G(\mathcal O_\nu)}.
\end{equation}


\begin{prop} \label{main_comb_prop} 
\noindent {\rm (a)} Consider an element $w = t_{e_m} \bar w \in {\rm Adm}(\mu_0)$, where $\bar w$ is a cycle of the form $(m_k \, m_{k-1} \, \cdots \, m_1)$ with $m = m_k > \cdots > m_1$ {\em (Lemma \ref{general_form})}.  
Then $S(w) = \{m_1, \dots, m_k \}$.

\smallskip

{\rm Fix a semi-standard Levi $M$ as above, and let $\nu\in W\mu_0$ be $M$-dominant. }

\smallskip

\noindent {\rm (b)} We have $w \in {\rm Adm}^G(\mathcal O_\nu)$ if and only if $m_i \in \mathcal O_\nu$ for all $i$.   Thus, ${\rm Adm}(\mathcal O_\nu) \subset \widetilde{W}_M := X_*(T) \rtimes W_M$, since $\bar w \in W_M$.  In fact, we have the equality
$$ {\rm Adm}^G(\mathcal O_\nu) = {\rm Adm}^M(\nu),$$
where the right hand side is the $\nu$-admissible subset for the Levi subgroup $M$ and the $M$-dominant coweight $\nu$, which is defined using the Bruhat order on $\widetilde{W}_M$ determined by the alcove in the $M$-{\em antidominant} chamber whose closure contains the origin.

\smallskip

\noindent {\rm (c)}  For $w \in Adm(\mu_0)$, we have ${\rm codim}(w) = |S(w)| -1$.

\smallskip

\noindent {\rm (d)}  For $w, y \in Adm(\mu_0)$, we have $y \leq w \Longleftrightarrow S(w) \subseteq S(y)$.

\smallskip

\noindent {\rm (e)}  There is an equality $$k_{\mathcal O_\nu} =  k^M_\nu,$$
where $k^M_\nu$ denotes the Kottwitz function associated to $M$ and the $M$-dominant coweight $\nu$.
\end{prop}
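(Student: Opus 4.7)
I would tackle the five parts in order, since each builds on the previous. For part (a), I apply Lemma~\ref{w_leq_e_j}(iii) with $\lambda = e_m$: the condition $j \in S(w)$ becomes $\bar w(\bar\omega_j) = \bar\omega_{j-1} - e_m$, equivalently $\{\bar w(1), \ldots, \bar w(j)\} = \{1, \ldots, j-1, m\}$ as multisets. Since $\bar w = (m_k \cdots m_1)$ sends $m_\ell \mapsto m_{\ell-1}$ for $\ell \geq 2$, sends $m_1 \mapsto m_k = m$, and fixes everything else, a direct case analysis finishes the proof: if $j = m_\ell$, then short bookkeeping shows the image of $\{1,\ldots, j\}$ equals $(\{1, \ldots, j\} \setminus \{m_\ell\}) \cup \{m\} = \{1, \ldots, j-1, m\}$; if $j \notin \{m_1, \ldots, m_k\}$ (so $j \neq m$), then $\bar w$ fixes $j$, forcing $j$ into the image, which is incompatible with $\{1, \ldots, j-1, m\}$. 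Hence $S(w) = \{m_1, \ldots, m_k\}$.

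Part (b) is immediate from (a): the condition $S(w) \subseteq \mathcal O_\nu$ forces all $m_i \in \mathcal O_\nu$, so $\bar w$ permutes $\mathcal O_\nu$ internally and fixes the other blocks, placing $w$ in $\widetilde W_M$. For the identity ${\rm Adm}^G(\mathcal O_\nu) = {\rm Adm}^M(\nu)$, I observe that since $\nu$ is supported on the $\GL(\mathcal O_\nu)$-factor of $M$, Lemma~\ref{general_form} applied in that factor produces exactly the same list of elements. For part (c), the claim reduces to $\ell(w) = d - |S(w)|$ (using $\ell(t_{\mu_0}) = d-1$), which can be deduced geometrically from Theorem~\ref{dnc} and Proposition~\ref{critad}: the KR-stratum $\A_{0,w}$ equals $\A_{0,S(w)}$ and so has dimension $d - |S(w)|$, which matches the general identity $\dim \A_{0,w} = \ell(w)$ from the theory of local models; alternatively, a direct combinatorial proof produces an explicit reduced expression for $w\tau^{-1} \in W_{\rm aff}$ of length $d - |S(w)|$.

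Part (d) is the crux. The forward direction is immediate from the definition: if $y \leq w$ and $j \in S(w)$, then $w \leq t_{e_j}$, whence $y \leq t_{e_j}$, i.e.\ $j \in S(y)$. For the converse I induct on $|S(y)| - |S(w)|$. The base case $|S(y)| = |S(w)|$ forces $y = w$ by the bijectivity of $w \mapsto S(w)$ established in (a). In the inductive step, pick $j \in S(y) \setminus S(w)$ and let $w'$ be the unique admissible element with $S(w') = S(w) \cup \{j\}$; induction then reduces the problem to showing the Bruhat cover $w' \lessdot w$. This is verified by an explicit computation of $w^{-1}w'$ from the cycle presentations: if $j > m$, then $w^{-1}w' = t_{e_j - e_{m_1}}(j \, m_1)$, an affine reflection; if $j < m$ with $m_\ell < j < m_{\ell+1}$, then $w^{-1}w' = (j \, m_{\ell+1})$, a finite reflection. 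In either case $w^{-1}w'$ is a reflection in $W_{\rm aff}$, and since (c) gives $\ell(w') = \ell(w) - 1$, this is indeed a Bruhat cover.

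Part (e) then follows by combining (b), (c), and formula~(\ref{Ko_fcn_GLd}). By (b), both $k_{\mathcal O_\nu}$ and $k^M_\nu$ are supported on ${\rm Adm}^G(\mathcal O_\nu) = {\rm Adm}^M(\nu)$. On this common support, (\ref{Ko_fcn_GLd}) applied to $G$ gives $k^G_{\mu_0}(w) = (1-q)^{|S(w)|-1}$ by (c), while (\ref{Ko_fcn_GLd}) applied to $\nu$ inside the $\GL(\mathcal O_\nu)$-factor of $M$ gives $k^M_\nu(w) = (1-q)^{|S(w)|-1}$, using (c) within that factor together with the observation that the $G$-critical set of $w$ coincides with its $\GL(\mathcal O_\nu)$-critical set. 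The main technical obstacle is part (d), where the correspondence of Bruhat covers with the elementary operation of adjoining one index to $S(w)$ requires the case-by-case computation of $w^{-1}w'$; the other parts are either direct consequences of the bijection in (a) or reductions to applying formula~(\ref{Ko_fcn_GLd}) in the two groups.
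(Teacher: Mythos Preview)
Your proof is correct, and parts (a), (b), and (e) match the paper's argument essentially verbatim. The interesting divergence is in parts (c) and (d), where the paper uses a single unified device that you replace with two separate arguments.

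The paper reduces via $\tau$-conjugation to the case $w \leq t_{e_d}$ and exploits the explicit reduced expression $t_{e_d} = \tau\, s_1 s_2 \cdots s_{d-1}$. Any $w \leq t_{e_d}$ is obtained by deleting letters $s_{i_1}, \ldots, s_{i_q}$ (with $i_1 < \cdots < i_q$), and a short cycle computation gives $w = t_{e_d}\,(d\ i_q\ \cdots\ i_1)$, so part (a) yields $S(w) = \{i_1, \ldots, i_q, d\}$. This single calculation immediately gives (c), since $|S(w)| = q+1 = {\rm codim}(w)+1$, and also (d), since the Bruhat order on subwords of a fixed reduced word is precisely reverse inclusion on the deletion sets. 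No induction or case analysis on reflections is needed.

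Your route to (d) via induction and the explicit identification of $w^{-1}w'$ as a reflection is a genuine alternative; it is more hands-on but has the virtue of exhibiting the covering relations concretely. Your geometric argument for (c) is logically sound (only part (a) feeds into the identification $\A_{0,w} = \A_{0,S(w)}$, so there is no circularity), but note that it imports the non-emptiness of each KR-stratum and the local-model dimension formula $\dim \A_{0,w} = \ell(w)$ as external inputs. The paper deliberately keeps \S\ref{comb_sec} self-contained and purely combinatorial; indeed the Remark following Proposition~\ref{critad} acknowledges that a geometric proof of (d) exists along the lines you sketch, but points to the combinatorial one as the intended argument.
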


\begin{figure}
\includegraphics[width=10cm]{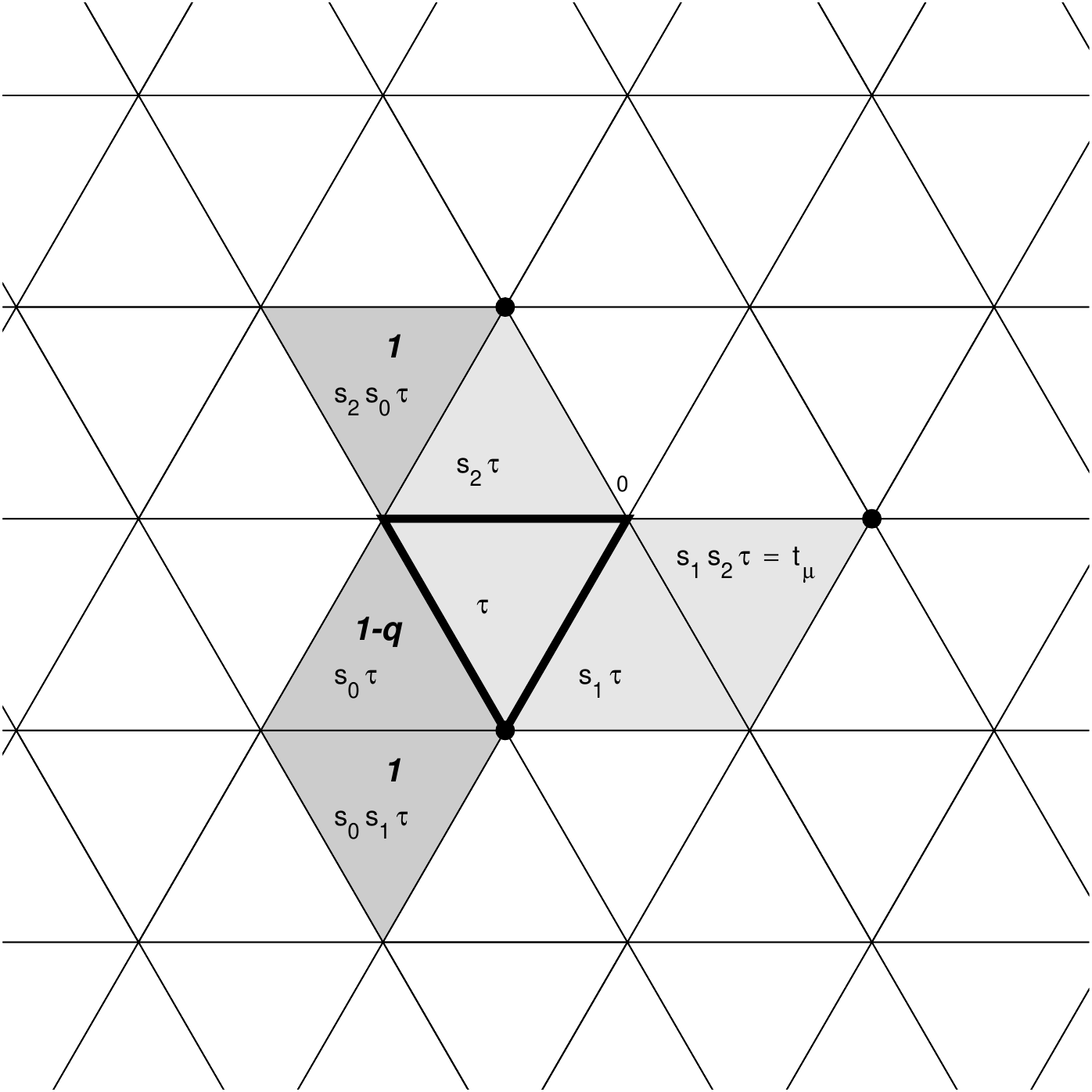}
\caption{\it Let $\mu = (1,0,0)$.  The three marked vertices represent the Weyl group orbit of $\mu$.  
The set ${\rm Adm}^{{\rm GL}_3}(\mu)$ labels the 7 gray alcoves, with $\tau$ labeling the base alcove.  
Consider the decomposition $\{1\} \amalg \{2, 3\}=\{1, 2, 3\}$, so that $M = {\mathbb G}_m \times {\rm GL}_2$. 
The values of the function $k_{{\mathcal O}_\nu}$ for $\nu = (0,1,0)$ are indicated on the dark gray alcoves comprising its support ${\rm Adm}^{{\rm GL}_3}(\mathcal O_\nu)$.}
\end{figure}

\begin{proof}
We start with part (a).  If $k=1$, then $w = t_{e_m}$ and clearly $S(w) = \{ m \}$.   So, from now on, we may assume $k >1$, i.e.,  $m_k > m_1$.  
For any $j \in \{ 1, \dots, d\}$, Lemma \ref{w_leq_e_j} shows that  
$$
j \in S(w) \Longleftrightarrow \bar{\omega}_{j} - \bar w(\bar{\omega}_{j}) = e_m - e_j.
$$
If $j > m_k$ or $j < m_1$ (resp. $j = m$), then the condition on the right fails (resp. holds), and so $j \notin S(w)$ (resp. $j=m \in S(w)$).  If $m_1 \leq j < m_k$, let $n$ denote the unique index with $m_n \leq j < m_{n+1}$.  A calculation shows that
$$
\bar{\omega}_{j} - \bar w(\bar{\omega}_{j}) = e_m - e_{m_{n}}.
$$
Thus 
$$
j \in S(w) \Longleftrightarrow j = m_{n}.
$$
Putting all this  together, we get that $S(w) = \{m_1, \dots, m_k \}$, proving (a).  Part (a) implies part (b), once we take into account Lemma \ref{general_form} for $G$ as well as its obvious analogue for $M$.

For parts (c) and (d), conjugation by an appropriate power of $\tau$ reduces us to the case where $w \leq t_{e_d}$, i.e. $\lambda(w) = e_d$ (Lemma \ref{w_leq_mu}).  For $1 \leq i \leq d-1$, let $s_i$ denote the simple reflection corresponding to the transposition $(i ~ i\!\!+\!\!1) \in S_d$.  We have the equalities
\begin{equation} \label{e_d_reduced}
t_{e_d} = \tau ~(1 ~ 2 \cdots d) = \tau s_{1} s_2 \cdots s_{d-1},
\end{equation}
where the expression on the right is reduced.  Since $w \leq   t_{e_d}$, there exists a strictly-increasing list of indices $i_1, \dots, i_q \in \{ 1, \dots, d-1 \}$ such that $w = \tau s_1 \cdots \hat{s}_{i_1} \cdots \hat{s}_{i_q} \cdots s_{d-1}$, where the hat means we omit the corresponding term in the reduced expression for $t_{e_d}$.  Then we have
\begin{align*}
w &= \tau  ~ s_{1} \cdots \hat{s}_{i_1} \cdots \hat{s}_{i_q} \cdots s_{d-1} \\
&= t_{e_d} ~ (d ~ d-1 \cdots 1) ~ s_{1} \cdots \hat{s}_{i_1} \cdots \hat{s}_{i_q} \cdots s_{d-1} \\
&= t_{e_d} ~ (d ~ i_q ~ \cdots ~ i_1).
\end{align*}
Thus, in view of (a), we have $S(w) = \{i_1,~ \cdots, i_q, ~ d \}$.  Parts (c) and (d) now follow easily.

For part (e), assume $w \in {\rm Adm}^G(\mathcal O_\nu)$.  Then,  in addition to the equality
$$
\ell(t_{\mu_0}) - \ell(w) = |S(w)|-1
$$
we just proved, we also have 
$$\ell^M(t_{\nu}) - \ell^M(w) = |S^M(w)|-1.$$ 
Here we use the alcove specified in (b) to define both the Bruhat order on $\widetilde{W}_M$ coming into the definition of the subset $S^M(w) \subseteq W_M \nu$, and the length function $\ell^M$ on $\widetilde{W}_M$.  Now since $w \in {\rm Adm}^G(\mathcal O_\nu)$, it is clear that $|S(w)| = |S^M(w)|$ hence
\begin{equation} \label{ell_diff}
\ell(t_{\mu_0}) - \ell(w) = \ell^M(t_\nu) - \ell^M(w).
\end{equation}
Then comparing the formulas (\ref{Ko_fcn_GLd}) for $G$ and $M$ proves the desired equality.
\end{proof}


\subsection{The tori $T_S$ and $^wT$} \label{T_S-wT_subsec}

For $w \in {\rm Adm}(\mu_0)$ write $S := S(w)$ for the set of critical indices for $w$.  If we write $w = t_{e_m}(m \, m_{k-1} \, \cdots \, m_1)$, as in Lemma \ref{general_form}, then Lemma \ref{main_comb_prop} (a) says that $S = \{m_1, m_2, \dots, m_{k-1}, m \}$.  We define the subtorus
$$
T_S \subset T
$$
by requiring that $(t_1, \cdots, t_d) \in T$ belongs to $T_S$ if and only if $t_i =1$ for all 
$i \notin S$.  Thus
$$
X_*(T_S) = \bigoplus_{j \in S} \mathbb Z e_j.
$$

We also define the free abelian subgroup $L_w \subset X_*(T)$ to be the subgroup generated by the elements of the form $w(\nu) - \nu$, for $\nu$ ranging over $X_*(T)$.  Here $w(\nu) : = e_m + \bar w(\nu)$, if $w = t_{e_m} \bar w $ for $\bar w \in W$.

\begin{lemma} \label{T_S_vs_wT}
The quotient $X_*(T)/L_w$ is torsion-free, hence there is a unique quotient torus $^wT$ of $T$ whose cocharacter lattice is given by
$$
X_*(\, ^wT) = X_*(T)/L_w.
$$
There is a natural identification $L_w = \bigoplus_{j \in S(w)} \mathbb Z e_j$.  Hence $^wT$ is canonically isomorphic to $T/T_{S(w)}$.
\end{lemma}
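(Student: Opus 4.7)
The plan is to reduce everything to a direct calculation using the cycle structure of $\bar w$ supplied by Lemma \ref{general_form} and Proposition \ref{main_comb_prop}(a). Write $w = t_{e_m}\bar w$ with $\bar w = (m\ m_{k-1}\ \cdots\ m_1)$ and $S(w) = \{m_1,\ldots,m_{k-1},m\}$. The first observation is that, since $w(\nu) - \nu = e_m + (\bar w - 1)(\nu)$, the subgroup $L_w \subset X_*(T)$ is generated by the single vector $e_m$ (corresponding to $\nu = 0$) together with the image of the endomorphism $\bar w - 1 : X_*(T) \to X_*(T)$.

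Next I would compute the image of $\bar w - 1$ explicitly. Since $\bar w$ fixes $e_j$ for $j \notin S(w)$ and cyclically permutes $\{e_{m_1},\ldots,e_{m_{k-1}},e_m\}$, the operator $\bar w - 1$ sends each $e_{m_i}$ to $e_{\bar w(m_i)} - e_{m_i}$, a vector supported on $S(w)$ with coordinate sum zero. A short check (using that the $k$ vectors of the form $e_{\bar w(m_i)} - e_{m_i}$ satisfy a single linear relation, namely their sum is zero, and any $k-1$ of them are independent) identifies $\IIm(\bar w - 1)$ with the index-zero sublattice
\[
\Bigl\{\textstyle\sum_{j \in S(w)} c_j e_j ~\Big|~ \sum_j c_j = 0\Bigr\} \subset \bigoplus_{j \in S(w)} \mZ e_j.
\]
Adjoining $e_m$, which lies in $\bigoplus_{j \in S(w)} \mZ e_j$ and projects nontrivially under the sum map, kills the codimension-one constraint, so $L_w = \bigoplus_{j \in S(w)} \mZ e_j$, as claimed.

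Granting this, $X_*(T)/L_w$ is canonically identified with $\bigoplus_{j \notin S(w)} \mZ e_j$, which is free, hence torsion-free. Therefore there is a unique quotient torus $^wT$ of $T$ with cocharacter lattice $X_*(T)/L_w$, and comparing cocharacter lattices with the definition of $T_S$ in subsection \ref{T_S-wT_subsec} gives the canonical identification $^wT \cong T/T_{S(w)}$.

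I do not anticipate a real obstacle: the only point requiring care is the bookkeeping with the cycle-notation convention for $\bar w$ (i.e.\ confirming that the image of $\bar w - 1$ is exactly the zero-sum sublattice supported on $S(w)$, rather than a proper sublattice of it), and this is handled by the one-line observation that the $k$ vectors $e_{\bar w(m_i)} - e_{m_i}$ span the full zero-sum sublattice because consecutive differences $e_{m_{i-1}} - e_{m_i}$ already form a basis for it.
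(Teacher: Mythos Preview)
Your argument is correct and takes a somewhat different route from the paper's. The paper first invokes Lemma~\ref{w_leq_e_j} to note that $w(\bar\omega_j)-\bar\omega_j = e_j$ for each $j\in S(w)$, which immediately places every $e_j$ with $j\in S(w)$ into $L_w$; it then establishes the reverse inclusion by writing out $w(\nu)-\nu$ for arbitrary $\nu$ and observing that all nonzero coordinates lie in $S(w)$. You instead decompose $L_w = \mZ e_m + \IIm(\bar w - 1)$ (using $\nu=0$ to isolate $e_m$) and then identify $\IIm(\bar w-1)$ with the zero-sum sublattice of $\bigoplus_{j\in S(w)}\mZ e_j$ from the cycle structure of $\bar w$. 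Your approach is slightly more self-contained, avoiding the appeal to the alcove-geometric Lemma~\ref{w_leq_e_j}; the paper's approach, by contrast, highlights the link between $L_w$ and the vertex-sharing description of $S(w)$, which is thematically closer to how $S(w)$ arose. One cosmetic point: the phrase ``index-zero sublattice'' is nonstandard (the displayed sublattice has rank $|S(w)|-1$, not finite index); ``zero-sum sublattice'' or ``augmentation-zero sublattice'' would be clearer, though your explicit formula removes any ambiguity.
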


\begin{proof}
For $j \in S(w)$, we have $w(\bar{\omega}_{j}) - \bar{\omega}_{j} = e_j$ (Lemma \ref{w_leq_e_j}).  Hence we have a canonical identification
\begin{equation*}
\mathbb Z^d = \bigoplus_{j \notin S(w)} \mathbb Ze_j ~ \bigoplus ~ \bigoplus_{j \in S(w)} \mathbb Z(w(\bar{\omega}_{j})- \bar{\omega}_{j}).
\end{equation*}
and the second summand is contained in $L_w$.  

\medskip

\noindent {\bf Claim:} $L_w \cap \oplus_{j \notin S(w)} \mathbb Z e_j = (0)$.

\smallskip

The claim implies that $L_w = \oplus_{j \in S(w)} \mathbb Z(w(\bar{\omega}_{j}) - \bar{\omega}_{j})$, which in turn implies the lemma.

To prove the claim, write $\nu = (\nu_1, \cdots, \nu_d)$.  A calculation shows that 
$$
w(\nu) - \nu = (\nu_{m_2}-\nu_{m_1})e_{m_1} + (\nu_{m_3}- \nu_{m_2})e_{m_2} + \cdots + (1+ \nu_{m_1} - \nu_{m}) e_{m},
$$
and clearly a sum of such elements can belong to $\oplus_{j \notin \{m_1, \dots, m_{k-1}, m \}} \mathbb Z e_j$ only if it is zero.
\end{proof}
Thus, in summary, for $S=S(w)$:
\begin{equation} \label{X_*(T_S)=}
X_*(T_S) = \langle w(\lambda) - \lambda ~ | ~ \lambda \in X_*(T) \rangle = 
\bigoplus_{j \in S} \mathbb Ze_j.
\end{equation}

\subsection{The functions $\delta(w,\chi)$ and $\delta^1(w,\chi)$}\label{deltadef}

We fix a character $\chi = (\chi_1, \dots, \chi_d)$ on $T(\mathbb F_p)$, and an element $w \in {\rm Adm}(\mu_0)$.

Let $S = S(w) = \{ i_1, \dots, i_h \}$ as in subsection \ref{crit_def}, and define $T_S \subset T$ as in subsection \ref{T_S-wT_subsec}.  Let $T^1_S \subset T_S$ denote the kernel of
\begin{equation}\label{dethom}
\begin{aligned}
T_S &\rightarrow \mG_m \\
(t_{i_1}, \dots, t_{i_h}) &\mapsto t_{i_1} \cdots t_{i_h}.
\end{aligned}
\end{equation}

The homomorphism (\ref{dethom}) also defines the $T_S(\mathbb F_p)$-module  $\bar{\mathbb Q}_\ell[\mu_{p-1}]$. The character $\chi$ gives rise to a disjoint decomposition 
\begin{equation}\label{decchi}
\{ 1, 2, \dots, d \} = \mathcal O_1 \amalg \cdots \amalg \mathcal O_t,
\end{equation}
given by the equivalence relation $\sim$, where $i \sim j$ if and only if $\chi_i = \chi_j$. 

\begin{lemma} \label{delta_equiv}
The following conditions on a pair $(w,\chi)$ are equivalent:
\begin{enumerate}
\item[{\rm (i)}] the $\chi$-isotypical component of ${\rm Ind}^{T(\mathbb F_p)}_{T_S(\mathbb F_p)} \bar{\mathbb Q}_\ell[\mu_{p-1}]$ is non-zero;
\item[{\rm (ii)}] $\chi |_{T_S(\mathbb F_p)}$ appears in $\bar{\mathbb Q}_\ell[\mu_{p-1}]$;
\item[{\rm (iii)}] $\chi_{i_1} = \cdots = \chi_{i_h}$;
\item[{\rm (iv)}] $S(w) \subseteq \mathcal O_i$, for some $i$;
\item[{\rm (v)}] $\chi$ is trivial on $T^1_S(\mathbb F_p)$.
\end{enumerate}
\end{lemma}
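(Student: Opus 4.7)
The plan is to verify the equivalences by unravelling the definitions and using Frobenius reciprocity together with the explicit structure of $\bar{\mathbb Q}_\ell[\mu_{p-1}]$ as a $T_S(\mathbb F_p)$-module.

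First I would establish (i) $\Leftrightarrow$ (ii) via Frobenius reciprocity: the $\chi$-isotypical component of ${\rm Ind}^{T(\mathbb F_p)}_{T_S(\mathbb F_p)}\bar{\mathbb Q}_\ell[\mu_{p-1}]$ is non-zero if and only if $\mathrm{Hom}_{T_S(\mathbb F_p)}(\chi|_{T_S(\mathbb F_p)}, \bar{\mathbb Q}_\ell[\mu_{p-1}]) \neq 0$, which is precisely (ii). This step is immediate since both the source and target are semisimple.

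Next I would address (ii) $\Leftrightarrow$ (iii) $\Leftrightarrow$ (v). The module $\bar{\mathbb Q}_\ell[\mu_{p-1}]$ is the regular representation of $\mu_{p-1} = \mathbb F_p^\times$, pulled back to $T_S(\mathbb F_p) = (\mathbb F_p^\times)^h$ along the product homomorphism $(t_{i_1},\dots,t_{i_h}) \mapsto t_{i_1}\cdots t_{i_h}$ of (\ref{dethom}). Its simple constituents are therefore exactly the characters of $T_S(\mathbb F_p)$ that factor through this product map, or equivalently, those that are trivial on its kernel $T^1_S(\mathbb F_p)$. The restriction $\chi|_{T_S(\mathbb F_p)}$ sends $(t_{i_1},\dots,t_{i_h})$ to $\chi_{i_1}(t_{i_1})\cdots\chi_{i_h}(t_{i_h})$; this expression depends only on the product $t_{i_1}\cdots t_{i_h}$ if and only if $\chi_{i_1} = \cdots = \chi_{i_h}$. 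This simultaneously yields (ii) $\Leftrightarrow$ (iii) and (v) $\Leftrightarrow$ (iii).

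Finally, (iii) $\Leftrightarrow$ (iv) is immediate from the definition of the decomposition (\ref{decchi}): the indices $i_1,\dots,i_h$ satisfy $\chi_{i_1}=\cdots=\chi_{i_h}$ precisely when they all belong to a single equivalence class $\mathcal O_i$, i.e. when $S(w) = \{i_1,\dots,i_h\}\subseteq \mathcal O_i$.

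No serious obstacle is anticipated; the only point requiring slight care is the description of $\bar{\mathbb Q}_\ell[\mu_{p-1}]$ as the regular representation of $\mathbb F_p^\times$ viewed through the product map, but this is built into the setup of (\ref{dethom}). The upshot is that the five conditions are different ways of encoding the single geometric fact that $S(w)$ is contained in a $\chi$-equivalence class.
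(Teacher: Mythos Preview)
Your proposal is correct and follows the same approach as the paper, which simply invokes Frobenius reciprocity for (i) $\Leftrightarrow$ (ii) and declares the remaining equivalences immediate. You have merely spelled out the details the paper leaves implicit.
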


\begin{proof} Frobenius reciprocity yields ${\rm (i)} \Leftrightarrow {\rm (ii)}$.  The other equivalences are immediate.
\end{proof}
We define $\delta(w,\chi) \in \{0,1\}$ by
\begin{equation} \label{delta_defn}
\delta(w, \chi) := \begin{cases} 1, & \mbox{if (i-v) above hold for } (w, \chi)\\
0, & \mbox{otherwise}. \end{cases}
\end{equation}

\begin{cor} \label{chi=chi^w}
If $\delta(w, \chi) = 1$, then $^{w}\chi = \chi$.
\end{cor}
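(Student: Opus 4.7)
The plan is to reduce to a direct coordinate computation using the explicit form of $w$ supplied by Lemma~\ref{general_form} and the description of $S(w)$ given in Proposition~\ref{main_comb_prop}(a). Since $T$ is abelian, conjugation of $T$ by any translation element $t_\lambda\in X_*(T)$ is trivial; hence the character $^w\chi$ depends only on the image $\bar w\in W$ of $w$ under $\widetilde W\twoheadrightarrow W$. Thus I may replace $w$ by $\bar w$ throughout.

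Write $w=t_{e_m}\bar w$ with $\bar w=(m\; m_{k-1}\;\cdots\; m_1)$, where $m=m_k>m_{k-1}>\cdots>m_1$ (Lemma~\ref{general_form}). View $\bar w$ as the permutation $\sigma\in S_d$ acting on $T=\mathbb G_m^d$ by permuting coordinates. Then for $t=(t_1,\ldots,t_d)\in T(\mathbb F_p)$ one has
\[
(^w\chi)(t)=\chi(\bar w^{-1}t\bar w)=\prod_{j=1}^{d}\chi_{\sigma^{-1}(j)}(t_j),
\]
so $(^w\chi)_j=\chi_{\sigma^{-1}(j)}$. Consequently $^w\chi=\chi$ is equivalent to $\chi_j=\chi_{\sigma^{-1}(j)}$ for every $j$, i.e.\ to $\chi$ being constant on every cycle of $\sigma$.

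The cycle decomposition of $\sigma$ consists of the single nontrivial cycle $\{m_1,\ldots,m_{k-1},m\}$ together with fixed points on the complement. By Proposition~\ref{main_comb_prop}(a) this nontrivial cycle is exactly the set $S(w)$ of critical indices. The hypothesis $\delta(w,\chi)=1$ is, by condition (iii) of Lemma~\ref{delta_equiv}, precisely the statement $\chi_{m_1}=\cdots=\chi_{m_{k-1}}=\chi_m$, i.e.\ that $\chi$ is constant on $S(w)$. Outside $S(w)$ the permutation $\sigma$ fixes every index, so equality of the coordinates of $\chi$ and $^w\chi$ is automatic there. Combining these two observations gives $^w\chi=\chi$.

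There is no real obstacle here: once the cycle/support identification $\mathrm{supp}(\bar w)=S(w)$ from Proposition~\ref{main_comb_prop}(a) is in hand, the corollary follows from a one-line unwinding of the definitions of the permutation action and of $\delta(w,\chi)$.
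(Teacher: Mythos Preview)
Your proof is correct and follows essentially the same approach as the paper's: the paper invokes condition (iii) of Lemma~\ref{delta_equiv} together with Proposition~\ref{main_comb_prop}(b) (which says $\bar w\in W_{M_\chi}$), while you use Proposition~\ref{main_comb_prop}(a) directly to identify the support of the cycle $\bar w$ with $S(w)$ and then check the permutation action by hand. Since part (b) is deduced from part (a) in the paper, this is the same argument with the key step unpacked.
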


\begin{proof}
This follows from (iii) in combination with Proposition \ref{main_comb_prop}, (b).
\end{proof}

Now we define 
\begin{equation} \label{delta1_defn}
\delta^1(w, \chi) := \begin{cases} 1, & \mbox{if $\chi_j = {\rm triv}, \, \forall j \in S(w)$} \\
0, & \mbox{otherwise}. \end{cases}
\end{equation}

The following statement is obvious. 

\begin{lemma} \label{chi_S_triv}
A character $\chi \in T(\mathbb F_p)^\vee$ satisfies $\delta^1(w,\chi) =1$ if and only if $\chi$ is trivial on $T_S(\mathbb F_p)$.
\end{lemma}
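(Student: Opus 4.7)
The statement unpacks as a direct equivalence between two conditions on $\chi = (\chi_1,\dots,\chi_d)$ in the identification $T(\mathbb F_p)^\vee \cong \prod_{j=1}^d (\mathbb F_p^\times)^\vee$. The plan is simply to unwind the definition of $T_S$ as a subtorus of $T$ and compare to the pointwise condition defining $\delta^1(w,\chi)=1$.

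First I would recall from subsection \ref{T_S-wT_subsec} that with $S=S(w)$ we have $T_S = \prod_{j\in S}\mathbb G_m \subset T = \prod_{i\in\mathbb Z/d}\mathbb G_m$, and that on $\mathbb F_p$-points the natural inclusion $T_S(\mathbb F_p)\hookrightarrow T(\mathbb F_p)$ sends $(t_j)_{j\in S}$ to the tuple with entries $t_j$ for $j\in S$ and $1$ for $j\notin S$.

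Next I would compute the restriction directly: for $(t_j)_{j\in S}\in T_S(\mathbb F_p)$,
\begin{equation*}
\chi\big((t_j)_{j\in S}\big) = \prod_{j\in S} \chi_j(t_j).
\end{equation*}
Since the factors $\mathbb F_p^\times$ are mutually independent, this product character on $\prod_{j\in S}\mathbb F_p^\times$ is trivial if and only if each individual character $\chi_j$ (for $j\in S$) is trivial on $\mathbb F_p^\times$. That is exactly the defining condition (\ref{delta1_defn}) for $\delta^1(w,\chi)=1$.

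There is essentially no obstacle here; the only thing to watch is keeping the bookkeeping straight between the two parametrizations of $S(w)$ (as $\{i_1,\dots,i_h\}$ in subsection \ref{deltadef} versus $\{m_1,\dots,m_{k-1},m\}$ coming from Lemma \ref{general_form}), but since the claim is indexed only by the underlying set $S(w)\subset\{1,\dots,d\}$, the labeling is immaterial. The proof is therefore a one-line verification.
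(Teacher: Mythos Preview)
Your proof is correct and is simply an explicit unpacking of the definitions, which is exactly what the paper intends: it states that the lemma is obvious and gives no further argument. There is nothing to add.
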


\subsection{An extension of $\chi$}

\begin{lemma} \label{chi_extn}
Suppose $\delta^1(w, \chi) = 1$, and let $k_{r'} \supseteq \mathbb F_p$ be any finite extension.  Then $\chi$ possesses an extension to a character $\widetilde{\chi}$ on $T(k_{r'}) \rtimes \langle w \rangle$ which is trivial on $T_S(k_{r'})$ and $\langle w\rangle$.
\end{lemma}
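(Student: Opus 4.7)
The plan is to extend $\chi$ in three stages: from $T(\mathbb F_p)$ to $T(k_{r'})$, then from $T(k_{r'})$ to the semi-direct product $T(k_{r'}) \rtimes \langle w \rangle$. First, by Lemma \ref{chi_S_triv}, the hypothesis $\delta^1(w,\chi) = 1$ means that $\chi$ is trivial on $T_S(\mathbb F_p)$, so it factors as $\chi = \bar\chi \circ \pi$, where $\pi: T(\mathbb F_p) \twoheadrightarrow T^S(\mathbb F_p)$ is the natural projection and $\bar\chi: T^S(\mathbb F_p) \to \bar{\mathbb Q}_\ell^\times$ is a character.

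Next, I would extend $\bar\chi$ to a character $\bar{\widetilde{\chi}}: T^S(k_{r'}) \to \bar{\mathbb Q}_\ell^\times$. Since $T^S(\mathbb F_p) \hookrightarrow T^S(k_{r'})$ is an inclusion of finite abelian groups, and since $\bar{\mathbb Q}_\ell^\times$ is divisible (hence $\mathbb Z$-injective), such an extension exists. Pulling back along the projection $T(k_{r'}) \twoheadrightarrow T^S(k_{r'})$ yields a character $\widetilde{\chi}: T(k_{r'}) \to \bar{\mathbb Q}_\ell^\times$ which is trivial on $T_S(k_{r'})$ and restricts to $\chi$ on $T(\mathbb F_p)$.

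Finally, I would extend $\widetilde{\chi}$ to $T(k_{r'}) \rtimes \langle w\rangle$ by setting it equal to $1$ on $\langle w \rangle$. The standard criterion for building a character on a semi-direct product in this way is that $\widetilde{\chi}$ must be invariant under conjugation by $w$, i.e.\ $^w\widetilde{\chi} = \widetilde{\chi}$. The action of $w = t_{e_m}\bar w$ on $T$ by conjugation is through $\bar w \in W$ alone, and by Lemma \ref{T_S_vs_wT} combined with the explicit description of $\bar w$ as the cycle $(m_k \, m_{k-1}\, \cdots \, m_1)$ whose support is $S(w)$, the permutation $\bar w$ fixes every coordinate index $j \notin S(w)$, hence acts trivially on the quotient $T^S = T/T_S$. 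Since $\widetilde{\chi}$ factors through $T^S(k_{r'})$ by construction, the invariance $^w\widetilde{\chi} = \widetilde{\chi}$ follows immediately.

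The argument is therefore essentially routine once one invokes the divisibility of $\bar{\mathbb Q}_\ell^\times$ and the triviality of the $\bar w$-action on $T^S$. The only mildly non-trivial input is the compatibility between the combinatorial definition of $\delta^1$ and the fact that $\bar w$ preserves $T^S$ pointwise, but both are already packaged in Lemma \ref{T_S_vs_wT} and Proposition \ref{main_comb_prop}(a), so no real obstacle is expected.
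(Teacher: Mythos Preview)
Your proof is correct and follows essentially the same approach as the paper, which simply says the result is ``straightforward, using (\ref{X_*(T_S)=}), along with Lemma \ref{chi_S_triv}.'' You have unpacked this one-line sketch faithfully: Lemma \ref{chi_S_triv} gives the factoring through $T^S(\mathbb F_p)$, divisibility of $\bar{\mathbb Q}_\ell^\times$ gives the extension to $T^S(k_{r'})$, and the identification $X_*(T_S) = \bigoplus_{j\in S}\mathbb Z e_j$ together with the explicit cycle form of $\bar w$ (Proposition \ref{main_comb_prop}(a)) shows $\bar w$ acts trivially on $T^S$, which is exactly the invariance needed to extend trivially over $\langle w\rangle$.
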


\begin{proof}
This is straightforward, using (\ref{X_*(T_S)=}),  along with Lemma \ref{chi_S_triv}.
\end{proof}

\section{Trace of Frobenius on nearby cycles}\label{tracefrob_sec}

\subsection{Definition of the functions $\phi_{r,0}$ and $\phi_{r,\chi}$}

We fix $r$ and write $q$ for $p^r$.  Recall $\mu^* = -w_0(\mu)$, where $\mu$ is the Drinfeld type cocharacter which corresponds to $\mu_0 = (1,0^{d-1})$ under the isomorphism $G_{\mathbb Q_p} = {\rm GL}_d \times \mG_m$.  

Let $\phi_{r,0} = k_{\mu^*,r}$, where $k_{\mu^*,r} := q^{\langle \rho, \mu^*\rangle} z_{\mu^*,r}$ is the Kottwitz function. Here $z_{\mu^*,r} \in \mathcal Z(G_r,I_r)$ is the Bernstein function as in the Appendix, section \ref{nonst}, and  $\rho$ is the half-sum of the $B$-positive roots, so that $\langle \rho, \mu^*\rangle = \frac{d-1}{2}$. 

By \cite{H05}, the function $\phi_{r,0}$ is known to be ``the'' test function for the $\A_0$ moduli problem, in the sense of the Introduction, i.e., 
$\phi_{r,0}$ satisfies the analogue of (\ref{count}) below,  relative to $R\Psi_0$ instead of $R\Psi_\chi$.  In fact, this follows  from section \ref{counting_pts_sec}, by taking $\chi={\rm triv}$ to be the trivial character.  The function $\phi_{r,0}$ also computes the semi-simple trace of Frobenius function for  the nearby cycles $R\Psi_0$, see Proposition \ref{A_0_tr_Fr} below.

At this point we introduce a change in the notation used in sections \ref{moduli_sec}--\ref{comb_sec}, by letting $T = \mathbb G_m^d \times \mathbb G_m$ denote the maximal torus in $G_{\mQ_p} = {\rm GL}_d \times \mathbb G_m$ whose first factor is the usual diagonal torus.  We will only consider  characters $\chi$ on $T(\mF_p)$ which are trivial on the $\mathbb G_m$-factor.  It is obvious that for such $\chi$ and for $w \in \widetilde{W}$, we may define the symbols $\delta(w, \chi)$ and $\delta^1(w,\chi)$ as in (\ref{delta_defn}) and (\ref{delta1_defn}).
 
Now fix a character $\chi \in T(\mF_p)^\vee$ which is trivial on the $\mathbb G_m$-factor.  Recall from  section \ref{notation_sec} the Hecke algebra $\mathcal H(G_r, I_r, \chi_r)$ and its center $\mathcal Z(G_r,I_r, \chi_r)$, where  $\chi_r=\chi\circ N_r$,  as in the Introduction. We want to define a function $\phi_{r,\chi} \in \mathcal Z(G_r,I_r, \chi_r)$ with  analogous properties to those of $\phi_{r, 0}$.  Recall that $I^+_r \subset I_r$ denotes the pro-unipotent radical of $I_r$, and that we have the decomposition 
$$T(k_r) \rtimes \widetilde{W} = I^+_r \backslash G_r /I^+_r.$$
Here, as in section \ref{notation_sec}, we embed $t_\lambda \bar w \in X_*(T) \rtimes W = \widetilde{W}$ into 
$N_G(T)(L_r)$ by using permutation  matrices to represent the elements ${\bar w}$ and by sending $t_\lambda$ to $\lambda(\varpi)$, where $\varpi = p$.  Also, we use Teichm\"{u}ller representatives to regard $T(k_r)$ as a subgroup of $T(\mathcal O_{L_r})$, and thus of $I_r$.  

The function $\phi_{r,\chi}$ will be defined by specifying its values on representatives $tw$, for $t \in T(k_r)$, $w \in \widetilde{W}$.  Since $t$ and $w$ do  not commute in general, this will only make sense in the situation where $\delta(w,\chi) = 1$.  The key point is the following lemma.

\begin{lemma} \label{fcn_on_tw}  Suppose $\delta(w, \chi) = 1$.  Having fixed the embedding $T(k_r) \rtimes \widetilde{W} \hookrightarrow G_r$ as above, for each $t$ there is a unique function $\psi \in \mathcal H(G_r,I_r, \chi_r)$ supported on $I_r tw I_r$ and having value 1 at $tw$.
\end{lemma}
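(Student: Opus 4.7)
The plan is as follows. Extend $\chi_r$ from $T(k_r)$ to a character $\tilde\chi_r$ on $I_r$ via the quotient map $I_r \twoheadrightarrow I_r/I_r^+ \cong T(k_r)$ (using the Teichm\"uller section). Uniqueness is immediate: any candidate $\psi \in \mathcal H(G_r, I_r, \chi_r)$ supported on $I_r\, tw\, I_r$ is forced by the biequivariance condition to satisfy $\psi(i_1\, tw\, i_2) = \tilde\chi_r^{-1}(i_1)\,\tilde\chi_r^{-1}(i_2)\, \psi(tw)$, and the value $\psi(tw) = 1$ is prescribed; this also tells us what formula to take for existence.

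For existence, I would define $\psi$ on $I_r\, tw\, I_r$ by the above formula (and by $0$ off this set) and check well-definedness. Suppose $i_1\, tw\, i_2 = j_1\, tw\, j_2$. Setting $\alpha := j_1^{-1} i_1$ and $\beta := j_2 i_2^{-1}$, one has $\alpha = (tw)\,\beta\,(tw)^{-1}$ with $\alpha, \beta \in I_r$, and the required compatibility reduces to the identity
\[
\tilde\chi_r\bigl((tw)\,\beta\,(tw)^{-1}\bigr) = \tilde\chi_r(\beta)
\qquad \text{for all } \beta \in J := I_r \cap (tw)^{-1} I_r\, (tw).
\]

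To verify this identity, I would use the decomposition $I_r = T(k_r)\cdot I_r^+$ coming from the Teichm\"uller section to write any $\beta \in J$ as $\beta = s\cdot u$ with $s \in T(k_r)$ and $u \in I_r^+$. Then $(tw)\,\beta\,(tw)^{-1} = {}^{tw}\!s \cdot {}^{tw}\!u$. Since $t \in T$ commutes with $T$, conjugation by $tw$ acts on $T$ through $w$; moreover, our fixed embedding $i_\varpi$ represents $\bar w$ by a permutation matrix (hence preserves the Teichm\"uller lift), so ${}^{tw}\!s = {}^{w}\!s \in T(k_r)$. The element ${}^{tw}\!u$ is pro-unipotent and, by the hypothesis $(tw)\beta(tw)^{-1} \in I_r$ together with the fact that it reduces trivially in $T(k_r)$ (the torus component is absorbed by ${}^w\!s$), lies in $I_r^+$. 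Consequently $\tilde\chi_r(\beta) = \chi_r(s)$ and $\tilde\chi_r((tw)\beta(tw)^{-1}) = \chi_r({}^w\!s)$, and the two agree precisely because ${}^w\!\chi_r = \chi_r$, which is Corollary~\ref{chi=chi^w} applied to our standing hypothesis $\delta(w,\chi) = 1$.

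The only real obstacle is bookkeeping in the last paragraph: one must ensure that the Teichm\"uller splitting of $\beta$ is compatible with conjugation by $tw$, and that a pro-unipotent element of $I_r$ automatically lies in $I_r^+$ (this latter point holds because $I_r/I_r^+ \cong T(k_r)$ has order prime to $p$, so $I_r^+$ contains every pro-$p$ subgroup of $I_r$). With these observations the identity follows cleanly, and the invocation of Corollary~\ref{chi=chi^w} is exactly where the hypothesis $\delta(w,\chi)=1$ enters.
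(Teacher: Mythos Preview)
Your proof is correct and follows essentially the same route as the paper's. Both define $\psi$ by the evident formula $\psi(i_1\,tw\,i_2)=\tilde\chi_r^{-1}(i_1)\tilde\chi_r^{-1}(i_2)$ and reduce well-definedness to the identity ${}^{w}\chi_r=\chi_r$, which is supplied by Corollary~\ref{chi=chi^w} under the hypothesis $\delta(w,\chi)=1$. The paper phrases the reduction slightly more economically---checking only the case $i_1\,tw\,i_2=tw$ and reading off $t_1={}^{w}t_2^{-1}$ directly---whereas you treat a general pair of factorizations and track the torus and pro-$p$ parts through conjugation; your additional remark that $I_r^+$, as the maximal pro-$p$ subgroup of $I_r$, must absorb any pro-$p$ element of $I_r$ is exactly what justifies the implicit step in the paper's version.
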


\begin{proof}
On $I_r tw I_r$, we define $\psi$ by the formula $\psi(i_1 t w i_2) = \chi^{-1}(t_1)^{-1} \chi^{-1}(t_2)$, where $i_j = t_j i^+_j \in T(k_r)I^+_r$ for $j = 1,2$.  We need only check this is well-defined, and for this it is enough to check that $i_1 tw i_2 = tw$ implies that $\chi^{-1}(t_1)\chi^{-1}(t_2) = 1$.  But
$i_1 tw i_2 = tw$ only if $t_1 = \, ^wt_2^{-1}$, and so the result holds because $\chi(t_2) = \chi(\,^wt_2)$ (using Corollary \ref{chi=chi^w}).
\end{proof}


\begin{defn} \label{test_fcn}  
For any $r \geq 1$,  let $\phi_{r,\chi}$  be the unique function in $\mathcal H(G_r, I_r, \chi_r)$ determined by
$$
\phi_{r,\chi}(tw) = \delta^1(w^{-1},\chi) \, \chi^{-1}_r(t)\, k_{\mu^*,r}(w),
$$
for all $(t,w) \in T(k_r) \rtimes \widetilde{W}$. 
\end{defn} 
Note that $k_{\mu^*,r}(w)$ is supported on $w \in {\rm Adm}(\mu^*)$ (cf. Appendix), and for such elements $\delta^1(w^{-1},\chi)$ is defined. Therefore
the previous Lemma shows that the right hand side is well-defined. 

The Iwahori-level function $\phi_{r,0}$ can be seen as the  special case corresponding to  the trivial character $\chi={\rm triv}$.

Here is how we will proceed: at the end of the present section we will show that $\phi_{r,\chi}$ computes the semi-simple trace of Frobenius on $R\Psi_\chi$.  Then,  in Theorem \ref{counting_points}, we will show that this function is ``the'' test function for $R\Psi_\chi$, in the sense that $\phi_{r,\chi}$ satisfies (\ref{count}).   Finally, in Proposition \ref{test_fcn_image}, we shall identify $\phi_{r,\chi}$ using Hecke algebra isomorphisms,  and thereby show that it belongs to the center 
$\mathcal Z(G_r,I_r,\chi_r)$ of $\mathcal H(G_r, I_r,\chi_r)$.

\subsection{Trace of Frobenius on $R\Psi_0$}

We start by recalling some results on ${\rm Tr}^{ss}(\Phi^r_\fp, R\Psi_0)$ proved in \cite{H1b} and \cite{HN}.

\'{E}tale locally at a point in its special fiber, $\A_0$ is isomorphic to the local model \cite{RZ}, which we will denote by ${\bf M}^{\rm loc}$.  The $\mathbb Z_p$-scheme ${\bf M}^{\rm loc}$ can be identified with a closed subscheme of a certain deformation of the affine flag variety of ${\rm GL}_d \times \mathbb G_m$ to its affine Grassmannian (cf. \cite{HN}).  The special fiber of ${\bf M}^{\rm loc}$ is a union of the Iwahori-orbits ${\rm FL}_w$ in the affine flag variety associated to $w \in {\rm Adm}(\mu)$, 
\begin{equation}\label{dec_FL}
{\bf M}^{\rm loc} \otimes  {\mF}_p =  {\rm FL}_\mu := \coprod_{w \in {\rm Adm}(\mu)} {\rm FL}_w.
\end{equation}
A point $x \in \A_0$ belongs to the KR-stratum $\A_{0,w}$ if and only if it is associated via the {\em local model diagram} to a point $\tilde{x}$ lying in $ {\rm FL}_w$.  By  \cite{H1b} or \cite{HN}, the nearby cycles sheaf $R\Psi^{\rm loc} := R\Psi^{{\bf M}^{\rm loc}}(\bar{\mathbb Q}_\ell)$ is Iwahori-equivariant and corresponds to the function $q^{\langle \rho, \mu \rangle } z_{\mu}$ under the function-sheaf dictionary.  
This means that for $x \in \A_{0,w}(k_r)$ we have
\begin{equation} \label{RPsi_0_fs}
{\rm Tr}^{ss}(\Phi^r_{\mathfrak p}, (R\Psi_0)_x) = 
{\rm Tr}^{ss}(\Phi^r_{\mathfrak p}, R\Psi^{\rm loc}_{\tilde{x}}) = q^{\langle \rho, \mu \rangle} z_{\mu,r}(w).
\end{equation}

On the other hand, there is a general identity $z_{\mu^*,r}(w^{-1}) = z_{\mu,r}(w)$, where $\mu^* = -w_0(\mu)$ for any dominant $\mu$ and any $w \in \widetilde{W}$ (cf. \cite{HKP}). Thus we have

\begin{prop} \label{A_0_tr_Fr}  For $x \in \A_{0,w}(k_r)$, 
$$
{\rm Tr}^{ss}(\Phi^r_{\mathfrak p}, (R\Psi_0)_x) = q^{\langle \rho, \mu^* \rangle} z_{\mu^*,r}(w^{-1}) = \phi_{r,0}(w^{-1}).
$$\qed
\end{prop}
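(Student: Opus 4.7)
The proposition is essentially a bookkeeping assembly of results that have already been recalled or cited in the preceding discussion, so the plan is short.

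The first step is to invoke the local model diagram for $\A_0$. \'{E}tale locally at $x$, the scheme $\A_0$ is isomorphic to ${\bf M}^{\rm loc}$, and this isomorphism sends $x$ to a point $\tilde x$ lying in the Iwahori orbit ${\rm FL}_w$ in the decomposition (\ref{dec_FL}). Since $R\Psi$ commutes with smooth pullback (the morphism (\ref{RPsi_morup}) is an isomorphism for smooth $f$), we get the identification $(R\Psi_0)_x \cong R\Psi^{\rm loc}_{\tilde x}$ as complexes with Galois action, hence the equality of semi-simple Frobenius traces
\[
{\rm Tr}^{ss}(\Phi^r_{\mathfrak p}, (R\Psi_0)_x) = {\rm Tr}^{ss}(\Phi^r_{\mathfrak p}, R\Psi^{\rm loc}_{\tilde x}).
\]

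The second step is to quote the main result of \cite{H1b}, \cite{HN}: the Iwahori-equivariant sheaf $R\Psi^{\rm loc}$ corresponds under the function-sheaf dictionary to the Kottwitz function $q^{\langle \rho,\mu\rangle} z_{\mu,r}$ in $\mathcal Z(G_r,I_r)$. Applied to a point of ${\rm FL}_w$, this yields
\[
{\rm Tr}^{ss}(\Phi^r_{\mathfrak p}, R\Psi^{\rm loc}_{\tilde x}) = q^{\langle \rho,\mu\rangle} z_{\mu,r}(w),
\]
which is the formula (\ref{RPsi_0_fs}).

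The third step is purely algebraic: combine the symmetry identity $z_{\mu^*,r}(w^{-1}) = z_{\mu,r}(w)$ from \cite{HKP} with the equality $\langle \rho,\mu\rangle = \langle \rho,\mu^*\rangle$ (which holds because $w_0\rho = -\rho$, so $\langle \rho, -w_0\mu\rangle = \langle -w_0\rho,\mu\rangle = \langle \rho,\mu\rangle$). This converts the right-hand side above into $q^{\langle \rho,\mu^*\rangle} z_{\mu^*,r}(w^{-1})$, which is by definition $k_{\mu^*,r}(w^{-1}) = \phi_{r,0}(w^{-1})$.

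There is no real obstacle: the conceptual content lives in \cite{H1b}, \cite{HN} (the function-sheaf description of $R\Psi^{\rm loc}$), and in \cite{HKP} (the Bernstein-function symmetry); the present proposition is the immediate concatenation of these, combined with the smooth-pullback compatibility of $R\Psi$ along the local model diagram. The only care needed is the book-keeping on the passage from $\mu$ to $\mu^*$ and from $w$ to $w^{-1}$.
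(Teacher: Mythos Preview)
Your proposal is correct and follows exactly the same approach as the paper: reduce to the local model via the local model diagram (smooth pullback of $R\Psi$), invoke the function--sheaf result of \cite{H1b}, \cite{HN} to get $q^{\langle \rho,\mu\rangle} z_{\mu,r}(w)$, then apply the symmetry $z_{\mu^*,r}(w^{-1}) = z_{\mu,r}(w)$ from \cite{HKP}. Your explicit justification that $\langle \rho,\mu\rangle = \langle \rho,\mu^*\rangle$ via $w_0\rho = -\rho$ is a small but welcome clarification that the paper leaves implicit.
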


\subsection{Trace of Frobenius on $R\Psi_\chi$}

We will now prove the analogue of Proposition \ref{A_0_tr_Fr} for arbitrary $\chi$ (which is trivial on the $\mathbb G_m$-factor).  First, we need a preliminary construction.  Let $x \in \A_{0,w}(k_r)$.  We will construct from $x$ an element $t_x \in T^S(\mathbb F_p)$, where $S = S(w)$.  (In keeping with the notation change made at the beginning of this section, one should think of $T^S$ as the quotient of $T = \mathbb G_m^d \times \mathbb G_m$ by the torus $(\prod_{i \in S}\mathbb G_m) \times \mathbb G_m$.)

Choose a sufficiently large extension $k_{r'} \supset k_r$ and a point $x' \in \mathcal A_{1,w}(k_{r'})$ lying above 
$x$.  Then its Frobenius translate $\Phi^r_{\mathfrak p}(x')$ also lies above $x$.  
Since $\mathcal A_{1,w} \rightarrow \mathcal A_{0,w}$ is a $T^S(\mathbb F_p)$-torsor, we have
\begin{equation} \label{t_x_def_eqn}
\Phi_{\mathfrak p}^r(x') = x't_x
\end{equation}
for a unique element $t_x \in T^S(\mathbb F_p)$.  It is easy to check that $t_x$ is independent of the choice of $x'$ and $r'$.  

The element $t_x$ enters into the computation of ${\rm Tr}^{ss}(\Phi^r_\fp, R\Psi_{\chi,x})$ via the following lemma, cf.~\cite{D}, $\S1.4$.

\begin{lemma} \label{sommes_trig}   Suppose that $\delta^1(w,\chi) = 1$ and let $\bar \chi : T^S(\mathbb F_p) \rightarrow \bar{\mathbb Q}_\ell^\times$ denote the character derived from $\chi$.  Let $\bar{\mathbb Q}_{\ell, \bar \chi} = (\pi_{S,*}(\bar{\mathbb Q}_\ell))_{\chi}$ denote the push-out of $\pi_{S,*}(\bar{\mathbb Q}_\ell)$ along $\bar \chi$ {\em (cf. Theorem \ref{main_geom_thm})}.  Then 
$${\rm Tr}(\Phi^r_{\mathfrak p}, (\bar{\mathbb Q}_{\ell, \bar \chi})_x) = \chi(t_x).$$\qed
\end{lemma}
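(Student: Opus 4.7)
The plan is to unwind the definitions: $\bar{\mathbb Q}_{\ell,\bar\chi}$ is by construction the rank-one local system on $\A_{0,w}$ associated to the \'etale $T^S(\mathbb F_p)$-torsor $\pi_S : (\A_{1,w})_{\rm red} \to \A_{0,w}$ via the character $\bar\chi$, so the computation reduces to tracking how geometric Frobenius permutes points in the fiber of this torsor over $x$. The hypothesis $\delta^1(w,\chi)=1$ is exactly what guarantees (via Lemma \ref{chi_S_triv}) that $\chi$ factors through $T^S(\mathbb F_p)$, so that $\bar\chi$ and therefore $\bar{\mathbb Q}_{\ell,\bar\chi}$ are defined.

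First I will pick a geometric point $\bar x$ over $x$, enlarge the ground field to a finite extension $k_{r'}/k_r$ over which $\mathcal A_{1,w}$ acquires a point $x' \in \pi_S^{-1}(\bar x)$, and describe the stalk at $\bar x$ by the standard associated-bundle formula
\[
(\bar{\mathbb Q}_{\ell,\bar\chi})_{\bar x} \;=\; \pi_S^{-1}(\bar x)\times^{T^S(\mathbb F_p)}\bar{\mathbb Q}_\ell,
\]
with the balancing relation $[yt,v]=[y,\bar\chi(t)v]$ for $t\in T^S(\mathbb F_p)$. The choice of $x'$ provides a basis vector $[x',1]$ of this one-dimensional space.

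Next I will apply the defining relation (\ref{t_x_def_eqn}), namely $\Phi^r_{\mathfrak p}(x')=x'\cdot t_x$ in $\pi_S^{-1}(\bar x)$. Under the trivialization given by $x'$, geometric Frobenius then acts by
\[
\Phi^r_{\mathfrak p}\cdot[x',1] \;=\; [\Phi^r_{\mathfrak p}(x'),1] \;=\; [x'\cdot t_x,1] \;=\; [x',\bar\chi(t_x)\cdot 1] \;=\; \bar\chi(t_x)\,[x',1],
\]
so the trace equals $\bar\chi(t_x)$, which coincides with $\chi(t_x)$ since $\chi$ factors through $\bar\chi$. Since the choices of $\bar x$, $x'$ and $r'$ are immaterial (this is precisely why $t_x$ was well-defined in (\ref{t_x_def_eqn})), this proves the lemma.

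The only real obstacle is bookkeeping about conventions: the direction of the $T^S(\mathbb F_p)$-action on the torsor, the side on which $\bar\chi$ acts in the associated bundle, and the fact that in $R\Psi$ one uses geometric (rather than arithmetic) Frobenius. These conventions need to be set up consistently with those used elsewhere in the paper, in particular in the decomposition (\ref{direct_sum}) and in Theorem \ref{main_geom_thm}; once this is done, the computation is a one-line identification.
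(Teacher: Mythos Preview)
Your proposal is correct and is precisely the standard computation the paper is invoking: the lemma is stated with a bare \qed\ and a pointer to Deligne's \emph{Sommes trigonom\'etriques} \cite{D}, \S1.4, i.e.\ the paper gives no argument of its own. What you have written is exactly the unwinding of that reference --- identify the rank-one local system with the associated bundle of the $T^S(\mathbb F_p)$-torsor $\pi_S$ via $\bar\chi$, then read off the Frobenius eigenvalue from $\Phi^r_{\mathfrak p}(x') = x' t_x$ --- so there is nothing to compare.
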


Putting together Theorem \ref{main_geom_thm}, Proposition \ref{A_0_tr_Fr}, and Lemma \ref{sommes_trig}, we get
\begin{align*}
{\rm Tr}^{ss}(\Phi^r_\fp, (i^*_SR\Psi_{\chi})_x) &= \delta^1(w,\chi) \cdot \chi(t_x) \cdot {\rm Tr}^{ss}(\Phi^r_\fp, i^*_S(R\Psi_0)_x) \\
&= \delta^1(w, \chi) \cdot \chi_r(t) \cdot k_{\mu^*,r}(w^{-1})\ ,
\end{align*}
where $t \in T(k_r)$ is any element such that $N_r(t)$ projects to $t_x \in T^S(\mF_p)$.
This implies the following result.
\begin{thm} \label{chi_tr_Fr} Let $x \in \A_{0,w}(k_r)$ and write $S = S(w)$.  
Then for any element $t \in T(k_r)$ such that $N_r(t)$ projects to $t_x \in T^S(\mF_p)$, we have
$$
{\rm Tr}^{ss}(\Phi^r_\fp, (R\Psi_{\chi})_x) = \phi_{r,\chi}(t^{-1}w^{-1}).$$\qed
\end{thm}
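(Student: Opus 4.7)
The strategy is essentially a bookkeeping assembly of facts already in place: apply the product decomposition of Theorem \ref{main_geom_thm}(iv), evaluate each factor via Proposition \ref{A_0_tr_Fr} and Lemma \ref{sommes_trig}, and compare with Definition \ref{test_fcn}. The calculation is sketched in the paragraph immediately preceding the theorem statement; the job of the proof is to make this sketch rigorous and to handle the case split cleanly.

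I would proceed by cases on $\delta^1(w,\chi)$. If $\delta^1(w,\chi)=0$, then by Lemma \ref{chi_S_triv} the character $\chi$ does not factor through $T^S(\mF_p)$; Theorem \ref{main_geom_thm}(ii) then gives $(\mathcal H^i(i_S^*R\Psi_\chi))^{I_p}=0$ for all $i$, so the semi-simple trace vanishes. The right-hand side vanishes as well, directly from Definition \ref{test_fcn}, since $\delta^1((w^{-1})^{-1},\chi)=\delta^1(w,\chi)=0$. Both sides are zero.

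If $\delta^1(w,\chi)=1$, then $\chi$ factors through a character $\bar\chi$ on $T^S(\mF_p)$, and Theorem \ref{main_geom_thm}(iv), combined with Proposition \ref{A_0_tr_Fr} and Lemma \ref{sommes_trig}, gives
\[
{\rm Tr}^{ss}(\Phi^r_{\mathfrak p},(R\Psi_\chi)_x)=\bar\chi(t_x)\cdot k_{\mu^*,r}(w^{-1}).
\]
For the chosen $t\in T(k_r)$ one has $\chi_r(t)=\chi(N_r(t))=\bar\chi(t_x)$; the second equality uses that $\chi$ is trivial on $T_S(\mF_p)$ (Lemma \ref{chi_S_triv} again), and this same triviality shows the right-hand side is independent of the choice of $t$ among elements whose norm projects to $t_x$. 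Existence of such a $t$ follows from Lang's theorem applied to the split torus $T$ over $k_r$. Unwinding Definition \ref{test_fcn} at $t^{-1}w^{-1}$ gives $\phi_{r,\chi}(t^{-1}w^{-1})=\delta^1(w,\chi)\,\chi_r(t)\, k_{\mu^*,r}(w^{-1})$, which matches.

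The only genuine potential obstacle is essentially notational: keeping straight the inverse conventions in the $\delta^1$ and $\chi_r$ factors at $t^{-1}w^{-1}$, and verifying that the ambiguity in $t$ does not affect either side. All real content is already contained in the cited results, so I would not expect to encounter a deeper difficulty.
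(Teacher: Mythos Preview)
Your proposal is correct and follows essentially the same approach as the paper: assemble Theorem \ref{main_geom_thm}, Proposition \ref{A_0_tr_Fr}, and Lemma \ref{sommes_trig}, then compare with Definition \ref{test_fcn}. The paper presents this as a single displayed computation immediately before the theorem rather than splitting explicitly into cases on $\delta^1(w,\chi)$, but the content is the same.
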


\section{Counting points formula} \label{counting_pts_sec}

\subsection{Statement of the formula}

Fix $r \geq 1$ and let $k_r = \mathbb F_{p^r}$.  Fix $\chi \in T(\mathbb F_p)^\vee$.  We recall that $\mathcal A_0 = \mathcal A_{0,K^p}$, where $K^p$ is a sufficiently small compact open subgroup of $G(\mathbb A^p_f)$, so that $\mathcal A_0$ is a generically smooth $\mathbb Z_p$-scheme.  Our goal is to prove a formula for the semi-simple Lefschetz number
\begin{equation}
{\rm Lef}^{ss}(\Phi_{\mathfrak p}^r, \chi) := \sum_{x \in \mathcal A_0(k_r)} {\rm Tr}^{ss}(\Phi^r_{\mathfrak p}, R\Psi_{\chi, x}).
\end{equation}

We shall follow the strategy of Kottwitz \cite{K90}, \cite{K92}.  

\begin{thm}  \label{counting_points}  Let $\phi_{r,\chi}$ be the function in $\mathcal H(G_r, I_r, \chi_r)$ defined in {\em Definition \ref{test_fcn}}.  Then
\begin{equation} \label{count}
{\rm Lef}^{ss}(\Phi_\mathfrak p^r, \chi) =  \sum_{(\gamma_0; \gamma, \delta)} c(\gamma_0;\gamma,\delta) \, {\rm O}_\gamma(f^p) \, {\rm TO}_{\delta \sigma}(\phi_{r,\chi}).
\end{equation}
\end{thm}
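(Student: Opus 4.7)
The plan is to follow the Langlands--Kottwitz counting strategy \cite{K90,K92,H05}, using Theorem \ref{chi_tr_Fr} as the new $p$-adic input.

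First, I would partition $\mathcal A_0(k_r)$ into isogeny classes in $AV_{\mathcal O_B}$. By \cite{K90}, these are parametrized by admissible Kottwitz triples $(\gamma_0;\gamma,\delta)$, with the prime-to-$p$ level structures contributing $O_\gamma(f^p)$ and the global volume factor $c(\gamma_0;\gamma,\delta)$ arising as in \cite{K90}. Within a fixed isogeny class $\mathfrak I$ attached to $(\gamma_0;\gamma,\delta)$, the points $x \in \mathfrak I \cap \mathcal A_0(k_r)$ correspond bijectively, via Dieudonn\'e theory and the parahoric lattice-chain description of \cite{RZ}, to $I_r$-cosets $gI_r$ in
$$Y_\delta := \{\, gI_r \in G_\delta(\mathbb Q_p)\backslash G(L_r)/I_r ~|~ g^{-1}\delta\sigma(g) \in I_r\cdot i_\varpi({\rm Adm}(\mu^*))\cdot I_r \,\},$$
in such a way that the Kottwitz--Rapoport index $w_x$ of the stratum $\mathcal A_{0,w_x}$ containing $x$ is the class of $g^{-1}\delta\sigma(g)$ in $I_r\backslash G(L_r)/I_r = \widetilde W$. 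This first step is essentially identical to the Iwahori-level setting of \cite{H05}.

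The new ingredient is to refine this bijection using the Oort--Tate generators which define $\pi:\mathcal A_1\to\mathcal A_0$. For $r' \gg r$, pairs $(x,x')$ with $x \in \mathfrak I \cap \mathcal A_0(k_r)$ and $x' \in \pi^{-1}(x)(k_{r'})$ should correspond to $I^+_r$-cosets $gI^+_r$ with $g^{-1}\delta\sigma(g) \in I^+_r \cdot i_\varpi(T(k_r) \rtimes {\rm Adm}(\mu^*)) \cdot I^+_r$, in such a way that the element $t_x \in T^{S(w_x)}(\mathbb F_p)$ of (\ref{t_x_def_eqn}) is the image of the $T(k_r)$-component of $g^{-1}\delta\sigma(g)$ under $T(k_r) \to T(\mathbb F_p) \twoheadrightarrow T^{S(w_x)}(\mathbb F_p)$ (after a $(\sigma-1)$-twist coming from Lang's theorem on the torus $T^{S(w_x)}$). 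Combining Theorem \ref{chi_tr_Fr} with the Bruhat--Tits decomposition $I^+_r \backslash G(L_r) / I^+_r = T(k_r) \rtimes \widetilde W$, the contribution of $\mathfrak I$ to ${\rm Lef}^{ss}(\Phi^r_{\mathfrak p},\chi)$ then rewrites as
$${\rm TO}_{\delta\sigma}(\phi_{r,\chi}) = \int_{G_\delta(\mathbb Q_p)\backslash G(L_r)} \phi_{r,\chi}(g^{-1}\delta\sigma(g))\, dg,$$
with Haar measure normalized so that $I^+_r$ has volume $1$. Summing over all isogeny classes and combining with the prime-to-$p$ and adelic factors yields (\ref{count}).

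The hard part will be this last identification. One must check that the factor $\delta^1(w^{-1},\chi)$, the $\chi_r^{-1}$-weighting on the $T(k_r)$-component prescribed by Definition \ref{test_fcn}, and the cardinality $|T^{S(w_x)}(\mathbb F_p)|$ of the Oort--Tate fibers of $\pi_{S(w_x)}$ fit together to produce the twisted orbital integral with the stated normalization. This bookkeeping will rely on Lemma \ref{fcn_on_tw} to make sense of $\phi_{r,\chi}(tw)$, on Lemma \ref{chi_extn} to pull $\chi$ back along the relevant Lang torsor so that the sum over a fiber of $\pi_S$ factors cleanly, and on Corollary \ref{chi=chi^w} to control the $(I_r,\chi_r)$-equivariance when re-assembling the integrand from its values on representatives $tw$; a non-vanishing term forces $\delta^1(w_x^{-1},\chi) = 1$, which explains on the geometric side why only critical-index patterns compatible with $\chi$ contribute, consistent with the vanishing in Theorem \ref{main_geom_thm}(ii).
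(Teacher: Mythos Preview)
Your overall architecture is correct and matches the paper: partition $\mathcal A_0(k_r)$ into isogeny classes, use the Kottwitz parametrization with $Y_p \subset G(L_r)/I_r$ exactly as in \cite{H05}, and feed Theorem \ref{chi_tr_Fr} into the $p$-adic slot.  Two points need correction.

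First, a small one: the measure in the statement gives $I_r$ (not $I^+_r$) volume $1$; the function $\phi_{r,\chi}$ lies in $\mathcal H(G_r,I_r,\chi_r)$, and the twisted orbital integral really is a sum over $I_r$-cosets.

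Second, and more substantive: your proposed bijection between pairs $(x,x')$ with $x'\in\pi^{-1}(x)(k_{r'})$ and $I^+_r$-cosets cannot hold as stated.  For $x\in\mathcal A_{0,S}(k_r)$ the reduced fiber of $\pi$ is a $T^S(\mathbb F_p)$-torsor of size $(p-1)^{d-|S|}$, whereas a single $I_r$-coset contains $(p^r-1)^d$ $I^+_r$-cosets; these do not match, and indeed the Lefschetz number you are computing is a sum over $x\in\mathcal A_0(k_r)$ alone, so no fiber cardinality $|T^{S(w_x)}(\mathbb F_p)|$ should enter.  The paper does \emph{not} refine the point-count to $I^+_r$-level.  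Instead it stays with the parametrization $x\leftrightarrow gI_r$, and the crucial step (Lemma \ref{t_g_lemma}) is to show directly that $\phi_{r,\chi}(g^{-1}\delta\sigma(g))$ is well-defined on $gI_r$ and equals $\delta^1(w,\chi)\,\chi(t_x)\,\phi_{r,0}(w^{-1})$.  Concretely: one constructs a group-theoretic avatar $t_g\in T^S(\mathbb F_p)$ by choosing a lift $\widetilde g$ and a $V$-fixed basis $v_\bullet$ of the non-critical graded pieces of the Dieudonn\'e lattice chain (Lemma \ref{v_i_exists}), writes $(M_\bullet,v_\bullet)=\widetilde g\,t'(\Lambda_\bullet,e_\bullet)$, and sets $t_g=t'^{-1}\sigma^r(t')$; one then checks $t_g=t_x$ via the Oort--Tate/Dieudonn\'e dictionary (section \ref{OT_Dieu_sec}), and finally that for \emph{any} lift $\widetilde g$, if $\widetilde g^{-1}\delta\sigma(\widetilde g)\in I^+_r\,t\,w^{-1}\,I^+_r$ then $\chi_r(t)=\chi(t_g^{-1})$.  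The independence of the lift uses exactly $^w\chi=\chi$ (Corollary \ref{chi=chi^w}), as you anticipated.  So the $I^+_r$-decomposition is used only to \emph{read off} $t$, not to parametrize anything; replace your refinement step by this construction and the rest of your outline goes through.
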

The terms on the right hand side have the same meaning as in \cite{K92}, p.~442\footnote{The factor $|{\rm ker}^1(\mathbb Q,G)|$ does not appear here, since our assumptions on $G$ guarantee that this number coincides with $|{\rm ker}^1(\mathbb Q,Z(G))|$ and that this number is 1, cf.\ loc.~cit., \S7.}.  Recall that $f^p = 1_{K^p}$, and that the twisted orbital integral is defined as
$$
{\rm TO}_{\delta \sigma}(\phi) = \int_{G_{\delta \sigma}\backslash G(L_r)} \phi(x^{-1} \delta \sigma(x)) \, d\overline{x}$$ 
where $G_{\delta \sigma} = \{ g \in G(L_r) ~ | ~ g^{-1}\delta \sigma(g) = \delta \}$ and where the quotient measure $d\overline{x}$ is determined as in \cite{K92}, except that here we use the measure on $G(L_r)$ which gives $I_r$ measure 1.  

Exactly as in \cite{K90}, the sum in (\ref{count}) is indexed by equivalence classes of triples $(\gamma_0; \gamma, \delta) \in G(\mathbb Q) \times G(\mathbb A^p_f) \times G(L_r)$ for which the Kottwitz invariant $\alpha(\gamma_0; \gamma, \delta)$ is defined and trivial (see \cite{K92}, p.~441).  The term $c(\gamma_0;\gamma,\delta)$ is defined as follows.  Let $I$ denote\footnote{There should be no confusion arising from the fact that elsewhere in the paper the symbol $I$ denotes an Iwahori subgroup. } the inner $\mathbb Q$-form of $I_0 := G_{\gamma_0}$ specified in \cite{K90}, $\S3$.  (In what follows, $I$ will be realized as the group of self-$\mathbb Q$-isogenies of a triple $(A',\lambda', i')$ coming from a point in $\mathcal A_0(k_r)$.)    We let $c(\gamma_0;\gamma,\delta) = {\rm vol}(I(\mathbb Q)\backslash I(\mathbb A_f)) \, |{\rm ker}[{\rm ker}^1(\mathbb Q,I_0) \rightarrow {\rm ker}^1(\mathbb Q, G)]|$.   The measures on $I(\mathbb A^p_f)$ and $I(\mathbb Q_p)$ used to define ${\rm vol}(I(\mathbb Q)\backslash I(\mathbb A_f))$ are compatible with the ones on $G_{\gamma}(\mathbb A^p_f)$ and $G_{\delta \sigma}$ used to form ${\rm O}_{\gamma}(f^p)$ and ${\rm TO}_{\delta \sigma}(\phi_{r,\chi})$, in the sense of \cite{K90}, $\S3$.  The expression (\ref{count}) is thus a {\em rational number} independent of the choices of these measures.

\subsection{Description of the $k_r$-points in a $\mathbb Q$-isogeny class in $\mathcal A_0$}



We will prove Theorem \ref{counting_points} by determining the contributions of the individual  $\mathbb Q$-isogeny classes to ${\rm Lef}^{ss}(\Phi_{\mathfrak p}^r, \chi)$.  We will perform the calculation on each KR-stratum $\mathcal A_{0,w}$ separately.  We will first describe the $k_r$-points of $\mathcal A_0$ lying in a single $\mathbb Q$-isogeny class.

Fix a point $(A'_\bullet, \lambda', i', \overline{\eta}') \in \mathcal A_0(k_r)$.  Set $(A',\lambda',i') := (A'_0,\lambda'_0, i'_0)$, and regard this as a {\em $c$-polarized virtual $B$-abelian variety over $k_r$ up to isogeny}, using the terminology of \cite{K92}, $\S14$.  Let $I$ denote the $\mathbb Q$-group of automorphisms of $(A', \lambda', i')$.  Thus, using the notation  $k = \bar{k_r}$ and $\overline{A}' = A'\otimes_{k_r}k$, then $I(\mathbb Q)$ consists of the $\mathbb Q$-isogenies $\overline{A}' \rightarrow \overline{A}'$ which commute with $i'$, preserve $\lambda'$ up to a scalar in $\QQ^\times$, and commute with $\pi_{A'}$, the absolute Frobenius morphism of $A'$ relative to $k_r$ (see \cite{K92}, \S10).  Let ${\rm Isog}_0(A',\lambda',i')$ (resp. ${\rm Isog}_{0,w}(A',\lambda',i')$) denote the set of points $(A_\bullet, \lambda, i, \overline{\eta}) \in \mathcal A_0(k_r)$ (resp. $\mathcal A_{0,w}(k_r)$) such that $(A_0, \lambda, i)$ is isogenous to $(A',\lambda', i')$.  Following the usual strategy, we need to prove a bijection
\begin{equation} \label{Isog_0=}
{\rm Isog}_0(A',\lambda',i') = I(\mathbb Q) \backslash[Y^p \times Y_p]
\end{equation}
for some appropriate sets $Y^p$ and $Y_p$.

For the moduli problem $\mathcal A_0$, such a bijection is explained in \cite{H05}, $\S11$, which we now review.  Associated to $(A',\lambda',i')$ is an $L_r$-isocrystal $(H'_{L_r},\Phi)$.  The precise definition of $H' = H(A')$ is given in \cite{K92}, $\S10$, but roughly, $H'=H(A')$ is the 
$W(k_r)$-dual of $H^1_{\rm crys}(A'/W(k_r))$ and $\Phi$ is the $\sigma$-linear bijection on $H'_{L_r}$ such that $p^{-1}H' \supset \Phi H' \supset H'$ (i.e. $\Phi = V^{-1}$, the inverse of the Verschiebung operator on $H(A')$ which is induced by duality from the Frobenius on $H^1_{\rm crys}(A'/W(k_r))$; see the formulas (14.4.2-3) in \cite{H05}).  We have isomorphisms of skew-Hermitian $\mathcal O_B \otimes \mathbb A^p_f$- (resp. $\mathcal O_B \otimes L_r$-) modules
\begin{align} 
V \otimes \mathbb A^p_f &= H_1(A', \mathbb A^p_f) \label{V_A^p_f} \\
V \otimes  L_r &= H(A')_{L_r} \label{V_L_r}
\end{align}
which we fix (the end result of our calculation will be independent of these choices). 

Let us describe the set $Y^p$.  Using (\ref{V_A^p_f}), transport the action of $\pi^{-1}_{A'}$ on $H_1(A',\mathbb A^p_f)$ over to the action of an element $\gamma \in G(\mathbb A^p_f)$ on $V \otimes \mathbb A^p_f$.  A $\mathbb Q$-isogeny $\xi \in {\rm Isom}((A_0,\lambda,i),(A',\lambda',i'))_\mQ$ takes a $K^p$-level structure on $A_0$ to an element $yK^p \in G(\mathbb A^p_f)/K^p$.  More precisely, having fixed (\ref{V_A^p_f}), there is a map ${\rm Isog}_0(A',\lambda',i') \rightarrow I(\mathbb Q)\backslash G(\mathbb A^p_f)/K^p$ of the form 
$$
(A_\bullet, \lambda, i, \overline{\eta}) \mapsto \xi(\overline{\eta})$$ 
(use (\ref{V_A^p_f}) to regard $\xi(\overline{\eta}) \in G(\mathbb A^p_f)/K^p$).  
The level structure is Galois-invariant if and only if $y^{-1}\gamma y \in K^p$.  Thus we define
$$
Y^p := \{ y \in G(\mathbb A^p_f)/K^p ~ | ~ y^{-1} \gamma y \in K^p \}.
$$
A theorem of Tate gives an isomorphism $I(\mathbb A^p_f) \cong G(\mathbb A^p_f)_\gamma$ (cf.~\cite{K92}, Lemma 10.7), and the latter acts on $Y^p$ by left multiplications.  So $I(\mathbb Q)$ acts on $Y^p$ via $I(\mathbb Q) \hookrightarrow I(\mathbb A^p_f)$.

Next we turn to the set $Y_p$.  Using the terminology of \cite{RZ}, we shall describe it as the set of multichains $H_\bullet$ of $\mathcal O_B \otimes W(k_r)$-lattices in $H(A')_{L_r} = V_{L_r}$.  Let 
$$
\widetilde{\Lambda}_\bullet = \Lambda_\bullet \oplus \Lambda^*_\bullet
$$
denote the ``standard'' self-dual multichain of $\mathcal O_B \otimes \mathbb Z_p$-lattices, in the sense of \cite{RZ}, which is constructed in \cite{H05}, $\S5.2.3$.  We may assume that $\widetilde{\Lambda}_\bullet$ corresponds to our base alcove ${\bf a}$ in the building of $G(\mQ_p)$. 
  Let $Y_p^\circ$ denote the set of all multichains  of $\mathcal O_B \otimes W(k_r)$-lattices $H_\bullet$  contained in $V_{L_r}$, of type $(\widetilde{\Lambda}_\bullet)$  and self-dual\footnote{In particular, these multichains are {\em polarized} in the sense of \cite{RZ}, Def.~3.14.} up to a scalar in $L_r^\times$.  The group $G(L_r)$ acts transitively on these objects, and thus we can identify $Y_p^\circ$ with $G(L_r)/I_r$.  To see this,  we use \cite{RZ}, Theorem 3.11, 3.16, along with \cite{P} (see \cite{H05}, Thm.~6.3).

Consider the set
$$
\{(A_\bullet, \lambda, i, \xi) \}
$$
consisting of chains of polarized $\mathcal O_B$-abelian varieties over $k_r$, up to $\mathbb Z_{(p)}$-isogeny, equipped with a $\mathbb Q$-isogeny of polarized $\mathcal O_B$-abelian varieties $\xi: A_0 \rightarrow A'$.  The group $I(\mathbb Q)$ acts on these objects by acting on the isogenies $\xi$.  The covariant functor $A \mapsto H(A)$ takes this set to the set of chains of $\mathcal O_B \otimes W(k_r)$-lattices in $H(A')_{L_r}$, equipped with Frobenius and Verschiebung endomorphisms.  In fact, $(A_\bullet, \lambda, i, \xi) \mapsto \xi(H(A_\bullet))$ gives an isomorphism
$$
\{(A_\bullet, \lambda, i, \xi) \} ~ \widetilde{\rightarrow} ~ Y_p,
$$
where by definition $Y_p$ is the set of $H_\bullet \in Y^\circ_p$ such that $p^{-1}H_i \supset \Phi H_i \supset H_i$ for each $i$, and $\sigma^{-1}(\Phi H_i/H_i) $ satisfies the determinant 
condition.  The determinant condition arises because we imposed it on ${\rm Lie}(A_i)$ in (\ref{kocon}), and we have $\Phi H(A_i)/H(A_i) = \sigma({\rm Lie}(A_i))$ (cf. \cite{H05}, $\S14$).   The group $I(\mathbb Q)$ acts on $Y_p$ in a natural way. 

Let us rephrase the determinant condition.  Using (\ref{V_L_r}) we write $\Phi = \delta \sigma$ for $\delta \in G(L_r)$.  Lemma 10.8 of \cite{K92} shows that (\ref{V_L_r}) induces an isomorphism 
$I(\mathbb Q_p) \cong G_{\delta \sigma}$, and hence an embedding $I(\mathbb Q) \hookrightarrow G(L_r)$.  
Thus for any element $g \in I(\mathbb Q) \backslash G(L_r)/I_r$, the double coset $I_r g^{-1} \delta \sigma(g) I_r$ is well-defined.  

We use the symbol $x$ to abbreviate $(A_\bullet, \lambda,i,\overline{\eta}) \in {\rm Isog}_0(A',\lambda',i')$.  Choose any $\mathbb Q$-isogeny $\xi \in {\rm Isom}((A_0,\lambda,i),(A',\lambda',i'))_\mQ$.  Write
\begin{equation} \label{1st_x_to_g}
\xi(H(A_\bullet)) = g\widetilde{\Lambda}_{\bullet, W(k_r)}
\end{equation}
for some $g \in G(L_r)/I_r$.  The image of $g$ in $I(\mathbb Q) \backslash G(L_r)/I_r$ is independent of the choice of $\xi$.  Thus we have a well-defined map
\begin{align} \label{x_to_g}
{\rm Isog}_0(A',\lambda',i') ~ &\rightarrow ~ I(\mathbb Q) \backslash G(L_r)/I_r \\
x \,\,\, &\mapsto \,\,\,[g]. \notag 
\end{align}
For each index $i$ of the chain $\widetilde{\Lambda}_\bullet$, let $K_i$ denote the fixer of $\widetilde{\Lambda}_i$ in $G(L_r)$.  The determinant condition (\ref{kocon}) at $i$ gives the relative position of the lattices $H_i$ and $\Phi H_i$, and may be written
\begin{equation} \label{inv=mu*}
{\rm inv}_{K_i}(\widetilde{\Lambda}_{i, W(k_r)}, \, g^{-1}\delta\sigma(g) \widetilde{\Lambda}_{i,W(k_r)}) = \mu^*,
\end{equation}
where $\mu^* := -w_0(\mu)$ corresponds under Morita equivalence to $\mu_0^* := -w_0(\mu_0) = (0^{d-1},-1)$ (cf. \cite{H05}, $\S11.1$).  This says that $g^{-1}\delta \sigma(g) \in I_r w^{-1} I_r$ for some $w \in {\rm Perm}^G(\mu)$.  The equality ${\rm Adm}^G(\mu) = {\rm Perm}^G(\mu)$ holds (this translates under Morita equivalence to the analogous equality for ${\rm GL}_n$, which is known by \cite{KR}).  

For $w \in {\rm Adm}^G(\mu)$ define
\begin{equation}\begin{aligned} \label{defM_w}
{\bf M}_w(k_r) &= I_r w^{-1} I_r/I_r \\
{\bf M}_\mu(k_r) &= \coprod_{w \in {\rm Adm}^G(\mu)} {\bf M}_w(k_r).  
\end{aligned}
\end{equation}
(Note that ${\bf M}_w(k_r)={\rm FL}_{w^{-1}}(k_r)$, in the notation of (\ref{dec_FL}).)
The determinant condition on $H_\bullet$ can now be interpreted as:
\begin{equation}
g^{-1}\delta \sigma(g)\widetilde{\Lambda}_{\bullet,W(k_r)} \in {\bf M}_\mu(k_r).
\end{equation} 

We can thus identify
\begin{equation}
Y_p = \{ gI_r \in G(L_r)/I_r ~ | ~ g^{-1}\delta \sigma(g) I_r \in {\bf M}_\mu(k_r) \}.
\end{equation}
 Also,  for $w \in {\rm Adm}^G(\mu)$, define
\begin{equation}
Y_{p,w} = \{ gI_r \in G(L_r)/I_r ~ | ~ g^{-1} \delta \sigma(g) I_r \in {\bf M}_w(k_r) \}.
\end{equation}

Let us summarize:

\begin{lemma}  The map $(A_\bullet, \lambda, i, \overline{\eta}) \mapsto [\xi(\overline{\eta}), \xi(H(A_\bullet))]$ determines a bijection
\begin{equation}
{\rm Isog}_0(A',\lambda',i') ~ \widetilde{\rightarrow} ~ I(\mathbb Q)\backslash [Y^p \times Y_p]
\end{equation}
(here $\xi$ is any choice of $\mQ$-isogeny as above).   This map restricts to give a bijection
\begin{equation} \label{Isog_0w=}
{\rm Isog}_{0,w}(A',\lambda',i') ~ \widetilde{\rightarrow} ~ I(\mathbb Q)\backslash [Y^p \times Y_{p,w}].
\end{equation}
\end{lemma}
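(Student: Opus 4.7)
The plan is to establish the bijection by first verifying that the map $(A_\bullet, \lambda, i, \overline{\eta}) \mapsto [\xi(\overline{\eta}), \xi(H(A_\bullet))]$ is well-defined on the quotient $I(\mathbb Q)\backslash[Y^p \times Y_p]$, and then constructing an explicit inverse. For well-definedness: if $\xi' = \alpha \xi$ for a different choice $\alpha \in I(\mathbb Q) = {\rm Aut}((A',\lambda',i'))_{\mathbb Q}$, then by functoriality $\xi'(\overline{\eta}) = \alpha(\xi(\overline{\eta}))$ under the identification \eqref{V_A^p_f} and $\xi'(H(A_\bullet)) = \alpha(\xi(H(A_\bullet)))$ under \eqref{V_L_r}. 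Since $I(\mathbb Q)$ acts diagonally on $Y^p \times Y_p$ through the embeddings $I(\mathbb A^p_f) \hookrightarrow G(\mathbb A^p_f)_\gamma$ (from Tate) and $I(\mathbb Q_p) \hookrightarrow G_{\delta \sigma} \subset G(L_r)$ (from Kottwitz's Lemma 10.8 of \cite{K92}), the class $[\xi(\overline{\eta}), \xi(H(A_\bullet))]$ is independent of the choice of $\xi$. One also must check that the constraints $y^{-1}\gamma y \in K^p$ and the lattice/determinant conditions defining $Y_p$ translate correctly, which is immediate from Galois-invariance of $\overline{\eta}$ and the Kottwitz condition (\ref{kocon}).

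Next I would construct the inverse map. Given a pair $(y, H_\bullet) \in Y^p \times Y_p$, I use covariant Dieudonn\'{e} theory (or equivalently, since we work over a perfect field, the Dieudonn\'{e}--Serre--Tate equivalence used in \cite{RZ} and reviewed in \cite{K92}, \S14) to produce from $H_\bullet$ a chain of $p$-divisible groups with $\mathcal O_B \otimes \mathbb Z_p$-action and a polarization (compatible with a scalar in $\mathbb Q_p^\times$); the determinant condition on $\sigma^{-1}(\Phi H_i/H_i)$ translates into (\ref{kocon}) for the associated Lie algebras. The condition that $H_\bullet \in Y^\circ_p$ is a self-dual multichain of type $(\widetilde{\Lambda}_\bullet)$ ensures we recover the isogeny chain of the prescribed shape. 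Combining these $p$-divisible groups with $A'_\bullet$ away from $p$ via Serre--Tate-style glueing, one obtains a chain $A_\bullet$ of $\mathcal O_B$-abelian varieties over $k_r$ equipped with a polarization class and an isogeny $\xi\colon A_0 \to A'$ inducing the datum $H_\bullet = \xi^{-1}(H(A')_{L_r})$. The element $y$ then specifies a $K^p$-level structure $\overline{\eta}$ on $A_0$ via (\ref{V_A^p_f}); the constraint $y^{-1}\gamma y \in K^p$ forces $\overline{\eta}$ to be Galois-invariant (since $\gamma$ encodes the action of $\pi_{A'}^{-1}$).

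Injectivity amounts to showing that if two objects $(A_\bullet,\lambda,i,\overline{\eta})$ and $(A''_\bullet,\lambda'',i'',\overline{\eta}'')$ in ${\rm Isog}_0(A',\lambda',i')$ give pairs in the same $I(\mathbb Q)$-orbit, then they are isomorphic as objects in $\mathcal A_0(k_r)$. Given isogenies $\xi$ and $\xi''$ to $(A',\lambda',i')$ such that $[\xi(\overline{\eta}),\xi(H(A_\bullet))]$ and $[\xi''(\overline{\eta}''),\xi''(H(A''_\bullet))]$ differ by some $\alpha \in I(\mathbb Q)$, the composition $\xi''^{-1}\circ \alpha \circ \xi\colon A_0 \to A''_0$ is a $\mathbb Q$-isogeny which (by the matching of level structures modulo $K^p$ and of lattice chains) is actually a prime-to-$p$ isogeny of integral degree $1$, hence an isomorphism, and it extends canonically to an isomorphism of the full data $(A_\bullet, \lambda, i, \overline{\eta})$.

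For the refinement to KR-strata in \eqref{Isog_0w=}, I would invoke the local model diagram together with the identification of KR-strata via relative position of lattice chains: by (\ref{M:VM}) (with $V = \Phi^{-1}$), a point $x \in \mathcal A_0(k_r)$ lies in $\mathcal A_{0,w}$ if and only if ${\rm inv}(H_\bullet, \Phi H_\bullet) = w$, i.e. (after the Morita translation between $\mu$ and $\mu^*$ and the passage from $g$ to $g^{-1}\delta\sigma(g)$) if and only if $g^{-1}\delta\sigma(g) \widetilde{\Lambda}_{\bullet,W(k_r)} \in \mathbf{M}_w(k_r)$. This cuts out exactly $Y_{p,w}$ inside $Y_p$, and the forward map restricts accordingly. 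The main obstacle is the construction of the inverse, which depends on the Dieudonn\'{e}-theoretic reconstruction of $A_\bullet$ from $H_\bullet$; once this and the independence from $\xi$ are in place, the remaining verifications are bookkeeping already standard in \cite{K90,K92,H05}.
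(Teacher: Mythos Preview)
Your proposal is correct and follows essentially the same approach as the paper. The paper in fact presents this lemma as a summary of the preceding discussion (where $Y^p$, $Y_p$, and the map are set up exactly as you describe, with the inverse coming from the equivalence $\{(A_\bullet,\lambda,i,\xi)\}\,\widetilde{\to}\,Y_p$ via Dieudonn\'e theory) and disposes of the stratum refinement by citing \cite{H05}, Lemma~11.1, which is precisely your relative-position argument; your write-up simply unpacks these steps more explicitly.
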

The final statement comes from an analysis of how the KR-stratum of $x$ can be recovered from the element $g^{-1}\delta \sigma(g)$.  This is explained in \cite{H05}, Lemma 11.1, noting that here $w^{-1}$ appears instead of $w$ because of differing conventions (here $\mu_0 = (1,0^{d-1})$; in loc.~cit. $\mu_0 = (0^{d-1},-1)$). We state this as the following lemma. 

\begin{lemma} \label{dR_vs_crys}{\em (\cite{H05}, Lemma 11.1)}
Fix $w \in {\rm Adm}(\mu)$.  Let $(A_\bullet, \lambda, i, \overline{\eta}) \in {\rm Isog}_0(A',\lambda',i')$.  Suppose that $\xi: A_0 \rightarrow A'$ is a choice of $\mathbb Q$-isogeny as above and set $\xi(H(A_\bullet)) = g\widetilde{\Lambda}_{\bullet, W(k_r)}$ for $g \in G(L_r)/I_r$.  Then the image $g \in I(\mathbb Q)\backslash G(L_r)/I_r$ is well-defined (independent of $\xi$) and $(A_\bullet, \lambda,i, \overline{\eta}) \in {\rm Isog}_{0,w}(A',\lambda',i')$ if and only if
$$
I_r g^{-1} \delta \sigma(g) I_r = I_r w^{-1} I_r.
$$\qed
\end{lemma}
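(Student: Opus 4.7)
The plan has three steps: (i) verify that the assignment $x \mapsto [g] \in I(\mQ)\backslash G(L_r)/I_r$ is independent of the choice of $\mQ$-isogeny $\xi$; (ii) compute how Verschiebung acts on $g\,\widetilde{\Lambda}_{\bullet, W(k_r)}$; (iii) match the resulting relative position with the definition of the KR-stratum $\mathcal A_{0,w}$ recalled in subsection \ref{subKR}.

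For (i), if $\xi'$ is a second $\mQ$-isogeny $(A_0,\lambda,i)\to (A',\lambda',i')$, then $j := \xi' \xi^{-1}$ is a self-$\mQ$-isogeny of $(A',\lambda',i')$ commuting with $\pi_{A'}$, so $j \in I(\mQ)$. The covariant Dieudonn\'e functor $H$ carries $j$ to the image of $j$ under the embedding $I(\mQ)\hookrightarrow I(\mQ_p)\cong G_{\delta\sigma} \subset G(L_r)$ coming from the trivialization (\ref{V_L_r}). Since $\xi'_*(H(A_\bullet)) = j \cdot \xi_*(H(A_\bullet))$, the corresponding element $g'$ equals $jg$ in $G(L_r)/I_r$, so the class $[g] \in I(\mQ)\backslash G(L_r)/I_r$ is well-defined.

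For (ii), recall that $\Phi = V^{-1}$ is $\sigma$-linear and is realized as $\Phi(v) = \delta\sigma(v)$ on $V_{L_r}$; moreover $\xi_*$ intertwines Frobenius and Verschiebung because $\xi$ is an isogeny. Since $\widetilde{\Lambda}_\bullet$ is $\mZ_p$-rational, both $\sigma$ and $\sigma^{-1}$ stabilize $\widetilde{\Lambda}_{\bullet, W(k_r)}$, and
\[
\xi_*\bigl(V H(A_\bullet)\bigr) \;=\; \Phi^{-1}\bigl(g\,\widetilde{\Lambda}_{\bullet, W(k_r)}\bigr) \;=\; \sigma^{-1}(\delta^{-1} g)\,\widetilde{\Lambda}_{\bullet, W(k_r)}.
\]
Therefore the relative position of $H(A_\bullet)$ and $VH(A_\bullet)$, transported by $\xi_*$ into the standard lattice chain, is encoded by the element $g^{-1}\sigma^{-1}(\delta^{-1} g) \in G(L_r)$.

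For (iii), by subsection \ref{subKR}, $x \in \mathcal A_{0,w}$ if and only if ${\rm inv}(H(A_\bullet), V H(A_\bullet)) = w$, i.e., if and only if $g^{-1}\sigma^{-1}(\delta^{-1} g) \in I_r w I_r$. Because $I_r$ and the chosen lift of $w$ are defined over $\mZ_p$, we have $\sigma(I_r w I_r) = I_r w I_r$; applying $\sigma$ converts this condition to $\sigma(g)^{-1}\delta^{-1} g \in I_r w I_r$, and taking inverses yields $g^{-1}\delta\sigma(g) \in I_r w^{-1} I_r$, which is the desired criterion. The main bookkeeping hurdle is tracking the $\sigma$-twists that enter when working with $V$ in place of $\Phi$; this is precisely what makes $w^{-1}$ (rather than $w$) appear in the final statement.
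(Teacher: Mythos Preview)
Your argument is correct. The paper itself does not give a proof of this lemma; it simply cites \cite{H05}, Lemma~11.1, and remarks that the appearance of $w^{-1}$ rather than $w$ reflects the convention $\mu_0=(1,0^{d-1})$ used here versus $\mu_0=(0^{d-1},-1)$ in loc.~cit. Your proof is the natural direct verification: you unwind the definition of the KR-stratum via ${\rm inv}(M_\bullet, VM_\bullet)=w$ from subsection~\ref{subKR}, transport it through $\xi_*$ and the identification $\Phi=\delta\sigma$, and observe that passing from $V=\Phi^{-1}$ to $\Phi$ (equivalently, applying $\sigma$ and then inverting) converts $w$ into $w^{-1}$. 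This is exactly the mechanism behind the paper's remark about conventions, made explicit; the two explanations are the same once one notes that the choice of $\mu_0$ governs whether the stratification is phrased via $V$ or via $\Phi$.
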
 

\subsection{The group-theoretic analogue $t_g$ of $t_x$}

Let $x = (A_\bullet, \lambda, i, \overline{\eta}) \in {\rm Isog}_{0,w}(A',\lambda',i')$.  Recall from (\ref{t_x_def_eqn}) that  $x$ determines  an element $t_x \in T^S(\mathbb F_p)$, where $S = S(w)$.

Now we need to give a more group-theoretic description of $t_x$, which will be denoted $t_g$, where $g \in I(\mathbb Q) \backslash G(L_r)/I_r$ is defined in (\ref{x_to_g}).  We first need a lemma.  Recall that Morita equivalence  is realized here by multiplying by the idempotent $e_{11} \in {\rm M}_d(\mathbb Z_p)$.  Below, we shall abuse notation,  in that operators induced via Morita equivalence (e.g. $\delta \sigma$) will be denoted by the same symbols.  Further, in our standard lattice chain $\widetilde{\Lambda}_{i} = \Lambda_i \oplus \Lambda^*_i$ we have (\cite{H05}, 5.2.3)
\begin{equation}
\Lambda_i = {\rm diag}((p^{-1})^i, 1^{d-i}) \, {\rm M}_d(\mathbb Z_p)
\end{equation}
and we use the same symbol to denote its Morita equivalent $e_{11}\Lambda_i$, namely
\begin{equation}
\Lambda_i = (p^{-1}\mathbb Z_p)^i \oplus \mathbb Z_p^{d-i}.
\end{equation}

\begin{lemma} \label{v_i_exists}  Suppose $g \in I(\mathbb Q) \backslash G(L_r)/I_r$ satisfies $g^{-1}\delta \sigma(g) \in I_r w^{-1} I_r$, and write $S(w) = S$.   Let $\widetilde{g} \in G(L_r)/I^+_r$ be any lift of $g$, and set for any $r'$ with $k_{r'} \supset k_r$, $M_{\bullet, {r'}} := e_{11}(\widetilde{g} \widetilde{\Lambda}_{\bullet, W(k_{r'})}) = \widetilde{g}\Lambda_{\bullet, W(k_{r'})}$.   
Then,  for any sufficiently large extension $k_{r'} \supset k_r$ and,  for each non-critical index $i$ (i.e., $i \notin S$),  there exists a non-zero $v_i \in M_{i,r'}/M_{i-1,r'}$ such that $(\delta \sigma)^{-1} (v_i) = v_i$  (and $v_i$ is uniquely 
determined up to a scalar in $\mathbb F_p^\times$).
\end{lemma}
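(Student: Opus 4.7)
The plan is to reduce the lemma to a concrete semilinear-algebra statement on each one-dimensional quotient $M_{i,r'}/M_{i-1,r'}$, and then to invoke a Lang–Steinberg / Hilbert 90 argument to produce the fixed vector.

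First I would observe that the stated chain $M_{\bullet,r'} = \widetilde g\,\Lambda_{\bullet,W(k_{r'})}$ depends only on the class of $\widetilde g$ in $G(L_r)/I_r$: the Iwahori $I_r$ stabilizes each $\Lambda_j$, and in particular so does its subgroup (mod $I_r^+$) $T(k_r)$, so the lift $\widetilde g$ plays no role in this part of the discussion. Next, since $\Phi = \delta\sigma$ satisfies $p^{-1}M_j \supset \Phi M_j \supset M_j$, the Verschiebung $V := \Phi^{-1}$ satisfies $VM_j \subset M_j$ for every index $j$, so $V$ descends to a $\sigma^{-1}$-semilinear endomorphism $\bar V_i$ of the quotient $M_{i,r'}/M_{i-1,r'}$. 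Since $M_{i-1} \subset M_i$ is of colength $1$, this quotient is a one-dimensional $k_{r'}$-vector space.

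The key step is then to translate the non-criticality condition $i \notin S(w)$ into surjectivity of $\bar V_i$. Recalling from \S\ref{subKR} that $i \in S$ if and only if $M_{i-1} = V M_i$, the hypothesis $i \notin S$ gives $VM_i \neq M_{i-1}$; the two colength-one sublattices $VM_i$ and $M_{i-1}$ of $M_i$ must then together generate $M_i$, so the composition $M_i \xrightarrow{V} M_i \twoheadrightarrow M_i/M_{i-1}$ is surjective. This surjectivity passes to the quotient, showing that $\bar V_i$ is a surjective, hence bijective, $\sigma^{-1}$-semilinear endomorphism of a one-dimensional $k_{r'}$-space.

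Finally, I would produce $v_i$ by a standard descent argument: choosing any $k_{r'}$-basis we can write $\bar V_i(v) = c\,\sigma^{-1}(v)$ for a unique $c \in k_{r'}^\times$, and the equation $\bar V_i(v_i) = v_i$ becomes $c = v_i / \sigma^{-1}(v_i)$. By Hilbert 90 (equivalently, Lang's theorem applied to $\mathbb G_m$), enlarging $r'$ we may solve $c = u \cdot \sigma^{-1}(u)^{-1}$ in $k_{r'}^\times$, which conjugates $\bar V_i$ into the standard semilinear action $\sigma^{-1}$. Its fixed points in $k_{r'}$ are exactly $\mathbb F_p$, yielding a nonzero $v_i$ unique up to $\mathbb F_p^\times$. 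The only step requiring actual content is the translation of the critical-set condition into surjectivity of $\bar V_i$, but this is immediate from the Dieudonné-module interpretation already recorded in \S\ref{subKR}, so no real obstacle arises.
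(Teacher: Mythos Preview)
Your argument is correct, but it takes a different route from the paper's. The paper proves the lemma by passing through the geometric interpretation: it identifies the lattice chain $M_\bullet$ with the covariant Dieudonn\'e modules $M(X_\bullet)$ of the chain of $p$-divisible groups attached to a point of ${\rm Isog}_{0,w}(A',\lambda',i')$, so that $M_{i,r'}/M_{i-1,r'}$ becomes the Dieudonn\'e module $M(G_{i,k_{r'}})$. For non-critical $i$ the group scheme $G_i$ is \'etale, hence over a large enough $k_{r'}$ isomorphic to $(\mathbb Z/p\mathbb Z)_{k_{r'}}$, whose Dieudonn\'e module is $(k_{r'},F=0,V=\sigma^{-1})$; the $\mathbb F_p$-line of $\sigma^{-1}$-fixed vectors gives $v_i$. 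Your approach instead stays entirely at the level of semilinear algebra: you use the criterion $i \notin S \Leftrightarrow VM_i \neq M_{i-1}$ to see that $\bar V_i$ is bijective on the one-dimensional quotient, and then apply Lang's theorem for $\mathbb G_m$ to produce a fixed vector after a finite extension. This is more elementary and avoids invoking the classification of order-$p$ group schemes. On the other hand, the paper's geometric identification is not wasted effort: it is precisely what is needed in the next step (Lemma~\ref{t_g_lemma}(ii) and \S\ref{OT_Dieu_sec}) to match the algebraically defined $t_g$ with the geometrically defined $t_x$ via Oort--Tate generators. One small point to make explicit in your write-up: the fact that $VM_i$ has colength one in $M_i$ (needed for your ``two distinct colength-one sublattices generate'' argument) is not a consequence of the chain condition $pM_i \subset VM_i \subset M_i$ alone, but uses the determinant condition~(\ref{kocon}), equivalently ${\rm inv}_{K_i}(M_i,\Phi M_i) = \mu^*$.
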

The identity $(\delta \sigma)^{-1} (v_i) = v_i$ is meant here in the following sense: $(\delta \sigma)^{-1}$ preserves each $M_i$ and  for non-critical $i$ induces a bijection
$$
(\delta \sigma)^{-1}: M_i/M_{i-1} ~ \widetilde{\rightarrow} ~ (\delta \sigma)^{-1} M_i/(\delta \sigma)^{-1} M_{i-1} ~ \widetilde{\rightarrow} ~ M_i/M_{i-1}
$$
(the second arrow being induced by inclusion $(\delta \sigma)^{-1}M_i \subset M_i$), which sends $v_i$ to 
itself.

\begin{proof}
There exists $(A_\bullet, \lambda, i, \overline{\eta}) \in {\rm Isog}_{0,w}(A',\lambda',i')$ such that $M_\bullet = e_{11}H(A_\bullet) = M(X_\bullet)$, where $X_\bullet$ is the chain of $p$-divisible groups (\ref{chainpdiv}) and $M(X_i)$ is the covariant Dieudonn\'{e} module of $X_i$.  We regard this as an equality of chains in $e_{11}(V \otimes L_r) = L^d_r$  (via (\ref{V_L_r}) and Morita).  We may identify $M_{i,r'}/M_{i-1,r'}$ with the covariant Dieudonn\'{e} module $M(G_{i,k_{r'}})$.  Choose $k_{r'}$ such that for each non-critical index $i$, we have $G_{i,k_{r'}} \cong (\mathbb Z/p\mathbb Z)_{k_{r'}}$. The  covariant Dieudonn\'{e} module of $(\mathbb Z/p\mathbb Z)_{k_{r'}}$ is $(k_{r'}, F = 0, V = \sigma^{-1})$.  The Verschiebung operator $V = (\delta \sigma)^{-1}$ on $H_{L_r}$ induces the operator $\sigma^{-1}$ on $M(G_{i,k_{r'}}) = k_{r'}$, which contains a unique $\mathbb F_p$-line of $\sigma^{-1}$-fixed points.
\end{proof}

Now fix $g,\, \widetilde{g}, \, r'$, and $v_\bullet$ as in the lemma.    Let $e_i$ denote the $i$-th standard basic vector in $\mathbb Z_p^d$, and let $e_{i,r'}$ denote the image of $p^{-1}e_i$ in the quotient $\Lambda_{i, W(k_{r'})}/\Lambda_{i-1,W(k_{r'})}$.   Write
\begin{equation} \label{M=gt'}
(M_\bullet,v_\bullet) = \widetilde{g} t'(\Lambda_{\bullet, W(k_{r'})}, e_{\bullet,r'}),
\end{equation}
for a unique element $t' \in T^S(k_{r'})$.  Here and in what follows we are using Teichm\"{u}ller lifts to regard $T^S(k_{r'})$ as a subgroup of $T^S(\mathcal O_{L_{r'}})$.  

  By (\ref{M=gt'}), we have an equality (in the sense analogous to the previous identity $(\delta \sigma)^{-1} v_\bullet = v_\bullet$):
\begin{equation} \label{trans_Vv=v}
(\widetilde{g}t')^{-1} \, (\delta \sigma)^{-1} \, (\widetilde{g} t') \, e_{\bullet, r'} = e_{\bullet, r'}.
\end{equation}
Now we may define
\begin{equation} \label{def_t_g}
t_g := (\widetilde{g}t')^{-1} \, \sigma^r(\widetilde{g}t') = t'^{-1} \, \sigma^r(t') \in T^S(k_{r'}).
\end{equation}
It follows easily that $t_g$ depends only on $g \in I(\mathbb Q) \backslash G(L_r)/I_r$ and not on the choice of $r'$, $\widetilde{g}$,  or $v_\bullet$.

\begin{lemma} \label{t_g_lemma} Let $g$ be as in Lemma \ref{v_i_exists}.

\noindent {\rm (i)} We have $t_g \in T^S(\mathbb F_p)$.

\noindent {\rm (ii)} If $g$ comes from $x$ via (\ref{x_to_g}), then $t_x = t_g$.

\noindent {\rm (iii)}  Choose any lift $\widetilde{g}$  in $G(L_r)$ of $g \in I(\QQ)\backslash G(L_r)/I_r$;  if $\widetilde{g}^{-1}\delta \sigma(\widetilde{g}) \in I^+_r t w^{-1} I^+_r$, where $tw^{-1} \in T(k_r) \rtimes \widetilde{W}$ is embedded into $G(L_r)$ as in section \ref{notation_sec}, then $N_r(t)$ projects to $t_g^{-1}$ in $T^S(\mF_p)$.
\end{lemma}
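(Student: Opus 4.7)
The plan is to handle (ii) first by a direct geometric--crystalline translation of $t_g$, deduce (i) as a corollary, and then establish (iii) via a lattice-chain computation that exploits a key combinatorial feature of admissible elements in the Drinfeld setting.

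\textbf{Parts (ii) and (i).} A lift $x' \in \A_{1,w}(k_{r'})$ of $x$ amounts, by Oort--Tate theory (cf.~Remark \ref{OT=KM_rem}) together with the étaleness of $G_i$ for $i \notin S$, to the choice of a $\delta\sigma$-fixed non-zero $v_i \in M_i/M_{i-1} \otimes W(k_{r'})$ for each $i \notin S$. Writing $v_\bullet = \widetilde{g}\,t'\,e_{\bullet,r'}$ as in (\ref{M=gt'}), the action of $\Phi^r_{\fp}$ on $x'$ is induced by $\sigma^r$ on the étale Dieudonné modules. Using $\sigma^r(\widetilde{g}) = \widetilde{g}$ (valid for $\widetilde{g} \in G(L_r)$) and the $\sigma$-invariance of the standard basis vectors, I compute
\[
\sigma^r(v_i) \,=\, \widetilde{g}\,\sigma^r(t')\,e_{i,r'} \,=\, v_i\cdot \bigl((t')^{-1}\sigma^r(t')\bigr) \,=\, v_i\cdot t_g,
\]
which by (\ref{t_x_def_eqn}) gives $t_x = t_g$, proving (ii). Since $t_x$ arises as a deck transformation of an étale cover defined over $\mF_p$, it automatically lies in $T^S(\mF_p)$, giving (i).

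\textbf{Part (iii).} After cancelling $\widetilde{g}$, the relation $\delta\sigma(v_\bullet) \equiv v_\bullet \pmod{M_{\bullet-1}}$ becomes
\[
u\cdot \sigma(t')\cdot e_{\bullet,r'} \,\equiv\, t'\cdot e_{\bullet,r'} \pmod{\Lambda_{\bullet-1,W(k_{r'})}},\qquad \bullet \notin S,
\]
where $u = \widetilde{g}^{-1}\delta\sigma(\widetilde{g}) \in I^+_r\, tw^{-1}\, I^+_r$. The decisive combinatorial input is that, writing $w = t_{e_m}(m\; m_{k-1}\cdots m_1)$ as in Lemma \ref{general_form}, the element $w$ fixes $e_{\bullet,r'}$ for every $\bullet \notin S$: indeed $\bar{w}$ fixes the index $\bullet$ and $t_{e_m}$ acts trivially on $p^{-1}e_\bullet$ because $m \neq \bullet$. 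Since $I^+_r$ acts trivially on each graded piece $\Lambda_j/\Lambda_{j-1}$, an explicit coordinate calculation then shows that the pro-unipotent factors $i^+_1, i^+_2$ do not contribute to the $e_{\bullet,r'}$-coefficient modulo $\Lambda_{\bullet-1}$: for any $y \in \Lambda_{\bullet-1}$, the $\bullet$-th standard-basis coordinate of $w^{-1}y$ equals $y_\bullet \in W(k_{r'})$, which projects to zero in $\Lambda_\bullet/\Lambda_{\bullet-1} \cong k_{r'}$. The displayed relation therefore collapses to the scalar identity $t_\bullet \cdot \sigma(t')_\bullet = t'_\bullet$ in $k_{r'}^\times$. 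Iterating this $r$ times via successive $\sigma^j$-twists yields $\sigma^r(t')_\bullet = t'_\bullet \cdot N_r(t)^{-1}_\bullet$, whence
\[
(t_g)_\bullet \,=\, (t')^{-1}_\bullet\,\sigma^r(t')_\bullet \,=\, N_r(t)^{-1}_\bullet,
\]
so $N_r(t) \equiv t_g^{-1}$ in $T^S(\mF_p)$, proving (iii).

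The main technical obstacle is the verification that the pro-unipotent corrections are indeed invisible on the critical graded piece $\Lambda_\bullet/\Lambda_{\bullet-1}$. This rests on the combination of the identity $we_{\bullet,r'} = e_{\bullet,r'}$ (which is special to the minuscule Drinfeld-type admissible elements of Lemma \ref{general_form}) with a direct analysis of how $I^+_r$ and $w^{-1}$ act on the standard lattice chain; together these features explain why the Iwahori-level decomposition of $u$ yields a clean answer for $N_r(t)$ in the quotient $T^S(\mF_p)$, despite the fact that individual $I^+_r$-corrections may move vectors outside of $\Lambda_\bullet$ before returning inside.
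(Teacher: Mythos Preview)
Your argument is essentially correct, and for parts (ii) and (i) it tracks the paper closely: the paper likewise reduces (ii) to the Oort--Tate/Dieudonn\'e dictionary (its subsection~\ref{OT_Dieu_sec}) and you simply reverse the order by deducing (i) from (ii) rather than proving (i) self-containedly as the paper does. This is harmless since, as the proof of Lemma~\ref{v_i_exists} shows, every admissible $g$ does arise from some $x$.

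For part (iii) you take a genuinely different and more elementary route. The paper does not compute component-wise; instead it passes to the equivalent statement (iii${}'$): \emph{for each $\chi$ with $\delta^1(w,\chi)=1$, one has $\chi_r(t)=\chi(t_g^{-1})$}. It then raises the identity $[t'^{-1}(tw^{-1}\sigma)t']^{-1}e_{\bullet,r'}=e_{\bullet,r'}$ to the $r$-th power, uses the same combinatorial input $we_{\bullet,r'}=e_{\bullet,r'}$ that you identified, deduces that $t'^{-1}N_r^{w^{-1}}(t)\,{}^{w^{-r}}\sigma^r(t')\in T_S(k_{r'})$, and finally applies the $T_S$-trivial extension $\widetilde\chi$ of Lemma~\ref{chi_extn}. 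Your direct scalar recursion $t_\bullet\sigma(t')_\bullet=t'_\bullet$ bypasses the character argument entirely, which is cleaner; the paper's detour through (iii${}'$) has the advantage that it makes the role of the subtorus $T_S$ and the later appearance of $\delta^1(w,\chi)$ in the test function completely transparent.

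One point of care: you phrase the fixed-vector condition as $\delta\sigma(v_\bullet)\equiv v_\bullet\pmod{M_{\bullet-1}}$, but $\delta\sigma$ does not preserve $M_\bullet$ (only $(\delta\sigma)^{-1}$ does), so the congruence is not literally well-posed. The paper is explicit about this: the meaningful operator on the graded piece is the bijection induced by $(\delta\sigma)^{-1}$, and this is why it works throughout with $u^{-1}$ rather than $u$ and keeps the outer ${}^{-1}$ in equations~(\ref{twsigma_e=e})--(\ref{N^w_r(t)e=e}). Your computation survives because you are really using the inverse of that bijection, but you should rewrite the argument with $(\delta\sigma)^{-1}$ and $u^{-1}=i_2^{-1}wt^{-1}i_1^{-1}$ to make the lattice inclusions honest; then the ``$I^+_r$ trivial on graded pieces'' step applies directly without the hedge in your final paragraph.
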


\begin{proof}
Part (i).  Heuristically, $\widetilde{g}^{-1}\, \delta \sigma(\widetilde{g})$ is the inverse of ``Verschiebung in the fiber over $gI_{k_{r'}}$'', and $t_g$ measures the difference between two points in the fiber over $g \in G(L_r)/I_r$ in the moduli problem
$$
(\mbox{lattice chain; generators in non-critical indices, fixed by Verschiebung over the chain}).
$$
The group of deck transformations of the forgetful morphism -- the one forgetting the generators -- is the torus $T^S(\mathbb F_p)$.  Thus $t_g \in T^S(\mathbb F_p)$.  Rather than making these notions precise, we will take a more pedestrian approach.  

In view of (\ref{trans_Vv=v}), to show that  $t_g \in T^S(k_{r'})$ belongs to $T^S(\mathbb F_p)$ it suffices to show 
\begin{equation} \label{ratl_crit}
(\delta \sigma)^{-1}( \widetilde{g}t' t_g) e_{\bullet,r'} = (\widetilde{g}t' t_g) e_{\bullet, r'}.
\end{equation}
However, applying $\sigma^r$ to (\ref{trans_Vv=v}) and using $[\sigma^r, \delta \sigma ] = 0$ we obtain
$$
(\delta \sigma)^{-1}( \sigma^r(\widetilde{g}t')) e_{\bullet, r'} = \sigma^r(\widetilde{g}t') \,
e_{\bullet, r'},
$$
which implies (\ref{ratl_crit}) by (\ref{def_t_g}).

\smallskip

Part (ii).  This follows, using Morita equivalence, from subsection \ref{OT_Dieu_sec} below.

\smallskip

Part (iii).  It is enough to prove the following equivalent statement. Recall $\chi_r=\chi\circ N_r$. 

\smallskip

\noindent ${\rm (iii')}$: {\it For each $\chi$ with $\delta^1(w,\chi) =1 $, if $\widetilde{g}^{-1} \delta \sigma(\widetilde{g}) \in I^+_r t w^{-1} I^+_r$ then $\chi_r(t)= \chi(t_g^{-1})$.}

\smallskip

We first claim that this condition is independent of the choice of lift $\widetilde{g}$.  Write $\widetilde{g}^{-1} \delta \sigma(\widetilde{g}) = i^+_1 t w^{-1} i^+_2$ for some $i^+_1, i^+_2 \in I^+_r$.  Changing the choice of the lift $\widetilde{g} \in G(L_r)$ replaces $t$ by an element of the form $t_1t\, ^{w^{-1}}\!\!\!\sigma(t_1)^{-1}$ for some $t_1 \in T(k_r)$.   But then $\chi_r(t)$ is replaced by
$$
\chi_r(t_1 t \, ^{w^{-1}}\!\!\!\sigma(t_1)^{-1}) = \chi_r(t) \cdot \chi_r(t_1) \cdot \chi_r(\, ^{w^{-1}}\!\!\!\sigma(t_1))^{-1}.
$$
Since $^w\chi = \chi$ (Corollary \ref{chi=chi^w}) we also have $^w\chi_r = \chi_r$, and thus the right hand side is $\chi_r(t)$, and the claim is proved.  Thus it makes sense to {\em define}
$$
\chi_r(g^{-1}\delta \sigma(g)) = \chi_r(t)\ ,
$$
where $\widetilde{g}^{-1}\delta\sigma(\widetilde{g})\in I_r^+tw^{-1}I_r^+$. Now to compute $\chi_r(g^{-1}\delta \sigma(g))$ for $g$ as in (iii), we again fix any representative $\widetilde{g} \in G(L_r)$ for $g$ and write $\widetilde{g}^{-1} (\delta \sigma) \widetilde{g} \in I^+_r t w^{-1} I^+_r \sigma$.  The identity
\begin{equation*}  
t'^{-1} [\widetilde{g}^{-1} (\delta \sigma)^{-1} \widetilde{g}] t' \, e_{\bullet, r'} = e_{\bullet, r'}
\end{equation*}
implies that
\begin{equation} \label{twsigma_e=e}
[t'^{-1} (t w^{-1} \sigma) t'^{-1}]^{-1} \, e_{\bullet, r'} = e_{\bullet, r'}.
\end{equation}
(We require the $-1$ exponent on $[ \cdots ]$ here for the same reason it is required to make sense of $(\delta \sigma)^{-1}v_\bullet = v_\bullet$.)  ``Raising (\ref{twsigma_e=e}) to the $r$th power'' yields
\begin{equation} \label{N^w_r(t)e=e}
^{(w\sigma)^{r}}[t'^{-1} \, N^{w^{-1}}_r(t) \,\, ^{w^{-r}}\!\!\!\sigma^r(t')]^{-1} \, e_{\bullet, r'} = e_{\bullet, r'},
\end{equation}
where $N^{w^{-1}}_r(t) := t \cdot \, ^{w^{-1}\sigma}t \cdots \, ^{(w^{-1}\sigma)^{r-1}}t$.  Here we have used the fact that $w^{r}$, and thus also $(w\sigma)^{r}$, fixes $e_{\bullet, r'}$.  

We claim that in fact  $w \, e_{\bullet, r'} = e_{\bullet, r'}$.  First recall (Lemma \ref{general_form}, Proposition 
\ref{main_comb_prop}) that $w$ corresponds under Morita equivalence to a matrix of the form ${\rm diag}(1, \dots, p, \dots, 1) E$, where $E$ is an elementary matrix which fixes every $e_i$ with $i \notin S = S(w)$ and where $p$ appears in the $m$-th place for some $m \notin S$.  Then it is clear that for $i \notin S$, we have $w(p^{-1}e_i) = pe_m + p^{-1}e_i$, which is congruent modulo $\Lambda_{i-1}$ to $p^{-1}e_i$.  This proves our claim.

From (\ref{N^w_r(t)e=e}) and the fact that conjugation by $w\sigma$ preserves $T_S \subset T$ (see Lemma \ref{T_S_vs_wT})  we deduce
\begin{equation} \label{N^w_r(t)_in_T_S}
t'^{-1} \, N^{w^{-1}}_r(t) \,\, ^{w^{-r}}\!\!\!\sigma^r(t') \in T_S(k_{r'}).
\end{equation}

Now the fact that $\delta^1(w, \chi) =1$ means that $\chi \in T(\mathbb F_p)^\vee$ can be extended to a character $\widetilde{\chi}$ on $T(k_{r'}) \rtimes \langle w \rangle$ which is trivial on $T_S(k_{r'})$ and on $\langle w\rangle$ (Lemma \ref{chi_extn}).  Applying $\widetilde{\chi}$ to (\ref{N^w_r(t)_in_T_S}) gives
$$
\widetilde{\chi}(N^{w^{-1}}_r(t)) = \widetilde{\chi}(t'^{-1}\, ^{w^{-r}}\!\!\!\sigma^r(t'))^{-1}\ ,
$$
and hence the desired equality
$$
\chi_r(t) = \chi(t^{-1}_g).
$$
\end{proof}

\subsection{Oort-Tate generators in terms of Dieudonn\'e modules} \label{OT_Dieu_sec}

We pause to fill in an ingredient used in the proof of Lemma \ref{t_g_lemma}, (ii).

Let $\mathcal G$ denote an \'{e}tale finite group scheme over $k_r$ having order $p$.  For example, we could take  $\mathcal G = G_i := {\rm ker}(X_{i-1} \rightarrow X_{i})$ for a non-critical index $i$ in the chain of $p$-divisible groups (\ref{chainpdiv}).  Suppose $k_{r'} \supset k_r$ is an extension field such that $\mathcal G_{k_{r'}} \cong (\mathbb Z/p\mathbb Z)_{k_{r'}}$.  An Oort-Tate generator $\eta \in \mathcal G^\times(k_{r'})$ is then given by the choice of an isomorphism
$$
\eta: (\mathbb Z/p\mathbb Z)_{k_{r'}} ~ \widetilde{\rightarrow} ~ \mathcal G_{k_{r'}}.
$$
The group $\mu_{p-1}(k_{r'}) = \mathbb F_p^\times$ acts on $\mathcal G^\times(k_{r'})$: it acts on the set of $\eta$ via its natural action on $\mathbb Z/p\mathbb Z$.  Also, the group ${\rm Gal}({k_{r'}}/k_r)$ acts on $\mathcal G^\times(k_{r'})$. 

We want to describe these data in terms of Dieudonn\'{e} modules.   Let $M(\mathcal G)$ denote the covariant Dieudonn\'{e} module of $\mathcal G$; this is a $W(k_r)$-module equipped with Frobenius operator $F$ and Verschiebung operator $V$.  We have $M((\mathbb Z/p\mathbb Z)_{k_r}) = (k_r, F = 0, V = \sigma^{-1})$.  

Applying the functor $M(\cdot)$ to $\eta: (\mathbb Z/p\mathbb Z)_{k_{r'}} ~ \widetilde{\rightarrow} ~ \mathcal G_{k_{r'}}$ yields an isomorphism compatible with the Verschiebung operators on each side, 
$$M(\eta):  k_{r'}=M((\mathbb Z/p\mathbb Z)_{k_{r'}})\to M(\mathcal G_{k_{r'}}) .$$   
Since $\mathcal G$ is defined over $k_r$, $M(\mathcal G_{k_{r'}})$ carries an action of ${\rm Gal}(k_{r'}/k_r)$, in particular of $\sigma^r$.  Also, there is an obvious action of $\mathbb F_p^\times \subset k_{r'}^\times$ on $M(\mathcal G_{k_r'})$.

\begin{lemma}
Let $V$ denote the Verschiebung operator on $M(\mathcal G_{k_r})$.  The association $\eta \mapsto M(\eta)(1) = v$ sets up a bijection
$$
\left\{ \mbox{Oort-Tate generators $\eta \in \mathcal G^\times(k_{r'})$} \right\} \longleftrightarrow \left\{\mbox{non-zero vectors $v \in M(\mathcal G_{k_{r'}})$ with $Vv = v$} \right\}.
$$
Moreover, this bijection is equivariant for the actions of $\sigma^r$ and $\mu_{p-1}(k_{r'}) = \mathbb F_p^\times$ on both sides.\qed
\end{lemma}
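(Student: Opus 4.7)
The plan is to reduce the entire statement to a short linear-algebra computation via the covariant Cartier--Dieudonn\'e equivalence on the category of finite commutative group schemes of $p$-power order over $k_{r'}$. Under this equivalence, the set of homomorphisms $\eta\colon (\mathbb Z/p\mathbb Z)_{k_{r'}}\to \mathcal G_{k_{r'}}$ corresponds bijectively to the set of $W(k_{r'})$-linear maps $M((\mathbb Z/p\mathbb Z)_{k_{r'}})\to M(\mathcal G_{k_{r'}})$ commuting with $F$ and $V$. Since the source module is just $k_{r'}$, such a map is uniquely determined by the element $v:=M(\eta)(1)$.

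Because $V$ acts on the source as $\sigma^{-1}$, which fixes $1$, the $V$-compatibility is equivalent to the single identity $Vv=v$. The $F$-compatibility $Fv=0$ then comes for free, using the Dieudonn\'e relation $FV=p$: indeed $Fv=F(Vv)=pv=0$ since we work in a $p$-torsion module. Conversely, any $v$ with $Vv=v$ produces a valid Dieudonn\'e map, hence a homomorphism $\eta$. A nonzero $v$ gives a nonzero $\eta$, which is automatically an isomorphism since source and target are both étale group schemes of order $p$ over $k_{r'}$; by the identification recalled just before the lemma, these are precisely the Oort--Tate generators. This establishes the claimed bijection.

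For the $\mathbb F_p^\times$-equivariance, the natural action on an Oort--Tate generator is by pre-composing $\eta$ with multiplication by $a\in\mathbb F_p^\times$ on $(\mathbb Z/p\mathbb Z)_{k_{r'}}$; since $M$ is a functor and $M([a])$ is simply scalar multiplication by $a$ on $k_{r'}=M((\mathbb Z/p\mathbb Z)_{k_{r'}})$, this translates into $v\mapsto av$, matching the $\mathbb F_p^\times$-action on the Dieudonn\'e side. For the $\sigma^r$-equivariance, the descent of $\mathcal G$ to $k_r$ provides a canonical isomorphism $\sigma^{r,*}\mathcal G_{k_{r'}}\simeq \mathcal G_{k_{r'}}$, and similarly for $(\mathbb Z/p\mathbb Z)_{k_{r'}}$, which is constant; applying $M(\cdot)$ and invoking functoriality converts these into the semilinear $\sigma^r$-action on the respective Dieudonn\'e modules, so the bijection $\eta\leftrightarrow v$ is $\mathrm{Gal}(k_{r'}/k_r)$-equivariant.

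I do not foresee a real obstacle here: everything is formal once the Cartier--Dieudonn\'e equivalence is invoked. The one subtle point, namely the normalization $V=\sigma^{-1}$ on the covariant Dieudonn\'e module of $(\mathbb Z/p\mathbb Z)_{k_r}$, is already stipulated in the lemma; the proof proper is then a direct translation.
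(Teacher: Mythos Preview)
Your proof is correct. Note that the paper does not actually give a proof of this lemma: the statement ends with \qed and the authors leave it as an exercise in the covariant Dieudonn\'e formalism, having set up the relevant identifications $M((\mathbb Z/p\mathbb Z)_{k_{r'}}) = (k_{r'},\,F=0,\,V=\sigma^{-1})$ and the description of Oort--Tate generators as isomorphisms $\eta\colon(\mathbb Z/p\mathbb Z)_{k_{r'}}\xrightarrow{\sim}\mathcal G_{k_{r'}}$ in the paragraphs immediately preceding the lemma. Your argument is exactly the intended one: the Dieudonn\'e equivalence reduces everything to the observation that a $k_{r'}$-linear, $F$- and $V$-equivariant map out of $(k_{r'},0,\sigma^{-1})$ is the same as a choice of $v$ with $Vv=v$, with the $F$-condition automatic from $FV=p$ and $p$-torsion. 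The equivariance statements are likewise formal consequences of functoriality, as you explain.
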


\subsection{Contribution of ${\rm Isog}_{0}(A',\lambda',i')$ to the counting points formula}

Recall from \cite{K90}, $\S14$,  the map $(A',\lambda',i') \mapsto (\gamma_0;\gamma, \delta)$.  Above we recalled the construction of $\gamma$ and $\delta$.  Once $\gamma$ and $\delta$ are given, $\gamma_0$ is constructed from them as in loc.~cit., and we shall not review that here.  For the remainder of this section, we fix $(A',\lambda',i')$ and the associated triple $(\gamma_0;\gamma, \delta)$.

Temporarily fix $w \in {\rm Adm}^G(\mu)$, and consider points $x = (A_\bullet, \lambda, i, \eta) \in 
{\rm Isog}_{0,w}(A',\lambda',i')$.  For each $\chi \in T(\mathbb F_p)^\vee$ we must calculate
$$
{\rm Tr}^{ss}(\Phi^r_{\mathfrak p}, (R\Psi_{\chi})_x).
$$
By Theorem \ref{main_geom_thm}, this vanishes unless $\delta^1(w, \chi) = 1$, in which case it is
$$
{\rm Tr}^{ss}(\Phi^r_{\mathfrak p}, (\bar{\mathbb Q}_{\ell, \bar \chi} \otimes R\Psi_0)_x).
$$
The nearby cycles sheaf $R\Psi_0$ is constant along the KR-stratum $\mathcal A_{0,w}$, and if $x \in \mathcal A_{0,w}(k_r)$ maps to $g$ under (\ref{x_to_g}), then by Proposition \ref{A_0_tr_Fr} 
\begin{equation}
{\rm Tr}^{ss}(\Phi^r_{\mathfrak p}, (R\Psi_0)_x) = \phi_{r,0}(w^{-1}) = \phi_{r,0}(g^{-1}\delta \sigma(g)).
\end{equation}
By Lemma \ref{sommes_trig}, we have
$$
{\rm Tr}^{ss}(\Phi^r_{\mathfrak p}, (\mathbb Q_{\ell, \bar \chi})_x) = \chi(t_x) = \chi(t_g),
$$
using Lemma \ref{t_g_lemma} (ii) for the last equality.  By Definition \ref{test_fcn} and Lemma \ref{t_g_lemma} (iii) we have 
$$
\phi_{r,\chi}(g^{-1}\delta \sigma(g)) = \delta^1(w,\chi) \, \chi(t_g) \, \phi_{r,0}(w^{-1}).
$$
Putting these facts together with our parametrization (\ref{Isog_0w=}) of ${\rm Isog}_{0,w}(A',\lambda', i')$, we see that for each $w \in {\rm Adm}^G(\mu)$ and $\chi \in T(\mathbb F_p)^\vee$,
\begin{equation*}
\sum_{x \in {\rm Isog}_{0,w}(A',\lambda',i')} {\rm Tr}^{ss}(\Phi^r_{\mathfrak p}, (R\Psi_\chi)_x)
\end{equation*}
is given by
\begin{equation*}
{\rm vol}(I(\mathbb Q)\backslash I(\mathbb A_f)) ~ {\rm O}_{\gamma}(f^p) ~ {\rm TO}_{\delta \sigma}[\phi_{r,\chi} \cdot 1_{I_r w^{-1} I_r}].
\end{equation*}
Now summing over all $w \in {\rm Adm}^G(\mu)$, we obtain the following.

\begin{prop}\label{counting_Isog_0} For any $\chi \in T(\mathbb F_p)^\vee$, the contribution of $(A',\lambda',i')$ to the counting points formula is given by
\begin{equation}
\sum_{x \in {\rm Isog}_0(A',\lambda',i')} {\rm Tr}^{ss}(\Phi^r_{\mathfrak p}, (R\Psi_\chi)_x) = 
{\rm vol}(I(\mathbb Q)\backslash I(\mathbb A_f)) ~ {\rm O}_{\gamma}(f^p) ~ 
{\rm TO}_{\delta \sigma}(\phi_{r,\chi}),
\end{equation}
where $\phi_{r,\chi}$ is the function of {\em Definition \ref{test_fcn}}.
\end{prop}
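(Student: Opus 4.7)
The plan is to reduce to the stratum-by-stratum calculation that already appears in the text immediately before the statement, and then to sum. Concretely, I would argue in three steps.

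First, use the KR-stratification to write
\begin{equation*}
{\rm Isog}_0(A',\lambda',i') = \coprod_{w \in {\rm Adm}^G(\mu)} {\rm Isog}_{0,w}(A',\lambda',i'),
\end{equation*}
so that the left-hand side of the proposition decomposes as a sum over $w \in {\rm Adm}^G(\mu)$. For a fixed $w$, use the parametrization (\ref{Isog_0w=}) to replace the sum over $x \in {\rm Isog}_{0,w}(A',\lambda',i')$ by a sum over pairs $(y, g) \in I(\mathbb Q)\backslash[Y^p \times Y_{p,w}]$.

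Second, for each such $x \leftrightarrow (y, g)$, identify ${\rm Tr}^{ss}(\Phi^r_{\mathfrak p}, (R\Psi_\chi)_x)$ with a value of $\phi_{r,\chi}$ evaluated at $g^{-1}\delta\sigma(g)$. By Theorem \ref{main_geom_thm} and Lemma \ref{sommes_trig}, this trace vanishes unless $\delta^1(w,\chi) = 1$, in which case it equals
\begin{equation*}
\chi(t_x) \cdot {\rm Tr}^{ss}(\Phi^r_{\mathfrak p}, (R\Psi_0)_x) = \chi(t_g) \cdot \phi_{r,0}(w^{-1}),
\end{equation*}
by Proposition \ref{A_0_tr_Fr} and Lemma \ref{t_g_lemma}(ii). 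Combining this with Definition \ref{test_fcn} and Lemma \ref{t_g_lemma}(iii) yields the key identity
\begin{equation*}
{\rm Tr}^{ss}(\Phi^r_{\mathfrak p}, (R\Psi_\chi)_x) = \phi_{r,\chi}(g^{-1}\delta\sigma(g)) \cdot 1_{I_r w^{-1} I_r}(g^{-1}\delta\sigma(g)),
\end{equation*}
uniformly in $x \in {\rm Isog}_{0,w}(A',\lambda',i')$ (valid regardless of whether $\delta^1(w,\chi) = 1$, since both sides vanish otherwise by Definition \ref{test_fcn}).

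Third, summing over $(y, g) \in I(\mathbb Q)\backslash[Y^p \times Y_{p,w}]$ converts the resulting expression into the product of an orbital integral and a twisted orbital integral, in the standard fashion of \cite{K90,K92}: the sum over $y \in I(\mathbb Q)\backslash Y^p$ with the condition $y^{-1}\gamma y \in K^p$ produces ${\rm vol}(I(\mathbb A^p_f)/K^p\text{-stuff}) \cdot {\rm O}_\gamma(f^p)$, while the sum over $g \in I(\mathbb Q_p)\backslash G(L_r)/I_r$ weighted by the characteristic function of $\{g^{-1}\delta\sigma(g) \in I_r w^{-1} I_r\}$ produces ${\rm TO}_{\delta\sigma}[\phi_{r,\chi} \cdot 1_{I_r w^{-1} I_r}]$, with the $I(\mathbb Q)$-quotient accounting for the remaining ${\rm vol}(I(\mathbb Q)\backslash I(\mathbb A_f))$ factor. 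Finally, sum over $w \in {\rm Adm}^G(\mu)$ and use that $\phi_{r,\chi}$ is supported on $\bigcup_{w \in {\rm Adm}^G(\mu)} I_r w^{-1} I_r$ (by Definition \ref{test_fcn} and the fact, recalled in the Appendix, that $z_{\mu^*,r}$ is supported on ${\rm Adm}(\mu^*)$), so that
\begin{equation*}
\sum_{w \in {\rm Adm}^G(\mu)} \phi_{r,\chi} \cdot 1_{I_r w^{-1} I_r} = \phi_{r,\chi}.
\end{equation*}
This delivers the claimed formula.

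The only real obstacle is the verification that the parametrization (\ref{Isog_0w=}) together with the bookkeeping in Lemma \ref{t_g_lemma}(iii) really matches $\chi(t_g)\phi_{r,0}(w^{-1})$ with the value $\phi_{r,\chi}(g^{-1}\delta\sigma(g))$ independently of the choice of representative $\widetilde g$; but this is precisely what the independence statement in the proof of Lemma \ref{t_g_lemma}(iii) guarantees. All other ingredients are standard Kottwitz-style manipulations, so the proposition follows essentially by assembling them.
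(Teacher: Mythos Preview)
Your proposal is correct and follows essentially the same approach as the paper: stratify by $w \in {\rm Adm}^G(\mu)$, identify the semi-simple trace on each stratum with $\phi_{r,\chi}(g^{-1}\delta\sigma(g))$ via Theorem~\ref{main_geom_thm}, Proposition~\ref{A_0_tr_Fr}, Lemma~\ref{sommes_trig}, and Lemma~\ref{t_g_lemma}, convert the sum over $I(\mathbb Q)\backslash[Y^p \times Y_{p,w}]$ into ${\rm vol}(I(\mathbb Q)\backslash I(\mathbb A_f)) \, {\rm O}_\gamma(f^p) \, {\rm TO}_{\delta\sigma}[\phi_{r,\chi}\cdot 1_{I_rw^{-1}I_r}]$, and then sum over $w$. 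The paper carries out exactly this computation in the paragraphs immediately preceding the proposition.
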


\subsection{Conclusion of the proof of the counting points formula} 

By Kottwitz \cite{K92}, $(A',\lambda',i') \mapsto (\gamma_0; \gamma, \delta)$ maps to the set of triples for which the Kottwitz invariant $\alpha(\gamma_0; \gamma, \delta)$ is defined and trivial.  The image consists of triples for which $\gamma_0^* \gamma_0 = c$ for a positive rational number $c = p^r c_0$, for $c_0$ a $p$-adic unit, and for which there exists a lattice $\Lambda$ in $V_{L_r}$ with $(\delta \sigma)\Lambda \supset \Lambda$ (see \cite{K92}, Lemma 18.1).  In loc.~cit.,~p.~441,  Kottwitz determines the cardinality of the fiber above $(\gamma_0;\gamma, \delta)$ to be $|{\rm ker}^1(\mathbb Q,I_0)|$.  Further, he shows that in the counting points formula, we may drop 
the conditions in loc.~cit., Lemma 18.1, imposed on $(\gamma_0;\gamma,\delta)$ other than $\alpha(\gamma_0;\gamma,\delta) = 1$.
Recall that in our situation, where $|{\rm ker}^1(\mathbb Q,G)| = 1$, we have $c(\gamma_0;\gamma,\delta) = {\rm vol}(I(\mathbb Q)\backslash I(\mathbb A_f)) ~ |{\rm ker}^1(\mathbb Q,I_0)|$.  Therefore,  exactly as in loc.~cit., we derive Theorem \ref{counting_points} from Proposition \ref{counting_Isog_0}.

\section{Hecke algebra isomorphisms associated to depth-zero characters} \label{HA_isom_sec}

\subsection{Preliminaries}
In this section we temporarily change our notation.  We let $F$ denote an arbitrary $p$-adic field with ring of integers $\mathcal O$, and residue field $k_F$.  Let $q$ denote the cardinality of $k_F$.  Write $\varpi$ for a uniformizer (some constructions will depend upon this choice of uniformizer).  

Let $G$ denote a connected reductive group, defined and split over $\mathcal O$.  Fix an $F$-split maximal torus $T$ and a Borel subgroup $B$ containing $T$; assume $T$ and $B$ are defined over $\mathcal O$.   Let $^\circ T = T(\mathcal O)$ denote the maximal compact subgroup of $T(F)$.   Let $\Phi \subset X^*(T)$,  resp. $\Phi^\vee \subset X_*(T)$, denote the set of roots, resp. coroots, for $G,T$.  Let $U$, resp. $\overline{U}$,  denote the unipotent radical of $B$, resp. the Borel subgroup $\overline{B} \supset T$ opposite to $B$.

Let $N$ denote the normalizer of $T$ in $G$, let $W = N/T$ denote the Weyl group, and write $\widetilde{W} = N(F)/\, ^\circ T$ for the Iwahori-Weyl group, cf.~\cite{HR}.  There is a canonical isomorphism $X_*(T) = T(F)/\, ^\circ T, \spa \lambda \mapsto \varpi^\lambda := 
\lambda(\varpi)$ (independent of the choice of $\varpi$).  The canonical homomorphism $N(F)/\, ^\circ T = \widetilde{W} \rightarrow W = N(F)/T(F)$ has a  section, hence there is a (non-canonical) isomorphism with the extended affine Weyl group $\widetilde{W} \cong X_*(T) \rtimes W$.  

Next, write $\chi$ for a {\em depth-zero character} of $T$, that is, a homomorphism $^\circ T \rightarrow \bar{\QQ}_\ell^\times$ which is trivial on the kernel $T_1$ of the canonical homomorphism $^\circ T = T(\mathcal O) \rightarrow T(k_F)$.  Clearly $N(F)$, $\widetilde{W}$ and $W$ act on the set of depth-zero characters.  We define
$$
W_\chi = \{ w \in W ~ | ~ ^w\chi = \chi \}.
$$
Consider the natural surjective homomorphisms $N \rightarrow \widetilde{W} \rightarrow W$.  We denote by $N_\chi$ (resp. $\widetilde{W}_\chi$) the inverse image in $N$ (resp. $\widetilde{W}$) of $W_\chi$.  Then there are obvious surjective homomorphisms $N_\chi \rightarrow \widetilde{W}_\chi \rightarrow W_\chi$.  


Define $\Phi_\chi$ (resp. $\Phi^\vee_\chi$, resp. $\Phi_{\chi, \rm aff}$) to be the set of roots $\alpha \in \Phi$ (resp. coroots $\alpha^\vee \in \Phi^\vee$, resp. affine roots $a = \alpha + k$, where $\alpha \in \Phi$, $k \in \mathbb Z$) such that $\chi \circ \alpha^\vee|_{\mathcal O_F^\times} = 1$.  Note that $\widetilde{W}_\chi$ acts in an obvious way on $\Phi_{\chi, \rm aff}$.  Define the following subgroups of the group of affine-linear automorphisms of $V := X_*(T) \otimes \mathbb R$:
\begin{align*}
W^\circ_\chi &= \langle s_\alpha ~ | ~ \alpha \in \Phi_\chi \rangle \\
W_{\chi, \rm aff} &= \langle s_a ~ | ~ a \in \Phi_{\chi, \rm aff} \rangle.
\end{align*}
Here $s_a$ and $s_\alpha$ are the reflections on $V$ corresponding to $a$ and $\alpha$.  

It is clear that $\Phi_\chi$ is a root system with Weyl group $W^\circ_\chi$, and that $W^\circ_\chi \subseteq W_\chi$. Let $\Phi^+$ denote the $B$-positive roots in $\Phi$, and set $\Phi^+_\chi = \Phi_\chi \cap \Phi^+$.  Then $\Phi^+_\chi$ is a system of positive roots for $\Phi_\chi$, and we denote by  $\Pi_\chi$  the corresponding set of simple  roots. 

Let $\mathcal C_\chi$ resp. ${\bf a}_\chi$ denote the subsets in $V$ defined by 
\begin{align*}
\mathcal C_\chi &= \{ v \in V ~ | ~ 0 < \alpha(v), \,\, \forall \alpha \in \Phi^+_\chi \}, \spa \mbox{resp.} 
\\
{\bf a}_\chi &= \{ v \in V ~ | ~ 0 < \alpha(v) < 1, \,\, \forall \alpha \in \Phi^+_\chi \}.
\end{align*}

For $a \in \Phi_{\chi, \rm aff}$ we write $a > 0$ if $a(v) > 0$ for all $v \in {\bf a}_\chi$.    Let 
$$\Pi_{\chi, \rm aff} = \{ a \in \Phi_{\chi, \rm aff} ~ | ~ a \,\, \mbox{is a minimal positive element} \},
$$ for the obvious  ordering on the set $\Phi_{\chi, \rm aff}$.  Define
\begin{align*}
S_{\chi, \rm aff} &= \{ s_a ~ | ~ a \in \Pi_{\chi, \rm aff} \} \\  
\Omega_\chi &= \{ w \in \widetilde{W}_\chi ~ | ~ w{\bf a}_\chi = {\bf a}_\chi \}.
\end{align*}

  In general, $W_\chi$ can be larger that $W^\circ_\chi$ and is not even a Weyl group (see Example 8.3 in \cite{Ro} and Remark \ref{W_chi} below).  The next result is contained in \cite{Ro}.

\begin{lemma} [Roche]  \label{W_chi_aff_Cox}

\noindent {\rm (1)} The group $W_{\chi,\rm aff}$ is a Coxeter group with system of generators $S_{\chi, \rm aff}$;

\smallskip

\noindent {\rm (2)} there is a canonical decomposition $\widetilde{W}_\chi = W_{\chi, \rm aff} \rtimes \Omega_\chi$, and the Bruhat order $\leq_\chi$ and the length function $\ell_\chi$ on $W_{\chi, \rm aff}$ can be extended in an obvious way to $\widetilde{W}_\chi$ such that $\Omega_\chi$ consists of the length-zero elements;

\smallskip

\noindent {\rm (3)} if $W^\circ_\chi = W_\chi$, then ${W}_{\chi, \rm aff}$ (resp. $\widetilde{W}_\chi$) is the affine (resp. extended affine) Weyl group associated to the root system $\Phi_\chi $. Further, $\mathcal C_\chi$ is the dominant Weyl chamber corresponding to a set of simple positive  roots which can be identified with $\Pi_{\chi}$. Similarly,  ${\bf a}_\chi$ is the base alcove in $V$ corresponding to a set of simple positive affine roots which can be identified with $\Pi_{\chi, \rm aff}$.
\end{lemma}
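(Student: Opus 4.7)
The plan is to establish the three parts of the lemma by reducing to the classical theory of affine reflection groups (Bourbaki, Ch. VI), with the one delicate point being that everything takes place in a root system $\Phi_\chi$ rather than the ambient $\Phi$.

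For part (1), I would first verify that $\Phi_\chi \subset \Phi$ is itself a (possibly non-reduced, but in our split case reduced) root system with Weyl group $W^\circ_\chi$; the key closure property is that for $\alpha, \beta \in \Phi_\chi$ with $s_\alpha(\beta) \in \Phi$, the relation $(s_\alpha \beta)^\vee = s_\alpha(\beta^\vee) = \beta^\vee - \langle \beta, \alpha^\vee\rangle \alpha^\vee$ shows that $\chi \circ (s_\alpha\beta)^\vee$ agrees with $\chi \circ \beta^\vee$ on $\mathcal O^\times$. Once $\Phi_\chi$ is a root system, $\Phi_{\chi,\mathrm{aff}} = \Phi_\chi + \ZZ$ is the associated affine root system on $V = X_*(T) \otimes \RR$ (after taking into account that the condition $\chi \circ \alpha^\vee|_{\mathcal O^\times} = 1$ makes the constant-term integer structure compatible). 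The group $W_{\chi,\mathrm{aff}}$ is then the affine reflection group generated by reflections in all hyperplanes $\ker a$ for $a \in \Phi_{\chi,\mathrm{aff}}$. The open connected components of the complement of this hyperplane arrangement are the alcoves, one of which is ${\bf a}_\chi$ by definition of $\Pi_\chi$ and $\Pi_{\chi,\mathrm{aff}}$. Classical affine Coxeter theory (Bourbaki, VI.\S2) then yields that $W_{\chi,\mathrm{aff}}$ acts simply transitively on the set of alcoves and is generated as a Coxeter group by the reflections $S_{\chi,\mathrm{aff}}$ through the walls of ${\bf a}_\chi$.

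For part (2), I would first check that $\widetilde W_\chi = \{tw : t \in X_*(T),\ w \in W_\chi\}$ normalizes $W_{\chi,\mathrm{aff}}$ by verifying it preserves $\Phi_{\chi,\mathrm{aff}}$: translations in $X_*(T)$ preserve $\Phi_{\chi,\mathrm{aff}}$ since they only shift the integer constant terms, while $W_\chi$ preserves $\Phi_\chi$ because for $w \in W_\chi$ and $\alpha \in \Phi_\chi$, $\chi \circ (w\alpha)^\vee = \chi \circ (w \cdot \alpha^\vee) = (^{w^{-1}}\!\chi) \circ \alpha^\vee = \chi \circ \alpha^\vee$ on $\mathcal O^\times$. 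Given the simple transitivity of $W_{\chi,\mathrm{aff}}$ on alcoves, any $x \in \widetilde W_\chi$ sends ${\bf a}_\chi$ to some alcove $w{\bf a}_\chi$ with $w \in W_{\chi,\mathrm{aff}}$ unique, so $w^{-1}x \in \Omega_\chi$. This proves $\widetilde W_\chi = W_{\chi,\mathrm{aff}} \cdot \Omega_\chi$ with $W_{\chi,\mathrm{aff}} \cap \Omega_\chi = \{1\}$, and normality gives the semidirect product. The extension of the Bruhat order $\leq_\chi$ and length $\ell_\chi$ from $W_{\chi,\mathrm{aff}}$ to $\widetilde W_\chi$ by declaring $\Omega_\chi$ to consist of length-zero elements is then formal, as in the standard affine case (so that $w\tau \leq_\chi w'\tau'$ iff $\tau = \tau'$ and $w \leq_\chi w'$).

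Part (3) is essentially a bookkeeping step under the extra hypothesis $W^\circ_\chi = W_\chi$. In that case $\widetilde W_\chi$ fits in an extension $1 \to X_*(T) \to \widetilde W_\chi \to W(\Phi_\chi) \to 1$, which is exactly the presentation of the extended affine Weyl group of $\Phi_\chi$ (the translation lattice $X_*(T)$ typically being larger than the coroot lattice of $\Phi_\chi$, whence the possibly non-trivial $\Omega_\chi$). The chamber $\mathcal C_\chi$ is by definition the locus of strict positivity on $\Pi_\chi$, which is the dominant Weyl chamber for $(\Phi_\chi, \Pi_\chi)$, and ${\bf a}_\chi$ is the unique alcove in $\mathcal C_\chi$ whose closure contains $0$, i.e., the base alcove for $\Pi_{\chi,\mathrm{aff}}$.

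The main obstacle I expect is verifying that $\Phi_\chi$ is genuinely a root system in the first place, since without that one cannot invoke the Bourbaki machinery; after that step, parts (1)--(3) are structural consequences of the theory of affine reflection groups. A secondary subtlety is that in part (2) one must distinguish $W_\chi$ from $W^\circ_\chi$: the former can be strictly larger (an ``R-group'' contribution), and this extra piece gets absorbed into $\Omega_\chi$ rather than into $W_{\chi,\mathrm{aff}}$, which is why $\Omega_\chi$ need not be abelian in general.
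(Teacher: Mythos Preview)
The paper does not give its own proof of this lemma: it simply records the statement and attributes it to Roche \cite{Ro}, with the sentence ``The next result is contained in \cite{Ro}'' preceding the lemma and no proof following it. Your sketch is correct and is essentially the argument one finds in Roche (and ultimately in Bourbaki): verify that $\Phi_\chi$ is closed under the reflections $s_\alpha$ for $\alpha \in \Phi_\chi$ (your coroot computation does this), invoke the standard theory of affine reflection groups to get the Coxeter structure on $W_{\chi,\mathrm{aff}}$ with the walls of ${\bf a}_\chi$ as simple reflections, and then use simple transitivity on alcoves to split off $\Omega_\chi$. Your remark that the possible discrepancy $W_\chi/W^\circ_\chi$ is absorbed into $\Omega_\chi$ (so that $\Omega_\chi$ need not be abelian) is exactly the point Roche makes, and is the reason part (3) requires the hypothesis $W^\circ_\chi = W_\chi$.

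One small comment: your parenthetical ``possibly non-reduced'' is unnecessary here, since $\Phi_\chi \subset \Phi$ and $\Phi$ is reduced, so $\Phi_\chi$ is automatically reduced.
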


\begin{Remark} \label{W_chi}
In \cite{Ro}, pp. 393-6, Roche proves that $W^\circ_\chi = W_\chi$ at least when $G$ has connected center and when $p$ is not a torsion prime for $\Phi^\vee$ (see loc.~cit.,  p. 396).  It is easy to see that $W^\circ_\chi = W_\chi$ always holds when $G = {\rm GL}_d$ (with no restrictions on $p$).  
\end{Remark}

\subsection{Bernstein components and Bushnell-Kutzko types associated to depth-zero characters}

Here we recall some facts about the Bernstein decomposition of the category $\mathcal R(G)$ of smooth representations of $G(F)$ and Bushnell-Kutzko types (cf. \cite{BK}), specifically in relation to principal series representations associated to depth-zero characters.

Choose any extension $\widetilde{\chi} : T(F) \rightarrow \bar{\QQ}_\ell^\times$ of the depth zero character $\chi : {^\circ T}\rightarrow  \bar{\QQ}_\ell^\times$.  This gives an inertial equivalence class
$$
[T, \widetilde{\chi}]_G = \mathfrak s_\chi = \mathfrak s,
$$
which depends only on the $W$-orbit of $\chi$.  It indexes a component $\mathcal R_\mathfrak s(G)$ of the Bernstein decomposition of $\mathcal R(G)$.  Recall that $\mathcal R_\mathfrak s(G)$ is the full subcategory of $\mathcal R(G)$ whose objects are the smooth representations of $G(F)$,  each of whose irreducible subquotients is a subquotient of some normalized principal series $i^G_B(\widetilde{\chi} \, \eta) := {\rm Ind}^{G(F)}_{B(F)}(\delta_B^{1/2} \, \widetilde{\chi} \, \eta)$, where $\eta$ denotes an unramified $\bar{\QQ}_\ell^\times$-valued character of $T(F)$. 

Let $I$ denote the Iwahori subgroup of $G(F)$ corresponding to the alcove ${\bf a}$, and let $I^+ \subset I$ be its pro-unipotent radical, so that $I/I^+ = T(k_F)$.  Let $\rho=\rho_\chi:I \rightarrow \bar{\QQ}_\ell^\times$ be the character of $I$, trivial on $I^+$, given by the character that $\chi$ induces on $T(k_F)$.  Set $I_U = I \cap U$ and $I_{\overline{U}} = I \cap \overline{U}$.  In terms of the Iwahori-decomposition
\begin{equation*}
I = I_{\overline{U}} \cdot \, ^\circ T \, \cdot I_U,
\end{equation*}
$\rho$ is given by 
\begin{equation} \label{rho_from_chi}
\rho(\overline{u} \cdot t \cdot u) = \chi(t).  
\end{equation}

Let $\mathcal R_\rho(G)$ denote the full subcategory of $\mathcal R(G)$ whose objects $(\pi, V)$ are generated, as $G$-modules, by their $\rho$-isotypical components $V^\rho$.  Let $\mathcal H(G)$, resp.~$\mathcal H(G,\rho)$, denote the Hecke algebra of compactly supported smooth functions $f : G(F) \rightarrow \bar{\QQ}_\ell$, resp. those with  $f(i_1 g i_2) = \rho^{-1}(i_1)  f(g) \rho^{-1}(i_2)$ for all $i_1, i_2 \in I$ and $g \in G(F)$. Note that $\mathcal H(G,\rho_\chi)$ can be identified with $\mathcal H(G,I, \chi)$ defined in section \ref{notation_sec}. 

\begin{theorem} [Roche, Bushnell-Kutzko]
The pair $(I, \rho)$ is an $\mathfrak s$-{\em type} in the sense of Bushnell-Kutzko \cite{BK},  i.e.,  an irreducible smooth representation $\pi$ of $G(F)$ contains $\rho$ if and only if $\pi \in \mathcal R_\mathfrak s(G)$.  Furthermore, $\mathcal R_\mathfrak s(G) = \mathcal R_\rho(G)$ as subcategories of $\mathcal R(G)$.  There is an equivalence of categories
\begin{align*}
\mathcal R_\rho(G) ~ &\widetilde{\rightarrow} ~ \mathcal H(G,\rho)\mbox{-\rm Mod} \\
(\pi,V) ~ &\mapsto ~ V^\rho.
\end{align*}
\end{theorem}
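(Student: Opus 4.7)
The plan is to follow Roche's strategy and then invoke the general Bushnell--Kutzko machinery. The heart of the matter is showing that $(I,\rho)$ is an $\mathfrak{s}$-type; once this is established, the category-equivalence statement follows formally, and the identification $\mathcal{R}_\mathfrak{s}(G) = \mathcal{R}_\rho(G)$ is then automatic since for any smooth representation generated by its $\rho$-isotypic component, all irreducible subquotients must again contain $\rho$.

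First I would verify the easier direction: if an irreducible smooth $\pi$ satisfies $V^\rho \neq 0$, then $\pi \in \mathcal{R}_\mathfrak{s}(G)$. By Frobenius reciprocity together with the Iwahori decomposition $I = I_{\overline U}\cdot {}^\circ T \cdot I_U$ and the recipe \eqref{rho_from_chi} defining $\rho$, the nonvanishing of $V^\rho$ forces the Jacquet module $V_U$ to have a nonzero $(^\circ T, \chi)$-isotypic component. Since $\pi$ is irreducible, $V_U$ is a finitely generated $T(F)$-module of finite length, so some irreducible subquotient of $V_U$ is an unramified twist $\widetilde\chi\,\eta$ of $\widetilde\chi$. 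By the second adjointness, $\pi$ embeds into $i^G_{\overline B}(\widetilde\chi\,\eta)$, which is inertially equivalent to $i^G_B(\widetilde\chi\,\eta)$; hence $\pi \in \mathcal{R}_{\mathfrak{s}}(G)$.

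The main obstacle is the converse: if $\pi$ is an irreducible object of $\mathcal{R}_\mathfrak{s}(G)$, then $V^\rho \neq 0$. I would argue as follows. Such a $\pi$ is a subquotient of some $i^G_B(\widetilde\chi\,\eta)$, so by exactness of Jacquet restriction along $B$ and the geometric lemma, the semisimplification of $V_U$ as a $T(F)$-module contains some $W$-conjugate $\widetilde\chi^w\eta^w$. Now the key input from Roche \cite{Ro} is the explicit description of the Iwahori--Matsumoto type presentation of $\mathcal H(G,\rho)$ together with the subgroup structure of $W_\chi, W_{\chi,\mathrm{aff}}, \Omega_\chi$ given in Lemma \ref{W_chi_aff_Cox}; these are used to show that the functor $V \mapsto V^\rho$ is faithful on $\mathcal R_{\mathfrak s}(G)$, and in particular nonzero on every nonzero object. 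Equivalently, one uses that the normalized Jacquet functor and the functor $V\mapsto V^\rho$ both land in the category of $\chi$-isotypic $T(F)$-modules, and the fact (essentially Bernstein's second adjoint theorem combined with the computation $(i^G_B \sigma)^\rho = \bigoplus_{w \in W_\chi \backslash W} \sigma^{^w\chi=\chi}$) that $(i^G_B(\widetilde\chi\,\eta))^\rho$ is nonzero whenever the central character of the representation lies in the Bernstein component $\mathfrak s$.

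Once the type property is established, the equivalence
$$
\mathcal{R}_\rho(G)\ \widetilde{\rightarrow}\ \mathcal{H}(G,\rho)\text{-Mod},\qquad V \mapsto V^\rho,
$$
is a direct application of Bushnell--Kutzko \cite{BK}, \S4: they show that if $(J,\tau)$ is an $\mathfrak s$-type, then $V \mapsto \mathrm{Hom}_J(\tau, V)$ (which here coincides with $V \mapsto V^\rho$ because $\rho$ is one-dimensional) is an equivalence from $\mathcal R_{\mathfrak s}(G)$ to modules over the endomorphism algebra $\mathcal H(G,\tau) = \mathcal H(G,\rho)$, with quasi-inverse given by tensor product with the $(G,\mathcal H(G,\rho))$-bimodule of compactly supported $\rho$-coinvariant functions on $G(F)$. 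The identification $\mathcal R_\rho(G) = \mathcal R_{\mathfrak s}(G)$ then reduces to checking that any $V \in \mathcal R_\rho(G)$ lies in $\mathcal R_{\mathfrak s}(G)$, which follows because $V$ is by assumption generated by $V^\rho$, and the type property forces every irreducible subquotient to lie in $\mathfrak s$.
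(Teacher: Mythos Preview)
Your easy direction and your invocation of Bushnell--Kutzko for the final equivalence are fine. The gap is in the hard direction: you correctly flag it as the main obstacle, but then do not close it. Knowing that $(i^G_B(\widetilde\chi\,\eta))^\rho \neq 0$ tells you nothing about an arbitrary irreducible \emph{subquotient} $\pi$ of that induced representation; the functor $V\mapsto V^\rho$ is exact, so $\pi^\rho$ is a subquotient of $(i^G_B(\widetilde\chi\,\eta))^\rho$, but subquotients of nonzero modules can be zero. Your sentence ``these are used to show that the functor $V\mapsto V^\rho$ is faithful on $\mathcal R_{\mathfrak s}(G)$'' is precisely the assertion to be proved, and you give no mechanism for deducing it from the Hecke algebra presentation or from Lemma~\ref{W_chi_aff_Cox}. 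The Jacquet-module remark is likewise insufficient: the geometric lemma tells you the semisimplification of $\pi_U$ contains some $^w(\widetilde\chi\eta)$, but that character restricts to $^w\chi$ on $^\circ T$, not to $\chi$, and you have not explained why this forces $\pi^{\rho_\chi}\neq 0$.

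The paper does not attempt a self-contained argument here; it cites Roche \cite{Ro}, Theorems~7.5 and~7.7, where the type property is established by verifying that $(I,\rho)$ is a \emph{$G$-cover} of $(\,^\circ T,\chi)$ in the sense of Bushnell--Kutzko. The decisive point in that verification is the existence of an invertible element of $\mathcal H(G,\rho)$ supported on a single double coset $I z I$ with $z$ strongly positive; Roche extracts this from the Hecke algebra structure. The paper also notes an alternative route, closer in spirit to what you seem to be reaching for: imitate Casselman's proof of Borel's theorem, using the Hecke algebra isomorphism $\mathcal H(G,\rho)\cong \mathcal H(H_\chi,I_{H_\chi})$ of Theorem~\ref{GMR} to transport the problem to the Iwahori case for $H_\chi$, where Borel's theorem applies. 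If you want to make your sketch into a proof, you should either carry out the $G$-cover verification or make that reduction explicit.
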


\begin{proof}
This is contained in Theorems 7.7 and 7.5 of \cite{Ro}.  The first part is proved there using Bushnell-Kutzko's notion of $G$-cover (cf. \cite{BK}).  
However, it can also be proved more directly (for depth-zero characters $\chi$) by imitating Casselman's proof \cite{Ca} of Borel's theorem that an irreducible smooth representation of $G(F)$ is a constituent of an unramified principal series if and only if it is generated by its Iwahori-fixed vectors -- i.e, essentially the case $\chi = {\rm triv}$ of the theorem.  One input in that approach is the Hecke-algebra isomorphism discussed in the following subsection.
\end{proof}

The above theorem allows us to describe $\mathcal Z(G, \rho)$, the center of $\mathcal H(G,\rho)$, as follows.  The general theory of the Bernstein center (cf. \cite{BD}) identifies the center of the category $\mathcal R_\mathfrak s(G)$ (i.e. the endomorphism ring  of its identity functor) with the ring of regular functions of an affine algebraic variety $\mathfrak X_\mathfrak s = \Spec \mathcal O_{\mathfrak X_\mathfrak s}$
(over $\bar{\QQ}_\ell$).  Let us make this concrete for the inertial class $\mathfrak s = \mathfrak s_\chi$.  We can identify
\begin{equation} \label{X_mathfrak_s}
\mathfrak X_\mathfrak s = \{ (T, \xi)_G \},
\end{equation}
the set of $G$-equivalence classes $(T,\xi)_G$ of pairs $(T,\xi)$ where $\xi : T(F) \rightarrow \bar{\QQ}_\ell^\times$ is a character extending $\chi$.  Two such pairs are $G$-equivalent if they are $G(F)$-conjugate, in the obvious sense.  

In the following we identify the dual torus $\widehat{T} = \widehat{T}(\bar{\QQ}_\ell)$ with the group of unramified characters $\eta$ on $T(F)$.

\begin{lemma} \label{X_s_unifor}
Let $\widetilde{\chi}$ be a $W_\chi$-invariant extension of $\chi$ to a character of $T(F)$.  Then the map $\eta \mapsto (T, \eta \, \widetilde{\chi})_G$ determines an isomorphism of algebraic varieties
$$
\widehat{T}/W_\chi ~ \widetilde{\rightarrow} ~ \mathfrak X_\mathfrak s.
$$
Thus, there is an isomorphism of algebras
$$
\beta_{\widetilde{\chi}}: \bar{\QQ}_\ell[X_*(T)]^{W_\chi} = \mathcal O_{\mathfrak X_\mathfrak s} ~\widetilde{\rightarrow} ~  \mathcal Z(G, \rho)
$$
having the property that $z \in \mathcal Z(G, \rho)$ acts on $i^G_B(\widetilde{\chi}\eta)^\rho$ by the scalar 
$\beta^{-1}_{\widetilde{\chi}}(z)(\eta)$.  

{\em Here $\beta^{-1}_{\widetilde \chi}(z)$ is viewed as a regular function on the variety $\widehat{T}/W_\chi$, and $\eta$ is a point in $\widehat{T}/W_\chi$.}\qed
\end{lemma}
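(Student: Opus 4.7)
The plan is to first produce the set-theoretic bijection $\widehat T/W_\chi \to \mathfrak X_\mathfrak s$, then promote it to an isomorphism of affine varieties via GIT, and finally invoke the Bernstein-Deligne theory together with the Bushnell-Kutzko type theorem (recalled in the previous subsection) to identify the resulting algebra isomorphism with $\beta_{\widetilde\chi}$ and read off its action on principal-series-fixed vectors.

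To obtain the set-theoretic bijection, observe first that $\eta\widetilde\chi$ restricts to $\chi$ on ${}^\circ T$, so the pair $(T,\eta\widetilde\chi)$ lies in the inertial class $\mathfrak s=[T,\widetilde\chi]_G$. Two extensions $\eta_1\widetilde\chi$ and $\eta_2\widetilde\chi$ define the same element of $\mathfrak X_\mathfrak s$ if and only if they are $N_G(T)(F)$-conjugate, i.e.\ there exists $w\in W$ with ${}^w(\eta_1\widetilde\chi) = \eta_2\widetilde\chi$. Restricting to ${}^\circ T$ and using that the $\eta_i$ are trivial there, one finds ${}^w\chi=\chi$, so $w\in W_\chi$; the hypothesis that $\widetilde\chi$ is $W_\chi$-invariant then collapses the equation to ${}^w\eta_1=\eta_2$. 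This simultaneously shows the map descends to $\widehat T/W_\chi$ and that the induced map is injective. For surjectivity, given $(T,\xi)_G\in\mathfrak X_\mathfrak s$, one conjugates by an element of $N_G(T)(F)$ to arrange $\xi|_{{}^\circ T}=\chi$, and then $\eta:=\xi\cdot\widetilde\chi^{-1}$ is unramified with $\xi=\eta\widetilde\chi$.

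To upgrade this to an isomorphism of affine varieties, note that both sides carry natural algebraic structures (inherited from the group structure on $\widehat T$ and from the Bernstein variety structure on $\mathfrak X_\mathfrak s$), and that the map $\widehat T\to\mathfrak X_\mathfrak s$ is a $W_\chi$-equivariant algebraic surjection whose fibres are exactly the $W_\chi$-orbits. Since $W_\chi$ is finite acting on the affine variety $\widehat T$ with coordinate ring $\bar{\mathbb Q}_\ell[X_*(T)]$, the GIT quotient is $\operatorname{Spec}\bar{\mathbb Q}_\ell[X_*(T)]^{W_\chi}$, yielding the algebra isomorphism
$$\bar{\mathbb Q}_\ell[X_*(T)]^{W_\chi}\;\xrightarrow{\,\sim\,}\;\mathcal O_{\mathfrak X_\mathfrak s}.$$
The Bernstein-Deligne theory (cf.~\cite{BD}) identifies $\mathcal O_{\mathfrak X_\mathfrak s}$ with the center of the category $\mathcal R_\mathfrak s(G)$, a regular function $f$ acting on an irreducible object with inertial support $(T,\xi)_G$ as multiplication by the scalar $f((T,\xi)_G)$. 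Composing with the inverse of the equivalence $\mathcal R_\mathfrak s(G)\simeq\mathcal H(G,\rho)\text{-Mod}$ from the Bushnell-Kutzko theorem above identifies this center with $\mathcal Z(G,\rho)$, and produces $\beta_{\widetilde\chi}$. Specialising the Bernstein-center action to the irreducible subquotients of $i^G_B(\widetilde\chi\eta)$, whose inertial support is $(T,\widetilde\chi\eta)_G$, gives the stated formula $z\cdot v=\beta^{-1}_{\widetilde\chi}(z)(\eta)\cdot v$ for $v\in i^G_B(\widetilde\chi\eta)^\rho$.

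The main subtlety is conceptual rather than computational: it lies in pinning down the stabiliser in $W$ of the unramified-twist orbit $\widehat T\cdot\widetilde\chi$ as precisely $W_\chi$, which is where the $W_\chi$-invariance of the chosen extension $\widetilde\chi$ enters essentially. Once this is verified, everything else is a formal consequence of the Bernstein-Deligne theory of the center and of the Bushnell-Kutzko equivalence of categories.
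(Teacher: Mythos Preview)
The paper does not actually provide a proof of this lemma: the statement ends with a \qed\ symbol and is treated as an immediate consequence of the Bernstein--Deligne description of the center together with the Bushnell--Kutzko type theorem recalled just before. Your proposal correctly unpacks exactly these ingredients --- the bijection $\widehat T/W_\chi \to \mathfrak X_{\mathfrak s}$, the finite-quotient identification of coordinate rings, and the passage to $\mathcal Z(G,\rho)$ via the equivalence $\mathcal R_{\mathfrak s}(G)\simeq \mathcal H(G,\rho)\text{-Mod}$ --- so there is nothing to compare beyond noting that you have written out what the paper leaves implicit.

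One minor point: in your surjectivity argument you say one ``conjugates by an element of $N_G(T)(F)$ to arrange $\xi|_{{}^\circ T}=\chi$''. With the paper's convention (see the displayed description of $\mathfrak X_{\mathfrak s}$ just above the lemma), representatives $(T,\xi)$ are already taken with $\xi|_{{}^\circ T}=\chi$, so no conjugation is needed and surjectivity is immediate by setting $\eta=\xi\widetilde\chi^{-1}$. This does not affect the correctness of your argument.
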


We note that $W_\chi$-invariant extensions of $\chi$ exist by Remark \ref{varpi_can_rem} below.  The next lemma will be useful in the following subsection.

\begin{lemma} \label{extn_lemma}
Let $\widetilde \chi$ denote a character on $T(F)$ which extends $\chi$.  Then $\widetilde \chi$ 
extends to a character $\breve \chi$ on $N_\chi$ if and only if $\widetilde \chi$ is $W_\chi$-invariant.
\end{lemma}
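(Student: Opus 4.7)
The forward direction is immediate: if $\breve\chi$ is a character on $N_\chi$ restricting to $\widetilde\chi$, then for any lift $n \in N_\chi(F)$ of $w \in W_\chi$ and any $t \in T(F)$, the relation $\breve\chi(ntn^{-1}) = \breve\chi(t)$ becomes $\widetilde\chi({}^w t) = \widetilde\chi(t)$, so $\widetilde\chi$ is $W_\chi$-invariant.

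For the converse, assume $\widetilde\chi$ is $W_\chi$-invariant. The problem of extending $\widetilde\chi$ to a character on $N_\chi(F)$ is equivalent, by push-out, to splitting the central extension
$$ 1 \longrightarrow \bar{\mQ}_\ell^\times \longrightarrow E \longrightarrow W_\chi \longrightarrow 1 $$
obtained from $1 \to T(F) \to N_\chi(F) \to W_\chi \to 1$ by pushing forward along $\widetilde\chi$ (the image of $T(F)$ is central in $E$ precisely because of the $W_\chi$-invariance). Thus one must show that the associated class in $H^2(W_\chi, \bar{\mQ}_\ell^\times)$ vanishes.

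The plan for exhibiting a splitting is to use the Tits cross-section. Since $G$ is split over $\mathcal O$, there is a set-theoretic lift $s: W \to N(\mathcal O)$ satisfying the Tits relations $s(s_\alpha)^2 = \alpha^\vee(-1)$ and the braid relations; restricted to $W_\chi$, this gives lifts $s(w) \in N_\chi(\mathcal O)$ with $s(w) s(w') = h(w,w') \, s(ww')$ for a $T(\mathcal O)$-valued 2-cocycle $h$. Writing $\breve\chi(s(w) \cdot t) := \epsilon(w)\, \widetilde\chi(t)$ reduces the task to finding $\epsilon: W_\chi \to \bar{\mQ}_\ell^\times$ with $\epsilon(w)\epsilon(w') = \epsilon(ww')\,\widetilde\chi(h(w,w'))$. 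In the case $W_\chi = W_\chi^\circ$ (which, by Remark \ref{W_chi}, covers the case $G=\GL_d$ of this paper), $W_\chi$ is the Coxeter group generated by the reflections $s_\alpha$ for $\alpha \in \Pi_\chi$; set $\epsilon(s_\alpha) = c_\alpha$ for some square root $c_\alpha$ of $\widetilde\chi(\alpha^\vee(-1))$, choosing the $c_\alpha$ compatibly on $W_\chi$-orbits. The conjugation condition then reads $\widetilde\chi({}^{s_\alpha}\!t) = \widetilde\chi(t)$, automatic from $W_\chi$-invariance, while the braid relations of length $m_{\alpha\beta}$ impose $c_\alpha = c_\beta$ whenever $m_{\alpha\beta}$ is odd — consistent because conjugate roots satisfy $\widetilde\chi(\alpha^\vee(-1)) = \widetilde\chi(\beta^\vee(-1))$. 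In the general case $W_\chi \supsetneq W_\chi^\circ$, one extends $\epsilon$ from the subgroup $N_\chi^\circ := p^{-1}(W_\chi^\circ)$ to all of $N_\chi$ using the decomposition provided by Lemma \ref{W_chi_aff_Cox} (finite analogue) and the fact that $\Omega_\chi$ permutes $\Pi_\chi$ compatibly with $\widetilde\chi$.

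The main obstacle is the simultaneous consistency of the choices of square roots $c_\alpha$ across all odd braid relations and across the semi-direct factor $\Omega_\chi$; this reduces to a finite cohomological check that succeeds precisely because the hypothesis $\widetilde\chi = {}^w\widetilde\chi$ for all $w \in W_\chi$ forces all obstruction terms (which involve evaluating $\widetilde\chi$ on $W_\chi$-conjugates of the 2-torsion elements $\alpha^\vee(-1)$) to coincide. This is the content of Roche's construction, and the Haines--Rapoport paper invokes this as the given lemma.
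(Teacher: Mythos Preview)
Your forward direction matches the paper. For the converse, however, the paper takes a much shorter route than yours. It first cites Howlett--Lehrer \cite{HL}, 6.11, to get an extension $\overline{\chi}$ of $\chi$ to $N_\chi(\mathcal O)$, and then uses the uniformizer decomposition $N_\chi(F) \cong X_*(T) \rtimes N_\chi(\mathcal O)$, $(\nu,n_0) \mapsto \varpi^\nu n_0$, to define $\breve\chi(\varpi^\nu n_0) := \widetilde\chi(\varpi^\nu)\,\overline{\chi}(n_0)$. A one-line computation then shows this is multiplicative precisely when $\widetilde\chi(\varpi^{w\nu}) = \widetilde\chi(\varpi^\nu)$ for all $w \in W_\chi$, i.e., when $\widetilde\chi$ is $W_\chi$-invariant.

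Your approach via the pushed-out central extension and Tits cross-sections is valid in outline, but it is essentially reproving the Howlett--Lehrer extension result from scratch: since the Tits lifts lie in $N(\mathcal O)$ and the cocycle values $h(w,w')$ lie in $T(\mathcal O)$, you are really only using $\chi$, not $\widetilde\chi$, on the cocycle (note that for $\alpha \in \Phi_\chi$ one has $\chi(\alpha^\vee(-1))=1$, so the square roots $c_\alpha$ are just $\pm 1$ and one may take $\epsilon \equiv 1$). The case $W_\chi \neq W_\chi^\circ$ remains genuinely sketchy in your write-up, and your final paragraph defers to Roche rather than completing the argument. Also, your closing claim is not accurate: the paper does \emph{not} invoke Roche for this lemma --- it gives the direct proof above. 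What the paper's approach buys is that the $W_\chi$-invariance hypothesis enters transparently, at exactly the step where the $X_*(T)$-factor interacts with the $N_\chi(\mathcal O)$-factor.
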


\begin{proof} 
First, $\chi$ can always be extended to a character $\overline{\chi}$ on $N_\chi(\mathcal O)=N_\chi\cap N(\mathcal O)$, cf.\ \cite{HL}, 6.11.  Choose any uniformizer $\varpi$ and note that $X_*(T) \rtimes N_\chi(\mathcal O) \cong N_\chi$ via $\spa (\nu,n_0) \mapsto \varpi^\nu n_0$.  Define a function $\breve \chi$ on $N_\chi$ by setting 
$$\breve \chi(\varpi^\nu n_0) = \widetilde \chi(\varpi^{\nu})\overline{\chi}(n_0).$$
It is then easy to check that $\breve \chi$ is a character if and only if $\widetilde \chi$ is $W_\chi$-invariant.
\end{proof}

\begin{Remark} \label{varpi_can_rem}
We single out a $W_\chi$-invariant extension $\widetilde{\chi}^\varpi$, which depends on our choice of uniformizer $\varpi$ and which we call the $\varpi$-{\em canonical} extension.  It is defined by  
\begin{equation} \label{varpi_canon}
\widetilde{\chi}^\varpi(\varpi^\nu \, t_0) = \chi(t_0), \spa \forall \nu \in X_*(T), \spa t_0 \in \, ^\circ T.
\end{equation}
In the notation of Lemma \ref{extn_lemma},  we can define an extension $\breve \chi$ of $\widetilde \chi$ to $N_\chi$ by 
$\breve \chi(\varpi^\nu n_0) = \overline{\chi}(n_0)$.
\end{Remark}

\subsection{Hecke algebra isomorphisms}

Fix $\chi$ and $\rho=\rho_\chi$ as above.  Fix a $W_\chi$-invariant character $\widetilde \chi$ on $T(F)$ which extends $\chi$.  Let $\breve \chi$ denote a character on $N_\chi$ extending $\widetilde \chi$.

Assume that $W^\circ_\chi = W_\chi$.  Let $H = H_\chi$ be an $F$-split connected reductive group containing $T$ as a maximal torus and having $(X^*(T),\Phi_\chi, X_*(T), \Phi^\vee_\chi, \Pi_\chi)$ as based root system.  
Recall (Lemma \ref{W_chi_aff_Cox} (3)) that $\widetilde{W}_\chi$ is the extended affine Weyl 
group attached to $H$.

 Let $B_H \supset T$ denote the Borel subgroup which corresponds to the set of simple positive roots $\Pi_\chi$.  Let $I_H$ denote the Iwahori subgroup of $H(F)$ corresponding to the base alcove ${\bf a}_\chi$ and $\mathcal H(H,I_H)$  the Iwahori-Hecke algebra of $H(F)$ relative to $I_H$.  

For $w \in \widetilde{W}_\chi$, choose any $n = n_w \in N_\chi$ mapping to $w$ under the projection $N_\chi \rightarrow \widetilde{W}_\chi$.  Define
\begin{equation} \label{InI_chi}
[InI]_{\breve{\chi}} \in \mathcal H(G,\rho)
\end{equation}
to be the unique element in $\mathcal H(G, \rho)$ which is supported on $InI$ and whose value at $n$ is $\breve{\chi}^{-1}(n)$.  Note that $[InI]_{\breve{\chi}}$ depends only on $w$, not on the choice of $n \in N_\chi$ mapping to  $w \in \widetilde{W}_\chi$.

The following theorem is essentially due to D.~Goldstein \cite{Go}.  Our precise statement is extracted from Morris \cite{Mor} and Roche \cite{Ro}.  The hypothesis $W^\circ_\chi = W_\chi$ is not necessary, but it is sufficient for our purposes and  simplifies the statement.  
Recall that $\ell $ resp. $\ell_\chi$ denotes the length function on $\widetilde{W}$ resp. $\widetilde{W}_\chi = \widetilde{W}(H)$,  defined using the alcove ${\bf a}$ resp. ${\bf a}_\chi$.

\begin{theorem} [Goldstein, Morris, Roche] \label{GMR} Assume $W^\circ_\chi = W_\chi$, and define $H = H_\chi$ as above.  Fix the extensions $\widetilde{\chi}$ and $\breve \chi$ as above.

\smallskip

\noindent {\rm (1)}
The functions $[In_wI]_{\breve{\chi}}$ for $w \in \widetilde{W}_\chi$  form a $\bar{\QQ}_\ell$-basis for the algebra $\mathcal H(G,\rho)$. 

\smallskip

\noindent {\rm (2)} There is an isomorphism of algebras  (depending on the choice of $\breve \chi$)
$$
\xymatrix{
\Psi_{\breve \chi}:\mathcal H(G,\rho) \ar[r]^{\sim} & \mathcal H(H,{I_H})}
$$
which sends $q^{-\ell(w)/2}[InI]_{\breve{\chi}}$ to $q^{-\ell_\chi(w)/2} [I_H n I_H]$ for any $n \mapsto w \in \widetilde{W}_\chi$.   In particular, $[In_wI]_{\breve \chi}$ is invertible in $\mathcal H(G, \rho)$ for every $w \in \widetilde{W}_\chi$.
\end{theorem}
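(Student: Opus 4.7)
My plan is to prove the two parts of the theorem in order, since part (1) both classifies the support of $\mathcal H(G,\rho)$ and provides the basis needed to recognize the map in (2) as a bijection.

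For (1), I would first show that any $f\in\mathcal H(G,\rho)$ nonzero on a double coset $IgI$ forces $g$ to represent an element of $\widetilde W_\chi$. The bi-equivariance $f(i_1 g i_2)=\rho^{-1}(i_1 i_2)f(g)$, combined with the identity $gi=(gig^{-1})g$ valid for $i\in I\cap g^{-1}Ig$, implies $\rho(gig^{-1})=\rho(i)$ on this intersection. Using the Iwahori decomposition $I=I_{\overline U}\cdot{}^\circ T\cdot I_U$ and the fact that $\rho$ is inflated from $\chi$ on $T(k_F)$ via (\ref{rho_from_chi}), this condition collapses to $\,^w\chi=\chi$, where $w=g\cdot{}^\circ T\in\widetilde W$; that is, $w\in\widetilde W_\chi$. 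Conversely, for every $w\in\widetilde W_\chi$ and any lift $n_w\in N_\chi$, Lemma \ref{extn_lemma} (applied to the character $\breve\chi$ of $N_\chi$ extending $\widetilde\chi$) ensures that the assignment $i_1 n_w i_2\mapsto \breve\chi^{-1}(i_1 n_w i_2)/\breve\chi^{-1}(n_w)$ is well defined on $In_wI$, giving the function $[In_wI]_{\breve\chi}$. Linear independence and spanning of these elements then follow from the disjointness of the double cosets $In_wI$.

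For (2), the strategy is to present both Hecke algebras via generators and relations and verify that the prescribed map intertwines the presentations. By Lemma \ref{W_chi_aff_Cox}, $\widetilde W_\chi=W_{\chi,\rm aff}\rtimes\Omega_\chi$ with $(W_{\chi,\rm aff},S_{\chi,\rm aff})$ a Coxeter system, so $\mathcal H(H,I_H)$ admits the Iwahori--Matsumoto presentation with generators $T_s$ for $s\in S_{\chi,\rm aff}$ and $T_\tau$ for $\tau\in\Omega_\chi$, quadratic relations $T_s^2=(q-1)T_s+q$, the usual braid relations, and length-additive relations $T_\tau T_w=T_{\tau w}$. I would show that the renormalized elements
\[
\widetilde T_w := q^{(\ell(w)-\ell_\chi(w))/2}\,[In_wI]_{\breve\chi}\in\mathcal H(G,\rho)
\]
satisfy exactly these relations. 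Once this is established, the universal property of the presentation produces the algebra homomorphism $\Psi_{\breve\chi}^{-1}$, and the basis statement of (1) makes it a bijection.

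The main technical obstacle is verifying the quadratic relation at each $s=s_a\in S_{\chi,\rm aff}$. I would compute $[In_sI]_{\breve\chi}*[In_sI]_{\breve\chi}$ by an explicit convolution: the coset space $InsI/I$ is parametrized by points in an affine root subgroup of cardinality $q^{\ell(s)}$ (the $G$-length, generally larger than $\ell_\chi(s)=1$), and expanding the convolution reduces, via the commutation relation $u_a u_{-a} = u_{-a}\cdot h\cdot u_a\cdot (\text{higher terms})$ in the root subgroup datum, to an exponential sum over a coset of $\mathcal O^\times$ in a filtration quotient of $T(\mathcal O)$, weighted by $\chi\circ\alpha^\vee$. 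The defining condition $\chi\circ\alpha^\vee|_{\mathcal O^\times}=1$, i.e.\ $\alpha\in\Phi_\chi$, is exactly what causes the sum to collapse; after the normalization $q^{(\ell(s)-1)/2}$ the relation $\widetilde T_s^2=(q-1)\widetilde T_s+q\cdot \widetilde T_e$ emerges. The braid relations are then reduced, as in Iwahori--Matsumoto, to rank-two subsystems of $\Phi_\chi$ by choosing minimal galleries, where they follow from explicit structure theory in $G(F)$; compatibility with $\Omega_\chi$ is automatic since its elements have length zero in both length functions, and their associated functions are supported on single $I$-double cosets that multiply by the group law in $\Omega_\chi$.
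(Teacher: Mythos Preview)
The paper does not give its own proof of this theorem; it merely cites Morris \cite{Mor}, Lemma~5.5 for part~(1) and Roche \cite{Ro}, Lemma~9.3 for part~(2), noting only the cosmetic difference that the dependence on $\breve\chi$ replaces Roche's dependence on a uniformizer. Your outline is essentially a reconstruction of what those references actually do: the support computation via intertwining (Morris), followed by verification of the Iwahori--Matsumoto presentation relations for the renormalized generators (Goldstein, Morris, Roche). So at the level of strategy you are aligned with the cited literature, not diverging from the paper.

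Two places in your sketch would need real work to be a proof rather than a plan. First, for the quadratic relation at $s=s_a\in S_{\chi,\rm aff}$, the element $s$ typically has $G$-length $\ell(s)>1$, and $In_sI/I$ is not parametrized by a single affine root group; the convolution $[In_sI]_{\breve\chi}*[In_sI]_{\breve\chi}$ must be analyzed through a full minimal gallery for $s$ in $\widetilde W$, and the reduction to a character sum in $\chi\circ\alpha^\vee$ is not as immediate as you suggest. Second, and more seriously, the length-additive (braid) relations do not come for free: $\ell_\chi$-additivity of $w,w'\in\widetilde W_\chi$ does \emph{not} in general imply $\ell$-additivity, so the identity $[In_wI]_{\breve\chi}\cdot[In_{w'}I]_{\breve\chi}=[In_{ww'}I]_{\breve\chi}$ cannot be read off from the $G$-level BN-pair. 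One must instead verify the braid relations for each pair $s,s'\in S_{\chi,\rm aff}$ by explicit rank-two computations in $G(F)$, then \emph{define} $\widetilde T_w$ via a reduced $\chi$-expression and separately check that this agrees with $q^{(\ell(w)-\ell_\chi(w))/2}[In_wI]_{\breve\chi}$. This is the content of Roche's \S6--7 and is where the genuine labor lies.
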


\begin{proof}
The key step for part (1) is proved in \cite{Mor}, Lemma 5.5.  Part (2) is a special case of \cite{Ro}, Lemma 9.3.  Note, however, that we describe $\Psi$ as depending only on a choice of extension $\breve \chi$, instead of on the choice of the uniformizer $\varpi$ and other data as in \cite{Ro}, since this seems more natural to us.  
\end{proof}

A very special case of the theorem (which is easy to check directly) is that there is an algebra 
isomorphism
\begin{equation}
\xymatrix{
\psi_{\widetilde{\chi}}:\mathcal H(T,\chi) \ar[r]^{\sim} & \mathcal H(T, {^\circ T}) }
\end{equation}
 which sends $[t \, ^\circ T]_{\widetilde{\chi}}$ to $[t \, ^\circ T]$.

Next, we construct an algebra homomorphism
\begin{equation*}
\theta^{\chi}_B: \mathcal H(T,\chi) \rightarrow \mathcal H(G, \rho)
\end{equation*}
as follows.  Given $t \in T(F)$, we write $t \in T^{+}$ provided that $t = \varpi^\nu t_0$ where $t_0 \in \, ^\circ T$ and $\nu \in X_*(T)$ is $B$-dominant.  This notion is independent of the choice of uniformizer $\varpi$ we use.  Any $t \in T(F)$ may be written as $t = t_1 t_2^{-1}$, where $t_1, t_2 \in T^+$.  

We then define $\theta^{\chi}_B$ by putting
\begin{equation}
\theta^{\chi}_B([t \, ^\circ T]_{\widetilde{\chi}}) = \delta_B^{1/2}(t) \, [It_1 I]_{\breve \chi} [It_2I]_{\breve \chi}^{-1}.
\end{equation}

\begin{lemma} The homomorphism is well-defined, i.e.,  $\theta^{\chi}_B([t\, ^\circ T]_{\widetilde \chi})$ is independent of the representative $t$ and of the choice of $t_1,t_2 \in T^+$ such that $t = t_1 t_2^{-1}$.  As the notation suggests, $\theta^{\chi}_B$ depends on $\chi$ but not on the extension $\breve \chi$ of $\chi$.
\end{lemma}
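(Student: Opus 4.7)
The plan is to deduce everything from the following multiplicativity identity: for all $t, t' \in T^+$,
\[
[ItI]_{\breve\chi} \, [It'I]_{\breve\chi} = [Itt'I]_{\breve\chi}
\qquad \text{in } \mathcal H(G, \rho). \quad (\star)
\]
This is the one nontrivial input; independence of $(t_1,t_2)$, of the representative $t$, and of $\breve\chi$ will all follow formally.

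To prove $(\star)$, observe that since $\Phi_\chi^+ = \Phi_\chi \cap \Phi^+$, every $B$-dominant cocharacter is automatically $B_H$-dominant. Consequently both length functions are additive on $T^+$: $\ell(tt')=\ell(t)+\ell(t')$ in $\widetilde W$, and $\ell_\chi(tt')=\ell_\chi(t)+\ell_\chi(t')$ in $\widetilde W_\chi$. Now in $\mathcal H(H,I_H)$ with the Haar measure giving $I_H$ volume one, the standard Iwahori--Hecke relation gives $[I_H t I_H][I_H t' I_H]=[I_H tt' I_H]$. Applying the inverse of $\Psi_{\breve\chi}$ from Theorem~\ref{GMR}(2) and matching the factors $q^{(\ell(\cdot)-\ell_\chi(\cdot))/2}$ on both sides, all powers of $q$ cancel thanks to the two length additivities, yielding $(\star)$. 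A first consequence of $(\star)$ is that the family $\{[ItI]_{\breve\chi}\}_{t \in T^+}$ is commutative (as $T^+$ is a commutative monoid), and by Theorem~\ref{GMR}(2) each such element is invertible in $\mathcal H(G,\rho)$.

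With $(\star)$ in hand, the independence statements are formal. If $t_1 t_2^{-1} = t_1' t_2'^{-1}$ with all four factors in $T^+$, then $t_1 t_2' = t_1' t_2 \in T^+$, so $(\star)$ gives
\[
[It_1 I]_{\breve\chi}[It_2' I]_{\breve\chi} = [I(t_1t_2')I]_{\breve\chi} = [I(t_1't_2)I]_{\breve\chi} = [It_1' I]_{\breve\chi}[It_2 I]_{\breve\chi},
\]
and commutativity lets us rearrange this into $[It_1 I]_{\breve\chi}[It_2 I]_{\breve\chi}^{-1} = [It_1' I]_{\breve\chi}[It_2' I]_{\breve\chi}^{-1}$. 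Independence of the representative $t$ of the coset $t\,^\circ T$ is immediate: $[t\,^\circ T]_{\widetilde\chi}$, $\delta_B^{1/2}(t)$, and each $[It_iI]_{\breve\chi}$ are unchanged under $t \mapsto ts$ for $s \in {^\circ T}$, provided $s$ is absorbed into $t_1$.

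Finally, any two extensions $\breve\chi, \breve\chi'$ of $\chi$ to $N_\chi$ differ by a character $\psi$ of $N_\chi$ trivial on $^\circ T$. Restricted to $T(F)$, $\psi$ is an unramified character, so $\widetilde\chi' = \psi \cdot \widetilde\chi$ on $T(F)$, and correspondingly $[t\,^\circ T]_{\widetilde\chi'} = \psi(t)^{-1}[t\,^\circ T]_{\widetilde\chi}$ while $[It_i I]_{\breve\chi'} = \psi(t_i)^{-1}[It_iI]_{\breve\chi}$. The product on the right-hand side of the defining formula therefore picks up the factor $\psi(t_1 t_2^{-1})^{-1} = \psi(t)^{-1}$, which exactly matches the twist of the input $[t\,^\circ T]_{\widetilde\chi}$; hence $\theta^\chi_B$, as a linear map, is unchanged. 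The main obstacle throughout is $(\star)$, which is essentially the translation part of the Bernstein presentation for $\mathcal H(G,\rho)$; everything else is formal once Theorem~\ref{GMR} is invoked.
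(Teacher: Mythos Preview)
Your proof is correct and takes essentially the same approach as the paper: both reduce the well-definedness of $\theta^{\chi}_B$ to the known Iwahori--Hecke algebra case for $H$ via the isomorphism $\Psi_{\breve\chi}$ of Theorem~\ref{GMR}. The paper transports the independence statement directly (citing \cite{Lus}) and leaves the $\breve\chi$-independence to the reader, whereas you make the intermediate multiplicativity identity $(\star)$ explicit and supply the twist-by-$\psi$ computation for the last assertion; this is a faithful unpacking of the paper's terse argument rather than a different route.
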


\begin{proof}
The independence of the representative $t$ is clear.  Using the isomorphism $\Psi_{\breve \chi}$, one can translate the assertion concerning $t_1,t_2$ (associated to a fixed choice for $t$) into an analogous independence statement for the Iwahori-Hecke algebra $\mathcal H(H, {I_H})$, which is well-known (cf. \cite{Lus}).  The final assertion of the lemma is left to the reader.  
\end{proof}

Note that $\theta^1_{B_H}$ is the usual embedding of the commutative subalgebra $\bar{\QQ}_\ell[X_*(T)]$ into the Iwahori-Hecke algebra $\mathcal H(H, {I_H})$ (cf. \cite{Lus}, \cite{HKP}).

\begin{prop} \label{comm_diag_prop} With the same hypotheses as Theorem \ref{GMR}, 

\smallskip

\noindent {\rm (1)} the following diagram commutes:
\begin{equation} \label{comm_diagram}
\begin{aligned}
\xymatrix{ \mathcal H(G,\rho) \ar[r]^{\Psi_{\breve \chi}}_{\sim} & \mathcal H(H, {I_H})  \\
\mathcal H(T, \chi) \ar[u]^{\theta^{\chi}_B} \ar[r]^{\psi_{\widetilde \chi}}_{\sim} & \mathcal H(T, {^\circ T}) \ar[u]_{\theta^{1}_{B_H}};
}
\end{aligned}
\end{equation}

\smallskip

\noindent {\rm (2)}   the isomorphism $\Psi_{\breve \chi}$ sets up an equivalence of categories $\mathfrak R_{\mathfrak s_\chi}(G) \cong \mathfrak R_{\mathfrak s_1}(H)$ under which $i^G_B(\widetilde{\chi} \, \eta)$ corresponds to $i^H_{B_H}(\eta)$. 
\end{prop}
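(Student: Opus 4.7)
The algebra $\mathcal H(T, \chi)$ is spanned by the basis elements $[t\, {}^{\circ}T]_{\widetilde{\chi}}$ as $t$ ranges over $T(F)/{}^{\circ}T$; since all four maps in the diagram are algebra homomorphisms and $\theta^{\chi}_B$ is defined via $t = t_1 t_2^{-1}$ with $t_1,t_2 \in T^+$, commutativity reduces to the case $t = \varpi^\nu \in T^+$ with $\nu \in X_*(T)$ being $B$-dominant (hence also $B_H$-dominant, as $\Phi^+_\chi \subseteq \Phi^+$). Unwinding the definitions gives
\begin{align*}
\theta^{1}_{B_H} \circ \psi_{\widetilde{\chi}}\bigl([t\, {}^{\circ}T]_{\widetilde{\chi}}\bigr) &= \delta_{B_H}^{1/2}(t)\, [I_H t I_H], \\
\Psi_{\breve{\chi}} \circ \theta^{\chi}_B\bigl([t\, {}^{\circ}T]_{\widetilde{\chi}}\bigr) &= \delta_B^{1/2}(t)\, q^{(\ell(t) - \ell_\chi(t))/2}\, [I_H t I_H],
\end{align*}
the second equality using Theorem \ref{GMR}(2). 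Commutativity therefore reduces to the numerical identity $\delta_B^{1/2}(t)\, q^{(\ell(t) - \ell_\chi(t))/2} = \delta_{B_H}^{1/2}(t)$, which follows from $\delta_B^{1/2}(\varpi^\nu) = q^{-\langle \rho, \nu\rangle}$ and $\ell(\varpi^\nu) = \langle 2\rho, \nu\rangle$ for $B$-dominant $\nu$, together with the analogous identities for $H_\chi$ using $\rho_\chi$ in place of $\rho$.

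\textbf{Plan for part (2).} The Bushnell--Kutzko/Roche theorem recalled above identifies $\mathcal R_{\mathfrak s_\chi}(G)$ (resp.\ $\mathcal R_{\mathfrak s_1}(H)$) with $\mathcal H(G,\rho)$-$\mathrm{Mod}$ (resp.\ $\mathcal H(H,I_H)$-$\mathrm{Mod}$) via $\pi\mapsto \pi^\rho$ (resp.\ $\pi\mapsto\pi^{I_H}$), and $\Psi_{\breve{\chi}}$ then supplies the claimed equivalence $\mathcal R_{\mathfrak s_\chi}(G) \simeq \mathcal R_{\mathfrak s_1}(H)$.

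To match the principal series, I will characterize each side as an induced module over the Bernstein abelian subalgebra. Concretely, I claim
$$
i^G_B(\widetilde{\chi}\eta)^\rho \,\cong\, \mathcal H(G,\rho) \otimes_{\theta^{\chi}_B(\mathcal H(T,\chi))} \bar{\mathbb Q}_{\ell,\widetilde{\chi}\eta},
$$
where $\bar{\mathbb Q}_{\ell,\widetilde{\chi}\eta}$ denotes the one-dimensional $\theta^{\chi}_B(\mathcal H(T,\chi))$-module on which $\theta^{\chi}_B([t\, {}^{\circ}T]_{\widetilde{\chi}})$ acts by the scalar $(\widetilde{\chi}\eta)(t)$. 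The Iwahori-spherical analogue, classical since Borel--Casselman, reads $i^H_{B_H}(\eta)^{I_H} \cong \mathcal H(H,I_H) \otimes_{\theta^{1}_{B_H}(\mathcal H(T,{}^{\circ}T))} \bar{\mathbb Q}_{\ell,\eta}$. Applying $\Psi_{\breve{\chi}}$ and invoking part (1) identifies the two tensor products, proving the assertion.

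\textbf{The main obstacle} lies in the induced-module characterization of $i^G_B(\widetilde{\chi}\eta)^\rho$. I expect to derive it from a type-theoretic version of Casselman's Jacquet-module adjunction, namely
$$
\Hom_{\mathcal H(G,\rho)}\bigl(i^G_B(\widetilde{\chi}\eta)^\rho,\, V^\rho\bigr) \,\cong\, \Hom_{\mathcal H(T,\chi)}\bigl(\bar{\mathbb Q}_{\ell,\widetilde{\chi}\eta},\, (V_{\overline{U}})^\chi\bigr)
$$
for all $V \in \mathcal R_{\mathfrak s_\chi}(G)$, which packages Bernstein's second adjointness through the cover structure on the type $(I,\rho)$; alternatively, the Iwahori-spherical universal property can be transported back along $\Psi_{\breve{\chi}}$ by means of Roche's explicit $G$-cover realization. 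Either route demands careful bookkeeping of the $\delta_B^{1/2}$ and $\delta_{B_H}^{1/2}$ normalizations - precisely those reconciled in the length identity above - but once the universal property is in place, part (2) follows formally from part (1).
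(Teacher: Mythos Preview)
Your argument for part~(1) is essentially identical to the paper's: both reduce to $t\in T^+$ and verify commutativity via the identity $\delta_B^{1/2}(t)\,q^{(\ell(t)-\ell_\chi(t))/2}=\delta_{B_H}^{1/2}(t)$, which is exactly the paper's observation that $\delta_B(t)=q^{-\ell(t_\nu)}$ and $\delta_{B_H}(t)=q^{-\ell_\chi(t_\nu)}$ combined with Theorem~\ref{GMR}(2).

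For part~(2) you take a different, more labor-intensive route. The paper simply cites Roche \cite{Ro}, Theorem~9.4, which asserts directly that the diagram~(\ref{comm_diagram}) induces a commutative-up-to-natural-isomorphism square of functors with $i^G_B$ and $i^H_{B_H}$ as the vertical arrows. Your plan is instead to establish the induced-module characterization $i^G_B(\widetilde{\chi}\eta)^\rho \cong \mathcal H(G,\rho)\otimes_{\theta^\chi_B(\mathcal H(T,\chi))}\bar{\mathbb Q}_{\ell,\widetilde{\chi}\eta}$ and then transport via part~(1). This is correct in principle, and indeed is essentially the content of Roche's Theorem~9.4 (his proof goes through the Bushnell--Kutzko $G$-cover machinery, which packages precisely the Jacquet-module adjunction you allude to). So what you call your ``main obstacle'' is not a gap so much as an alternative derivation of the cited result; be aware, though, that your second suggested route---transporting the Iwahori-spherical universal property back along $\Psi_{\breve\chi}$---does not by itself identify the resulting module with $i^G_B(\widetilde{\chi}\eta)^\rho$; you still need the $G$-side characterization independently, which forces you back onto the first route.
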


\begin{proof}
For (1), suppose $t \in T^+$ maps to $t_\nu \in \widetilde{W}_\chi$.  Then $\delta_B(t) = q^{-\ell(t_\nu)}$ and $\delta_{B_H}(t) = q^{-\ell_\chi(t_\nu)}$.  It is immediate from this and Theorem \ref{GMR} (2) that $\Psi_{\breve \chi} \circ \theta^{\chi}_B$ and $\theta^1_{B_H} \circ \psi_{\widetilde \chi}$ agree on $[t \, ^\circ T]_{\widetilde \chi}$.  The commutativity in (1) follows,  since such elements and their inverses generate the algebra $\mathcal H(T, \chi)$.

Part (2) is contained in \cite{Ro}, Theorem 9.4.  Indeed, it is checked there that (\ref{comm_diagram}) induces a diagram of functors (whose horizontal arrows are equivalences of categories and which commutes up to a natural isomorphism of functors)
\begin{equation}
\begin{aligned}
\xymatrix{ 
\mathcal H(G,\rho)\mbox{-\rm Mod} \ar[r]^{\Psi^\ast_{\breve \chi}}_{\sim} & \mathcal H(H, {I_H})\mbox{-\rm Mod}  \\
\mathcal H(T, \chi)\mbox{-\rm Mod} \ar[u]^{i^G_B} \ar[r]^{\psi^\ast_{\widetilde \chi}}_{\sim} & \mathcal H(T, {^\circ T})\mbox{-\rm Mod} \ar[u]_{i^H_{B_H}}\ .
}
\end{aligned}
\end{equation}\end{proof}

In the next proposition we again retain the hypotheses of Theorem \ref{GMR}.

\begin{prop} \label{Z_comm_diag_prop} 

\noindent {\rm (1)} Diagram (\ref{comm_diagram}) induces a commutative diagram
\begin{equation} \label{Z_comm_diagram}
\begin{aligned}
\xymatrix{ \mathcal Z(G,\rho) \ar[r]^{\Psi_{\breve \chi}}_{\sim} & \mathcal Z(H, {I_H})  \\
\mathcal H(T, \chi)^{W_\chi} \ar[u]^{\theta^{\chi}_B}_{\wr} \ar[r]^{\psi_{\widetilde \chi}}_{\sim} & \mathcal H(T, {^\circ T})^{W_\chi} \ar[u]_{\theta^{1}_{B_H}}^{\wr}.
}
\end{aligned}
\end{equation}
In particular, $\Psi_{\breve \chi}$ restricted to the center depends only on $\widetilde \chi$, not on $\breve\chi$.  

\smallskip

\noindent {\rm (2)} The following (equivalent) statements hold: 
\begin{itemize}
\item[{\rm (i)}]Let $z \in \mathcal Z(G,\rho)$.  Then $z$ acts on $i^G_B(\widetilde{\chi} \, \eta)^\rho$ by the scalar by which $\Psi_{\breve \chi}(z)$ acts on $i^H_{B_H}(\eta)^{I_H}$.
\item[{\rm (ii)}]  The isomorphism $\theta^{\chi}_B \circ \psi^{-1}_{\widetilde \chi}$
$$\bar{\QQ}_\ell[X_*(T)]^{W_\chi} = \mathcal H(T,  {^\circ T})^{W_\chi} ~ \widetilde{\rightarrow} ~ \mathcal Z(G,\rho)
$$
coincides with the isomorphism $\beta_{\widetilde \chi}$ of Lemma \ref{X_s_unifor} (hence in particular is independent of the choice of Borel subgroup $B \supset T$).
\end{itemize}  
\end{prop}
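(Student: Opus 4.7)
The plan is to derive both parts from Proposition \ref{comm_diag_prop} together with the classical Bernstein presentation of the center of the Iwahori--Hecke algebra applied to $H = H_\chi$. For part (1), I would first observe that since $W_\chi = W^\circ_\chi$ is by Lemma \ref{W_chi_aff_Cox}(3) the finite Weyl group of $H$, the standard Bernstein theorem for $\mathcal H(H, I_H)$ implies that $\theta^1_{B_H}$ restricts to an isomorphism $\bar{\QQ}_\ell[X_*(T)]^{W_\chi} \xrightarrow{\sim} \mathcal Z(H, I_H)$. Since $\Psi_{\breve\chi}$ of Theorem \ref{GMR}(2) is an algebra isomorphism, it carries $\mathcal Z(G,\rho)$ isomorphically onto $\mathcal Z(H, I_H)$. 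The commutative square (\ref{comm_diagram}), together with the fact that $\psi_{\widetilde\chi}$ obviously restricts to an isomorphism on $W_\chi$-invariants, then forces $\theta^\chi_B$ to restrict to an isomorphism $\mathcal H(T,\chi)^{W_\chi} \xrightarrow{\sim} \mathcal Z(G,\rho)$, giving the commutative square (\ref{Z_comm_diagram}).

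For the independence of $\breve\chi$ assertion, note that once $\mathcal Z(G,\rho)$ has been identified with $\theta^\chi_B(\mathcal H(T,\chi)^{W_\chi})$, the description of $\theta^\chi_B$ depends only on $\chi$ (through $[t\, {}^\circ T]_{\widetilde\chi}$ and the normalizer data of the form $[It_iI]_{\breve\chi}[It_j I]_{\breve\chi}^{-1}$, whose dependence on $\breve\chi$ cancels on $T^+$). Combined with the commutativity of (\ref{Z_comm_diagram}), we can read off $\Psi_{\breve\chi}|_{\mathcal Z(G,\rho)} = \theta^1_{B_H}\circ\psi_{\widetilde\chi}\circ(\theta^\chi_B)^{-1}$, an expression in which $\breve\chi$ does not appear.

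For part (2), I would first verify (i) directly from the equivalence of categories in Proposition \ref{comm_diag_prop}(2). That equivalence identifies $i^G_B(\widetilde\chi\eta)$ with $i^H_{B_H}(\eta)$ and carries the $\rho$-isotypical component on the $G$-side to the $I_H$-fixed vectors on the $H$-side; since $\Psi_{\breve\chi}$ is the algebra map intertwining the two module structures, any $z \in \mathcal Z(G,\rho)$ acts on $i^G_B(\widetilde\chi\eta)^\rho$ by the scalar with which $\Psi_{\breve\chi}(z)$ acts on $i^H_{B_H}(\eta)^{I_H}$.

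Then I would derive (ii) from (i). Given $f \in \bar{\QQ}_\ell[X_*(T)]^{W_\chi}$, the classical Bernstein theorem for $H$ states that $\theta^1_{B_H}(f)$ acts on $i^H_{B_H}(\eta)^{I_H}$ by the scalar $f(\eta)$. By (i) applied to $z := \theta^\chi_B(\psi_{\widetilde\chi}^{-1}(f)) \in \mathcal Z(G,\rho)$ (whose image under $\Psi_{\breve\chi}$ is $\theta^1_{B_H}(f)$ by (\ref{Z_comm_diagram})), the element $z$ acts on $i^G_B(\widetilde\chi\eta)^\rho$ by $f(\eta)$. This is exactly the defining property of $\beta_{\widetilde\chi}(f)$ from Lemma \ref{X_s_unifor}; since elements of $\mathcal Z(G,\rho)$ are determined by their scalar actions on the family of principal series $i^G_B(\widetilde\chi\eta)^\rho$ as $\eta$ varies over $\widehat{T}$, this forces the equality $\theta^\chi_B\circ\psi^{-1}_{\widetilde\chi} = \beta_{\widetilde\chi}$. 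The main obstacle I anticipate is simply being careful that the equivalence of categories really does transport the $\rho$-isotypic part to $I_H$-fixed vectors compatibly with the algebra isomorphism $\Psi_{\breve\chi}$, but this is built into the formalism of types and is implicit in the proof of Proposition \ref{comm_diag_prop}(2).
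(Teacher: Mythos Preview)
Your proposal is correct and follows essentially the same approach as the paper: for (1) you invoke the Bernstein isomorphism $\theta^1_{B_H}\colon \bar{\QQ}_\ell[X_*(T)]^{W_\chi}\xrightarrow{\sim}\mathcal Z(H,I_H)$ and read the rest off the commutative square, and for (2) you use Proposition~\ref{comm_diag_prop}(2) exactly as the paper instructs (the paper in fact leaves (2) as an exercise with that hint). Your derivation of (ii) from (i) via the separation property of $\mathcal Z(G,\rho)$ on principal series is the intended unpacking of that exercise.
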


\begin{proof}
Concerning (\ref{Z_comm_diagram}), it is enough to recall that for Iwahori-Hecke algebras, $\theta^1_{B_H}$ induces an isomorphism $\bar{\QQ}_\ell[X_*(T)]^{W_\chi} ~ \widetilde{\rightarrow} ~ \mathcal Z(H,  {I_H})$ (\cite{Lus}, \cite{HKP}).  This proves (1).  We leave (2) as an exercise for the reader (use Proposition \ref{comm_diag_prop} (2)).
\end{proof}

In light of (1), when $\Psi_{\breve \chi}$ is restricted to the center we will also denote it by the symbol $\Psi_{\widetilde \chi}$.

\section{The base change homomorphism for depth-zero principal series} \label{BC_sec}

\subsection{Properties of the base change homomorphism}

We retain the notation and hypotheses of the previous section, except the hypothesis $W^\circ_\chi = W_\chi$, and we add some new notation.  For $r \geq 1$, let $F_r/F$ denote the unramified extension of $F$ in $\bar{F}$ having degree $r$.  Let $\mathcal O_r \subset F_r$ be its ring of integers, with residue field $k_r$.  Set $G_r := G(F_r)$, $B_r := B(F_r)$, and $T_r := T(F_r)$.  Write $I_r$ for the Iwahori subgroup of $G_r$ corresponding to $I$.  Use the symbol $N_r$ to denote the norm homomorphisms $T(F_r) \rightarrow T(F)$ and $T(\mathcal O_r) \twoheadrightarrow T(\mathcal O)$,  as well as the norm homomorphism $T(k_r) \twoheadrightarrow T(k)$.  

Let $\chi_r := \chi \circ N_r$, which we regard as a depth-zero $\bar{\QQ}_\ell^\times$-valued character on $^\circ T_r := T(\mathcal O_r)$,  or as the induced character on $T(k_r)$.  Note that we can identify $W_{\chi} = W_{\chi_r}$ (as subgroups of the absolute Weyl group $W$), and $\widetilde{W}_{\chi} = \widetilde{W}_{\chi_r}$ (as subgroups of $\widetilde{W}$ once it is identified with $X_*(T) \rtimes W$).  

Let $\widetilde{\chi}_r$ denote a character on $T_r$ extending $\chi_r$.  
Write $\mathfrak s_r := \mathfrak s_{\chi_r} = [T_r, \widetilde{\chi}_r]_{G_r}$, an inertial equivalence class for the category $\mathfrak R(G_r)$, which depends only on $\chi_r$.  

Write $\mathcal H(G_r, \rho_r)$ (resp. $\mathcal Z(G_r, \rho_r)$) for the analogue of $\mathcal H(G, \rho)$ (resp. $\mathcal Z(G, \rho)$).  

\smallskip

There is a canonical morphism of algebraic varieties
\begin{equation}
\begin{aligned}
b^\ast_r ~ : ~\mathfrak X_{\mathfrak s} & \rightarrow \mathfrak X_{\mathfrak s_r} \\
(T, \xi)_G & \mapsto (T_r, \xi \!\circ\! N_r)_{G_r}.
\end{aligned}
\end{equation}
Here $\xi$ ranges over characters on $T(F)$ extending $\chi$.  Let us describe $b^\ast_r$ explicitly.  Suppose $\widetilde{\chi}$ is a $W_\chi$-invariant character on $T(F)$ extending $\chi$.  Then $\widetilde{\chi}_r := \widetilde{\chi} \circ N_r$ is $W_\chi$-invariant and extends $\chi_r$.  Use $\widetilde{\chi}_r$, resp. $\widetilde \chi$, to identify $\widehat{T}/W_\chi$ with $\mathfrak X_{\mathfrak s_r}$, resp. $\mathfrak X_{\mathfrak s}$, as in Lemma \ref{X_s_unifor}.   Having chosen $\widetilde{\chi}$ and $\widetilde{\chi}_r$, we can identify $b^\ast_r$ with the morphism
\begin{equation}
\begin{aligned}
b^\ast_r ~ : ~ \widehat{T}/W_\chi & \rightarrow \widehat{T}/W_\chi \\
t & \mapsto N_r(t) = t^r.
\end{aligned}
\end{equation}

 We define the  {\em base change homomorphism} 
\begin{equation}\label{norm_map}
b_r : \mathcal Z(G_r, \rho_r) \rightarrow \mathcal Z(G,\rho)
\end{equation}
to be the canonical algebra homomorphism of rings of regular functions $\mathcal O_{\mathfrak X_{\mathfrak s_r}} \rightarrow \mathcal O_{\mathfrak X_{\mathfrak s}}$ which corresponds to $b^\ast_r : \mathfrak X_{\mathfrak s}  \rightarrow \mathfrak X_{\mathfrak s_r} $.  After choosing the extensions $\widetilde \chi$ and $\widetilde{\chi}_r$ as above, we can identify $b_r$ with the homomorphism
\begin{equation}
\begin{aligned}
  b_r : \bar{\QQ}_\ell[X_*(T)]^{W_\chi} & \rightarrow \bar{\QQ}_\ell[X_*(T)]^{W_\chi} \\
\sum_{\mu \in X_+} a_\mu m_\mu &\mapsto \sum_{\mu \in X_+} a_\mu m_{r\mu}.
\end{aligned}
\end{equation}
Here $X_+$ denotes a set of representatives for the $W_\chi$-orbits in $X_*(T)$.  Further, $a_\mu \in \bar{\QQ}_\ell$, and $m_\mu$ denotes the monomial symmetric element $m_\mu := \sum_{\lambda \in W_\chi \mu} t_\lambda$.

The homomorphism $b_r$ is analogous to the base-change homomorphism for centers of parahoric Hecke algebras, defined in \cite{H09}.  Furthermore, these base-change homomorphisms are compatible with the Hecke algebra isomorphisms of section \ref{HA_isom_sec}, as follows.   Choose a $W_\chi$-invariant character $\widetilde \chi$ on $T(F)$ extending $\chi$, and set $\widetilde{\chi}_r := \widetilde{\chi} \circ N_r$.

\begin{lemma} \label{HA_comp_lemma}  Assume that $W_\chi = W^\circ_\chi$.  Consider the group $H = H_\chi$ from section \ref{HA_isom_sec}, and set $H_r := H(F_r)$, etc.  

\smallskip

\noindent {\rm (1)} The following diagram commutes
$$
\xymatrix{
\mathcal Z(G_r, \rho_r) \ar[r]^{\Psi_{\widetilde{\chi}_r}} \ar[d]_{b_r} & \mathcal Z(H_r, {I_{H_r}})\ar[d]^{b^H_r} \\
\mathcal Z(G, \rho) \ar[r]^{\Psi_{\widetilde \chi}} & \mathcal Z(H, {I_H}),}$$
where $b^H_r$ is the base-change homomorphism defined for the groups $H_r,H$ and the trivial character $\chi = {\rm triv}$.

\smallskip

\noindent {\rm (2)} Let $\phi_r \in \mathcal Z(G_r, \rho_r)$.  For any unramified character $\eta$ on $T(F)$, set $\eta_r := \eta \circ N_r$.  Then the following five scalars (in $\bar{\QQ}_\ell$) coincide for any $\eta \in \widehat{T}$:
\begin{itemize}
\item[{\rm (i)}] the scalar by which $\phi_r$ acts on $i^{G_r}_{B_r}(\widetilde{\chi}_r \, \eta_r)^{\rho_r}$;
\item[{\rm (ii)}] the scalar by which $\Psi_{\widetilde{\chi}_r}(\phi_r)$ acts on $i^{H_r}_{B_{H_r}}(\eta_r)^{I_{H_r}}$;
\item[{\rm (iii)}] the scalar by which $b_r(\phi_r)$ acts on $i^{G}_B(\widetilde{\chi} \, \eta)^\rho$;
\item[{\rm (iv)}] the scalar by which $b^H_r(\Psi_{\widetilde{\chi}_r}(\phi_r))$ acts on $i^H_{B_H}(\eta)^{I_H}$;
\item[{\rm (v)}] the scalar by which $\Psi_{\widetilde \chi}(b_r(\phi_r))$ acts on $i^H_{B_H}(\eta)^{I_H}$.
\end{itemize}
\end{lemma}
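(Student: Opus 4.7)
The strategy is to transport everything to the ``model'' ring $\bar{\QQ}_\ell[X_*(T)]^{W_\chi}$ and its spectrum $\widehat T/W_\chi$, via the canonical isomorphisms furnished by Lemma \ref{X_s_unifor} and Proposition \ref{Z_comm_diag_prop}, together with their tautological counterparts for the Iwahori-Hecke algebras of $H$ and $H_r$ (which are the specialization to $\chi={\rm triv}$).

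For part (1), I would first note that Proposition \ref{Z_comm_diag_prop}(1), and its specialization to the trivial character, identifies all four corner algebras in the square with $\bar{\QQ}_\ell[X_*(T)]^{W_\chi}$, in such a way that both $\Psi_{\widetilde\chi}$ and $\Psi_{\widetilde{\chi}_r}$ restricted to the center become the identity on $\bar{\QQ}_\ell[X_*(T)]^{W_\chi}$. On the other hand, the very definition of $b_r$ -- and likewise of $b_r^H$ -- in terms of the morphism $\mathfrak X_\mathfrak s \to \mathfrak X_{\mathfrak s_r}$ sending $\eta$ to $\eta\circ N_r = \eta_r$, translates under our identifications (which rely on the compatible extensions $\widetilde\chi$ and $\widetilde{\chi}_r := \widetilde\chi \circ N_r$) into the map $m_\mu \mapsto m_{r\mu}$ on $\bar{\QQ}_\ell[X_*(T)]^{W_\chi}$ in both rows. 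The commutativity asserted in (1) is then the trivial identity.

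For part (2), the decisive implication is (i)~$\Leftrightarrow$~(iii), which is essentially the defining property of $b_r$. By Lemma \ref{X_s_unifor} applied to $G_r$ and to $G$, the scalar in (i) equals $\beta_{\widetilde{\chi}_r}^{-1}(\phi_r)(\eta_r)$ and the scalar in (iii) equals $\beta_{\widetilde\chi}^{-1}(b_r(\phi_r))(\eta)$; the equality holds because $b_r$ is by construction the pullback of regular functions along the map $\eta\mapsto\eta_r$ on $\widehat T/W_\chi$. The equivalences (i)~$\Leftrightarrow$~(ii) and (iii)~$\Leftrightarrow$~(v) are both instances of Proposition \ref{comm_diag_prop}(2), which provides equivalences of categories carrying $i^{G_r}_{B_r}(\widetilde{\chi}_r\eta_r)$ to $i^{H_r}_{B_{H_r}}(\eta_r)$ and $i^G_B(\widetilde\chi\eta)$ to $i^H_{B_H}(\eta)$, so that the action of a central element on the $\rho$-isotypical (respectively $\rho_r$-isotypical) subspace matches the action of its image under $\Psi$ on the $I_H$-fixed (respectively $I_{H_r}$-fixed) subspace. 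Finally (iv)~$\Leftrightarrow$~(v) is part (1) applied to $\phi_r$.

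Given the preparatory results already in place, the proof is essentially a bookkeeping exercise; the only point demanding care -- and therefore the closest thing to a ``hard part'' -- is ensuring that the several identifications align coherently. In particular one must verify that the extensions $\widetilde\chi$ and $\widetilde{\chi}_r$ used to identify $\mathfrak X_\mathfrak s$ and $\mathfrak X_{\mathfrak s_r}$ with $\widehat T/W_\chi$ are compatible with the unramified base change $\eta\mapsto\eta_r$ (which is built into the hypothesis $\widetilde{\chi}_r = \widetilde\chi\circ N_r$), and that Proposition \ref{Z_comm_diag_prop}(1) legitimately allows us to write $\Psi_{\widetilde\chi}$ and $\Psi_{\widetilde{\chi}_r}$ on the center without reference to any choice of $\breve\chi$.
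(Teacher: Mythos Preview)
Your proof is correct and uses the same ingredients as the paper's proof: the definition of $b_r$ together with Proposition \ref{Z_comm_diag_prop}. The one organizational difference is that you prove (1) first by direct computation on $\bar{\QQ}_\ell[X_*(T)]^{W_\chi}$ and then invoke it to obtain (iv)$\Leftrightarrow$(v), whereas the paper runs the logic in the opposite direction: it establishes all five equalities in (2) independently of (1) (the equivalence (ii)$\Leftrightarrow$(iv) follows from the definition of $b_r^H$, just as (i)$\Leftrightarrow$(iii) follows from the definition of $b_r$), and then reads off (1) from the equality (iv)$=$(v), since central elements of $\mathcal H(H,I_H)$ are determined by their action on all $i^H_{B_H}(\eta)^{I_H}$. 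Either order works; the paper's route has the minor advantage of avoiding the separate verification that the horizontal isomorphisms become the identity under the model identifications.

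One small remark: for the equivalences (i)$\Leftrightarrow$(ii) and (iii)$\Leftrightarrow$(v) the cleaner citation is Proposition \ref{Z_comm_diag_prop}(2)(i) rather than Proposition \ref{comm_diag_prop}(2), since the former gives exactly the scalar-matching statement you need, while the latter is the category equivalence from which it is deduced.
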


\begin{proof}
Part (2) follows from the definition of $b_r$ and Proposition \ref{Z_comm_diag_prop} (2).  Part (1) follows from part (2).
\end{proof}

\subsection{The base change fundamental lemma for depth-zero principal series} 

Retain the notation and assumptions of the previous subsection.  Further, let $\sigma \in {\rm Gal}(F_r/F)$ denote any generator of that cyclic group.

We will apply the following stable base change fundamental lemma in the present context.  To state it, we will use the standard notions of stable (twisted) orbital integrals ${\rm SO}_{\delta \sigma}(\phi)$, associated pairs of functions $(\phi,f)$,  and the norm map $\mathcal N$.  For further information, we refer the reader to \cite{K82}, \cite{K86}, and \cite{H09}. The proof of the next theorem will appear in \cite{H10}, where it is proved for arbitrary unramified groups.

\begin{theorem} \label{fl}
For every $\phi_r \in \mathcal Z(G_r, \rho_r)$, the functions $\phi_r$ and $b_r(\phi_r)$ are associated.  That is, for every semi-simple $\gamma \in G(F)$, we have
\begin{equation*}
{\rm SO}_\gamma(b_r (\phi_r)) = \begin{cases} {\rm SO}_{\delta \sigma}(\phi_r) , \spa \mbox{if $\gamma = \mathcal N \delta$ for $\delta \in G(F_r)$}, \\
0 , \spa \mbox{if $\gamma$ is not a norm from $G(F_r)$.}
\end{cases}
\end{equation*}
\end{theorem}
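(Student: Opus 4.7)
\medskip

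\noindent\textbf{Proof proposal for Theorem \ref{fl}.}  My plan is to deduce the identity of stable (twisted) orbital integrals from an identity of traces on tempered representations, and then verify the trace identity through a two-step reduction: first, via the Bernstein center, to principal series attached to depth-zero characters; and second, via the Goldstein--Morris--Roche isomorphism of Theorem \ref{GMR}, to the trivial-character (Iwahori-level) case on the auxiliary split group $H = H_\chi$, which is known.

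First, I would invoke a stable version of the trace Paley--Wiener theorem (as in Kazhdan's density theorem, in its appropriate stable and twisted forms). This reduces the assertion -- that $b_r(\phi_r)$ and $\phi_r$ have matching stable and stable-twisted orbital integrals, with vanishing where the norm condition fails -- to showing that for every pair $(\pi,\pi_r)$ of irreducible tempered representations related by base change, one has
\[
\operatorname{tr}\bigl(\pi(b_r(\phi_r))\bigr) \;=\; \operatorname{tr}\bigl(\pi_r(\phi_r) \circ \sigma\bigr).
\]
Both $\phi_r$ and $b_r(\phi_r)$ lie in the respective Bernstein centers $\mathcal Z(G_r,\rho_r)$ and $\mathcal Z(G,\rho)$, and so they act by zero on every irreducible constituent lying outside $\mathfrak R_{\mathfrak s_r}(G_r)$ and $\mathfrak R_{\mathfrak s}(G)$, respectively. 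The categories $\mathfrak R_{\mathfrak s_r}$ and $\mathfrak R_{\mathfrak s}$ are generated by subquotients of the normalized principal series $i^{G_r}_{B_r}(\widetilde{\chi}_r\eta_r)$ and $i^G_B(\widetilde{\chi}\eta)$, where $\widetilde{\chi}$ is a fixed $W_\chi$-invariant extension of $\chi$, $\widetilde{\chi}_r := \widetilde{\chi}\circ N_r$, and $\eta \in \widehat{T}$ is an unramified character. It thus suffices to check the trace identity on these principal series -- and on these, each side is a scalar.

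At this point I would apply Lemma \ref{HA_comp_lemma} (or rather the extension to $W^\circ_\chi$ possibly being proper in $W_\chi$, which is handled similarly in \cite{Ro}), which identifies the two scalars with traces on the corresponding unramified principal series $i^{H_r}_{B_{H_r}}(\eta_r)$ and $i^H_{B_H}(\eta)$ of the split group $H = H_\chi$, for $\Psi_{\widetilde{\chi}_r}(\phi_r) \in \mathcal Z(H_r,I_{H_r})$ and $b^H_r\bigl(\Psi_{\widetilde{\chi}_r}(\phi_r)\bigr) \in \mathcal Z(H,I_H)$. This reduces the trace identity to the case of the trivial character on $H$, where by the density argument run in reverse it is equivalent to the matching of stable (twisted) orbital integrals of $\Psi_{\widetilde{\chi}_r}(\phi_r)$ and its image under $b^H_r$; and this is the Iwahori case of the base change fundamental lemma for parahoric Hecke algebras established in \cite{H09}.

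The main obstacle lies in the first and last step, namely in making rigorous the passage between trace identities and \emph{stable} (twisted) orbital integral identities within a single Bernstein component. One needs the stable trace Paley--Wiener theorem in twisted form and, crucially, that the component $\mathfrak R_{\mathfrak s}(G)$ suffices to detect the stable orbital integrals of a function in $\mathcal Z(G,\rho)$. A second subtlety is that the ``norm'' relation between stable conjugacy classes in $G(F)$ and $\sigma$-conjugacy classes in $G_r$ must be respected by the transfer to $H$; since $G$ is unramified this is, however, elementary in our setting. The arguments behind both points -- carried out in full generality for arbitrary unramified groups and depth-zero Bushnell--Kutzko types -- are the content of \cite{H10}.
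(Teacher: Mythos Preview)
The paper does not actually prove Theorem \ref{fl}; it states just before the theorem that ``the proof of the next theorem will appear in \cite{H10}, where it is proved for arbitrary unramified groups,'' and gives no further argument. So there is no proof in the paper against which to compare your proposal.

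That said, your outline is a reasonable sketch of the strategy one expects \cite{H10} to follow, and you are candid about its gaps. The reduction via the Hecke algebra isomorphisms $\Psi_{\widetilde\chi_r}$, $\Psi_{\widetilde\chi}$ and Lemma \ref{HA_comp_lemma} to the Iwahori case on $H=H_\chi$, and then appealing to \cite{H09}, is exactly the natural line. The genuine content, as you correctly flag, lies in the ``density'' direction: one needs a stable (and twisted-stable) version of the trace Paley--Wiener theorem that is strong enough to conclude equality of stable (twisted) orbital integrals from equality of traces on tempered representations, and one needs to know that testing only on the single Bernstein component $\mathfrak R_{\mathfrak s}(G)$ (resp.\ $\mathfrak R_{\mathfrak s_r}(G_r)$) suffices for functions lying in $\mathcal Z(G,\rho)$ (resp.\ $\mathcal Z(G_r,\rho_r)$). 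Neither of these is available off the shelf in the required form, and supplying them is precisely what \cite{H10} does. Your proposal is therefore best read as an accurate summary of the architecture of the deferred proof rather than a self-contained argument.
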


\begin{Remark}
We can view $\chi' \in T(k_r)^\vee$ as a depth-zero character on $T(\mathcal O_r)$ and thus define the corresponding algebra $\mathcal Z(G_r, I_r, \chi')$.   The complete set of idempotents $e_{\rho_{\chi'}}$, $\chi' \in T(k_r)^\vee$, gives rise to an algebra monomorphism 
$$
\mathcal Z(G_r, I^+_r) \hookrightarrow \prod_{\chi' \in T(k_r)^\vee} \mathcal Z(G_r, I_r, \chi').
$$
In \cite{H10}, $\S10$, the base change homomorphisms $b_r: \mathcal Z(G_r, I_r, \chi_r) \rightarrow \mathcal Z(G,I,\chi)$ and the above remark are used to define the base change homomorphism
\begin{equation} \label{full_bc}
b_r: \mathcal Z(G_r, I^+_r) \rightarrow \mathcal Z(G,I^+).
\end{equation}
We refer to loc.~cit.~for the construction and for the proof that the analogue of Theorem \ref{fl} holds for (\ref{full_bc}). 
\end{Remark}

\section{Image of the test function under Hecke algebra isomorphism} \label{image_sec}

\subsection{Notation} Now we specialize to the case of $F = \QQ_p$ and, as in section \ref{tracefrob_sec}, write $G = {\rm GL}_d \times \mathbb G_m$ for the localization at $p$ of our usual global group.  Let $T$, resp. $B$, denote the maximal torus, resp. Borel subgroup, in $G$ whose first factor is the usual diagonal torus, resp. upper triangular Borel subgroup, in ${\rm GL}_d$.  

We only need to consider characters $\chi$ on $T$ which are trivial on the $\mathbb G_m$-factor.  For this reason, in what follows we may as well assume $G = {\rm GL}_d$, $T = \mathbb G_m^d$, and fix $\chi = (\chi_1, \dots, \chi_d)$, a  character of $T(\ZZ_p)$ (or $T(\FF_p)$).  Note that we may identify $H=H_\chi$ with the semi-standard Levi subgroup $M = M_\chi$ associated to the decomposition (\ref{decchi}) of $d$ by the ``level sets" of $\chi$.  Recall that the Iwahori subgroup $I \subset G(\mathbb Q_p)$ fixes the base alcove ${\bf a}$ which we identify with the lattice chain $\Lambda_\bullet$ defined by $\Lambda_i = (p^{-1}\mathbb Z_p)^i \oplus \mathbb Z^{d-i}_p$.  Throughout this section, let $\varpi = p$, viewed as a uniformizer in $\QQ_p$ or $L_r=\mathbb Q_{p^r}$.

Definition \ref{test_fcn} describes  our test function $\phi_{r,\chi}$ as an element in the Hecke algebra $\mathcal H(G_r, I_r, \chi_r) = \mathcal H(G_r, \rho_r)$.  Implicit in this definition is the choice of uniformizer $\varpi=p$ used to define the embedding $i_{\varpi,K}: \widetilde{W} \hookrightarrow G(F)$ via $(\nu, \bar w) \mapsto \varpi^\nu n_{\bar w}$, cf. section \ref{notation_sec}.  In the present case we take $K = G(\mathcal O)$, and take $n_{\bar w} \in N(\mathcal O)$ to be the lift of $\bar w \in W =S_d$ given by identifying it with a $d \times d$ permutation matrix.  For $w \in \widetilde{W}$, denote its image under this embedding by $n_w$. 

For the choice of uniformizer $\varpi = p$, we fix the $\varpi$-canonical extension $\widetilde\chi^\varpi$ of $\chi$ (sometimes denoted below by $\widetilde \chi$).  We will choose an extension $\breve \chi$ of $\widetilde\chi^\varpi$ as in Remark \ref{varpi_can_rem}, but in a particular way.  Note that the isomorphism $N(\mathcal O) \cong T(\mathcal O) \rtimes S_d$ (given as above using permutation matrices) induces an isomorphism $N_\chi(\mathcal O) \cong T(\mathcal O) \rtimes W_\chi$.  Hence we may define $\overline{\chi}$ to be trivial on the elements in $W_\chi \hookrightarrow N_\chi(\mathcal O)$, and use this $\overline{\chi}$ and $\widetilde{\chi}^\varpi$ to define $\breve \chi$ as in Remark \ref{varpi_can_rem}.   Set $\chi_r := \chi \circ N_r$, and $\widetilde \chi_r := \widetilde{\chi} \circ N_r$.  Using $N_\chi(\mathcal O_r) \cong T(\mathcal O_r) \rtimes W_\chi$, define $\breve{\chi}_r$ analogously to $\breve \chi$; it is a character on $N_{\chi_r}(F_r) = N_\chi(F_r)$ extending $\widetilde{\chi}_r$.  We use ${\breve \chi}_r$ in the  construction of  the Hecke algebra isomorphism $\Psi_{{\breve \chi}_r}$ as in Theorem \ref{GMR}.

\subsection{Centrality of $\phi_{r,\chi}$ and its image in the Iwahori Hecke algebra of $M_\chi$} Let $\mathcal O^{\rm triv}_\chi$ denote the set of indices $j \in \{1,2, \dots, d \}$ such that $\chi_j = {\rm triv}$.  If $\mathcal O^{\rm triv}_\chi \neq \emptyset$, let $\mu^1_\chi$ denote the $B_M$-dominant element in the set of coweights ${e}_j \spa (j \in \mathcal O^{\rm triv}_\chi)$.  Recall the set ${\rm Adm}^G(\mathcal O^{\rm triv}_\chi)$ defined in section \ref{main_comb_subsec}.  By Proposition \ref{main_comb_prop} (b) we have
\begin{equation} \label{AdmG=AdmM}
{\rm Adm}^G(\mathcal O^{\rm triv}_\chi) = {\rm Adm}^M(\mu^1_\chi),
\end{equation}
where by convention both sides are empty if $\mathcal O^{\rm triv}_\chi = \emptyset$.  Also, $\delta^1(w, \chi) = 1$ iff $w \in {\rm Adm}^G(\mathcal O^{\rm triv}_\chi)$. 

Thus, we may write Definition \ref{test_fcn} as
\begin{equation}
\phi_{r,\chi} = \sum_{w \in {\rm Adm}^G(\mathcal O^{\rm triv}_\chi)} \phi_{r,0}(w^{-1}) \spa 
[I_r n_{w^{-1}} I_r]_{\breve{\chi}_r}.
\end{equation}

Recall that $\phi_{r,0} = k^{G}_{\mu^*_0,r}$ and $k^G_{\mu^*_0}(w^{-1}) = k^G_{\mu_0}(w)$.  Together with (\ref{AdmG=AdmM}) this yields
\begin{equation}
\phi_{r,\chi} = \sum_{w \in {\rm Adm}^{M}(\mu^1_\chi)} k^G_{\mu_0, r}(w) \spa 
[I_r n_{w^{-1}} I_r]_{\breve{\chi}_r}.
\end{equation}

Writing $\ell^M(\cdot)$ in place of $\ell_\chi(\cdot)$,  we see,  using Theorem \ref{GMR} (2),  that 

\begin{equation}
\Psi_{{\breve \chi}_r}(\phi_{r,\chi}) = \sum_{w \in {\rm Adm}^M(\mu^1_\chi)} k^{G}_{\mu_0, r} (w) \spa q^{\ell(w)/2 - \ell^M(w)/2} \spa [I_{M_r} n_{w^{-1}} I_{M_r}] .
\end{equation}

Here $q = p^r$.  Using (\ref{ell_diff}) and Proposition \ref{main_comb_prop} (e), we may rewrite the right hand side as
\begin{equation}
q^{\ell(t_{\mu_0})/2 - \ell^M(t_{\mu^1_\chi})/2} \sum_{w \in {\rm Adm}^M(\mu^1_\chi)} \spa 
k^M_{\mu^1_\chi, r}(w) \spa [I_{M_r} n_{w^{-1}} I_{M_r}] .
\end{equation}

Again, if $\mu^{1*}_\chi$ denotes the ``dual'' of $\mu^1_\chi$ (the analogue of $\mu_0^*$), we have $k^M_{\mu^{1*}_\chi,r}(w^{-1}) = k^M_{\mu^1_\chi, r}(w)$.  We have proved the following result.

\begin{prop} \label{test_fcn_image} Let $\varpi = p$.  If $\mathcal O^{\rm triv}_\chi = \emptyset$, then $\phi_{r, \chi} = 0$.  If $\mathcal O^{\rm triv}_\chi \neq \emptyset$, then $\phi_{r, \chi} \in \mathcal Z(G_r, I_r, \chi_r)$, and the isomorphism $\Psi_{\widetilde{\chi}}$ sends $\phi_{r, \chi}$ to the element 
$$
q^{\langle \rho_B, \mu^* \rangle - \langle \rho_{B_M}, \mu^{1*}_\chi \rangle} \spa k^M_{\mu^{1*}_\chi, r}
$$
in the center $\mathcal Z(M_r, I_{M_r})$ of the Iwahori-Hecke algebra for the Levi subgroup $M$ associated to $\chi$.
\end{prop}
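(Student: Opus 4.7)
The plan is to compute the image of $\phi_{r,\chi}$ by expanding it in the natural basis of $\mathcal H(G_r,I_r,\chi_r)$ indexed by $\widetilde{W}_\chi$, and then applying $\Psi_{\breve\chi_r}$ term-by-term via Theorem \ref{GMR}(2). First, I would observe that $k_{\mu^*,r}$ is supported on ${\rm Adm}(\mu^*)$ (Appendix), so the function in Definition \ref{test_fcn} is supported on elements $tw^{-1}$ with $w^{-1}\in{\rm Adm}(\mu_0)$ and $\delta^1(w^{-1},\chi)=1$. By definition of $\delta^1$ and by Proposition \ref{main_comb_prop}(b), this support condition is exactly $w^{-1}\in{\rm Adm}^G(\mathcal O_\chi^{\rm triv})={\rm Adm}^M(\mu^1_\chi)$, where the right-hand set is empty precisely when $\mathcal O^{\rm triv}_\chi=\emptyset$. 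This immediately yields $\phi_{r,\chi}=0$ in that degenerate case, so I would assume $\mathcal O^{\rm triv}_\chi\neq\emptyset$ from here on.

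Next, using Lemma \ref{fcn_on_tw}, I would rewrite
$$
\phi_{r,\chi}=\sum_{w^{-1}\in{\rm Adm}^M(\mu^1_\chi)} k^G_{\mu_0,r}(w)\,[I_r n_{w^{-1}} I_r]_{\breve\chi_r},
$$
where I used $k^G_{\mu^*,r}(w^{-1})=k^G_{\mu_0,r}(w)$ (cf. \cite{HKP}). Applying $\Psi_{\breve\chi_r}$ and Theorem \ref{GMR}(2) turns each $[I_rn_{w^{-1}}I_r]_{\breve\chi_r}$ into $q^{(\ell(w^{-1})-\ell^M(w^{-1}))/2}[I_{M_r} n_{w^{-1}} I_{M_r}]$.

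The decisive step is to extract the length factor as a single scalar. For any $w\in{\rm Adm}^M(\mu^1_\chi)$, equation (\ref{ell_diff}) in the proof of Proposition \ref{main_comb_prop}(e) gives
$$
\ell(w)-\ell^M(w)=\ell(t_{\mu_0})-\ell^M(t_{\mu^1_\chi}),
$$
which is independent of $w$. Rewriting $\ell(t_{\mu_0})=2\langle\rho_B,\mu^*\rangle$ and $\ell^M(t_{\mu^1_\chi})=2\langle\rho_{B_M},\mu^{1*}_\chi\rangle$, I pull this factor outside the sum. What remains is
$$
\sum_{w\in{\rm Adm}^M(\mu^1_\chi)} k^G_{\mu_0,r}(w)\,[I_{M_r} n_{w^{-1}} I_{M_r}],
$$
and this is where the combinatorial miracle Proposition \ref{main_comb_prop}(e) intervenes: the restriction of $k^G_{\mu_0}$ to ${\rm Adm}^M(\mu^1_\chi)$ coincides with the $M$-Kottwitz function $k^M_{\mu^1_\chi}$. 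Using once more the duality $k^M_{\mu^{1*}_\chi,r}(w^{-1})=k^M_{\mu^1_\chi,r}(w)$, I recognize the sum as the basis expansion of $k^M_{\mu^{1*}_\chi,r}\in\mathcal Z(M_r,I_{M_r})$, which gives the claimed formula.

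The main obstacle is, as I flagged, Proposition \ref{main_comb_prop}(e); without this identification the sum would not collapse to an object in the Iwahori center of $M_r$. Once the formula is established, centrality of $\phi_{r,\chi}$ follows at no cost: since $\Psi_{\breve\chi_r}$ is an algebra isomorphism and its image $k^M_{\mu^{1*}_\chi,r}$ lies in $\mathcal Z(M_r,I_{M_r})$, the preimage $\phi_{r,\chi}$ lies in $\mathcal Z(G_r,I_r,\chi_r)$.
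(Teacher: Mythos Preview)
Your argument is essentially the paper's own: expand $\phi_{r,\chi}$ in the basis $[I_r n_{w^{-1}} I_r]_{\breve\chi_r}$ over ${\rm Adm}^M(\mu^1_\chi)$, apply Theorem~\ref{GMR}(2) termwise, pull out the constant power of $q$ via (\ref{ell_diff}), and invoke Proposition~\ref{main_comb_prop}(e) together with the duality $k^M_{\mu^{1*}_\chi,r}(w^{-1})=k^M_{\mu^1_\chi,r}(w)$. One bookkeeping slip: after your change of variable the summation index should read $w\in{\rm Adm}^M(\mu^1_\chi)$, not $w^{-1}\in{\rm Adm}^M(\mu^1_\chi)$ (and correspondingly the support conditions in your first paragraph should be on $w$, not on $w^{-1}$) --- as written, the displayed sum is literally empty since $k^G_{\mu_0,r}$ is supported on ${\rm Adm}(\mu_0)$, but this is a harmless inversion error rather than a gap in the reasoning.
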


\section{Summing up the test functions $\phi_r(R\Psi_\chi)$} \label{sum_up_sec}

\subsection{Proof that the test function $\phi_{r,1}$ is central}

The previous section proved that each test function $\phi_{r, \chi}$ belongs to the center $\mathcal Z(G_r, I_r, \chi_r)$.  In the present section we deduce that the test function $\phi_{r, 1}=\phi_r(R\Psi_1)$ is central in $\mathcal H(G_r, I_r^+)$.
  
\begin{prop} \label{central_prop} The $I^+$-level test function $\phi_{r,1} =  [I_r:I^+_r]^{-1} \, \phi_r(\pi_*(R\Psi_1))$ given by 
\begin{equation} \label{phi_r_as_sum} 
\phi_{r,1} = [I_r:I^+_r]^{-1} \, \sum_{\chi \in T(\mathbb F_p)^\vee} \phi_{r, \chi} ~ \in \mathcal H(G_r, I^+_r)
\end{equation}
lies in $\mathcal Z(G_r, I^+_r)$.   
\end{prop}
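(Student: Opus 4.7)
The plan is to establish centrality spectrally. An element of $\mathcal H(G_r, I^+_r)$ lies in the center if and only if it acts by a scalar on the irreducible $\mathcal H(G_r, I^+_r)$-module $\pi^{I^+_r}$ for every irreducible smooth representation $\pi$ of $G_r$ with $\pi^{I^+_r} \neq 0$. Such a $\pi$ belongs to a unique depth-zero Bernstein block indexed by the $W$-orbit of a character $\chi' \in T(k_r)^\vee$, and the $T(k_r) = I_r/I^+_r$-action on $\pi^{I^+_r}$ yields a decomposition
\begin{equation*}
\pi^{I^+_r} = \bigoplus_{\chi'' \in W\chi'} (\pi^{I^+_r})_{\chi''}.
\end{equation*}
Thus centrality of $\phi_{r,1}$ reduces to checking that it acts by the \emph{same} scalar on each of these isotypic summands.

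The function $\phi_{r,\chi}$ lies in $\mathcal H(G_r, I_r, \chi_r)$, so it preserves $(\pi^{I^+_r})_{\chi_r}$ and kills every other summand; by Proposition \ref{test_fcn_image} it is central in $\mathcal H(G_r, I_r, \chi_r)$, and hence acts on $(\pi^{I^+_r})_{\chi_r}$ by a scalar $s(\chi, \pi)$. Writing $\pi$ as a subquotient of $i^{G_r}_{B_r}(\widetilde{\chi}^\varpi \eta)$ for some $\eta \in \widehat T$, and combining Proposition \ref{test_fcn_image} with Lemma \ref{HA_comp_lemma} and the Bernstein-isomorphism description of how Kottwitz functions act on unramified principal series of $M_\chi$, one computes
\begin{equation*}
s(\chi, \pi) = q^{\langle \rho_B, \mu^* \rangle} \sum_{j \in \mathcal O^{\rm triv}_\chi} \eta_r(e_j)^{-1}.
\end{equation*}
The key step is then the $W$-equivariance $s({}^w\chi, \pi) = s(\chi, \pi)$ for every $w$ in the finite Weyl group, which follows because $\pi$ is equally a subquotient of $i^{G_r}_{B_r}(\widetilde{{}^w\chi}^\varpi \cdot {}^w\eta)$, $\mathcal O^{\rm triv}_{{}^w\chi} = w(\mathcal O^{\rm triv}_\chi)$, and the substitution $j' = w^{-1}(j)$ converts the sum for ${}^w\chi$ into the one for $\chi$.

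To conclude, the $W$-action on $T(k_r)^\vee$ commutes with Frobenius, so the Bernstein orbit $W\chi'$ of $\pi$ either consists entirely of Frobenius-fixed characters, in which case every element is of the form $\chi_r$ for some $\chi \in T(\mathbb F_p)^\vee$ and $\phi_{r,1}$ acts on each piece $(\pi^{I^+_r})_{\chi_r}$ by the common scalar $[I_r:I^+_r]^{-1} s(\chi, \pi)$, or the orbit contains no Frobenius-fixed character at all, in which case $\phi_{r,1}$ acts as zero on $\pi^{I^+_r}$. Either way the action is by a single scalar, yielding centrality. The main obstacle, I anticipate, will lie in carrying out the explicit spectral calculation of $s(\chi, \pi)$ cleanly: this requires unwinding the Hecke-algebra isomorphism $\Psi_{\breve{\chi}^\varpi}$ of Proposition \ref{test_fcn_image} and carefully tracking how the canonical extensions $\widetilde{\chi}^\varpi$ and $\widetilde{{}^w\chi}^\varpi$ match up under $W$-conjugation, in the spirit of Remark \ref{varpi_can_rem}.
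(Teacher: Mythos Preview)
Your proposal is correct and takes essentially the same approach as the paper: both arguments reduce centrality to the separation lemma and verify spectrally, via Proposition~\ref{test_fcn_image} and the Hecke algebra isomorphisms, that the scalar by which $\phi_{r,\chi}$ acts on the $\rho_{\chi_r}$-isotypic part of an irreducible $\pi$ is invariant under replacing $\chi$ by ${}^w\chi$ (the paper packages this as an intertwining diagram for $1_{wt}$, you as the equality $s({}^w\chi,\pi)=s(\chi,\pi)$, but the underlying computation of the scalar as $q^{\langle\rho_B,\mu^*\rangle}\sum_{\nu\in W_\chi\mu^{1*}_\chi}\eta(\varpi^\nu)$ is identical). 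One small point of care: in your setup $\eta$ should be an unramified character of $T(F_r)$, so the ``$\eta_r$'' in your displayed formula for $s(\chi,\pi)$ should simply read $\eta$.
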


Later,  in Proposition \ref{exp_test_fcn},  we shall also find an explicit formula for $\phi_{r,1}$.   Recall that the test function $\phi_{r,1} = \phi_r(R\Psi_1)$ resp. $\phi_{r,\chi} = \phi_r(R\Psi_\chi)$ is normalized using the Haar measure which gives $I^+_r$ resp. $I_r$ volume 1.  This explains why we divide out by the factor $[I_r:I^+_r] = (q-1)^d$ on the right side of (\ref{phi_r_as_sum}).

\begin{proof}
We define convolution on $\mathcal H(G_r,I^+_r)$ using the Haar measure $dx$ which gives $I^+_r$ volume 1, so that $e_{I^+_r} := {\rm char}(I^+_r)$ is the identity element.  For each $\xi \in T(\mathbb F_{p^r})^\vee$, define $e_\xi \in \mathcal H(G_r,I_r, \xi) \subset \mathcal H(G_r,I^+_r)$ by
$$
e_\xi(x) := \begin{cases} {\rm vol}_{dx}(I_r)^{-1} \, \rho_\xi(x^{-1}), \,\,\,\, \mbox{if $x \in I_r$} \\ 0, \,\,\,\,\mbox{otherwise}. \end{cases}
$$
Here $\rho_\xi : I_r \rightarrow \bar{\QQ}_\ell^\times$ is determined from $\xi: T(\mathbb F_{p^r}) \rightarrow \bar{\QQ}_\ell^\times$ just as in (\ref{rho_from_chi}).

The  set $\{ e_\xi ~ | ~ \xi \in T(\mathbb F_{p^r})^\vee \}$ forms a complete set of orthogonal idempotents, i.e.,
\begin{align*}
e_{I^+_r} &= \sum_\xi e_\xi\\
e_{\xi} \, e_{\xi'} &= \delta_{\xi, \xi'} \, e_\xi, \spa \forall \xi, \xi' \in T(\mathbb F_{p^r})^\vee.
\end{align*}
The first equation is clear and the second follows from the Schur orthogonality relations.   

Let $1_{wt}$ denote the characteristic function of $I_r^+wtI_r^+$, for $w \in \widetilde{W} \hookrightarrow G(F_r)$ and $t \in T(\mathbb F_{p^r})$.   A calculation shows that for any $\xi$,
\begin{equation} \label{calculation}
1_{wt} \, e_\xi = e_{^w\xi} \, 1_{wt} \ .
\end{equation}
  Thus, if $[\xi]$ denotes the $W$-orbit of $\xi \in T(\mathbb F_{p^r})^\vee$, and if we set $e_{[\xi]} := \sum_{\xi' \in [\xi]} e_{\xi'}$, then $e_{[\xi]} \in \mathcal Z(G_r,I^+_r)$ and the following result holds.

\begin{lemma} \label{HA_decomp}  For $[\xi] \in W\backslash T(\mathbb F_{p^r})^\vee$, the functions $e_{[\xi]}$ form a complete set of {\em central} idempotents of $\mathcal H(G_r,I_r^+)$.  Hence the $e_{[\xi]}$ give the idempotents in the Bernstein center which project the category $\mathcal R(G_r)$ onto the various Bernstein components $\mathcal R_{\mathfrak s_{\xi}}(G_r)$ relevant for $\mathcal H(G_r,I^+_r)$.  That is, there is a canonical isomorphism of algebras
\begin{equation} 
\mathcal H(G_r, I^+_r) = \prod_{\xi \in W \backslash T(\mathbb F_{p^r})^\vee} \mathcal H(G_r, I^+_r) e_{[\xi]}
\end{equation} and,  for any smooth representation $(\pi, V) \in \mathcal R(G_r)$, the $G_r$-module spanned by $e_{[\xi]} V$ is the component of $V$ lying in the subcategory $\mathcal R_{\mathfrak s_\xi}(G_r)$.\qed
\end{lemma}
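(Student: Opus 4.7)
The plan has two parts: first, an algebraic verification that the $e_{[\xi]}$ form a complete set of central orthogonal idempotents in $\mathcal H(G_r, I^+_r)$, giving the claimed algebra decomposition; second, a type-theoretic identification of the corresponding projections as Bernstein projectors.

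For centrality, I would sum equation (\ref{calculation}) over $\xi' \in [\xi]$, using that conjugation by $w$ permutes the $W$-orbit $[\xi]$, to obtain $1_{wt}\, e_{[\xi]} = e_{[\xi]}\, 1_{wt}$ for every pair $(w,t) \in \widetilde{W} \times T(\mathbb F_{p^r})$. Since the functions $1_{wt}$ span $\mathcal H(G_r, I^+_r)$ via the decomposition $I^+_r \backslash G_r / I^+_r = T(\mathbb F_{p^r}) \rtimes \widetilde{W}$, this forces $e_{[\xi]}$ to be central. The orthogonality $e_{[\xi]} e_{[\xi']} = \delta_{[\xi],[\xi']} e_{[\xi]}$ and completeness $\sum_{[\xi]} e_{[\xi]} = e_{I^+_r}$ relations follow at once from the corresponding properties of the $e_\xi$ recorded just before the lemma, and the product decomposition $\mathcal H(G_r, I^+_r) = \prod_{[\xi]} \mathcal H(G_r, I^+_r)\, e_{[\xi]}$ is then a formal consequence.

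For the identification with Bernstein projectors, I would invoke the type-theoretic result of Roche recalled in section \ref{HA_isom_sec}: each pair $(I_r, \rho_{\xi'})$ is an $\mathfrak s_{\xi'}$-type, and the inertial class $\mathfrak s_{\xi'}$ depends only on the $W$-orbit $[\xi']$. For an arbitrary smooth representation $(\pi, V) \in \mathcal R(G_r)$ with Bernstein decomposition $V = \bigoplus_{\mathfrak s} V_{\mathfrak s}$, the type property forces $e_{\xi'} V = V^{\rho_{\xi'}}$ to lie in $V_{\mathfrak s_{\xi'}}$, and the categorical equivalence furnished by the type implies that this subspace generates $V_{\mathfrak s_{\xi'}}$ as a $G_r$-module (and that $V_{\mathfrak s_{\xi'}}$ vanishes when the subspace does). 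Summing over $\xi' \in [\xi]$ and using that $\mathfrak s_{\xi'} = \mathfrak s_\xi$ precisely when $\xi' \in [\xi]$, I conclude that the $G_r$-span of $e_{[\xi]} V = \bigoplus_{\xi' \in [\xi]} V^{\rho_{\xi'}}$ equals $V_{\mathfrak s_\xi}$, which is the final assertion; equivalently, $e_{[\xi]}$ acts on $V^{I^+_r}$ as the restriction of the Bernstein projector onto $\mathfrak s_\xi$.

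The only substantial input is Roche's type theorem; every other step is either the Bruhat--Tits decomposition or formal manipulation with orthogonal idempotents. The one point that warrants some care is that summing the $\rho_{\xi'}$-isotypic subspaces over $\xi' \in [\xi]$ picks out a single Bernstein component rather than overlapping several, but this is precisely the $W$-orbit invariance of $\mathfrak s_{\xi'}$.
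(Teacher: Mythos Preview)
Your proposal is correct and follows the same approach as the paper. The paper in fact presents the lemma with a \qed\ immediately after the statement, treating it as a direct consequence of the preceding discussion: the orthogonality and completeness of the $e_\xi$, equation (\ref{calculation}), and the definition $e_{[\xi]} = \sum_{\xi' \in [\xi]} e_{\xi'}$; your write-up simply makes explicit the summation-over-orbit argument for centrality and the appeal to Roche's type theorem for the Bernstein-component identification that the paper leaves implicit.
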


For each $[\chi]  \in W\backslash T(\mathbb F_p)^\vee$ the function $\phi_{r, [\chi]} := \sum_{\chi' \in [\chi]} \phi_{r, \chi'}$ is $[I_r\!:\!I^+_r]$ times the projection $e_{[\chi_r]}\phi_{r,1}$ of $\phi_{r,1}$ onto the Hecke algebra associated to the Bernstein component corresponding to the inertial class $s_{\chi_r}$.   To prove that $\phi_{r,1}$ is central is equivalent to showing  that each $\phi_{r, [\chi]}$ is central.

Fix $\chi \in T(\mathbb F_p)^\vee$ resp. its Weyl-orbit $[\chi]$.  Let $(\pi, V)$ be any irreducible smooth representation of $G_r$. 

\begin{lemma}
Let $t \in T(\mathbb F_{p^r})$ and $w \in \widetilde{W}$.  Set $\chi' := \, ^w\chi$.   Write  $\rho_r$ resp. $\rho'_r$ for $\rho_{\chi_r}$ resp.
 $\rho_{\chi'_r}$.   Then the following diagram commutes:
$$
\xymatrix{
V^{\rho_r} \ar[r]^{1_{wt}} \ar[d]_{\phi_{r,\chi}} & V^{\rho'_r} \ar[d]^{\phi_{r,\chi'}} \\
V^{\rho_r} \ar[r]^{1_{wt}} & V^{\rho'_r}.}
$$
\end{lemma}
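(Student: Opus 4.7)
The plan is to reduce the commutativity of the diagram to the identity
\[
1_{wt} \ast \phi_{r,\chi} \;=\; \phi_{r,\chi'} \ast 1_{wt}
\]
in $\mathcal H(G_r, I_r^+)$. Both sides, viewed as operators on any smooth representation $V$, restrict on $V^{\rho_r}$ to the two compositions in the diagram, using (\ref{calculation}) to see that $1_{wt}$ sends $V^{\rho_r}$ into $V^{\rho'_r}$. Hence it suffices to verify the identity of Hecke algebra elements.

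Having made this reduction, I would proceed by direct computation. I would first expand $\phi_{r,\chi}$ in the basis $\{1_{t'w'}\}_{(t',w') \in T(k_r) \rtimes \widetilde W}$ of $\mathcal H(G_r,I_r^+)$ using Definition \ref{test_fcn}; its support is concentrated on pairs $(t',w')$ with $w' \in {\rm Adm}(\mu^*)$ and $\delta^1(w'^{-1},\chi) = 1$, with coefficient proportional to $\chi_r^{-1}(t')\,k_{\mu^*,r}(w')$. Both sides of the target identity then become linear combinations of convolutions of the form $1_{wt} \ast 1_{t'w'}$ and $1_{t''w''} \ast 1_{wt}$. The comparison rests on three combinatorial facts: conjugation by $w$ is an automorphism of $\widetilde W$ that preserves ${\rm Adm}(\mu^*)$ and the Kottwitz function $k_{\mu^*,r}$; the critical-index sets transform equivariantly, $S((ww'w^{-1})^{-1}) = w\cdot S(w'^{-1})$, so that $\delta^1((ww'w^{-1})^{-1},\, ^w\chi) = \delta^1(w'^{-1}, \chi)$; and the relation $\chi'_r = \,^w\chi_r$ on $T(k_r)$ transports the torus scalar $\chi_r^{-1}(t')$ via conjugation by $wt$ into the corresponding scalar $\chi_r'^{-1}(t'')$ appearing in $\phi_{r,\chi'}$, with the $t$-dependence cancelling on the two sides.

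The main obstacle will be bookkeeping the scalars: when $1_{wt} \ast 1_{t'w'}$ is re-expressed in the $\{1_{t''w''}\}$-basis, one must verify that in the present setting the product of double cosets $I_r^+ \cdot wt \cdot I_r^+ \cdot t'w' \cdot I_r^+$ reduces without length-drop to a single double coset $I_r^+(wtt'w')I_r^+$, so that no Iwahori--Matsumoto-type quadratic correction intervenes. As a more conceptual alternative, one can invoke Proposition \ref{test_fcn_image}: $\phi_{r,\chi}$ is central in $\mathcal H(G_r,\rho_r)$, so it acts by a scalar $c_\pi$ on the $\rho_r$-isotypic part of any irreducible $\pi$, and this scalar depends only on the inertial class $\mathfrak s_\chi$; since $\mathfrak s_\chi = \mathfrak s_{\chi'}$ ($\chi'$ being a Weyl conjugate of $\chi$), the analogous scalar for $\phi_{r,\chi'}$ acting on $\pi^{\rho'_r}$ coincides, and the lemma for general smooth $V$ then follows by reduction to irreducible subquotients via a composition series.
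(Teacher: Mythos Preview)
Your first approach rests on a claim that is false: conjugation by a general $w \in \widetilde{W}$ does \emph{not} preserve ${\rm Adm}(\mu^*)$ or the Kottwitz function $k_{\mu^*,r}$. This only holds for $w \in \Omega$. For instance, in ${\rm GL}_2$ with $\mu_0 = (1,0)$, the element $t_{e_2}(2\ 1)$ lies in ${\rm Adm}(\mu_0)$, but conjugating by $s_1$ gives $t_{e_1}(2\ 1)$, whose translation part is $e_1$ rather than $e_2$, so it is not of the form required by Lemma~\ref{general_form}. Similarly, conjugating by a translation $t_\nu$ replaces $t_{e_m}\bar w$ by $t_{e_m+\nu-\bar w(\nu)}\bar w$, which generically leaves the Weyl orbit of $\mu_0$. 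So the ``combinatorial facts'' you list do not hold, and the direct Hecke-algebra identity $1_{wt}\ast\phi_{r,\chi}=\phi_{r,\chi'}\ast 1_{wt}$ cannot be established by this route.

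Your alternative is the right idea, and it is essentially the paper's approach, but the sentence ``this scalar depends only on the inertial class $\mathfrak s_\chi$'' is precisely what must be proved: $\phi_{r,\chi}$ and $\phi_{r,\chi'}$ are \emph{different} elements of \emph{different} algebras $\mathcal H(G_r,\rho_r)$ and $\mathcal H(G_r,\rho'_r)$, and nothing a priori forces the two scalars on $\pi^{\rho_r}$ and $\pi^{\rho'_r}$ to agree. The paper fills this gap by an explicit computation: realize $\pi$ as a subquotient of $i^{G_r}_{B_r}(\widetilde\chi_r\,\eta)$, apply the Hecke algebra isomorphisms $\Psi_{\widetilde\chi_r}$ and $\Psi_{\widetilde\chi'_r}$ together with Proposition~\ref{test_fcn_image} to identify the two scalars with values of Kottwitz functions on the Levi side, and then check the resulting equality
\[
\sum_{\nu\in W_\chi\mu^{1*}_\chi}\eta(\varpi^\nu)\;=\;\sum_{\nu'\in W_{\chi'}\mu^{1*}_{\chi'}}{}^w\eta(\varpi^{\nu'})
\]
term by term via $\nu\mapsto{}^w\nu$. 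You should supply this computation rather than assert its conclusion.
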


\begin{proof}
We may assume $V^{\rho_r} \neq 0$, in which case we may view $V$ as a subquotient of $i^{G_r}_{B_r}(\widetilde{\chi}_r \, \eta)$, for some unramified character $\eta$ on $T(F_r)$. 

Recall that we chose the uniformizer $\varpi=p$, and that we agreed to abbreviate the $\varpi$-canonical extension $\widetilde{\chi}_r^\varpi$ resp. $\widetilde{\chi'}_r^\varpi$ by $\widetilde{\chi}_r$ resp. $\widetilde{\chi}'_r$.  Using the corresponding Hecke algebra isomorphism $\Psi_{\widetilde{\chi}_r}$ resp. $\Psi_{\widetilde{\chi}'_r}$ we derive the following equalities:
\begin{align*}
 & \mbox{the scalar by which $\phi_{r,\chi}$ acts on $i^{G_r}_{B_r}(\widetilde{\chi}_r \, \eta)^{\rho_r}$} 
\\
= ~ & \mbox{the scalar by which $\Psi_{\widetilde{\chi}_r}(\phi_{r,\chi})$ acts on $i^{H_{\chi_r}}_{B_{H_{\chi_r}}}(\eta)^{\rm triv}$} \\
= ~& \mbox{the scalar by which $\Psi_{\widetilde{\chi'}_r}(\phi_{r,\chi'})$ acts on 
$i^{H_{\chi'_r}}_{B_{H_{\chi'_r}}}(\, ^w\eta)^{\rm triv}$} \\
= ~ & \mbox{the scalar by which $\phi_{r,\chi'}$ acts on 
$i^{G_r}_{B_r}(\widetilde{\chi'}_r \, ^w \eta)^{\rho'_r}$} \\
= ~
& \mbox{the scalar by which $\phi_{r,\chi'}$ acts on 
$i^{G_r}_{B_r}(\widetilde{\chi}_r \, \eta)^{\rho'_r}$}.
\end{align*}
The first and third equalities follow from Proposition \ref{Z_comm_diag_prop}.  The second equality follows, 
via Proposition \ref{test_fcn_image} and the Bernstein isomorphism, from the equality 
$$
\sum_{\nu \in W_{\chi}\mu^{1*}_\chi} \eta(\varpi^\nu) = \sum_{\nu' \in W_{\chi'}\mu^{1*}_{\chi'}} \,^w\eta(\varpi^{\nu'})
$$
(note that the correspondence $\nu \mapsto \nu' := \, ^w\nu$ gives an equality term-by-term).  The fourth equality is obvious.
\end{proof}

The lemma implies easily that the function
$$
1_{wt} ~ \phi_{r, [\chi]} - \phi_{r, [\chi]}~ 1_{wt}
$$
annihilates every irreducible smooth representation of $G_r$, hence by the separation lemma (\cite{Be}, Lemma 9), the function is zero.  Since this holds for every $w,t$, we have proved that $\phi_{r, [\chi]} \in \mathcal Z(G_r, I^+_r)$, as desired.
\end{proof}

We are about to use (\ref{phi_r_as_sum}) to give a very explicit formula for $\phi_{r,1}$, but the centrality of $\phi_{r,1}$ is still far from obvious from the explicit formula itself.

\subsection{Explicit formula for the test function $\phi_{r,1}$}

\begin{prop} \label{exp_test_fcn}
Set $q := p^r$.  With respect to the Haar measure $dx$ on $G_r$ which gives $I^+_r$ volume 1, the test function $\phi_{r,1}$ is given by the formula ($t \in T(\mathbb F_q)$, $\, w \in \widetilde{W} \hookrightarrow G_r$)
\begin{equation}
\phi_{r,1}(I^+_r \, t w^{-1} \, I^+_r) = \begin{cases} 0,\,\,\,\,\, \mbox{if $w \notin {\rm Adm}^{G}(\mu)$} \\
0, \,\,\,\,\, \mbox{if $w \in {\rm Adm}^{G}(\mu)$ but $N_r(t) \notin T_{S(w)}(\mathbb F_p)$} \\ 
(-1)^d \, (p-1)^{d-|S(w)|} \, (1-q)^{|S(w)|-d-1},  \,\,\, \mbox{otherwise}. \end{cases}
\end{equation}
\end{prop}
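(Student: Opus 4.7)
The plan is to obtain the formula by substituting Definition \ref{test_fcn} into the identity (\ref{phi_r_as_sum}) and evaluating the resulting sum over $\chi$ by character orthogonality. Fix $w \in \widetilde W$ and $t \in T(\mathbb F_q)$. First I would note that $[I_r:I_r^+] = (q-1)^d$, since $I_r/I_r^+$ is the image of $T(\mathcal O_{L_r})$ in $I_r/I_r^+$, which is the $(\mathbb F_p^\times)$-part coming only from the $\mathrm{GL}_d$-factor (the $\mathbb G_m$-factor contributes trivially to $I_r/I_r^+$ by construction of $I^+$ in the introduction). Substituting Definition \ref{test_fcn} with $w$ replaced by $w^{-1}$ gives
\begin{equation*}
\phi_{r,1}(tw^{-1}) \;=\; (q-1)^{-d}\, k_{\mu^*,r}(w^{-1})\, \sum_{\chi \in T(\mathbb F_p)^\vee}\, \delta^1(w,\chi)\,\chi_r^{-1}(t).
\end{equation*}

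Next I would handle the vanishing cases. Since $k_{\mu^*,r}$ is supported on $\mathrm{Adm}(\mu^*)$ (see the Appendix, Proposition \ref{suppz}), we have $k_{\mu^*,r}(w^{-1}) = 0$ unless $w^{-1} \in \mathrm{Adm}(\mu^*)$, i.e., unless $w \in \mathrm{Adm}(\mu)$. This yields the first vanishing case. Now assume $w \in \mathrm{Adm}(\mu)$. By Lemma \ref{chi_S_triv}, the condition $\delta^1(w,\chi) = 1$ is equivalent to $\chi$ being trivial on $T_{S(w)}(\mathbb F_p)$, i.e., $\chi$ factors through the projection $T(\mathbb F_p) \twoheadrightarrow T^{S(w)}(\mathbb F_p)$. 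Denoting by $\bar t \in T^{S(w)}(\mathbb F_p)$ the image of $N_r(t)$, orthogonality of characters on the finite abelian group $T^{S(w)}(\mathbb F_p)$ gives
\begin{equation*}
\sum_{\chi \in T(\mathbb F_p)^\vee}\delta^1(w,\chi)\,\chi_r^{-1}(t) \;=\; \sum_{\bar\chi \in T^{S(w)}(\mathbb F_p)^\vee}\bar\chi^{-1}(\bar t) \;=\; \begin{cases} (p-1)^{d-|S(w)|}, & \bar t = 1,\\ 0, & \bar t \neq 1, \end{cases}
\end{equation*}
which accounts for the second vanishing case (when $N_r(t)\notin T_{S(w)}(\mathbb F_p)$).

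Finally, for the non-vanishing case, I would compute $k_{\mu^*,r}(w^{-1})$ explicitly. Using $k_{\mu^*,r}(w^{-1}) = k_{\mu,r}(w)$ (from the general identity $z_{\mu^*,r}(w^{-1}) = z_{\mu,r}(w)$ together with $\langle \rho,\mu\rangle = \langle\rho,\mu^*\rangle$) and the formula (\ref{Ko_fcn_GLd}) together with Proposition \ref{main_comb_prop}(c), which gives $\ell(t_{\mu_0})-\ell(w) = |S(w)|-1$, we obtain $k_{\mu,r}(w) = (1-q)^{|S(w)|-1}$. Assembling everything,
\begin{equation*}
\phi_{r,1}(tw^{-1}) \;=\; (q-1)^{-d}\,(p-1)^{d-|S(w)|}\,(1-q)^{|S(w)|-1},
\end{equation*}
and rewriting $(q-1)^{-d} = (-1)^d(1-q)^{-d}$ yields exactly $(-1)^d\,(p-1)^{d-|S(w)|}\,(1-q)^{|S(w)|-d-1}$, as claimed.

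There is no serious obstacle in this argument; the proof is essentially bookkeeping once the identification of $I_r/I_r^+$ with $T(\mathbb F_q)$ (on the $\mathrm{GL}_d$-side only) and the combinatorial input from Proposition \ref{main_comb_prop}(c) and (\ref{Ko_fcn_GLd}) are in place. The only subtle point worth double-checking is the precise index of $I_r^+$ in $I_r$, since this determines whether the ambient factor is $(q-1)^d$ or $(q-1)^{d+1}$; the conventions from the introduction (the $\mathbb Z_p^\times$-factor sits entirely inside $I^+$) give the former, which is what matches the stated formula.
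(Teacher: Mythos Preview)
Your proof is correct and follows essentially the same route as the paper's: both start from (\ref{phi_r_as_sum}) and Definition \ref{test_fcn}, use Lemma \ref{chi_S_triv} to reduce the sum over $\chi$ to characters of $T^{S(w)}(\mathbb F_p)$, apply character orthogonality, and plug in the explicit value $k_{\mu,r}(w)=(1-q)^{|S(w)|-1}$ from (\ref{Ko_fcn_GLd}) and Proposition \ref{main_comb_prop}(c). Your extra care about $[I_r:I_r^+]=(q-1)^d$ is well-placed; the paper states this explicitly just before Proposition \ref{central_prop}, confirming your reading of the conventions.
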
 
  
\begin{proof}
We shall use the explicit formulas for the $\phi_{r,\chi}$ given in Definition \ref{test_fcn},  along with the formula  
$$
\phi_{r,0}(w^{-1}) = k_{\mu,r}(w) = \begin{cases} (1-q)^{\ell(t_{\mu})-\ell(w)} = (1-q)^{|S(w)|-1}, \spa \mbox{if $w \in {\rm Adm}^G(\mu)$} \\
0, \spa \mbox{otherwise } \end{cases}
$$
(cf. Proposition \ref{main_comb_prop} (c)).

By Definition \ref{test_fcn} and Lemma \ref{chi_S_triv}, $\phi_{r,\chi}(tw^{-1})$ is zero, unless $w \in {\rm Adm}(\mu)$ and $\chi$ factors through the quotient $^wT(\mathbb F_p) = T(\mathbb F_p)/T_{S(w)}(\mathbb F_p)$ (cf. Lemma \ref{T_S_vs_wT})), in which case it is given by 
$$
\phi_{r,\chi}(tw^{-1}) = \chi_r^{-1}(t) ~ (1-q)^{|S(w)|-1}.
$$
Thus, for $w \in {\rm Adm}(\mu)$, equation (\ref{phi_r_as_sum}) implies
\begin{align*}
\phi_{r,1}(I^+_r \, tw^{-1} \, I^+_r) &= (q-1)^{-d} \, \sum_{\chi \in \, ^wT(\mathbb F_p)^\vee} \chi^{-1}_r(t) ~ (1-q)^{|S(w)|-1}\\
&= \begin{cases} 0, \spa \spa \mbox{if $N_r(t) \notin T_{S(w)}(\mathbb F_p)$} \\
(p-1)^{d-|S(w)|} ~ (1-q)^{|S(w)|-1} ~ (q-1)^{-d}, \spa \spa \mbox{otherwise}.
\end{cases}
\end{align*}
The proposition follows.
\end{proof}

\subsection{Compatibility with change of level}

Convolution with $e_{I_r}$ gives an algebra homomorphism $\mathcal Z(G_r, I^+_r) \rightarrow \mathcal Z(G_r,I_r)$.

\begin{cor} Let $\phi_{r,0}$ denote the Iwahori-level test function (computed using the measure which gives $I_r$ volume 1).  Then we have
$$e_{I_r} \phi_{r,1} = [I_r:I^+_r]^{-1} \, \phi_{r,0}.$$
\end{cor}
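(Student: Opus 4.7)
My plan is to directly compute $e_{I_r} * \phi_{r,\chi}$ for each character $\chi \in T(\mathbb{F}_p)^\vee$, using the sum decomposition \eqref{phi_r_as_sum}, and show that all terms vanish except the one corresponding to the trivial character, which reproduces $\phi_{r,0}$.

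First I would recall the normalization. Since $dx$ gives $I^+_r$ volume $1$, the identity element of $\mathcal H(G_r, I^+_r)$ is $\mathbf 1_{I^+_r}$, while $\mathbf 1_{I_r} * \mathbf 1_{I_r} = [I_r : I^+_r] \, \mathbf 1_{I_r}$, so that $e_{I_r} = [I_r:I^+_r]^{-1} \mathbf 1_{I_r}$. Next, for each $\chi \in T(\mathbb F_p)^\vee$ the function $\phi_{r,\chi}$ lies in $\mathcal H(G_r, I_r, \chi_r)$, hence satisfies $\phi_{r,\chi}(i^{-1}g) = \chi_r(i) \, \phi_{r,\chi}(g)$ for $i \in I_r$. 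Using the Iwahori decomposition $I_r = \coprod_{t \in T(k_r)} t \, I^+_r$ and the fact that $I^+_r$ has volume $1$, a direct computation gives
\[
(\mathbf 1_{I_r} * \phi_{r,\chi})(g) \;=\; \phi_{r,\chi}(g) \, \int_{I_r} \chi_r(i)\,di \;=\; \phi_{r,\chi}(g) \, \sum_{t \in T(k_r)} \chi(N_r(t)).
\]
Since the norm map $N_r: T(k_r) \to T(\mathbb F_p)$ is surjective, the character $\chi_r = \chi \circ N_r$ is trivial if and only if $\chi$ itself is trivial; by orthogonality of characters the inner sum vanishes unless $\chi = \mathrm{triv}$, in which case it equals $|T(k_r)| = [I_r:I^+_r]$. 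Therefore
\[
e_{I_r} * \phi_{r,\chi} \;=\; \begin{cases} \phi_{r,\mathrm{triv}}, & \chi = \mathrm{triv},\\ 0, & \chi \neq \mathrm{triv}. \end{cases}
\]

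Combining this with \eqref{phi_r_as_sum} yields
\[
e_{I_r} * \phi_{r,1} \;=\; [I_r:I^+_r]^{-1} \sum_{\chi \in T(\mathbb F_p)^\vee} e_{I_r} * \phi_{r,\chi} \;=\; [I_r:I^+_r]^{-1} \, \phi_{r,\mathrm{triv}}.
\]
The final step is to identify $\phi_{r,\mathrm{triv}}$ with $\phi_{r,0}$ as functions on $G_r$. For $\chi = \mathrm{triv}$ we have $\delta^1(w^{-1}, \mathrm{triv}) = 1$ for every $w \in \mathrm{Adm}(\mu)$ (the trivial character is trivial on every $T_S(\mathbb F_p)$) and $\chi_r^{-1}(t) = 1$, so Definition \ref{test_fcn} gives $\phi_{r,\mathrm{triv}}(tw^{-1}) = k_{\mu^*,r}(w^{-1})$, which matches the value of $\phi_{r,0}$ on $I_r w^{-1} I_r$ by Proposition \ref{A_0_tr_Fr}. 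This completes the identification.

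The argument is essentially a character-orthogonality calculation together with the observation that the $\chi=\mathrm{triv}$ summand of $\phi_{r,1}$ is by design the Iwahori-level test function $\phi_{r,0}$; there is no real obstacle, only the mildly delicate bookkeeping of Haar measure normalizations (since $\phi_{r,1}$ and $\phi_{r,0}$ are set up with different conventions $\mathrm{vol}(I^+_r)=1$ vs.\ $\mathrm{vol}(I_r)=1$), which is precisely what the factor $[I_r:I^+_r]^{-1}$ absorbs.
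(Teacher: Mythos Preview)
Your proof is correct and follows essentially the same approach as the paper: multiply the decomposition \eqref{phi_r_as_sum} by $e_{I_r}$ and use that $e_{I_r}$ annihilates each $\phi_{r,\chi}$ with $\chi \neq \mathrm{triv}$, together with the identification $\phi_{r,\mathrm{triv}} = \phi_{r,0}$ noted after Definition~\ref{test_fcn}. The paper's one-line proof leaves the orthogonality step implicit (it is encoded in the idempotent relations $e_\xi e_{\xi'} = \delta_{\xi,\xi'} e_\xi$ established in the proof of Proposition~\ref{central_prop}, with $e_{I_r} = e_{\mathrm{triv}}$); you have simply spelled this out as a direct character-sum computation.
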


\begin{proof} Recall (Definition \ref{test_fcn}) that $\phi_{r, 0}$ coincides with $\phi_{r, {\rm triv}}$, where ${\rm triv}$ denotes  the trivial character.  Now multiply (\ref{phi_r_as_sum}) by $e_{I_r}$.
\end{proof}

\section{Pseudo-stabilization of the counting points formula}\label{pseud_stab_sec}

\subsection{Parabolic induction and the local Langlands correspondence}

First we let  $G$ denote any split connected reductive group over a $p$-adic field $F$.  We assume that the local Langlands correspondence holds for $G(F)$ and all of its $F$-Levi subgroups.  Let $W'_F = W_F \ltimes \bar{\QQ}_\ell$ denote the Weil-Deligne group, and for an irreducible smooth representation $\pi$ of $G(F)$ denote the Langlands parameter associated to $\pi$ by
$$
\varphi'_\pi: W'_F \rightarrow \widehat{G}(\bar{\QQ}_\ell).
$$
In what follows we will write $\widehat{G}$ instead of $\widehat{G}(\bar{\QQ}_\ell)$.   We are working with $\bar{\QQ}_\ell$ instead of $\mathbb C$, so we need to specify how the correspondence should be normalized.  Following Kottwitz \cite{K92b}, we choose a square root $\sqrt{p}$ of $p$ in $\bar{\QQ}_\ell$, and use it to define the square-root $| \cdot |_F^{1/2}: F^\times \rightarrow \bar{\QQ}_\ell^\times$ of the normalized absolute value $|\cdot|_F: F^\times \rightarrow \QQ^\times$.  Use this to define (unitarily normalized) induced representations such as $i^G_B(\xi)$ and to normalize the local Langlands correspondence.

Let $P = MN$ be an $F$-parabolic subgroup, with $F$-Levi factor $M$ and unipotent radical $N$.  Suppose $\pi$ resp. $\sigma$ is an irreducible smooth representation of $G(F)$ resp. $M(F)$.  There is an embedding $\widehat{M} \hookrightarrow \widehat{G}$, which is well-defined up to conjugation in $\widehat{G}$.  Then if $\pi$ is a subquotient of $i^G_P(\sigma)$, we expect that 
$$
\varphi'_\pi\vert_{W_F}: W_F \rightarrow \widehat{G}
$$
is $\widehat{G}$-conjugate to
$$
\varphi'_\sigma\vert_{W_F}: W_F \rightarrow \widehat{M} \hookrightarrow \widehat{G}.
$$

\noindent {\em CAUTION:} The expectation concerns {\em the restrictions of the parameters to the subgroup $W_F$ of $W'_F$}; obviously it is not true for the parameters themselves.

\begin{Remark}
The above expectation is true when $G = {\rm GL}_d$.  This  is a consequence of the way the local Langlands correspondence for supercuspidal representations is extended to all irreducible representations, cf. \cite{Rod}, $\S$4.4, which we now review.  Any representation $\varphi': W'_F \rightarrow {\rm GL}_d(\bar{\mathbb Q}_\ell)$  can be written as a sum of indecomposable representations
$$
\varphi' = (\tau_1 \otimes {\rm sp}(n_1)) \oplus \cdots \oplus (\tau_r \otimes {\rm sp}(n_r))\ ,
$$ where each $\tau_i$ is an irreducible  representation of $W'_F$ of dimension $d_i$, and 
 ${\rm sp}(n_i)$ is the special representation of dimension $n_i$.   If $\pi(\tau_i)$ is the supercuspidal representation of ${\rm GL}_{d_i}$ corresponding to $\tau_i$, then the local Langlands correspondence associates to $\varphi'$ the Bernstein-Zelevinski quotient 
$L(\Delta_1, \dots, \Delta_r)$, where $\Delta_i$ is the segment $[\pi(\tau_i), \pi(\tau_i(n_i-1))]$, cf.~\cite{Rod}.   This is because of Jacquet's calculation in \cite{Jac},  showing that the $L$- and $\epsilon$- factors of $\varphi'$ and of $L(\Delta_1, \dots, \Delta_r)$ agree, knowing the corresponding fact for each $\tau_i$ and $\pi(\tau_i)$.   Moreover, it is clear that any representation with the same supercuspidal support as $L(\Delta_1, \cdots, \Delta_r)$ has the same $\varphi$-parameter (up to permutation of the factors $\tau_i\, |\cdot|^j$, that is, up to $\widehat{G}$-conjugacy).    
\end{Remark}

\begin{Remark} \label{LLC_comp_rem}  We use the above compatibility only in the special case where $G= {\rm GL}_d$ and $M = T$ is the diagonal torus.  Recall that, for any $F$-split torus $T$, the Langlands parameter $\varphi'_{\xi}: W_F \rightarrow \widehat{T}(\bar{\QQ}_\ell)$ associated to a smooth character $\xi: T(F) \rightarrow \bar{\QQ}_\ell^\times $ (here $\widehat{T}(\bar{\QQ}_\ell)$ and $\bar{\QQ}_\ell^\times$ have the discrete topology) is given by the Langlands isomorphism $\xi \leftrightarrow \varphi'_\xi$
\begin{equation} \label{LLC_split_tori}
{\rm Hom}_{\rm conts}(T(F), \bar{\mathbb Q}_\ell^\times) = H^1_{\rm conts}(W_F, \widehat{T}) = 
{\rm Hom}_{\rm conts}(W_F, \widehat{T}),
\end{equation}
which is defined by requiring, for every $\nu \in X_*(T) = X^*(\widehat{T})$ and every $w \in W_F$,  the equality
\begin{equation} \label{LLC_comp_eq1}
\nu(\varphi'_\xi(w)) = \xi(\nu(\tau_F(w))).
\end{equation}
Here $\tau_F : W_F \rightarrow F^\times$ induces the isomorphism ${\rm Art}_F^{-1}: W^{\rm ab}_F ~ \widetilde{\rightarrow} ~ F^\times$ of local class field theory, and is normalized so that a geometric Frobenius $\Phi \in W_F$ is sent to a uniformizer in $F$.  

Suppose $\Phi \in W_F$ is a geometric Frobenius such that $\tau_F(\Phi) = \varpi$.  The compatibility above then asserts that, if $\pi$ is a subquotient of $i^G_B(\xi)$, then for any $\nu \in X_*(T) = X^*(\widehat{T})$ we have
\begin{equation} \label{LLC_comp_eq2}
\nu(\varphi'_{\pi}(\Phi)) = \xi(\varpi^\nu).
\end{equation} 
\end{Remark}

\subsection{A spectral lemma}

We now turn to the spectral characterization of the image of the test functions $\phi_{r, \chi}$ under the base change homomorphism (\ref{norm_map}). So here the local field is $F = \mathbb Q_p$, and the  reductive group $G$ over $F$ is the localization at $p$ of our global group.  We will often write $G$ in place of $G(\mathbb Q_p)$, etc.

Fix $\chi$ as before and set $f_{r, \chi} = b_r(\phi_{r, \chi}) \in \mathcal Z(G,I,\chi)$.  Given an irreducible smooth representation $\pi$ of $G(\mathbb Q_p)$, consider the homomorphism
$$
\varphi_\pi: W_{{\mathbb Q}_p} \times {\rm SL}_2(\bar{\QQ}_\ell) \rightarrow \widehat{G}(\bar{\QQ}_\ell)
$$
which is constructed from $\varphi'_\pi$ using the Jacobson-Morozov theorem.  Note that the $\widehat{G}$-conjugacy class of $\varphi_\pi$ is well-defined.  Let $\Phi \in W_{{\mathbb Q}_p}$ denote any geometric Frobenius element.  Let $I_p \subset W_{{\mathbb Q}_p}$ denote the inertia subgroup.  

\begin{lemma} \label{spectral_lemma}  Let $\mu$ denote the cocharacter of $G_{\mathbb Q_p}$ which under the isomorphism $G_{\mathbb Q_p} \cong {\rm GL}_d \times {\mathbb G}_m$ corresponds to $(\mu_0,1)$.  Let $\mu^* = -w_0(\mu)$ denote its dual coweight.  Let $(V_{\mu^*},r_{\mu^*})$ denote the irreducible representation of $^L(G_{{\mathbb Q}_p})$ with extreme weight $\mu^*$.   Let $\pi$ be an irreducible smooth representation of $G(\mathbb Q_p)$.  Then $f_{r, \chi}  \in \mathcal Z(G,I,\chi)$ acts by zero on $\pi$,  unless $\pi$ belongs to the category $\mathcal R_{\mathfrak s_\chi}(G)$.  In that case, $f_{r, \chi} $ acts on $\pi^{\rho_\chi}$ by the scalar
\begin{equation} \label{Tr(Phi^r_V_mu*)}
p^{r \langle \rho, \mu^*\rangle }\, {\rm Tr}\big(r_{\mu^*} \circ \varphi_{\pi}(\Phi^r \times \begin{tiny} \begin{bmatrix} p^{-r/2} & 0 \\ 0 & p^{r/2} \end{bmatrix} \end{tiny}) , \spa V_{\mu^*}^{I_p}\big).
\end{equation}
\end{lemma}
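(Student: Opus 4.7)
The plan is to reduce the spectral computation to the Iwahori-Hecke algebra of the Levi subgroup $M = M_\chi$ via the Hecke algebra isomorphism $\Psi_{\widetilde \chi}$ and the base change compatibility of Lemma \ref{HA_comp_lemma}, then match the resulting scalar with the trace on inertia invariants using the compatibility of the local Langlands correspondence with parabolic induction (Remark \ref{LLC_comp_rem}).

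First, since $\phi_{r,\chi} \in \mathcal Z(G_r, I_r, \chi_r)$ by Proposition \ref{test_fcn_image}, its image $f_{r,\chi} = b_r(\phi_{r,\chi})$ lies in $\mathcal Z(G, I, \chi)$ and hence acts as a scalar on $\pi^{\rho_\chi}$ by Schur's lemma; this scalar vanishes when $\pi^{\rho_\chi} = 0$, which is precisely the case $\pi \notin \mathcal R_{\mathfrak s_\chi}(G)$ by the Bushnell-Kutzko-Roche theory. For $\pi \in \mathcal R_{\mathfrak s_\chi}(G)$ the scalar depends only on the point of $\mathfrak X_{\mathfrak s_\chi}$ determined by the supercuspidal support, so I may assume $\pi$ is a subquotient of $i^G_B(\widetilde \chi^\varpi \eta)$ for some unramified character $\eta$ of $T(F)$.

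Next, Proposition \ref{test_fcn_image} gives $\Psi_{\widetilde \chi_r}(\phi_{r,\chi}) = q^{\langle \rho_B, \mu^* \rangle - \langle \rho_{B_M}, \mu_\chi^{1*}\rangle} \, k^M_{\mu_\chi^{1*}, r}$, which vanishes when $\mathcal O^{\rm triv}_\chi = \emptyset$ (in which case also $V_{\mu^*}^{I_p} = 0$ by the argument below, and the lemma is trivial). By Lemma \ref{HA_comp_lemma}(2) with $H = M$, the scalar by which $f_{r,\chi}$ acts on $i^G_B(\widetilde \chi^\varpi \eta)^{\rho_\chi}$ equals the scalar by which $\Psi_{\widetilde \chi_r}(\phi_{r,\chi})$ acts on $i^{M_r}_{B_{M_r}}(\eta \circ N_r)^{I_{M_r}}$. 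The Bernstein isomorphism for $\mathcal Z(M_r, I_{M_r})$, combined with $(\eta \circ N_r)(\varpi^\lambda) = \eta(\varpi^{r\lambda})$ and Proposition \ref{main_comb_prop}(b) (identifying $W_M \mu_\chi^{1*}$ with $\{-e_j : j \in \mathcal O^{\rm triv}_\chi\}$), gives this scalar as
\begin{equation*}
p^{r\langle \rho, \mu^* \rangle} \, \sum_{j \in \mathcal O^{\rm triv}_\chi} \eta(\varpi^{-r e_j}).
\end{equation*}

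To match this with the trace in (\ref{Tr(Phi^r_V_mu*)}), Remark \ref{LLC_comp_rem} identifies the restriction of $\varphi'_\pi$ to $W_F$ with the parameter $\varphi_{\widetilde \chi^\varpi \eta} : W_F \to \widehat T$, characterized by $\nu(\varphi_{\widetilde \chi^\varpi \eta}(\Phi)) = \eta(\varpi^\nu)$ on Frobenius and by inertia acting through $\chi$ on $T(\mathcal O)/T_1$. Since $\mu^*$ is minuscule, $r_{\mu^*}|_{\widehat T} = \bigoplus_{\lambda \in W\mu^*} V_\lambda$ with one-dimensional weight spaces $V_\lambda$; inertia acts on $V_{-e_j}$ through $\chi_j^{-1}$, so $V_{\mu^*}^{I_p} = \bigoplus_{j \in \mathcal O^{\rm triv}_\chi} V_{-e_j}$. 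The standard conversion between an ${\rm SL}_2$-parameter and the Frobenius-semisimplified Weil-Deligne parameter gives $\rho^{\rm F\text{-}ss}_\pi(\Phi^r) = \varphi_\pi(\Phi^r \times {\rm diag}(p^{-r/2}, p^{r/2}))$, so the trace in (\ref{Tr(Phi^r_V_mu*)}) becomes $\sum_{j \in \mathcal O^{\rm triv}_\chi} \eta(\varpi^{-re_j})$, matching the scalar above. The hard part is this final identification: for a possibly non-generic subquotient $\pi$ of a depth-zero principal series, one needs to know that the Frobenius-semisimplification of $\varphi'_\pi$, composed with $r_{\mu^*}$ and restricted to $V_{\mu^*}^{I_p}$, agrees with the formula dictated by the supercuspidal support, an input from the local Langlands correspondence for ${\rm GL}_d$ and its compatibility with parabolic induction; once this is in hand, the rest is a direct bookkeeping using the Hecke algebra machinery already developed.
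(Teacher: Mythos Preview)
Your proof is correct and follows essentially the same route as the paper's: reduce to a subquotient of $i^G_B(\widetilde\chi^\varpi\eta)$, transport through $\Psi_{\widetilde\chi}$ and Lemma~\ref{HA_comp_lemma} to the Iwahori-Hecke algebra of $M_\chi$, invoke Proposition~\ref{test_fcn_image} and the Bernstein isomorphism to obtain the scalar $p^{r\langle\rho,\mu^*\rangle}\sum_{\nu\in W_\chi\mu^{1*}_\chi}\eta(\varpi^{r\nu})$, and match it with the trace on $V_{\mu^*}^{I_p}$ via the compatibility (\ref{LLC_comp_eq2}). The only cosmetic difference is that the paper computes the trace side first and then identifies it with the Hecke scalar, whereas you proceed in the opposite order. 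Your closing worry about non-generic subquotients is unnecessary: the paper simply cites (\ref{LLC_comp_eq2}), which holds for any subquotient because the restriction $\varphi'_\pi|_{W_F}$ depends only on the supercuspidal support (this is the content of Remark~\ref{LLC_comp_rem} for ${\rm GL}_d$), so no further Frobenius-semisimplification argument is needed.
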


Note that $p^{r \langle \rho, \mu^* \rangle }$ and ${\rm Tr}(r_{\mu^*}\circ \varphi_{\pi}(\cdots))$ each depend on the choice of $\sqrt{p} \in \bar{\QQ}_\ell^\times$ used to normalize the correspondence $\pi \leftrightarrow \varphi_\pi$, but their product is independent of that choice.

\begin{proof}
The first part of the statement is clear, so we may assume $\pi$ belongs to $\mathcal R_{\mathfrak s_\chi}(G)$. 

Without loss of generality, we may assume $\tau_{\mathbb Q_p}(\Phi) = \varpi = p$, viewed as a uniformizer in $\mathbb Z_p$.  Let $\widetilde{\chi}=\widetilde{\chi}^{\varpi}$ denote the $\varpi$-canonical extension of $\chi$.   There is an unramified character $\eta$ of $T(\mathbb Q_p))$ such that $\pi$ is an irreducible subquotient of $i^G_B(\widetilde{\chi}^\varpi \, \eta)$.  To ease notation, set $\xi := \widetilde{\chi}\, \eta$.

Now we consider the Langlands homomorphism $\varphi'_{\xi} : W_{\mathbb Q_p} \rightarrow \widehat{T}$.  Following Roche (\cite{Ro}, p.~394), we see that the restriction of $\varphi'_\xi$ to the inertia subgroup $I_p \subset W_{\mathbb Q_p}$ depends only on $\chi$, and we denote that restriction by $\tau_\chi$.  We consider its image ${\rm im}(\tau_\chi) = \tau_\chi(I_p) \subset \widehat{T}$.  

It is clear that $V_{\mu^*}^{I_p} = V_{\mu^*}^{{\rm im}(\tau_\chi)}$ is the sum of $\widehat{T}$-weight spaces 
$$
\sum_{\nu} V_{\mu^*}(\nu) ,
$$
where $\nu$ runs over the characters of $\widehat{T}$ such that $\nu({\rm im}(\tau_\chi)) = 1$ or, equivalently (cf. (\ref{LLC_comp_eq1})), such that 
$\chi \circ \nu(\mathbb Z_p^\times) = 1$ (viewing $\nu$ as a cocharacter).  

Now because $\chi$ is a character which is 
trivial on the $\mathbb G_m$-factor of $T = (\mathbb G_m^d) \times \mathbb G_m$, to continue we may ignore that $\mathbb G_m$-factor and proceed as if $G = {\rm GL}_d$.  In that case $(V_{\mu^*}, r_{\mu^*})$ is just the contragredient of the standard representation of $\widehat{G} = {\rm GL}_d(\bar{\QQ}_\ell)$ and so the weights of $V_{\mu^*}$ are just the vectors $-e_i,\,\,$ ($i = 1, \dots, d$).  If $\nu = -e_i$, we have $\chi \circ \nu(\mathbb Z_p^\times) = 1$ if and only if $\chi_i = {\rm triv}$.

So, if there is no index $i$ with $\chi_i = {\rm triv}$, then $V_{\mu^*}^{I_p} = 0$.  But then we also have $f_{r, \chi} = 0$ (cf. Definition \ref{test_fcn}), and so the lemma holds in this case.

Now assume there is such an index $i$, and as before denote by $\mu^1_\chi$ the $M_\chi$-dominant element of the $W_\chi$-orbit 
$$
\{ e_i ~ | ~ \chi \circ (-e_i)(\mathbb Z_p^\times) =1 \}.
$$ 
Then we find that (\ref{Tr(Phi^r_V_mu*)}) is equal to 
\begin{equation} \label{scalar}
p^{r \langle \rho, \mu^* \rangle} \, \sum_{\nu \in W_\chi \mu^{1*}_\chi} \nu(\varphi'_{\pi}(\Phi^r)) = 
p^{r \langle \rho, \mu^* \rangle} \, \sum_{\nu \in W_\chi \mu^{1*}_\chi} \eta(\varpi^{r\nu}),
\end{equation}
where we used (\ref{LLC_comp_eq2}),  together with $\widetilde{\chi} \eta(\varpi^{r\nu}) = \eta(\varpi^{r\nu})$.

On the other hand, using Proposition \ref{test_fcn_image}, the quantity (\ref{scalar}) is easily seen to be the scalar by which 
$\Psi_{\widetilde{\chi}}(f_{r, \chi} ) = b_r (\Psi_{\widetilde{\chi}}(\phi_{r,\chi}))$ acts on $i^{M_\chi}_{B_{M_\chi}}(\eta)^{I_{M_\chi}}$.  By Lemma \ref{HA_comp_lemma}, this is the scalar by which $f_{r, \chi} $ acts on $i^G_B(\widetilde{\chi} \, \eta)^{\rho_\chi}$.  This completes the proof. 
\end{proof}

\begin{cor} \label{spectral_cor}
Let $f_{r, 1} = [I:I^+]^{-1} \sum_{\chi \in T({\mathbb F}_p)^\vee}f_{r, \chi} =b_r(\phi_{r, 1})$.  Then for every irreducible smooth representation $\pi$ of $G(\mathbb Q_p)$, we have
$$
{\rm tr} \, \pi(f_{r, 1} ) = {\rm dim}(\pi^{I^+}) \, p^{r \langle \rho, \mu^* \rangle} \, {\rm Tr}\big(r_{\mu^*}\circ  \varphi_{\pi}(\Phi^r \times \begin{tiny} \begin{bmatrix} p^{-r/2} & 0 \\ 0 & p^{r/2} \end{bmatrix} \end{tiny}) ,\spa V_{\mu^*}^{I_p}\big).
$$
Here, the trace on the left hand side is computed using the Haar measure which gives $I^+$ volume 1.
\end{cor}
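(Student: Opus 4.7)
The plan is to reduce the statement to Lemma \ref{spectral_lemma} by decomposing $f_{r,1}$ into its depth-zero $\chi$-components and exploiting the key feature that the spectral scalar produced by that lemma depends only on $\pi$, not on $\chi$.

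First I would sort out the Haar measure conversions. The function $f_{r,1}$ is normalized using $dx_{I^+}$ (giving $I^+$ volume $1$), while each $f_{r,\chi}$ is normalized using $dx_I$ (giving $I$ volume $1$), and $dx_{I^+} = [I:I^+]\, dx_I$. Therefore the definition $f_{r,1} = [I:I^+]^{-1}\sum_\chi f_{r,\chi}$, viewed as an identity of functions on $G$, produces upon integration against $\pi$ the operator identity
\begin{equation*}
\pi(f_{r,1}) \;=\; \sum_{\chi \in T(\mathbb F_p)^\vee} \pi(f_{r,\chi}),
\end{equation*}
where the left-hand side is computed with $dx_{I^+}$ and each summand on the right with $dx_I$; the factor $[I:I^+]^{-1}$ is exactly absorbed by the change-of-measure.

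Next, Lemma \ref{spectral_lemma} provides the central input: for each $\chi$, the operator $\pi(f_{r,\chi})$ annihilates $\pi$ unless $\pi \in \mathcal R_{\mathfrak s_\chi}(G)$, in which case it acts on $\pi^{\rho_\chi}$ by the scalar
\begin{equation*}
S_\pi \;:=\; p^{r\langle \rho, \mu^*\rangle}\, \mathrm{Tr}\bigl(r_{\mu^*}\circ \varphi_\pi(\Phi^r \times \mathrm{diag}(p^{-r/2}, p^{r/2})), \, V_{\mu^*}^{I_p}\bigr)
\end{equation*}
and annihilates the other $T(\mathbb F_p)$-isotypic components of $\pi^{I^+}$ (by centrality of $f_{r,\chi}$ in $\mathcal H(G,I,\chi)$, which also guarantees $\pi(f_{r,\chi})$ preserves $\pi^{\rho_\chi}$). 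The essential observation --- which makes the statement as clean as it is --- is that $S_\pi$ is built entirely out of the Langlands parameter of $\pi$, with no reference to $\chi$. Consequently $\mathrm{tr}\,\pi(f_{r,\chi}) = \dim(\pi^{\rho_\chi})\cdot S_\pi$ for every $\chi$ (with the convention $\dim(\pi^{\rho_\chi})=0$ when $\pi \notin \mathcal R_{\mathfrak s_\chi}(G)$).

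Finally, I would conclude by summing. Decomposing the $I/I^+ = T(\mathbb F_p)$-representation $\pi^{I^+}$ into characters gives $\sum_\chi \dim(\pi^{\rho_\chi}) = \dim(\pi^{I^+})$, hence
\begin{equation*}
\mathrm{tr}\,\pi(f_{r,1}) \;=\; \sum_\chi \dim(\pi^{\rho_\chi})\cdot S_\pi \;=\; \dim(\pi^{I^+})\cdot S_\pi,
\end{equation*}
which is the desired formula. The only nontrivial ingredient is the $\chi$-independence of $S_\pi$ built into Lemma \ref{spectral_lemma}; everything else is linearity and bookkeeping with Haar measures.
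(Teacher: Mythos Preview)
Your proposal is correct and is precisely the argument the paper has in mind: the corollary is stated without proof immediately after Lemma \ref{spectral_lemma}, and your write-up supplies exactly the expected details (the Haar-measure bookkeeping, the $\chi$-independence of the scalar $S_\pi$, and the isotypic decomposition $\pi^{I^+}=\bigoplus_\chi \pi^{\rho_\chi}$). One tiny remark: the fact that $\pi(f_{r,\chi})$ lands in $\pi^{\rho_\chi}$ and kills the other isotypic pieces follows already from $f_{r,\chi}\in\mathcal H(G,I,\chi)$, without invoking centrality.
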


\begin{Remark}
The above is an equality of numbers in $\bar{\QQ}_\ell$.  We may also view it as an equality of numbers in $\mC$, when considering the usual framework of representations $\pi$ over $\mathbb C$,  and Langlands parameters taking values in $^LG = \widehat{G}(\mC) \rtimes W_{\mathbb Q_p}$.   We shall do this in what follows.
\end{Remark}

\subsection{Definition of the semi-simple local $L$-function} \label{L^ss_def}

We define the semi-simple local $L$-factor $L^{ss}(s,\pi_p,r_{\mu^*})$ by an expression like that on the right hand side of Corollary \ref{spectral_cor}, as follows.

Let $\pi_p$ denote an irreducible smooth representation of $G(\QQ_p)$ (on a $\mC$-vector space), and let $\varphi'_{\pi_p} : W'_{\QQ_p} \rightarrow \widehat{G}(\mC)$ denote its Langlands parameter.  Let $(V,r_V)$ denote a rational representation of $\widehat{G}(\mC)$.  We define the semi-simple trace of 
$$
\varphi'_{\pi_p}(\Phi^r) = \varphi_{\pi}(\Phi^r \times \begin{tiny} \begin{bmatrix} p^{-r/2} & 0 \\ 0 & p^{r/2} \end{bmatrix} \end{tiny})
$$
acting on $V$ by
$$
{\rm Tr}^{ss}(\varphi'_{\pi_p}(\Phi^r), V): = {\rm Tr}(\varphi'_{\pi_p}(\Phi^r),V^{I_p}).
$$
Here $V^{I_p}$ is an abbreviation for $V^{r_V\circ \varphi'_{\pi_p}(I_p)}$.  Now define the function $L^{ss}(s,\pi_p, r_V)$ of a complex variable $s$ by the identity
$$
{\rm log}(L^{ss}(s,\pi_p,r_V)) = \sum_{r=1}^\infty {\rm Tr}^{ss}(\varphi'_{\pi_p}(\Phi^r), V) \, \dfrac{p^{-rs}}{r}.
$$

\subsection{Determination of the semi-simple local zeta function} \label{L_factor_sec}

We now return to the notational set-up of section \ref{moduli_sec}. We make the additional assumption that  $D$ is a division algebra.  Then the group $G_{\rm der}$ is anisotropic over $\QQ$ and  $G$ has ``no endoscopy'' in the sense that the finite abelian groups $\mathfrak K(I_0/\mathbb Q)$ are all trivial, cf. \cite{K92b}. (Recall that $I_0 = G_{\gamma_0}$ for a semi-simple $\gamma_0 \in G(\QQ)$.)  
Furthermore, the moduli scheme 
$\A_1 = \A_{1,K^p}$ is proper over $\Spec \mZ_p$, cf. \cite{K92}. 
Our goal is to express $Z^{ss}_{\mathfrak p}(s, \A_{1, K^p})$, the local factor of the semi-simple zeta function, in terms of the local automorphic 
L-functions $L^{ss}(s, \pi_p, r_{\mu^*})$.  That is, we will prove Theorem \ref{main_tr_thm} and Corollary \ref{zeta_cor} in the Introduction.  Our strategy is exactly the same as that of Kottwitz \cite{K92b}, so we shall give only a brief outline after recalling the definitions.

In our case, where $E_{\mathfrak p} = \QQ_p$, the semi-simple local zeta function $Z^{ss}_{\mathfrak p}(s, \A_{1,K^p})$ is the function of a complex variable $s$ determined by the equality
\begin{align*}
{\rm log}(Z^{ss}_{\mathfrak p}(s,\A_{1,K^p})) &= \sum_{r=1}^\infty \Big(\sum_i (-1)^i {\rm Tr}^{ss}(\Phi^r_{\mathfrak p} \, , \, H^i(\A_{1,K^p} \otimes_E \bar{E}_\mathfrak p \, , \, \bar{\QQ}_\ell) \Big) \dfrac{p^{-rs}}{r} \\
&= \sum_{r=1}^\infty {\rm Lef}^{ss}(\Phi^r_\fp \, , \, R\Psi_1) ~ \dfrac{p^{-rs}}{r}.
\end{align*}
For the second equality we are using the identification
$$
H^i(\A_{1,K^p} \otimes_E \bar{E}_\fp \, , \, \bar{\QQ}_\ell) = H^i(\A_{1,K^p} \otimes \bar{\mF}_p \, ,\, R\Psi_1)
$$
and the Grothendieck-Lefschetz fixed-point theorem for semi-simple traces (see \cite{HN}).  The quantities
\begin{equation} \label{eq}
{\rm Lef}^{ss}(\Phi^r_\fp, R\Psi_1) = \sum_{(\gamma_0;\gamma, \delta)} c(\gamma_0;\gamma,\delta) \, {\rm O}_\gamma(f^p) \, {\rm TO}_{\delta \sigma}(\phi_{r,1}),
\end{equation}
which {\em a priori} belong only to $\bar{\QQ}_\ell$, actually belong to $\QQ$ because $\phi_{r,1}$ takes rational values (cf. Proposition \ref{exp_test_fcn}) and because of the choices for the various measures defining the right hand side (see \cite{K90}, $\S3$).  Thus $Z^{ss}_{\mathfrak p}(s, \A_{1,K^p})$ is a well-defined function of a complex variable $s$, independent of any choice of an isomorphism $\bar{\QQ}_\ell \cong \mC$.  

Let us simplify the right hand side of (\ref{eq}), following Kottwitz's method (and freely using his notation).  Arguing as in \cite{K90}, $\S4$ and 
\cite{K92b}, $\S5$, we see that $\mathfrak K(I_0/\QQ) = 1$ implies that the right hand side of (\ref{eq}) is equal to 
$$
\tau(G)\sum_{\gamma_0} \sum_{(\gamma,\delta)} e(\gamma,\delta) \cdot {\rm O}_\gamma(f^p) \cdot {\rm TO}_{\delta \sigma}(\phi_{r,1}) \cdot {\rm vol}(A_G(\mR)^\circ \backslash I(\mR))^{-1},
$$
where $I$ denotes the inner form of $I_0 = G_{\gamma_0}$ mentioned at the beginning of section \ref{counting_pts_sec}.  As in \cite{K92b}, p.662, $e(\gamma,\delta)$ is the product of the Kottwitz signs
$$
e(\gamma,\delta) = e(\gamma) e(\delta) e_\infty(\gamma_0)
$$
defined there.  The ``fundamental lemma'' (cf. Theorem \ref{fl}) asserts that for every semi-simple $\gamma_p \in G(\QQ_p)$,
\begin{equation} \label{fl_eq}
{\rm SO}_{\gamma_p}(f_{r, 1} ) = \sum_{\delta} e(\delta) \, {\rm TO}_{\delta \sigma}(\phi_{r,1}),
\end{equation}
where $\delta$ ranges over a set of representatives for the $\sigma$-conjugacy classes of elements $\delta \in G(L_r)$ such that $N_r(\delta)$ is conjugate to $\gamma_p$ in $G(\bar{\QQ}_p)$. Note that, since $G_{\mQ_p}\cong {\rm GL}_d\times \mG_m$, there is at most one summand on the right hand side of (\ref{fl_eq}). 

Letting $(-1)^{d-1}f_\infty$ denote a pseudo-coefficient of the packet $\Pi_\infty$, we have ${\rm SO}_{\gamma_0}(f_\infty) = 0$,  unless the semi-simple element $\gamma_0$ is $\mR$-elliptic, in which case 
$${\rm SO}_{\gamma_0}(f_\infty) = e_\infty(\gamma_0) \, {\rm vol}(A_G(\mR)^\circ \backslash I(\mR))^{-1} ,$$
 cf. \cite{K92b}, Lemma 3.1.  
Since $\gamma_0$ in (\ref{eq}) ranges only over $\mR$-elliptic elements, we  can put these remarks together to see
\begin{equation} \label{eq_f}
 {\rm Lef}^{ss}(\Phi^r_\fp, R\Psi_1) = \tau(G) \sum_{\gamma_0} {\rm SO}_{\gamma_0}( f^p \, f_{r, 1}\, f_\infty),
\end{equation}
where now $\gamma_0$ ranges over all stable conjugacy classes in $G(\QQ)$, not just the $\mR$-elliptic ones.

 Since $G_{\rm der}$ is anisotropic, the trace formula for any $f \in C^\infty_c(A_G(\mR)^\circ \backslash G(\mA))$ is given by
\begin{equation} \label{TF}
\sum_{\gamma} \tau(G_\gamma) \, {\rm O}_{\gamma}(f) = \sum_\pi m(\pi) \, {\rm tr} \, \pi(f),
\end{equation}
where $\gamma$ ranges over conjugacy classes in $G(\QQ)$ and $\pi$ ranges over irreducible representations in ${\rm L}^2(G(\QQ)A_G(\mR)^\circ \backslash G(\mA))$.   By \cite{K92b}, Lemma 4.1, the vanishing of all $\mathfrak K(I_0/\QQ)$ means that 
\begin{equation} \label{O=SO}
\sum_{\gamma} \tau(G_\gamma) \, {\rm O}_{\gamma}(f) = \tau(G) \sum_{\gamma_0} {\rm SO}_{\gamma_0}(f),
\end{equation}
where $\gamma_0$ ranges over a set of representatives for the stable conjugacy classes of $\gamma_0 \in G(\QQ)$.  Combining equations (\ref{eq_f}), (\ref{TF}), and (\ref{O=SO}), we get
\begin{equation} \label{tr_pf}
{\rm Lef}^{ss}(\Phi^r_\fp, R\Psi_1) = \sum_\pi m(\pi) \, {\rm tr} \, \pi(f^p f_{r, 1} f_\infty),
\end{equation}
which proves Theorem \ref{main_tr_thm}.

Now we can deduce Corollary \ref{zeta_cor}.  The right hand side of (\ref{tr_pf}) can be written as 
\begin{equation*}
\sum_{\pi_f} \sum_{\pi_\infty \in \Pi_\infty} m(\pi_f \otimes \pi_\infty) \cdot {\rm tr}\, \pi^p_f(f^p) \cdot {\rm tr} \, \pi_p(f_{r, 1} ) \cdot {\rm tr} \, \pi_\infty(f_\infty), 
\end{equation*}
that is, invoking Corollary \ref{spectral_cor}, as
\begin{equation*}
\sum_{\pi_f} a(\pi_f) \, {\rm dim}(\pi_f^K) \, p^{nr/2} \, {\rm Tr}^{ss}(\varphi'_{\pi_p}(\Phi^r) ,\ V_{\mu^*}).
 \end{equation*}
Corollary \ref{zeta_cor} follows easily from this and the definitions. \qed

\section{Appendix: Support of Bernstein functions}

\subsection{Nonstandard descriptions of the Bernstein functions}\label{nonst}

Let $\mathcal H$ denote the affine Hecke algebra associated to a based root system $\Sigma$, with parameters $v_s$ (for $s$ a simple affine reflection).  In the equal parameter case, each $v_s$ should be thought of as $q^{1/2}$.  Let $\widetilde{T}_{s} = v_s^{-1}T_s$.  For $w \in \widetilde{W}$ with reduced expression $w = s_1 \cdots s_r \tau$, write $\widetilde{T}_w := \widetilde{T}_{s_1} \cdots \widetilde{T}_{s_r} T_\tau$ (this element of 
$\mathcal H$ is independent of the choice of reduced expression for $w$).  For a translation element $t_\lambda \in \widetilde{W}$, write $\widetilde{T}_\lambda$ in place of $\widetilde{T}_{t_\lambda}$.    Setting $Q_s := v_s^{-1} - v_s$, we have the relation
$$
\widetilde{T}_s^{-1} = \widetilde{T}_s + Q_s.
$$

Suppose the simple affine reflections are the reflections through the walls of a (base) alcove ${\bf a} \subset X_* \otimes \mathbb R$.  
Denote the Weyl chamber that contains ${\bf a}$ by $\mathcal C_0$.  This determines the notion of positive root (those which are positive on $\mathcal C_0$) and dominant coweight (those which pair to $\mathbb R_{\geq 0}$ with any positive root or, equivalently, those belonging to the closure of $\mathcal C_0$).  For any $\lambda \in X_*$ we define 
$$
\Theta^{\mathcal C_0}_\lambda = \widetilde{T}_{\lambda_1} \widetilde{T}_{\lambda_2}^{-1},
$$
where $\lambda = \lambda_1 - \lambda_2$ and each $\lambda_i$ is {\em dominant}.  The element $\Theta^{\mathcal C_0}_\lambda$ is independent of the choice of the $\lambda_i$ (use that for dominant coweights $\nu,\mu$, we have $T_{\nu + \mu} = T_\nu T_\mu$).  

 Now let $\mathcal C$ be any Weyl chamber in the same apartment and having the same origin as $\mathcal C_0$.  We would like to define in an analogous manner an element $\Theta^{\mathcal C}_\lambda$ by setting
$$
\Theta^{\mathcal C}_\lambda := \widetilde{T}_{\lambda_1} \widetilde{T}^{-1}_{\lambda_2},
$$
where $\lambda = \lambda_1 - \lambda_2$ and each $\lambda_i$ is $\mathcal C$-{\em dominant}, i.e.,  it lies in the closure of the chamber $\mathcal C$.  However, we need to show that this definition is independent of the choice of the $\lambda_i$.  This follows from the previous independence statement and the following standard lemma.  Indeed, in using the lemma we take $v \in W$ to be the unique element such that $v\mathcal C_0 = \mathcal C$.

\begin{lemma}  For any dominant coweight $\lambda$ and any element $v \in W$, we have
$$
T_v T_\lambda T_v^{-1} = T_{v\lambda}.
$$
\end{lemma}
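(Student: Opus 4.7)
The strategy is to derive the claimed identity from two applications of the basic multiplicativity rule in the affine Hecke algebra $\mathcal H$:
$$T_a T_b = T_{ab} \quad \text{whenever } \ell(ab) = \ell(a) + \ell(b).$$
The starting observation is the purely group-theoretic identity $v t_\lambda v^{-1} = t_{v\lambda}$ in $\widetilde W$ -- this is nothing but the definition of the $W$-action on the translation subgroup -- which I will use in the form $v t_\lambda = t_{v\lambda} v$.

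The main technical step is the length identity
$$\ell(v t_\lambda) = \ell(v) + \ell(t_\lambda)$$
for $\lambda \in \overline{\mathcal C_0}$ and $v \in W$. I plan to deduce this from the Iwahori--Matsumoto length formula
$$\ell(v t_\mu) = \sum_{\alpha \in \Phi^+,\ v^{-1}\alpha \in \Phi^+} |\langle \alpha, \mu\rangle| + \sum_{\alpha \in \Phi^+,\ v^{-1}\alpha \in \Phi^-} |\langle \alpha, \mu\rangle + 1|.$$
When $\mu = \lambda$ is dominant every $\langle \alpha, \lambda\rangle \geq 0$ for $\alpha \in \Phi^+$, so the absolute-value signs drop and the right-hand side collapses to $\langle 2\rho, \lambda\rangle + \#\{\alpha \in \Phi^+ : v^{-1}\alpha \in \Phi^-\} = \ell(t_\lambda) + \ell(v)$. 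This is the one non-trivial ingredient, and I expect it to be essentially the only obstacle; it is where the dominance hypothesis on $\lambda$ actually enters the argument.

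Once this length identity is in hand the rest is formal. On the one hand it gives $T_v T_\lambda = T_{v t_\lambda}$. On the other hand, rewriting $v t_\lambda = t_{v\lambda} v$ as the same element of $\widetilde W$ and observing that Weyl-conjugate translations have equal length (so that $\ell(t_{v\lambda} v) = \ell(v t_\lambda) = \ell(v) + \ell(t_\lambda) = \ell(t_{v\lambda}) + \ell(v)$), the same rule also yields $T_{v\lambda} T_v = T_{t_{v\lambda} v} = T_{v t_\lambda}$. Comparing the two expressions for $T_{v t_\lambda}$ gives $T_v T_\lambda = T_{v\lambda} T_v$, which rearranges to the desired $T_v T_\lambda T_v^{-1} = T_{v\lambda}$. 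Notably, neither the quadratic relation nor the Bernstein relations are needed; this is precisely the simplification afforded by the dominance of $\lambda$, and also explains why the analogous statement fails without that hypothesis.
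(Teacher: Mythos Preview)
Your proof is correct and follows essentially the same approach as the paper: establish the length additivity $\ell(vt_\lambda)=\ell(v)+\ell(t_\lambda)=\ell(t_{v\lambda})+\ell(v)$ from dominance of $\lambda$, then apply the multiplicativity rule $T_aT_b=T_{ab}$ to both factorizations of $vt_\lambda=t_{v\lambda}v$. The only difference is that you spell out the Iwahori--Matsumoto justification for the length identity, which the paper simply asserts.
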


\begin{proof} 
First note that $vt_\lambda = t_{v\lambda}v$ and that the dominance of $\lambda$ implies $\ell(vt_\lambda) = \ell(v) + \ell(t_\lambda) = \ell(v) + \ell(t_{v\lambda})$.  These statements then imply $T_vT_{t_\lambda} = T_{vt_\lambda} = T_{t_{v\lambda}v} = T_{t_{v\lambda}} T_v$.  The desired formula follows.
\end{proof}

This proves that $\Theta^{\mathcal C}_\lambda$ is well-defined, and at the same time proves the following simple 
proposition.  We set $z_\mu^{\mathcal C} = \sum_{\lambda \in W\mu} \Theta^{\mathcal C}_\lambda$.  From now on we will write $\Theta_\lambda$ (resp. $z_\mu$) in place of $\Theta^{\mathcal C_0}_\lambda$ (resp. $z^{\mathcal C_0}_\mu$).   
Recall we know already that $z_\mu \in Z(\mathcal H)$.   We call $z_\mu$ the {\em Bernstein function} associated to $\mu$.

\begin{prop} \label{main_prop}  Let $v \in W$ be the unique element such that $v{\mathcal C}_0 = \mathcal C$.  Then 
$$
T_v \Theta_\lambda T^{-1}_v = \Theta^{\mathcal C}_{v\lambda},
$$
and thus
$$
z_{\mu}^{\mathcal C} = T_v^{-1} z_\mu T_{v} = z_\mu.
$$\qed
\end{prop}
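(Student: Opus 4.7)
The plan is to derive both assertions directly from the preceding lemma, with only a small amount of bookkeeping to handle the normalization $\widetilde{T}_w = v_w^{-1}T_w$ and the passage from a single $\Theta_\lambda$ to the sum defining $z_\mu^{\mathcal C}$.

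First I would upgrade the lemma to an identity for the normalized generators: for any dominant $\lambda$,
$$
T_v\, \widetilde{T}_\lambda\, T_v^{-1} = \widetilde{T}_{v\lambda}.
$$
This follows from the lemma together with the fact that $\ell(t_\lambda)$ depends only on the $W$-orbit of $\lambda$ (since $\ell(t_\lambda) = \sum_{\alpha>0} |\langle\alpha,\lambda\rangle|$), so that the normalizing scalar $v_{t_\lambda}$ coincides with $v_{t_{v\lambda}}$. (In the unequal-parameter case one uses that conjugation by $v\in W$ preserves the parameter function on finite simple reflections, which is enough since $t_{v\lambda} = vt_\lambda v^{-1}$.) Taking inverses gives the analogous statement with $\widetilde{T}_\lambda^{-1}$.

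Next, write $\lambda = \lambda_1-\lambda_2$ with each $\lambda_i$ dominant (for $\mathcal C_0$). Then each $v\lambda_i$ is $\mathcal C$-dominant and $v\lambda = v\lambda_1 - v\lambda_2$, so by definition
$$
T_v \Theta_\lambda T_v^{-1} = \bigl(T_v\widetilde{T}_{\lambda_1}T_v^{-1}\bigr)\bigl(T_v\widetilde{T}_{\lambda_2}^{-1}T_v^{-1}\bigr) = \widetilde{T}_{v\lambda_1}\widetilde{T}_{v\lambda_2}^{-1} = \Theta^{\mathcal C}_{v\lambda},
$$
proving the first identity. This is the main computation; I do not anticipate a real obstacle, since once the normalization issue is dealt with it is a one-line manipulation.

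For the second assertion, sum over $\lambda\in W\mu$ and use that the map $\lambda\mapsto v\lambda$ is a bijection of $W\mu$ onto itself:
$$
T_v\, z_\mu\, T_v^{-1} = \sum_{\lambda\in W\mu} T_v\Theta_\lambda T_v^{-1} = \sum_{\lambda\in W\mu}\Theta^{\mathcal C}_{v\lambda} = z_\mu^{\mathcal C}.
$$
Since $z_\mu$ is already known to lie in $Z(\mathcal H)$, the left-hand side equals $z_\mu$, and conjugating instead by $T_v^{-1}$ gives the same element; hence $z_\mu^{\mathcal C} = T_v^{-1} z_\mu T_v = z_\mu$, as claimed. \qed
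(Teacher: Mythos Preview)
Your proof is correct and follows exactly the approach the paper intends: the text immediately preceding the proposition says the lemma ``proves that $\Theta^{\mathcal C}_\lambda$ is well-defined, and at the same time proves the following simple proposition,'' and the proposition is stated with a \qed\ and no further argument. You have simply written out the details the paper leaves to the reader---upgrading the lemma to $\widetilde{T}$'s via $\ell(t_\lambda)=\ell(t_{v\lambda})$, decomposing $\lambda=\lambda_1-\lambda_2$, and summing over $W\mu$ using centrality of $z_\mu$.
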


\subsection{Alcove walk descriptions of Bernstein functions}

We apply A. Ram's theory of alcove walks \cite{Ram}, as developed by U. G\"{o}rtz \cite{G2}.

The theory of alcove walks,  with the orientation determined by the Weyl chamber $\overline{\mathcal C}$ (the one opposite $\mathcal C$),  gives alcove walk descriptions for the elements $\Theta^{\mathcal C}_\lambda$.  They take the following form.  Let ${\bf b}$ denote an alcove which is deep inside $\overline{\mathcal C}$.  For any expression 
$$
t_\lambda = s_{i_1} \cdots s_{i_k} \tau
$$
(which need not be reduced), we have the equality
$$
\Theta^{\mathcal C}_\lambda = \widetilde{T}^{\varepsilon_1}_{s_{i_1}} \cdots \widetilde{T}^{\varepsilon_k}_{s_{i_k}} T_\tau,
$$
where the signs $\varepsilon \in \{ \pm 1 \}$ are determined as follows.  For each $\nu$,  consider the alcove ${\bf c}_\nu := s_{i_1} \cdots s_{i_{\nu-1}}{\bf a}$, and denote by $H_\nu$ the affine root hyperplane containing the face shared by ${\bf c}_\nu$ and ${\bf c}_{\nu+1}$.  Set
$$
\varepsilon_\nu = \begin{cases} 1 \,\,\,\, \mbox{if ${\bf c}_\nu$ is on the same side of $H_\nu$ as ${\bf b}$} \\ -1 \,\,\,\, \mbox{otherwise}. \end{cases}
$$

We call such an expression for $\Theta^{\mathcal C}_\lambda$ an {\em alcove walk description}.  Clearly,  summing over $\lambda \in W\mu$ yields an alcove walk description for $z_\mu$.   Once we have fixed the expressions for each $t_\lambda$, each choice of Weyl chamber yields a different alcove walk description of $z_\mu$.  We call the ones that come from $\mathcal C \neq \mathcal C_0$ the {\em nonstandard alcove walk descriptions}.

\subsection{The support of $z_\mu$}

Recall that,  by definition,  ${\rm Adm}(\mu)$ consists of the finite set of elements $w \in \widetilde{W}$ such that $w \leq t_\lambda$ for some $\lambda \in W\mu$.  In this section we prove the following result.

\begin{prop}\label{suppz}
The support of $z_\mu$ is precisely the admissible set ${\rm Adm}(\mu)$.
\end{prop}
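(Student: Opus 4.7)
The plan is to use the decomposition $z_\mu = \sum_{\lambda \in W\mu} \Theta_\lambda$, analyze the support of each $\Theta_\lambda$ via alcove walks, and exploit a near-disjointness property among these supports. The key input for the upper bound is that $\Theta_\lambda$, written as $\widetilde{T}_{\lambda_1}\widetilde{T}_{\lambda_2}^{-1}$ for a decomposition $\lambda = \lambda_1 - \lambda_2$ with $\lambda_i$ dominant, admits an alcove walk expansion: using reduced expressions for $t_{\lambda_1}$ and $t_{\lambda_2}$ and the relation $\widetilde{T}_s^{-1} = \widetilde{T}_s + Q_s$, one produces a linear combination of $\widetilde{T}_w$'s indexed by subwords of a concatenated (generally non-reduced) expression for $t_\lambda = t_{\lambda_1} t_{\lambda_2}^{-1}$. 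By the subword / Demazure-product criterion for the Bruhat order, every $w$ arising this way satisfies $w \leq t_\lambda$; hence ${\rm supp}(\Theta_\lambda) \subseteq \{w : w \leq t_\lambda\}$, and summing over $\lambda \in W\mu$ yields the inclusion ${\rm supp}(z_\mu) \subseteq {\rm Adm}(\mu)$.

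For the reverse inclusion ${\rm Adm}(\mu) \subseteq {\rm supp}(z_\mu)$, I would fix $w \in {\rm Adm}(\mu)$ and some $\lambda \in W\mu$ with $w \leq t_\lambda$, and show that the coefficient of $\widetilde{T}_w$ in $z_\mu$ is nonzero. A natural route is to exploit the nonstandard expression $z_\mu = z^{\mathcal C}_\mu$ of Proposition \ref{main_prop}, applied with a Weyl chamber $\mathcal C$ chosen so that $\lambda$ is $\mathcal C$-dominant; then $\Theta^{\mathcal C}_\lambda = \widetilde{T}_\lambda$ is a single monomial, which cleanly isolates the contribution at the top element $t_\lambda$ and leaves the contributions from the remaining $\Theta^{\mathcal C}_{\lambda'}$ to be analyzed. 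A Deodhar-style distinguished-subexpression argument, combined with a careful sign bookkeeping in the alcove walk description of section \ref{nonst}, then identifies a canonical contribution to the coefficient of $\widetilde{T}_w$ equal to a power of $(v_s - v_s^{-1})$ times a positive integer. A key simplifying observation is that distinct $\Theta_\lambda$ and $\Theta_{\lambda'}$ have disjoint supports, since their nonzero terms involve different translation parts $\lambda(w)$, so no cancellation between distinct $W\mu$-summands can occur.

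The hard part will be this lower bound: ruling out cancellations among the alcove walk contributions to the coefficient of $\widetilde{T}_w$ within a single $\Theta^{\mathcal C}_{\lambda'}$. A cleaner alternative route, likely the one taken in the appendix, is to first establish the refined identity
\[
{\rm supp}(\Theta_\lambda) = \{w \in \widetilde{W} : \lambda(w) = \lambda,\ w \leq t_\lambda\}
\]
(the inclusion $\subseteq$ via the alcove walk argument above together with the fact that the Bernstein subalgebra preserves translation parts, and the inclusion $\supseteq$ by descending induction from $\widetilde{T}_{t_\lambda}$ using the Lusztig-type relation $\widetilde{T}_s \Theta_\lambda = \Theta_{s\lambda} \widetilde{T}_s + (v_s - v_s^{-1})(\text{explicit correction})$). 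Combined with the disjointness of supports of the $\Theta_\lambda$ for distinct $\lambda \in W\mu$, this gives
\[
{\rm supp}(z_\mu) = \bigsqcup_{\lambda \in W\mu} \{w : \lambda(w) = \lambda,\ w \leq t_\lambda\},
\]
which coincides with ${\rm Adm}(\mu)$ in the minuscule case relevant to this paper, where every $w \in {\rm Adm}(\mu)$ is already permissible and hence satisfies $\lambda(w) \in W\mu$ and $w \leq t_{\lambda(w)}$.
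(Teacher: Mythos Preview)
Your upper bound argument is essentially the paper's (which simply cites \cite{H1a} and remarks that it is immediate from the alcove-walk description using a reduced word for $t_\lambda$).

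For the lower bound, your first approach has the chamber choice backwards relative to what the paper does, and this is the crux of the matter. You take $\mathcal C$ so that $\lambda$ is $\mathcal C$-dominant, which collapses $\Theta^{\mathcal C}_\lambda$ to the single monomial $\widetilde{T}_\lambda$; as you note, this forces you to locate $w$ inside some other $\Theta^{\mathcal C}_{\lambda'}$, and you never complete that step. The paper instead chooses $\mathcal C$ so that $\lambda + {\bf a}$ lies in $\overline{\mathcal C}$, i.e.\ so that $-\lambda$ is $\mathcal C$-dominant. Then $\Theta^{\mathcal C}_\lambda = \widetilde{T}^{-1}_{-\lambda} = \widetilde{T}^{-1}_{s_{i_1}} \cdots \widetilde{T}^{-1}_{s_{i_k}} T_\tau$ along a \emph{reduced} word for $t_\lambda$; expanding each factor via $\widetilde{T}_s^{-1} = \widetilde{T}_s + Q_s$ shows (by \cite{H1b}, Lemma~2.5) that every $w \leq t_\lambda$ already lies in ${\rm supp}(\Theta^{\mathcal C}_\lambda)$. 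No contribution from any other $\lambda'$ is needed.

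Non-cancellation is then handled by a positivity argument rather than your disjointness-of-translation-parts observation: each $\Theta^{\mathcal C}_{\lambda'}$, written in the $\widetilde{T}_w$-basis, has coefficients that are polynomials in the $Q_s$ with non-negative integer coefficients (\cite{H1a}, Lemma~5.1), so summing over $\lambda' \in W\mu$ cannot kill any term. This is what makes the paper's proof valid for arbitrary $\mu$, not just minuscule. Your second route via ${\rm supp}(\Theta_\lambda) = \{w : \lambda(w)=\lambda,\ w \leq t_\lambda\}$ and translation-part disjointness is correct in the minuscule case (and indeed is how the body of the paper argues in Lemma~\ref{w_leq_mu}), but you are right to flag that the identification of the resulting disjoint union with ${\rm Adm}(\mu)$ relies on minusculeness; the appendix is meant to establish the general statement.
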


\begin{proof}
See \cite{H1a} for the inclusion ${\rm supp}(z_\mu) \subseteq {\rm Adm}(\mu)$.  (Alternatively: this inclusion is  obvious from the alcove walk description of the function $\Theta_\lambda$ which comes from a {\em reduced} expression for $t_\lambda$.)

Suppose $w \in {\rm Adm}(\mu)$.  Fix an element $\lambda \in W\mu$ such that $w \leq t_\lambda$ in the Bruhat order.  Now fix the Weyl chamber $\mathcal C$ such that the alcove $\lambda + {\bf a}$ belongs to the opposite Weyl chamber $\overline{\mathcal C}$.  

Note that $\lambda = 0-(-\lambda)$ and that $-\lambda$ is $\mathcal C$-dominant.  It follows from the definition of 
$\Theta^{\mathcal C}_\lambda$ that 
$$
\Theta^{\mathcal C}_\lambda = \widetilde{T}^{-1}_{-\lambda} = \widetilde{T}^{-1}_{s_{i_1}} \cdots \widetilde{T}^{-1}_{s_{i_k}} T_\tau,
$$
where $t_\lambda = s_{i_1} \cdots s_{i_k} \tau$ is any {\em reduced} expression for $t_\lambda$. 

But from \cite{H1b}, Lemma 2.5, any element $w \leq t_\lambda$ belongs to the support of the right hand side.  Furthermore, any element in the support of $\Theta^{\mathcal C}_\lambda$ also belongs to the support of $z_\mu$.  Indeed, it is enough to observe (\cite{H1a}, Lemma 5.1), that when a function of the form $\Theta^{\mathcal C}_\lambda = \widetilde{T}_{\lambda_1}\widetilde{T}^{-1}_{\lambda_2}$ is expressed as a linear combination of the basis elements $\widetilde{T}_w$ ($w \in \widetilde{W}$), then the coefficients which appear are polynomials in the renormalized parameters $Q_s$, {\em with non-negative integral coefficients}.  Thus, cancellation cannot occur when the various functions $\Theta^{\mathcal C}_\lambda$ are added in the formation of $z_\mu$.

This completes the proof of the proposition.
\end{proof}

\small
\bigskip
\obeylines
\noindent
University of Maryland
Department of Mathematics
College Park, MD 20742-4015 U.S.A.
email: tjh@math.umd.edu

\bigskip
\obeylines
Mathematisches Institut der Universit\"at Bonn  
Endenicher Allee 60 
53115 Bonn, Germany.
email: rapoport@math.uni-bonn.de


\end{document}